    \renewcommand{\phi}{\varphi}
    \renewcommand{\epsilon}{\varepsilon}
    \renewcommand{\P}{\mathsf{P}}
    \newcommand	{\eins}	{\mathbbm{1}}   
	\newcommand	{\norm}[1]	{\left\lVert#1\right\rVert}
	\newcommand	{\abs}[1]		{\left\lvert#1\right\rvert}
	\newcommand {\cost}	{\mathsf{cost}}
	\newcommand {\ent}	{\mathsf{Ent}}
	\newcommand {\stat}[1]		{(#1)^{{stat}}}
\newcommand{\Cpl}{ \mathsf{Cpl}}
\newcommand{\cpl}{ \mathsf{cpl}}
\newcommand{\Q}{ \mathsf{Q}}
\newcommand{\R}{ \mathsf{R}}
\newcommand{\sfQ}{ \mathsf{Q}}
\newcommand{\q}{ \mathsf{q}}
\newcommand{\C}{\mathsf{C}}
\newcommand{\W}{\mathsf{W}}
\newcommand{\Poi}{\mathsf{Poi}}
\newcommand{\Leb}{\mathsf{Leb}}
\newcommand{\proj}{\mathsf{proj}}
\newcommand{\pr}{\mathrm{pr}}
\newcommand{\refl}{\mathsf{ref}}
\newcommand{\Semi}{\mathsf{S}}
\newcommand{\id}{\mathsf{id}}
\newcommand{\F}{\mathcal{F}}
\newcommand\numberthis{\addtocounter{equation}{1}\tag{\theequation}}
	\DeclareMathOperator	{\IE}			{\mathbb{E}} 
	\DeclareMathOperator	{\IN}			{\mathbb{N}}
	\DeclareMathOperator	{\IP}			{\mathbb{P}}
	\DeclareMathOperator	{\IR}			{\mathbb{R}}
	\DeclareMathOperator	{\IZ}			{\mathbb{Z}}
	\DeclareMathOperator	{\supp}			{supp}
        \DeclareMathOperator	{\spp1}			{\mathcal{P}_s(\Gamma)}
	\theoremstyle{plain}
\newtheorem{thm}			{Theorem}[section]
\newtheorem{lem}	[thm]	{Lemma}
\newtheorem{cor}	[thm]	{Corollary}
\newtheorem{prop}	[thm]	{Proposition}
\theoremstyle{definition}
\newtheorem{defi}	[thm]	{Definition}
\newtheorem{ex}	    [thm]	{Example}
\newtheorem{rem}	[thm]	{Remark}
\numberwithin{equation}{section}
\begin{document}

\author{Matthias Erbar}
\address{Matthias Erbar: Faculty of Mathematics, University of Bielefeld, Universit\"atsstrasse 25, 33615 Bielefeld, Germany}
\email{erbar@math.uni-bielefeld.de}

\author{Martin Huesmann}
\address{Martin Huesmann: Institute for Mathematical Stochastics,
University of M\"unster
Orl\'eans-Ring 10,
48149 M\"unster, Germany}
\email{martin.huesmann@uni-muenster.de}

\author{Jonas Jalowy}
\address{Jonas Jalowy: Institute for Mathematical Stochastics,
University of M\"unster
Orl\'eans-Ring 10,
48149 M\"unster, Germany}
\email{jjalowy@uni-muenster.de}

\author{Bastian M\"uller}
\address{Bastian M\"uller: Institute for Mathematical Stochastics,
University of M\"unster
Orl\'eans-Ring 10,
48149 M\"unster, Germany}
\email{bastian.mueller@uni-muenster.de}
\thanks{ME, MH, JJ are supported by the Deutsche Forschungsgemeinschaft (DFG, German Research Foundation) through the SPP 2265 {\it Random Geometric Systems}. MH and BM have been funded by the Deutsche Forschungsgemeinschaft (DFG, German Research Foundation) under Germany's Excellence Strategy EXC 2044 -390685587, Mathematics M\"unster: Dynamics--Geometry--Structure . }

\title[Optimal transport of stationary point processes]{Optimal transport of stationary point processes: Metric structure, gradient flow and convexity of the specific entropy}

\keywords{Stationary point processes, optimal transport, Wasserstein distance, gradient flow, specific relative entropy}

\begin{abstract}
We develop a theory of optimal transport for stationary random measures with a focus on stationary point processes and construct a family of distances on the set of stationary random measures. These induce a natural notion of interpolation between two stationary random measures along a shortest curve connecting them.
In the setting of stationary point processes we leverage this transport distance to give a geometric interpretation for the evolution of infinite particle systems with stationary distribution. Namely, we characterise the evolution of infinitely many Brownian motions as the gradient flow of the specific relative entropy w.r.t.~the Poisson point process. Further, we establish displacement convexity of the specific relative entropy along optimal interpolations of point processes and establish an stationary analogue of the HWI inequality, relating specific entropy, transport distance, and a specific relative Fisher information.
\end{abstract}

\maketitle

\tableofcontents

\section{Introduction}

Optimal transport has proven to be a powerful tool in the analysis of interacting particle systems and the associated PDEs. In this work, we develop a counterpart to the rich theory of optimal transport in the setting of stationary random measures with a particular focus on stationary point processes, i.e. stationary random infinite point configurations.

We construct and analyse a class of transportation distances between stationary random measures. Given random measures on $\IR^d$ whose distribution is stationary under shifts, these distances are obtained by minimising the expectation of suitable cost functions on the space of locally finite measures on $\IR^d$ over all possible joint distributions of the random measures under the constraint that also the joint distribution be stationary. The transport distances we obtain turn out the be geodesic distances and lead to a natural notion of interpolation between stationary random measures, in analogy to the famous displacement interpolation of McCann \cite{McCann} in classical optimal transport.

Our motivation for considering this novel geometry is twofold. On the one hand we want to investigate convexity properties of functionals of stationary point processes along displacement interpolations. Such properties have already been leveraged successfully characterising the sine$_\beta$ process as the unique minimiser of the renormalised free energy \cite{EHL21}, very much in the spirit of McCann's original work. We also expect displacement convexity to provide a general strategy to obtain functional inequalities for stationary point processes, which are available only in very special cases to date.
On the other hand we aim to provide the tools to analyse dynamics of infinite interacting particle systems by interpreting them geometrically as steepest descents of suitable free energy functionals and extending the powerful Otto calculus \cite{Ott01} to this setting. 

Indeed, in the present work we characterise the evolution of infinitely many Brownian motions starting from a stationary initial condition as the gradient flow of the specific relative entropy w.r.t.~the Poisson process. This provides an analogue of the celebrated results of Jordan-Kinderlehrer-Otto \cite{JKO98} for stationary point processes. Moreover, we establish displacement convexity of the specific relative entropy along optimal interpolations w.r.t.~one of our new transport distances taking over the role of the Wasserstein distance in the setting of stationary random measures.

Let us describe the setting we consider and the results we obtain in some more detail.

\subsection{Optimal transport for stationary random measures}

In this paper, a random measure $\xi^\bullet$ will be a random variable with values in the space $\gls{M}$ of locally finite measures on $\IR^d$.  We say that $\xi^\bullet$ is a
point process if $\xi^\bullet(A)\in\IN_0$ almost surely for all compact
$A\in\mathcal B(\IR^d)$.
The distribution of a random measure is an element of $\mathcal P\big(\mathcal M(\IR^d)\big)$ the set of probability measures over $\mathcal M(\IR^d)$. 
Note that $\IR^d$ naturally acts on $\mathcal M(\IR^d)$ by shift of the support, namely for $x\in \IR^d$ and $\xi\in \mathcal M(\IR^d)$ we define $\glsdisp{theta}{\theta_x}\xi\in \mathcal M(\IR^d)$ by $\theta_x\xi(A)=\xi(A+x)$ for all measurable sets $A$. We say that $\P\in \mathcal P\big(\mathcal M(\IR^d)\big)$ is \emph{stationary} if $\P\circ \theta_x^{-1}=\P$ for all $x\in \IR^d$. We denote by $\gls{Ps}\big(\mathcal M(\IR^d)\big)$ the set of stationary distributions. A random measure is called stationary if its distribution is.

Stationarity of the distribution of a random measure is implied by the following stronger property.  A random measure $\xi^\bullet:(\Omega,\F,\IP)\to\mathcal M(\IR^d)$ is called \emph{invariant} if the probability space $(\Omega,\F,\IP)$ admits a
measurable flow, i.e.~a
family of measurable mappings $\IR^d\times\Omega\ni(x,\omega)\mapsto \theta_x\omega\in\Omega$ with $\theta_0=\id$ and $\theta_x\circ\theta_y=\theta_{x+y}$ for all $x,y\in\IR^d$, such that $\IP$ is invariant under $\theta$ and for all $x\in\IR^d$,
$\omega\in\Omega$, and $A\in\mathcal B(\IR^d)$ it  holds
\[ \xi^{\omega}(A)=\xi^{\theta_x\omega}(A-x)\;.\]

We will now construct a transport distance on the space $\mathcal P_s\big(\mathcal M(\IR^d)\big)$. We first consider a suitable cost function $c:\mathcal M(\IR^d)\times\mathcal M(\IR^d)\to [0,\infty]$. This cost is defined in terms of a transport problem itself. However, since the measures involved are infinite, the key idea is to consider the transport cost per volume. Denoting by $\Lambda_n=[-n/2,n/2]^d$ the box of side length $n$ centred at the origin, we set for $p\geq 1$ and $\xi,\eta\in \mathcal M(\IR^d)$

\begin{align}\label{eq:intro-ground-cost}
\gls{c}=\inf_{\mathsf q\in\cpl(\xi,\eta)}  \limsup_{n\to\infty}\frac{1}{n^d} \int_{\Lambda_n\times \IR^d}|x-y|^p \mathsf q(dx,dy) \;,
\end{align}
where $\cpl(\xi,\eta)$ denotes the set of all couplings $\q\in \mathcal M(\IR^d\times \IR^d)$ of $\xi$ and $\eta$.

Consider now  $\P_0,\P_1\in \mathcal P_s\big(\mathcal M(\IR^d)\big)$ two distributions of stationary random measures. Denote by $\Cpl_s(\P_0,\P_1)$ the set of all couplings $\Q\in \mathcal P\big(\mathcal M(\IR^d)\times\mathcal M(\IR^d)\big)$ between $\P_0$ and $\P_1$ that are stationary in the sense that $\Q\circ(\theta_x,\theta_x)^{-1}=\Q$ for all $x\in \IR^d$.
 
 Then, we put

\begin{align}\label{eq:stationaryOT}
  \gls{C}=\inf_{\mathsf Q\in \Cpl_s(\P_0,\P_1)} \int c(\xi,\eta) \mathsf Q(d\xi,d\eta).
\end{align}
Equivalently, we can write this optimisation problem as
\begin{align*}
\mathsf C(\P_0,\P_1)=\inf_{(\xi^\bullet,\eta^\bullet)}\IE\big[c(\xi^\bullet,\eta^\bullet)\big]\;,
\end{align*}
where the infimum is taken over all jointly invariant random measures $(\xi^\bullet,\eta^\bullet)$ defined on a common probability space with distribution $\P_0$ and $\P_1$ respectively, since their joint laws are precisely the stationary couplings of $\P_0$ and $\P_1$.

Note that \eqref{eq:stationaryOT} is a two layer optimisation problem. The first layer of optimisation is on the level of the coupling of the distributions $\P_0,\P_1$. The second layer is on the level of the coupling of the realisations of the random measures $\xi^\bullet\sim\P_0$ and $\eta^\bullet\sim\P_1$ in the transport problem defining $c$. Moreover, observe, that \eqref{eq:stationaryOT} is an optimal transport problem with an additional probabilistic constraint, namely stationarity (see Lemma \ref{lem:charStat}). A direct consequence is compactness of the set of all stationary couplings such that existence of an optimal stationary coupling $\Q$ follows by the direct method of the calculus of variations once l.s.c.\ of $c$ is established. Moreover, one can derive a duality result (see Proposition \ref{prop:duality}) for instance by adapting the minmax  argument of \cite{BeHLPe13}.
In the following we fix $p\geq 1$ in \eqref{eq:intro-ground-cost} and put $\mathsf W_p:=\mathsf C^{\frac1p}$.

\begin{thm}
$\mathsf W_p$ defines a geodesic extended distance on the space of stationary distributions $\mathcal P_s(\mathcal M(\IR^d))$ with unit intensity. 
\end{thm}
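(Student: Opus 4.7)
The plan is to verify the four axioms of a (geodesic) extended distance on $\mathcal{P}_s(\mathcal{M}(\IR^d))$: nonnegativity, symmetry, strict positivity ($\mathsf{W}_p(\P_0,\P_1)=0\Leftrightarrow\P_0=\P_1$), the triangle inequality, and the existence of length-minimising curves. Nonnegativity is immediate from $|x-y|^p\ge 0$, and symmetry follows since the swap $(\xi,\eta)\mapsto(\eta,\xi)$ commutes with the diagonal shift $(\theta_x,\theta_x)$, so it sends $\Cpl_s(\P_0,\P_1)$ onto $\Cpl_s(\P_1,\P_0)$ bijectively while preserving the inner cost. For the vanishing implication I would invoke the compactness of $\Cpl_s(\P_0,\P_1)$ together with the lower semicontinuity of $c$ (both indicated in the excerpt) to extract an optimal stationary $\Q^\ast$ achieving zero cost; the accompanying optimal equivariant inner coupling $\q^\bullet$ is then concentrated on the diagonal almost surely, forcing $\xi^\bullet=\eta^\bullet$ a.s.\ and hence $\P_0=\P_1$.

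The triangle inequality is the principal technical point. Given stationary couplings $\Q_{01}\in\Cpl_s(\P_0,\P_1)$ and $\Q_{12}\in\Cpl_s(\P_1,\P_2)$ with shared middle marginal $\P_1$, I would disintegrate
\[
\Q_{01}(d\xi_0,d\xi_1)=\P_1(d\xi_1)\,K_{01}(\xi_1,d\xi_0),\qquad \Q_{12}(d\xi_1,d\xi_2)=\P_1(d\xi_1)\,K_{12}(\xi_1,d\xi_2),
\]
and glue to $\Q_{012}(d\xi_0,d\xi_1,d\xi_2)=K_{01}(\xi_1,d\xi_0)\,\P_1(d\xi_1)\,K_{12}(\xi_1,d\xi_2)$, then project onto the $(\xi_0,\xi_2)$-coordinates to obtain a candidate coupling of $\P_0$ and $\P_2$. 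To control the cost I would glue the corresponding equivariant inner couplings $\q^{01},\q^{12}$ analogously over the middle realisation $\xi_1$, apply $|x-z|\le|x-y|+|y-z|$ pointwise, and conclude by Minkowski's inequality in $L^p$, applied first per unit volume on $\Lambda_n$ and then passed to the limsup as $n\to\infty$. This yields $\mathsf{W}_p(\P_0,\P_2)\le\mathsf{W}_p(\P_0,\P_1)+\mathsf{W}_p(\P_1,\P_2)$.

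For the geodesic property I would adapt McCann's displacement interpolation. Given an optimal pair $(\Q,\q^\bullet)$ realising $\mathsf{C}(\P_0,\P_1)$, set $\xi_t^\bullet:=\big((1-t)\pr_1+t\pr_2\big)_\#\q^\bullet$ for $t\in[0,1]$ and let $\P_t$ be its law. Since the map $(x,y)\mapsto(1-t)x+ty$ on $\IR^{2d}\to\IR^d$ is shift-equivariant, the joint law of $(\xi_0^\bullet,\xi_t^\bullet,\xi_1^\bullet)$ is stationary; extracting the two sub-couplings and estimating directly gives $\mathsf{W}_p(\P_0,\P_t)\le t\,\mathsf{W}_p(\P_0,\P_1)$ and $\mathsf{W}_p(\P_t,\P_1)\le(1-t)\mathsf{W}_p(\P_0,\P_1)$. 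The triangle inequality forces these to be equalities, identifying $(\P_t)_{t\in[0,1]}$ as a $\mathsf{W}_p$-geodesic.

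The step I expect to be most delicate is stationarity of the glued coupling $\Q_{012}$. A naive Polish-space disintegration produces kernels $K_{01},K_{12}$ that are only defined $\P_1$-a.e.\ and need not respect the shift, whereas what is needed is a \emph{covariant} disintegration with $(\theta_x)_\#K_{ij}(\xi_1,\cdot)=K_{ij}(\theta_x\xi_1,\cdot)$ for all $x$ and $\P_1$-a.e.\ $\xi_1$. Realising this likely requires Palm-measure machinery or a measurable selection argument on $(\mathcal{M}(\IR^d),\P_1)$; once in place, the remainder of the proof is a combination of standard optimal-transport ingredients with the probabilistic stationarity constraint.
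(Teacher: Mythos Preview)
Your symmetry argument has a genuine gap. You claim the swap $(\xi,\eta)\mapsto(\eta,\xi)$ ``preserves the inner cost'', but the ground cost is
\[
c(\xi,\eta)=\inf_{\q\in\cpl(\xi,\eta)}\limsup_{n\to\infty}\frac{1}{n^d}\int_{\Lambda_n\times\IR^d}|x-y|^p\,\q(dx,dy),
\]
with integration over the \emph{asymmetric} domain $\Lambda_n\times\IR^d$. Swapping a coupling $\q\in\cpl(\xi,\eta)$ to $\q'\in\cpl(\eta,\xi)$ converts this to an integral over $\IR^d\times\Lambda_n$, and for a fixed deterministic pair $(\xi,\eta)$ there is no reason these coincide. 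The paper flags exactly this point in the introduction and resolves it not at the level of $c$ but at the level of $\C$: one passes to the equivalent representation $\C(\P_0,\P_1)=\inf\IE\big[\int_{\Lambda_1\times\IR^d}|x-y|^p\,\q^\bullet(dx,dy)\big]$ over \emph{equivariant} process couplings $\q^\bullet$, and then the mass-transport principle gives $\IE\big[\int_{\Lambda_1\times\IR^d}\cdots\big]=\IE\big[\int_{\IR^d\times\Lambda_1}\cdots\big]$, yielding symmetry.

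The same asymmetry bites your triangle inequality and geodesic arguments. After gluing process couplings over the middle marginal, the term you need to bound is $\IE\big[\int_{\Lambda_1\times\IR^d\times\IR^d}|y-z|^p\,d\q\big]$, with the box on the \emph{first} coordinate; but to identify this with $\C_p(\P_1,\P_2)$ you need the box on the \emph{second} coordinate. The paper again uses the mass-transport principle (equivariance of the glued $\q$) to swap domains before applying Minkowski. Your outline never invokes this and so does not close. By contrast, the issue you flag as ``most delicate'' --- stationarity of the glued distribution coupling --- is handled in the paper by working throughout with the equivariant process-coupling representation on a common probability space, which makes the covariance automatic and renders a separate covariant-disintegration argument unnecessary.
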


Here, extended distance means that $\W_p$ might attain the value $+\infty$. This result is more subtle than it looks at first, since for instance it is not clear whether the cost function $c(\xi,\eta)$ is symmetric. Hence, it is not obvious that $\mathsf W_p$ is symmetric. The solution  lies in another representation formula for $\mathsf W_p$ (resp.\ $\mathsf C$). To this end, we return to the cost function \eqref{eq:intro-ground-cost} which has been analysed for jointly invariant random measures with fixed joint distribution by Sturm and the second author in \cite{HS13,H16}. For two jointly invariant random measures $\xi^\bullet,\eta^\bullet$ defined on the same probability space $(\Omega,\F,\IP)$, we say that a random measure $\mathsf q^\bullet:(\Omega,\F,\IP)\to \mathcal M(\IR^d\times\IR^d)$ is an invariant/equivariant coupling of $\xi^\bullet$ and
$\eta^\bullet$ iff for all $A\in\mathcal B(\IR^d)$
\begin{align*}
    \mathsf q^{\omega}(A\times \IR^d)=\xi^{\omega}(A)  \qquad \text{ and }\qquad \mathsf q^{\omega}(\IR^d\times A)=\eta^\omega(A)
\end{align*}
and for all $x\in\IR^d$ and $A,B\in\mathcal B(\IR^d)$ we have
$\mathsf q^{\omega}(A\times B)=\mathsf q^{\theta_x\omega}(A-x\times B-x)$. 
Denote by $\cpl_e(\xi^\bullet,\eta^\bullet)$ the set of all such couplings. Note that a random coupling $\q^\bullet\in\cpl_e(\xi^\bullet,\eta^\bullet)$ induces a coupling $\Q\in\Cpl_s(\P_0,\P_1)$ between the distributions of $\xi^\bullet,\eta^\bullet$ by considering the joint law of the marginals $(\xi^\bullet,\eta^\bullet)$. We call $\Q$ a \emph{distribution coupling} and $\q^\bullet$ a \emph{process coupling}.

If we put
\begin{align*}
\cost(\xi^\bullet,\eta^\bullet) := \inf_{\q^\bullet\in \cpl_{e}(\xi^\bullet,\eta^\bullet)}\IE\Big[\int_{ \Lambda_1\times \IR^d} |x-y|^p\q^\bullet(dx,dy)\Big]\;,
 \end{align*}
then the following representation holds.

\begin{prop}
\begin{align}\label{eq:2ndrep}
     \mathsf C (\P_0,\P_1)=\inf_{(\xi^\bullet,\eta^\bullet)}\cost(\xi^\bullet,\eta^\bullet),
 \end{align} 
 where the infimum runs over all jointly invariant random measures $(\xi^\bullet,\eta^\bullet)$ such that $\xi^\bullet\sim\P_0, \eta^\bullet\sim\P_1$. Moreover, if $\mathsf C(\P_0,
 P_1)<\infty$, there exists an optimal pair $(\Q,\q^\bullet)$ of a process coupling $\q^\bullet\in \cpl_e(\xi,\eta)$ and the induced distribution coupling $\Q\in \Cpl_s(\P_0,\P_1)$ such that $\C(\P_0,\P_1)=\IE[c(\xi^\bullet,\eta^\bullet)]$ and $\q^\bullet$ attains $c(\xi^\bullet,\eta^\bullet)$ almost surely.
\end{prop}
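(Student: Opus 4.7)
For the upper bound $\C(\P_0,\P_1) \leq \inf \cost(\xi^\bullet,\eta^\bullet)$, the plan is to use the multidimensional pointwise ergodic theorem to interchange the per-volume $\limsup$ in \eqref{eq:intro-ground-cost} with the expectation defining $\cost$ whenever the process coupling is equivariant. Concretely, fix arbitrary jointly invariant $(\xi^\bullet,\eta^\bullet)$ with marginals $\P_0,\P_1$ and any $\q^\bullet \in \cpl_e(\xi^\bullet,\eta^\bullet)$. Translation invariance of $|x-y|^p$ together with equivariance of $\q^\bullet$ promotes $A \mapsto \int_{A\times\IR^d}|x-y|^p\q^\bullet(dx,dy)$ to a stationary random measure on $\IR^d$ with intensity $\IE\bigl[\int_{\Lambda_1\times\IR^d}|x-y|^p\q^\bullet\bigr]$, so Wiener's theorem yields $Y_n := |\Lambda_n|^{-1}\int_{\Lambda_n\times\IR^d}|x-y|^p\q^\bullet \to Y_\infty$ $\IP$-a.s.\ and in $L^1$, with $\IE[Y_\infty]=\IE[Y_1]$. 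Since $\q^\omega \in \cpl(\xi^\omega,\eta^\omega)$ pointwise, the definition of $c$ forces $c(\xi^\omega,\eta^\omega) \leq \limsup_n Y_n(\omega) = Y_\infty(\omega)$, hence $\IE[c(\xi^\bullet,\eta^\bullet)] \leq \IE[Y_1]$. The joint law of $(\xi^\bullet,\eta^\bullet)$ lies in $\Cpl_s(\P_0,\P_1)$, so $\C(\P_0,\P_1) \leq \IE[c(\xi^\bullet,\eta^\bullet)]$; infimising over $\q^\bullet$ and then over $(\xi^\bullet,\eta^\bullet)$ closes this direction.

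For the reverse inequality and the optimal pair (when $\C(\P_0,\P_1)<\infty$), I would take $\Q \in \Cpl_s(\P_0,\P_1)$ attaining $\C(\P_0,\P_1)$, which exists by lower semicontinuity of $c$ and weak compactness of $\Cpl_s$ already established elsewhere in the paper, and realise $(\xi^\bullet,\eta^\bullet)$ canonically on $\bigl(\mathcal M(\IR^d)\times\mathcal M(\IR^d),\Q\bigr)$ with diagonal flow $\theta_z(\xi,\eta):=(\theta_z\xi,\theta_z\eta)$. A measurable selection (Kuratowski--Ryll-Nardzewski or Aumann), combined with pointwise attainment of $c(\xi,\eta)$ from \cite{HS13,H16}, gives a measurable $\omega\mapsto\tilde{\q}^\omega \in \cpl(\xi^\omega,\eta^\omega)$ realising $c(\xi^\omega,\eta^\omega)$ $\Q$-a.s. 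I would then equivariantise $\tilde{\q}$: since $c(\theta_z\xi,\theta_z\eta)=c(\xi,\eta)$ and the pushforward $\q\mapsto\theta_z\q$ preserves the $\limsup$-cost (because $|\Lambda_n\triangle(\Lambda_n+z)|/|\Lambda_n|\to 0$ for fixed $z$), I set $\q^\omega:=\theta_{-\Phi(\omega)}\tilde{\q}^{\theta_{\Phi(\omega)}\omega}$ for a measurable equivariant translation $\Phi$ satisfying $\Phi(\theta_z\omega)=\Phi(\omega)-z$, or, if no such $\Phi$ is available, work on a suitably enlarged probability space with an auxiliary shift-compatible randomisation. For the resulting $\q^\bullet\in\cpl_e(\xi^\bullet,\eta^\bullet)$, we have $\limsup_n Y_n = c(\xi^\bullet,\eta^\bullet)$ a.s., so the ergodic theorem upgrades to $Y_\infty = c(\xi^\bullet,\eta^\bullet)$ a.s.\ and $\IE[Y_1] = \IE[c(\xi^\bullet,\eta^\bullet)] = \int c\,d\Q = \C(\P_0,\P_1)$. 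This yields $\cost(\xi^\bullet,\eta^\bullet) \leq \C(\P_0,\P_1)$, closing both inequalities and simultaneously exhibiting $(\Q,\q^\bullet)$ as the desired optimal pair.

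The main obstacle is this equivariantisation step: we simultaneously need joint measurability of $\omega\mapsto\q^\omega$, pointwise optimality for $c(\xi^\omega,\eta^\omega)$, \emph{and} exact equivariance under the $\IR^d$-shift. Since $\IR^d$ admits no finite fundamental domain, naive spatial averaging cannot produce an equivariant coupling, so one must either construct an equivariant measurable cross-section of the shift action on the a.s.\ non-trivial configuration space (in the point-process case via a canonical-point construction on the superposition $\xi+\eta$) or pass to an enlarged probability space with an appropriately chosen auxiliary shift variable. Verifying that the resulting $\q^\bullet$ is still a coupling of $(\xi^\bullet,\eta^\bullet)$ and still attains $c$ pointwise relies on invariance of the $\limsup$-cost under deterministic shifts, which is exact in the limit but only approximate at finite $n$.
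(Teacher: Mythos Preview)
Your upper bound argument is correct and is essentially the paper's Lemma~\ref{lem:ineq_equiv_costfct}: both apply the ergodic theorem to the stationary random measure $A\mapsto\int_{A\times\IR^d}\vartheta(x-y)\,\q^\bullet(dx,dy)$ (together with ergodic decomposition in the non-ergodic case) to replace the per-volume $\limsup$ by the unit-box expectation.

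The lower bound and the construction of the optimal pair, however, take a different route with a genuine gap. You assume that for $\Q$-a.e.\ pair $(\xi,\eta)$ there is a coupling $\tilde\q\in\cpl(\xi,\eta)$ \emph{attaining} $c(\xi,\eta)$, and attribute this to \cite{HS13,H16}. Those references do not establish pointwise attainment for deterministic pairs of measures; they work throughout with expected costs of equivariant random couplings. And pointwise attainment is genuinely unclear: $\cpl(\xi,\eta)$ is vaguely compact, but the functional $\q\mapsto\limsup_n n^{-d}\int_{\Lambda_n\times\IR^d}\vartheta\,d\q$ is an \emph{infimum} of lower semicontinuous functions (write $\limsup_n=\inf_N\sup_{n\geq N}$), hence not obviously l.s.c.\ itself, so the direct method does not apply. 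Without pointwise optimizers, your measurable selection $\tilde\q^\omega$ has nothing to select, and the subsequent equivariantisation is moot. (The equivariantisation via a shift-equivariant $\Phi$ is a secondary issue, and as you note would in any case only be available in the point-process setting via a canonical-point construction.)

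The paper sidesteps this entirely. For the lower bound (Lemma~\ref{lem:Cn}) it introduces the finite-window cost $X_0^n(\xi,\eta)=\inf_{\q\in\cpl(\xi,\eta)}n^{-d}\int_{\Lambda_n\times\IR^d}\vartheta\,d\q$ and exploits subadditivity: partitioning $\Lambda_{mn}$ into $m^d$ translates of $\Lambda_n$ gives $m^{-d}\sum_z X_z^n\leq (mn)^{-d}\int_{\Lambda_{mn}\times\IR^d}\vartheta\,d\q$ for \emph{any} single $\q$. The discrete ergodic theorem for the $(n\IZ)^d$-action then yields $\IE_\Q[X_0^n]\leq\IE_\Q[c]$ for every $n$, so $\C_\infty:=\limsup_n\IE_\Q[X_0^n]\leq\IE_\Q[c]$. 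The identification $\C_\infty=\inf_{(\xi^\bullet,\eta^\bullet)\sim\Q}\cost(\xi^\bullet,\eta^\bullet)$ and the construction of the optimal equivariant $\q^\bullet$ are imported from \cite[Remark~6.6, Proposition~8.5]{H16}, where $\q^\bullet$ arises as a vague limit of stationarised finite-window optimal couplings rather than by equivariantising a pointwise optimizer. The existence of the optimal pair is then handled separately in Proposition~\ref{prop:lsc_costfct+existence_min} via tightness of Campbell measures.
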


Note that the right hand side of \eqref{eq:2ndrep} can be interpreted as a Palm expectation. Hence, our optimization problem can be rephrased as an optimal transport problem between Palm versions of $\P_0$ and $\P_1$. 

We want to stress that the existence of an optimal \emph{invariant} $\q^\bullet$ attaining $c(\xi^\bullet,\eta^\bullet)$ a.s.~is the most difficult part of the statement. This follows using techniques developed in \cite{HS13,H16} once an optimal distribution coupling $\Q$ is fixed. To see why \eqref{eq:2ndrep} holds, observe that for any $\q^\bullet\in \cpl_{e}(\xi^\bullet,\eta^\bullet)$ we have by invariance  
$$\IE\Big[\int_{ \Lambda_1\times \IR^d} |x-y|^p\q^\bullet(dx,dy)\Big]=\limsup_{n\to\infty}\frac{1}{n^d}\IE\Big[\int_{ \Lambda_n\times \IR^d} |x-y|^p\q^\bullet(dx,dy)\Big],$$
so that the difference between the two sides in \eqref{eq:2ndrep} is essentially whether the $\limsup$ is inside or outside the expectation (plus the choice of an invariant $\q^\bullet$). The main tool to show equality is the ergodic theorem together with approximation techniques for the  optimal $\q^\bullet$ from \cite{HS13, H16}. 

A main advantage of \eqref{eq:2ndrep} is that the symmetry of $\mathsf W_p$ is now a direct consequence of the invariance of an optimal process coupling $\q^\bullet$ together with the mass transport principle. To show that $\mathsf W_p$ defines a geodesic metric we need to construct an interpolation $(\mathsf P_t)_{0\leq t \leq 1}$ between any two distributions of random measures $\mathsf P_0$ and $\mathsf P_1$ such that for any $0\leq s\leq  t\leq 1$
\begin{align*}
    \mathsf W_p(\mathsf P_s,\mathsf P_t)=(t-s)\mathsf W_p(\mathsf P_0,\mathsf P_1).
\end{align*}
To this end, fix $\P_0$ and $\P_1$ such that $\W_p(\P_0,\P_1)<\infty.$ Let $(\Q,\q^\bullet)$ be an optimal pair. For $t\in[0,1]$ define the map $\mathsf{geo}_t:\IR^d\times \IR^d\to \IR^d$ by $\mathsf{geo}_t(x,y)=x+t(y-x)$. Then, the push-forward $(\mathsf{geo}_t)_\#\q^\bullet\in\mathcal M(\IR^d)$ is the process interpolating the points of the support of $\q^\bullet$ according to the Euclidean geodesics. Let
\begin{align}\label{eq:congeodesic-intro}
\mathsf P_t= {\sf law} \big((\mathsf{geo}_t)_\#\q^\bullet\big)
\end{align}
be its distribution.
Then, $t\mapsto\mathsf P_t$ is the desired constant speed geodesic.
\bigskip

We will be particularly interested in point processes, i.e. random measures taking values in the set of locally finite counting measures. A realization of a point process can be identified with a random configuration of countably many points in $\IR^d$. It is interesting to note that the class of stationary point processes forms a geodesically convex subset, i.e.\ the geodesic interpolation of two stationary point processes is again a stationary point process, see Proposition \ref{prop:closed_sPP}.

\subsection{Infinite particle systems, displacement convexity and gradient flow of the specific entropy}

In the second part of this paper we apply the novel geometry induced by the transport distance $\mathsf W_2$ to study functionals on stationary point processes and infinite particle dynamics. 

Let us recall the seminal observation of McCann \cite{McCann} that the Boltzmann entropy is displacement convex, i.e.\ convex along geodesics in the Wasserstein distance. Here, the Boltzmann entropy of a probability distribution $\mu$ on $\IR^d$ is the relative entropy with respect to Lebesgue, that is $$\gls{Ent}(\mu|\Leb)=\int\rho\log\rho d\Leb$$ provided $\mu=\rho\Leb$ and $+\infty$ otherwise.

In the context of stationary point processes, we consider the specific entropy of a stationary point process. Let $\P\in\mathcal P_s(\Gamma)$ be the distribution of a stationary point process and $\Poi$ the distribution of a Poisson point process. Then, the \emph{specific relative entropy of} $\P$ is defined as
\[\gls{specE}(\P):=\sup_{n\geq 0} \frac1{n^d}\mathsf {Ent}(\P_{\Lambda_n}|\Poi_{\Lambda_n})=\lim_{n\to\infty} \frac1{n^d}\mathsf {Ent}(\P_{\Lambda_n}|\Poi_{\Lambda_n})\;, \]
where $\P_{\Lambda_n}$ denotes the restriction of $\P$ to $\Lambda_n$. $\mathcal E$ is a well studied and natural object for stationary point process, e.g.\ see \cite{RAS,LeSe17,Georgii,Der19,EHL21}. 
One of the main results of this paper is the convexity of the specific entropy along geodesics with respect to the transport distance we construct.

\begin{thm}\label{cor:convexity}
The specific entropy $\mathcal E$ is convex along $\mathsf W_2$-geodesics, i.e.~for any $\W_2$-geodesic $(\P_t)_{t\in[0,1]}$ we have
\[\mathcal E(\P_t) \leq (1-t)\mathcal E(\P_0) + t\mathcal E(\P_1)\;.\]
\end{thm}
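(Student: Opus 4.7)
The plan is to reduce to a finite-volume problem on $\Lambda_n$ where classical McCann displacement convexity applies, and then take the thermodynamic limit. Let $(\P_t)_{t\in[0,1]}$ be a $\W_2$-geodesic, and by the representation \eqref{eq:2ndrep} let $(\Q,\q^\bullet)$ be an optimal pair realizing it, so that $\P_t=\mathrm{law}((\mathsf{geo}_t)_\#\q^\bullet)$ as in \eqref{eq:congeodesic-intro}.

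First, I would approximate the geodesic by finite-volume surrogates. Applying Theorem \ref{thm:Modification} to each endpoint produces processes $\tilde{\P}_{\Lambda_n}^0,\tilde{\P}_{\Lambda_n}^1$ supported in $\Lambda_n$ and an equivariant coupling $\tilde{\q}_{\Lambda_n}^\bullet$ transporting them entirely within $\Lambda_n$, with the property that $\tilde{\q}_{\Lambda_n}^\bullet\to\q^\bullet$ in a suitable sense and $\tfrac{1}{n^d}\mathsf{Ent}(\tilde{\P}_{\Lambda_n}^i|\Poi_{\Lambda_n})\to\mathcal E(\P_i)$ for $i=0,1$. Define the finite-volume displacement interpolation $\tilde{\P}_{\Lambda_n,t}:=\mathrm{law}((\mathsf{geo}_t)_\#\tilde{\q}_{\Lambda_n}^\bullet)$. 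Convexity of $\Lambda_n$ guarantees that this is supported in $\Gamma(\Lambda_n)$ and that the (random) point count is preserved in $t$.

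The second step is classical displacement convexity on the box. Decomposing the relative entropy along the number of points in $\Lambda_n$,
\begin{equation*}
\mathsf{Ent}(\tilde{\P}_{\Lambda_n,t}|\Poi_{\Lambda_n}) \;=\; \sum_{k\geq 0} p_k \log\frac{p_k}{q_k} \;+\; \sum_{k\geq 0} p_k\,\mathsf{Ent}(\tilde{\P}_{\Lambda_n,t}^k|\Poi_{\Lambda_n}^k),
\end{equation*}
where $q_k$ are the Poisson weights and $p_k$ are $t$-independent by count preservation, so the first sum is constant in $t$. Conditionally on $k$ points, $\Poi_{\Lambda_n}^k$ is the symmetrized uniform measure on $\Lambda_n^k$, the restricted $\tilde{\q}_{\Lambda_n}^\bullet$ is a classical Wasserstein-optimal coupling on this convex subset of Euclidean space, and $\tilde{\P}_{\Lambda_n,t}^k$ is the associated McCann interpolation. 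The classical displacement convexity of the Boltzmann entropy therefore gives
\begin{equation*}
\mathsf{Ent}(\tilde{\P}_{\Lambda_n,t}^k|\Poi_{\Lambda_n}^k) \;\leq\; (1-t)\,\mathsf{Ent}(\tilde{\P}_{\Lambda_n,0}^k|\Poi_{\Lambda_n}^k) + t\,\mathsf{Ent}(\tilde{\P}_{\Lambda_n,1}^k|\Poi_{\Lambda_n}^k),
\end{equation*}
and summing against the common weights $p_k$ yields convexity of $\mathsf{Ent}(\tilde{\P}_{\Lambda_n,t}|\Poi_{\Lambda_n})$ in $t$.

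The final step divides by $n^d$ and sends $n\to\infty$. The endpoint convergence follows from the modification theorem; for $t\in(0,1)$ the convergence $\tilde{\q}_{\Lambda_n}^\bullet\to\q^\bullet$ transfers to convergence of $\tilde{\P}_{\Lambda_n,t}$ to $\P_t$ (in a mode compatible with the lower semicontinuity of the specific entropy), and hence $\mathcal E(\P_t)\leq \liminf_n \tfrac{1}{n^d}\mathsf{Ent}(\tilde{\P}_{\Lambda_n,t}|\Poi_{\Lambda_n})$. Combined with the convexity inequality above, this yields the claim. The main obstacle is the interplay between the finite-volume modification and the $\W_2$-geometry: one needs the modifications to produce couplings whose geodesic pushforwards approximate $\P_t$ for \emph{every} intermediate $t$, not merely at the endpoints, and one needs lower semicontinuity of $\mathcal E$ under the mode of convergence this approximation delivers. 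Everything else reduces to McCann's classical argument applied conditionally on the point count.
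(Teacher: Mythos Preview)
Your strategy differs substantially from the paper's, which does \emph{not} prove convexity by a direct finite-volume approximation of the geodesic. Instead, the paper first establishes the Evolution Variational Inequality (Theorem~\ref{thm:EVI}) and then invokes the abstract result of Daneri--Savar\'e \cite[Theorem~3.2]{Daneri_2008} that an EVI gradient flow automatically yields geodesic convexity of the driving functional (Corollary~\ref{cor:convexity2}). The modification construction is used only to prove the EVI, and crucially the finite-volume interpolation there never needs to be compared with the actual $\W_2$-geodesic $(\P_t)$ at intermediate times.

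Your direct approach has a genuine gap at step two. The coupling $\tilde{\q}_{\Lambda_n}$ produced by Theorem~\ref{thm:Modification} is \emph{not} optimal for the finite-volume transport problem; the paper states this explicitly (see the remark preceding Lemma~\ref{lem:box_EVI}, and the proof of Proposition~\ref{prop:cost_modified_processes}, where the modified coupling is replaced by an optimal one $(\hat\Q_k,\hat\q_k)$ on each $k$-sector). McCann's displacement convexity applies only along optimal transport interpolations, so you cannot conclude convexity of $t\mapsto\ent(\tilde\P_{\Lambda_n,t}^k|\Poi_{\Lambda_n}^k)$ from the interpolation induced by $\tilde\q_{\Lambda_n}$.

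If you repair this by switching to the optimal coupling $\hat\q_k$ on each $k$-sector, McCann's argument does go through (this is essentially Proposition~\ref{prop:conv_entropy_box}), but now the interpolation $\hat\P_{\Lambda_n,t}$ is defined through $\hat\q_k$, not through the modified $\tilde\q_{\Lambda_n}$. You then owe a proof that $\hat\P_{\Lambda_n,t}$ (or its stationarisation) converges to the \emph{given} geodesic $\P_t$ for intermediate $t$, and this is not provided by the modification theorem, nor is there any obvious relation between the finite-volume optimisers $\hat\q_k$ and the restriction of the global optimal matching $\q^\bullet$. This is precisely the difficulty the paper sidesteps by routing through the EVI: there, one only needs the cost \emph{inequality} $\sum_k p(k)\C_{\Lambda_n}^k(\tilde\P^k_0,\tilde\R^k)\le n^d\W_2^2(\P_0,\R)+o(n^d)$ (Theorem~\ref{thm:Modification}\,\eqref{thm:mod_proc_cost}), never an identification of intermediate-time interpolants.
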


This can be seen as the natural analogue of McCann's observation \cite{McCann} in the context of stationary point processes. Hence, we will also say that the specific entropy is displacement convex. Note that for the usual linear interpolation, the specific relative entropy is affine.
\medskip

In a similar spirit, we obtain an analogue for stationary point processes of the celebrated HWI inequality by Otto and Villani \cite{OV00} which relates the Boltzmann entropy $H=\mathsf{Ent}(\cdot|\Leb)$, the Wasserstein distance $W_2$ and the Fisher information $I$. More precisely, the Fisher information of the probability distribution $\mu\in \mathcal P(\IR^d)$ is defined by $I(\mu|\Leb)=\int |\nabla\sqrt{\rho}|^2d\Leb$, provided $\mu=\rho\Leb$ and $+\infty$ else. The HWI inequality then states that for $\mu_0,\mu_1\in\mathcal P(\IR^d)$
\[H(\mu_0)-H(\mu_1)\leq W_2(\mu_0,\mu_1)\sqrt{I(\mu_0)}\;.\]

Thus, it is natural to define the \emph{specific (relative) Fisher information} of a stationary point process $\P$ with respect to $\Poi$ by
\begin{align}
\gls{specI} (\P):=
\limsup_{n\to\infty}\frac 1 {n^d}
 I(\P_{\Lambda_n}|\Poi_{\Lambda_n})\;.
\end{align}
i.e.~as the large volume limit of the normalized relative Fisher information of $\P$ on boxes. The latter is given for a subset $A\subset \IR^d$ by
\[\gls{I}\big(\P_A\big\vert\Poi_A\big):=  \int_{\Gamma_A}|\nabla \sqrt{\rho}|^2d \Poi_A=4 E\big(\sqrt{\rho}\big)\:,\]
provided $\P_A = \rho \Poi_A$ is absolutely continuous w.r.t.\ $\Poi_A$ and $\sqrt\rho\in D(E)$.
Here, $E$ is the natural Dirichlet form on the configuration space introduced by Albeverio-Kondratiev-R\"ockner in \cite{AKR98}, see Section \ref{sec:HWI} for more details. Then we have the following

\begin{thm}\label{thm:HWI2-intro}
Let $\P_0,\P_1\in\spp1$ have finite cost $\W_2(\P_0,\mathsf \P_1)<\infty$, finite entropies $\mathcal E(\P_0),\mathcal E(\P_1)<\infty$ and finite Fisher information $\mathcal I(\P_0)<\infty$. Then, the following HWI-inequality holds
\begin{align}\label{eq:HWI2-intro}
    \mathcal E(\P_0)-\mathcal E(\P_1)\le \W_2(\P_0,\P_1)\sqrt{\mathcal I (\P_0)}.
\end{align}
\end{thm}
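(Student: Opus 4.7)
The plan is to follow the classical strategy of Otto and Villani \cite{OV00}: combine the displacement convexity of the specific entropy, already provided by Theorem \ref{cor:convexity}, with a Cauchy--Schwarz-type slope estimate that relates the initial rate of decrease of $\mathcal E$ along a $\W_2$-geodesic to $\sqrt{\mathcal I(\P_0)}\cdot\W_2(\P_0,\P_1)$.

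\textbf{Setup and convexity.} Since $\W_2(\P_0,\P_1)<\infty$, the construction in \eqref{eq:congeodesic-intro} produces a constant-speed $\W_2$-geodesic $(\P_t)_{t\in[0,1]}$ from $\P_0$ to $\P_1$ with $\W_2(\P_0,\P_t)=t\,\W_2(\P_0,\P_1)$. Applying Theorem \ref{cor:convexity} and rearranging the convexity inequality $\mathcal E(\P_t)\le (1-t)\mathcal E(\P_0)+t\mathcal E(\P_1)$ yields
\[\mathcal E(\P_0)-\mathcal E(\P_1)\le \limsup_{t\downarrow 0}\frac{\mathcal E(\P_0)-\mathcal E(\P_t)}{t}.\]

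\textbf{Slope bound via regularization.} The remaining task is to bound the right-hand side by $\sqrt{\mathcal I(\P_0)}\cdot\W_2(\P_0,\P_1)$. Formally this is the Otto-calculus Cauchy--Schwarz $-\frac{d}{dt}\mathcal E(\P_t)\big|_{t=0}\le \sqrt{\mathcal I(\P_0)}\cdot\W_2(\P_0,\P_1)$. To make this rigorous I would regularize via the semigroup $\Semi_\epsilon$, using (i) the entropy-dissipation identity $\mathcal E(\P_0)-\mathcal E(\Semi_\epsilon\P_0)=\int_0^\epsilon \mathcal I(\Semi_s\P_0)\,ds$, (ii) the $\W_2$-contractivity $\W_2(\Semi_\epsilon\P_0,\Semi_\epsilon\P_t)\le \W_2(\P_0,\P_t)$, a consequence of the $0$-convexity of $\mathcal E$ and the gradient-flow characterization of $\Semi_\epsilon$, and (iii) the monotonicity $\mathcal E(\Semi_\epsilon \P_t)\le \mathcal E(\P_t)$. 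Combined with the regularized chain-rule/EVI estimate
\[\mathcal E(\Semi_\epsilon\P_0)-\mathcal E(\Semi_\epsilon\P_t)\le \sqrt{\mathcal I(\Semi_\epsilon\P_0)}\cdot \W_2(\Semi_\epsilon\P_0,\Semi_\epsilon\P_t),\]
this yields
\[\frac{\mathcal E(\P_0)-\mathcal E(\P_t)}{t}\le \frac{1}{t}\int_0^\epsilon \mathcal I(\Semi_s\P_0)\,ds+ \sqrt{\mathcal I(\Semi_\epsilon\P_0)}\,\W_2(\P_0,\P_1).\]
Sending $t\downarrow 0$ with $\epsilon=\epsilon(t)\to 0$ chosen so that $\epsilon/t\to 0$ (exploiting $\mathcal I(\P_0)<\infty$ to estimate the integral) and invoking lower semicontinuity of $\mathcal I$ at $\P_0$ gives the desired bound.

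\textbf{Main obstacle.} The principal technical point is the slope estimate itself at the regularized level, that is, the inequality $|\nabla_{\W_2}\mathcal E|(\Semi_\epsilon\P_0)\le\sqrt{\mathcal I(\Semi_\epsilon\P_0)}$, together with the supporting infinite-dimensional Otto calculus (chain rule for $\mathcal E$ along $\W_2$-absolutely continuous curves, $\W_2$-contractivity of $\Semi_\epsilon$, and continuity/lower semicontinuity of $\mathcal I$ along the gradient flow) on the space $\mathcal P_s$ of stationary point processes. All of these hinge on the EVI$(0)$ gradient-flow characterization of $\Semi_\epsilon$ that the paper establishes in parallel with the present theorem. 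Granting this framework, HWI reduces to the combination of displacement convexity and Cauchy--Schwarz sketched above.
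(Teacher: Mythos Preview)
Your outline follows the abstract metric gradient-flow route (convexity $+$ slope bound), whereas the paper proceeds entirely differently: it lifts the \emph{finite-dimensional} HWI inequality \eqref{eq:HWI-classic} on $(\IR^d)^k$ through the modification construction of Theorem~\ref{thm:Modification}. Concretely, the paper applies the classical HWI to the ordered representatives $\tilde\mu_{0,n}^k,\tilde\mu_{1,n}^k$ on each orthant $O_n^k$, sums over $k$ with Cauchy--Schwarz, divides by $n^d$, and passes to the limit. The crucial computation is that the modified Fisher information satisfies $I(\tilde\P_{0,n}\mid\Poi_{\Lambda_n})=I(\P_{0,\Lambda_{n-1}}\mid\Poi_{\Lambda_{n-1}})$ exactly, so the limsup recovers $\mathcal I(\P_0)$.

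Your proposal has a genuine gap: the ``regularized chain-rule/EVI estimate''
\[
\mathcal E(\Semi_\epsilon\P_0)-\mathcal E(\Semi_\epsilon\P_t)\le \sqrt{\mathcal I(\Semi_\epsilon\P_0)}\cdot \W_2(\Semi_\epsilon\P_0,\Semi_\epsilon\P_t)
\]
is precisely the HWI inequality for the pair $(\Semi_\epsilon\P_0,\Semi_\epsilon\P_t)$, so invoking it is circular. What you actually need is the slope identification $|\partial\mathcal E|(\P)\le\sqrt{\mathcal I(\P)}$, and nothing in the paper provides this directly at the level of $\mathcal P_s(\Gamma)$: the EVI of Theorem~\ref{thm:PPEVI} gives the gradient-flow characterization, but identifying the metric slope of $\mathcal E$ with the \emph{specific} Fisher information $\mathcal I$ (which is defined as a $\limsup$ of box-wise quantities, not intrinsically) is an additional nontrivial step. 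Likewise, the entropy-dissipation identity $\mathcal E(\P_0)-\mathcal E(\Semi_\epsilon\P_0)=\int_0^\epsilon\mathcal I(\Semi_s\P_0)\,ds$ and the lower semicontinuity of $\mathcal I$ along the flow are not established here. The paper's finite-volume approach sidesteps all of this by importing HWI from Euclidean space, where the slope-Fisher identification is already a theorem, and then controlling the passage to the limit via the explicit density formula \eqref{eq:modified_density} for the modified processes.
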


We believe that this result is a first step in developing a general framework to establish functional inequalities for stationary point processes via optimal transport, keeping in mind that under a Bakry-Emery condition for a measure $\mathsf{m}=e^{-V}\Leb$, a strengthened version of the HWI inequality of Otto-Villani for the relative entropy and Fisher information w.r.t.~$\mathsf{m}$ gives rise to the logarithmic Sobolev inequality for $\mathsf{m}$.
\medskip

In fact, the convexity of the specific entropy in Theorem \ref{cor:convexity} will be a consequence of our analysis of the gradient flow of $\mathcal E$ w.r.t.~to the geometry induced by $\W_2$. Namely, we will show that this gradient flow coincides with the evolution of a stationary point process induced by adding independent Brownian motions to each point of its realisation.
This can be seen as the natural analogue in the context of infinitely many particles of the seminal results by Jordan, Kinderlehrer and Otto \cite{JKO98} that the heat flow, or the evolution of the law of a single Brownian motion is the Wasserstein gradient flow of the Boltzmann entropy.

To be more precise, let us denote by $(\Semi_t)_t$ the semigroup acting on point processes given by evolving each point by an independent Brownian motion. We then have the following gradient flow characterisation of this semigroup in terms of an \textit{Evolution Variational Inequality} (EVI).

\begin{thm}\label{thm:PPEVI}
Let $\P,\mathsf R\in\mathcal P_s(\Gamma)$ be two distributions of stationary point processes with unit intensity. Then, $\Semi_t\P$ satisfies the EVI
\begin{align*}
\mathsf W^2_2(\Semi_t\P,\mathsf R)-\W^2_2(\P, \mathsf R) \leq 2t\big[ \mathcal E(\mathsf R)- \mathcal E(\Semi_t\P)\big]\;.
\end{align*}
\end{thm}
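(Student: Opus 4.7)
The plan is to establish the EVI by finite-volume approximation, exploiting that on each box $\Lambda_n$ the evolution under $\Semi_t$ is, conditionally on the number of points, the heat flow in Euclidean space, to which the classical Jordan--Kinderlehrer--Otto theory applies. The approximation tool is Theorem~\ref{thm:Modification}, which produces stationary replacements $\tilde\P_{\Lambda_n}$ and $\tilde\R_{\Lambda_n}$ of $\P$ and $\R$ whose randomness is essentially supported in $\Lambda_n$ and which approximate the originals simultaneously in $\W_2$ and in specific entropy $\mathcal E$, in a way compatible with $\Semi_t$.

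First I would disintegrate $\tilde\P_{\Lambda_n}$ and $\tilde\R_{\Lambda_n}$ according to the number of points $k=\pi(\xi)\in\IN_0$, obtaining symmetric conditional laws on $(\Lambda_n)^k$. By construction of the modification, $\W_2^2(\tilde\P_{\Lambda_n},\tilde\R_{\Lambda_n})$ reduces, up to $o(1)$ in $n$, to a matching of number sectors weighted by the classical costs $\C_{\Lambda_n}^k$, and
\begin{align*}
\mathcal E(\tilde\P_{\Lambda_n}) = \frac{1}{n^d}\ent(\tilde\P_{\Lambda_n}|\Poi_{\Lambda_n}) + o(1),
\end{align*}
where the relative entropy decomposes as a weighted sum of Boltzmann-type entropies of the sector-wise conditional laws plus a contribution from the point-count distribution. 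The classical EVI for the heat flow on $(\IR^d)^k$ yields, for each $k$, an EVI between the two corresponding symmetric densities. Summing these against the matched number distributions and combining with the cost/entropy decomposition produces a finite-volume EVI
\begin{align*}
\W_2^2(\Semi_t \tilde\P_{\Lambda_n}, \tilde\R_{\Lambda_n}) - \W_2^2(\tilde\P_{\Lambda_n}, \tilde\R_{\Lambda_n}) \le 2t\bigl[\mathcal E(\tilde\R_{\Lambda_n}) - \mathcal E(\Semi_t \tilde\P_{\Lambda_n})\bigr] + \varepsilon_n,
\end{align*}
with $\varepsilon_n\to 0$ as $n\to\infty$.

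Next I would pass to the limit $n\to\infty$. Convergence $\W_2(\Semi_t\tilde\P_{\Lambda_n},\Semi_t\P)\to 0$ follows from attaching the independent Brownian-increment coupling to any stationary coupling of $\tilde\P_{\Lambda_n}$ and $\P$: this produces an equivariant coupling of the evolved processes whose cost is controlled by the initial one (since Brownian increments have finite second moment and are independent of the initial coupling). Lower semicontinuity of $\W_2^2$ and of $\mathcal E$ along these approximations handles the right-hand side, while the inequality $\mathcal E(\Semi_t\tilde\P_{\Lambda_n}) \le \mathcal E(\tilde\P_{\Lambda_n})$ from entropy dissipation along the heat flow provides the uniform upper bound needed to identify the limit of $\mathcal E(\Semi_t\tilde\P_{\Lambda_n})$.

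The hardest step will be the finite-volume EVI with a controllable error $\varepsilon_n$. Under $\Semi_t$ points cross the boundary of $\Lambda_n$, so the number-of-points distribution is not preserved and the finite-volume evolution is not simply the heat flow on a fixed $k$-sector; the modification must absorb these boundary exchanges into a volume-subdominant remainder and make the reduction of the stationary cost to $\C_{\Lambda_n}^k$ asymptotically exact. A secondary subtlety is that one needs continuity rather than mere lower semicontinuity of $\mathcal E$ along the approximation $\tilde\P_{\Lambda_n}\to\P$, which is precisely the property that Theorem~\ref{thm:Modification} is tailored to deliver.
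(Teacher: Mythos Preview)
Your overall architecture---disintegrate by number of points, apply the classical Euclidean EVI sector-by-sector, sum, pass to the limit via the modification---matches the paper's. But two concrete mechanisms are missing, and without them the finite-volume step does not close.

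First, the paper does not apply the free semigroup $\Semi_t$ on the box. It introduces a \emph{reflected} semigroup $\Semi_t^{\Lambda_n}$ (Brownian motion reflected at $\partial\Lambda_n$, cf.~\eqref{def:reflectedpath}, \eqref{def_semi_box}). This keeps the particle count in $\Lambda_n$ deterministic along the flow, so that on each $k$-sector the evolution is exactly the Neumann heat flow on the bounded convex domain $(\Lambda_n)^k$, and the classical EVI applies verbatim (Lemma~\ref{eq:EVI_fixed_everything}). Your worry that ``points cross the boundary'' is resolved not by absorbing exchanges into an error term---which would be hard to quantify---but by preventing them at the box level and later showing (Lemma~\ref{lem:weak_convergence}) that reflection is asymptotically invisible because most paths stay far from $\partial\Lambda_n$.

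Second, $\tilde\P_{\Lambda_n}$ is a process on $\Lambda_n$, not a stationary process on $\IR^d$, so $\W_2(\tilde\P_{\Lambda_n},\tilde\R_{\Lambda_n})$ and $\W_2(\Semi_t\tilde\P_{\Lambda_n},\Semi_t\P)$ are not defined. The paper inserts a \emph{stationarization} step (tiling plus random shift, cf.~\eqref{def:stationatized_process}) and proves the EVI for $(\Semi_t^{\Lambda_n}\tilde\P_{\Lambda_n})^{stat}$ against $(\tilde\R_{\Lambda_n})^{stat}$ (Proposition~\ref{prop:cost_modified_processes}). The passage to the limit is then by \emph{weak} convergence of these stationarized objects to $\Semi_t\P$ and $\R$, combined with lower semicontinuity of $\W_2^2$ and of $\mathcal E$---not by $\W_2$ convergence of the approximants, which is neither proved nor needed. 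Your proposed Brownian-increment coupling argument for $\W_2(\Semi_t\tilde\P_{\Lambda_n},\Semi_t\P)\to 0$ does not type-check as written and is replaced by this softer route.
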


Among several possible characterisations of gradient flows in metric spaces, Evolution Variational Inequalities are among the strongest. We refer e.g.~to \cite{Daneri_2008} for a general discussion of EVI flows.
The EVI encodes and entails a number of strong consequences for the semigroup. In particular, it yields the geodesic convexity of the specific entropy. Furthermore, we derive from it the following results on the long-time behaviour of the semi-group.

\begin{cor}Let $\P,\R\in \overline{D(\mathcal E)}\subseteq{\mathcal P(\Gamma)}$ the closure of the proper domain of $\mathcal E$ w.r.t.\ $\mathsf W_2$. Then, 
\begin{enumerate}
    \item for all $t\geq 0$
    \[\W_2(\Semi_t\P,\Semi_t \R)\leq \W_2(\P,\R)\; ;\]
    \item  if additionally  $\mathsf W_2(\P,\mathsf{Poi})<\infty$, then the function $(0,\infty)\ni t\mapsto \mathcal{E}(\Semi_t\P)$ is decreasing and 
    \[
    \mathcal{E}(\Semi_t\P)\xrightarrow{t\to\infty}0.
    \] 
    In particular, we have $\Semi_t\P\to\Poi$ weakly, w.r.t.~the vague topology.
\end{enumerate}
\end{cor}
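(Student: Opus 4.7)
Both statements will follow from the EVI of Theorem \ref{thm:PPEVI} by now standard gradient-flow arguments, cf.\ \cite{Daneri_2008}.

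For part (1), the plan is to apply the EVI twice along the two flows and combine the estimates on the diagonal, the classical \emph{doubling of variables} trick. Set $\mu_t:=\Semi_t\P$, $\nu_t:=\Semi_t\R$ and $D(t):=\W_2^2(\mu_t,\nu_t)$. The integrated EVI applied to $\mu_\bullet$ with reference $\nu_t$, upon dividing by $h>0$ and letting $h\downarrow 0$, yields the upper bound $2[\mathcal E(\nu_t)-\mathcal E(\mu_t)]$ on the upper right Dini derivative of $s\mapsto\W_2^2(\mu_s,\nu_t)$ at $s=t$. Symmetrically, the flow $\nu_\bullet$ with reference $\mu_t$ gives the bound $2[\mathcal E(\mu_t)-\mathcal E(\nu_t)]$ on the corresponding derivative in the second variable. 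Adding the two estimates and evaluating on the diagonal shows that $D$ has non-positive upper Dini derivative and is therefore non-increasing, which amounts to $\W_2(\Semi_t\P,\Semi_t\R)\le\W_2(\P,\R)$ on $D(\mathcal E)$. The extension to $\overline{D(\mathcal E)}$ uses the triangle inequality together with the $\W_2$-continuity $t\mapsto \Semi_t\P$, itself a consequence of the EVI.

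For the monotonicity part of (2), I would combine the EVI with the semigroup property: for $0<s<t$ the EVI applied to the flow $(\Semi_r\Semi_s\P)_{r\ge 0}$ at time $r=t-s$ with reference $\R=\Semi_s\P$ gives, using $\Semi_{t-s}\Semi_s\P=\Semi_t\P$,
\[ 0\le \W_2^2(\Semi_t\P,\Semi_s\P)\le 2(t-s)\bigl[\mathcal E(\Semi_s\P)-\mathcal E(\Semi_t\P)\bigr], \]
so that $\mathcal E(\Semi_t\P)\le\mathcal E(\Semi_s\P)$. For the decay $\mathcal E(\Semi_t\P)\to 0$, I would exploit the standing assumption $\W_2(\P,\Poi)<\infty$ together with $\mathcal E(\Poi)=0$: the EVI with $\R=\Poi$ reads
\[ \W_2^2(\Semi_t\P,\Poi)+2t\,\mathcal E(\Semi_t\P)\le \W_2^2(\P,\Poi), \]
whence $\mathcal E(\Semi_t\P)\le\W_2^2(\P,\Poi)/(2t)\to 0$ as $t\to\infty$.

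It remains to upgrade the entropy decay to vague convergence $\Semi_t\P\to\Poi$. Since unit intensity is preserved by the semigroup, the family $\{\Semi_t\P\}_{t\ge 0}$ is tight in the vague topology on $\mathcal M(\IR^d)$, so subsequential limits exist. Any such limit $\mathcal Q$ is stationary, and by the vague lower semicontinuity of the specific relative entropy (see e.g.~\cite{RAS,Georgii}) one has $\mathcal E(\mathcal Q)\le\liminf_{t\to\infty}\mathcal E(\Semi_t\P)=0$. Since $\Poi$ is the unique stationary point process of unit intensity with vanishing specific entropy, one concludes $\mathcal Q=\Poi$, so that the entire family converges vaguely to $\Poi$. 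The main technical obstacle is the Dini-derivative manipulation in part (1), which typically requires a careful time-discretisation or interpolation argument in the spirit of \cite{Daneri_2008}, and the identification of vague limits in the last step, which hinges on the variational characterisation of the Poisson process as the unique zero of $\mathcal E$ among stationary processes of given intensity.
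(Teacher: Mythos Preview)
Your plan is essentially correct and follows the same route as the paper: both derive items (1) and (2) from the EVI via the abstract gradient-flow machinery of Daneri--Savar\'e. You spell out the doubling of variables and the choices $\R=\Semi_s\P$ and $\R=\Poi$, whereas the paper simply cites \cite[Proposition~3.1]{Daneri_2008} after first extending the EVI to $\overline{D(\mathcal E)}$.

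One point deserves more care. Your extension of the contraction from $D(\mathcal E)$ to its closure ``using the triangle inequality together with the $\W_2$-continuity $t\mapsto\Semi_t\P$'' is not quite right: continuity in $t$ does not give continuity of $\Semi_t$ in the initial datum, which is what you need to pass to the limit along $\P^{(n)}\to\P$. The paper resolves this by first establishing an \emph{instantaneous regularisation}: for $\P\in\overline{D(\mathcal E)}$ and any $t>0$ one has $\mathcal E(\Semi_t\P)<\infty$. This follows by applying the EVI to an approximating sequence $\P^{(n)}\in D(\mathcal E)$ with a fixed reference $\P^{(1)}\in D(\mathcal E)$ and using lower semicontinuity of $\mathcal E$ together with weak convergence $\Semi_t\P^{(n)}\to\Semi_t\P$. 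Once the EVI is known to hold on all of $\overline{D(\mathcal E)}$, the Daneri--Savar\'e contraction applies directly. For the vague convergence $\Semi_t\P\to\Poi$, your compactness-plus-uniqueness argument is valid; the paper notes an even shorter route: since $\mathcal E=\sup_n n^{-d}\ent((\cdot)_{\Lambda_n}|\Poi_{\Lambda_n})$, the entropy decay forces $\ent((\Semi_t\P)_{\Lambda_n}|\Poi_{\Lambda_n})\to 0$ on each fixed box, and Pinsker then yields convergence in total variation of the restrictions.
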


The convergence statement in the second item is known since at least \cite{St68}, however the convergence in entropy is stronger than weak convergence due to Pinsker's inequality. In fact, we are not aware of a stronger convergence statement in the literature. However, the appealing and interesting part is certainly the interpretation as a gradient flow which we expect to extend also to infinite systems of interacting particles. Note that such a characterisation for stationary point processes was out of reach with the existing results \cite{ErHu15, DSSu21} which only cover point processes absolutely continuous to $\Poi$. The current results are complementary to this setting, as the only stationary process absolutely continuous to $\Poi$ is $\Poi$ itself. An example satisfying the assumptions in the second item of the Corollary is the shifted lattice in $d\geq 3$ where each point is perturbed by an iid r.v.\ which is uniform on a small ball.
\medskip

Finally, we want to comment on the proof of Theorem \ref{thm:PPEVI}. Since both, the specific entropy $\mathcal E$ as well as the metric $\mathsf W_2$ are defined using restrictions to boxes $\Lambda_n$ and taking limits, it is natural to first prove an EVI for $\P$ and $\mathsf R$ restricted to $\Lambda_n$ and then to pass to the limit. This strategy works very well under the assumption that the number statistics of $\P$ and $\mathsf R$ coincide, i.e. that for any $n\in\IN$ and any $k\in\IN_0$ the probabilities of finding $k$ points in the box $\Lambda_n$ are equal under $\P$ and $\mathsf R$. This is a necessary condition to be able to couple $\P_{\Lambda_n}$ and $\mathsf R_{\Lambda_n}$ using a Wasserstein distance on $(\IR^d)^k$. However, there is no reason why this should be the case.

To overcome this problem we construct modifications $\tilde\P_{\Lambda_n}$ and $ \tilde{\mathsf{ R}}_{\Lambda_n}$ of $\P$ and $\R$, respectively, as follows: Let $(\Q,\q^\bullet)$ be an optimal pair for $\P$ and $\mathsf R$ and assume that $\q^\bullet$ is a matching. Recall that the construction of geodesics from \eqref{eq:congeodesic-intro} transports points along straight lines connecting matched points $(x,y)\in\supp\q^\bullet$. If this transport happens to leave (or enter) $\Lambda_n$, then we replace that corresponding exterior point with an interior point nearby its exit position.
More precisely, we cover $\Lambda_n\setminus\Lambda_{n-1}$ by disjoint boxes $(K_i)_{i=1}^N$ of side length $1/2$. For each $(x,y)\in\supp(\q^\bullet)$ such that $x\in\Lambda_{n-1}, y\notin \Lambda_{n-1}$ (or switched), we choose the unique box $K_j$ such that the straight line $\mathsf{geo}_t(x,y)$ connecting $x$ and $y$ leaves $\Lambda_{n-1}$ via $K_j$. For each such pair, we add a uniformly distributed random variable $U\sim\mathcal U(K_j)$ to the configuration $\xi|_{\Lambda_{n-1}}$. The distributions of the resulting 'modified' point processes are the auxiliary measures $\tilde\P_{\Lambda_n}, \tilde {\mathsf{R}}_{\Lambda_n}$.

The most important features of this construction are that the number statistics of $\tilde\P_{\Lambda_n}$ and $ \tilde {\mathsf{R}}_{\Lambda_n}$ coincide, that we leave the interior $\Lambda_{n-1}$ unmodified and that we add points in a diffuse manner. With these properties at hand, we can prove an EVI on $\Lambda_n$ for our auxiliary measures. To be able to use that EVI to show the desired inequality on the full space, we need to
\begin{itemize}
    \item connect the transport cost on $\Lambda_n$ between $\tilde\P_{\Lambda_n}$ and $ \tilde {\mathsf{R}}_{\Lambda_n}$ with $\mathsf W_2(\P,\mathsf R)$
\item connect $\mathsf{Ent}(\tilde\P_{\Lambda_n}|\Poi_{\Lambda_n})$ with $\mathcal E(\P)$ and similarly for $\mathsf R$
\item show that the EVI together with the induced semigroup on $\Lambda_n$ converge to the corresponding objects on the full space.
\end{itemize}
The first item follows rather quickly from the construction since we implicitly shorten the transport distance. For the second item we need to express $\frac{d\tilde\P_{\Lambda_n}}{d\Poi_{\Lambda_n}}$ in terms of $\frac{d\P_{\Lambda_n}}{d\Poi_{\Lambda_n}}$ (similarly for $\mathsf R$), which is one of the technical parts where we make use of the explicit uniform distribution of the points we add. The final item follows from exit time estimates of Brownian motion from a box, carefully coupling the semigroups on $\Lambda_n$ and the full space, and coupling $\P$ and $\mathsf R$ with its auxiliary processes $\tilde\P_{\Lambda_n}$ and $ \tilde {\mathsf{R}}_{\Lambda_n}$.

We finally want to mention that the construction has similarities to the screening procedure used for Coulomb and Riesz gases for instance in \cite{EHL21, LeSe17, petrache2017next}.

\subsection{Connection to the literature}

The transport problem \eqref{eq:stationaryOT} considered here stands in line with a number of other recent developments involving transport problems with additional probabilistic constraints, such as martingale optimal transport or causal optimal transport, see e.g.~\cite{BeJu16, BeCoHu17, BaBeLiZa17}. 

Stochastic dynamics of infinite interacting particle systems on the configuration space $\Gamma$, the space of locally finite point configurations, have been investigated using Dirichlet form techniques. For diffusive dynamics with Ruelle type interactions we refer to \cite{AKR98, AKR98b}, for singular interactions we mention the works of Osada \cite{Os12, Os13, Os13b}. In the case of the Poisson process as volume measure on $\Gamma$, the resulting process can be describe as an infinite system of independent Brownian motions as considered here.

In \cite{ErHu15}, the first two authors showed that for the case of no interaction the evolution of the distribution of the process can be identified with the gradient flow of the Boltzmann entropy w.r.t.\ the Poisson process $\mathsf {Ent}( \cdot | \mathsf {Poi})$ in the sense of the EVI where the metric on $\mathcal P_s(\Gamma)$ is given as the $L^2$-Wasserstein metric w.r.t.\ non-normalised $L^2$-Wasserstein metric $d_\Gamma$ on $\Gamma.$ This, as well as an extensive analysis of the relation between the Dirichlet form and distance $d_\Gamma$ was largely generalised to much more abstract base spaces in \cite{DSSu21, DSSu22}.

As already mentioned,  these results are complementary to the present work since the allowed starting distributions for the EVI are very different. Note for instance that $\mathsf {Ent}( \cdot | \mathsf {Poi})$ is infinite for any stationary point process different from $\Poi$ and that the Wasserstein distance w.r.t.\ the non-normalised transport cost $d_\Gamma$ is typically infinite for stationary distributions. In contrast, the evolution of infinitely many free Brownian motions is well defined for any starting configuration $\xi\in\Gamma$ which does not clump too much, see \cite{KLR08, ErHu15} and in the language of point processes \cite{St68, FiKe80}.

In the setting of 1D Coulomb gases, the first two authors together with Lebl\'e \cite{EHL21} have shown that the sine$_\beta$ process is the unique minimiser of the renormalised free energy introduced in \cite{LeSe17} by exploiting strict convexity properties of this free energy along interpolations by optimal transport between finite volume approximations. We expect the  displacement interpolation w.r.t.~$\W_2$ constructed in the present work to have similar convexity properties.
Recently, Suzuki \cite{Su23} has established  related estimates of Bakry-\'Emery type for the Dirichlet form associated with the sine$_\beta$ process, however yet without capturing strict convexity properties.

In a different direction, one can consider birth-death dynamics on the configuration space that leave the distribution of the Poisson process invariant and are generated by locally adding and removing points, see e.g.~\cite{Su83, Su84, KoLy05}. Recently, Dello Schiavo, Herry, and Suzuki \cite{DSHeSu23} proposed a gradient flow interpretation of this  dynamic 
in terms of the Boltzmann entropy w.r.t.~the Poisson measure and a transportation metric that is inspired by similar constructions developed for jump processes, see \cite{Maa11, Mie11, Er14}. They also established a Talagrand and an HWI inequality linking the Boltzmann entropy w.r.t.\ the Poisson measure with a Fisher information and the metric.  Several functional inequalities for the Poisson process have been established exploiting the powerful Malliavin calculus for Poisson processes, e.g.\ \cite{Last, LaPeSc16} for Poincaré inequalities or \cite{Wu00} for logarithmic Sobolev type inequalities.

Stationary transports of point processes have received considerable attention in the literature due to their connections with shift-couplings of a point process with its Palm version \cite{HoPe05}: Any {\it allocation}, i.e.\ an invariant random map transporting the Lebesgue measure to a point process $\xi$ induces a shift coupling of the point process with its Palm version via $\theta_T\xi.$ The search for explicit constructions lead to a variety of interesting results on allocations, e.g.\ \cite{HoHoPe06, ChPePeRo10, MaTi16, HS13, NaSoVo07}, or {\it matchings} which requires the initial measure to be a point process as well instead of the Lebesgue measure, see e.g.~\cite{HPPS09, HoJaWa22}. These developments have inspired Last and Thorisson to investigate stationary transports between  invariant random measures  in a very general framework  in \cite{LaTh09}. A particular challenging part in these constructions is the quest of constructing factor allocations and matchings, i.e.\ invariant transports between two invariant random measures $(\xi,\eta)$ that measurably only depend on $(\xi,\eta)$. Interestingly, the invariance poses severe limitations on the pair $(\xi,\eta)$. For instance, Last and Thorisson construct an example of two jointly invariant random measures $(\xi,\eta)$ on $\IR^d$ where $\xi$ is concentrated on a $d-1$ dimensional set such there is no invariant factor transport map between $\xi$ and $\eta$. Recently, it was shown independently in \cite{HuMu23, KhMe23} that if $\xi$ does not charge $d-1$ rectifiable sets than one can always construct an invariant transport map which is a factor. We stress that in all these results the distributional coupling of the two random measures is fixed.

Allocations and matchings are closely related to the optimal matching problem which received considerable attention in the last years due to the new approach put forward by \cite{CaLuPaSi14}. This has led to a couple of refined results on the level of transport cost, e.g.\ \cite{AmStTr16, Le17, Ja23, Wa21, HuMaTr23}, and transport maps, e.g.\ \cite{AmGlTr19, ClMa23}, and convergence to Gaussian fields \cite{GoHu22}. However, it is still open to show convergence of solutions to the optimal matching problem to a unique stationary transport, see \cite{HS13, H16, GHO} for first results in this direction.

\subsection{Structure of the paper}
In Section 2 we develop the general theory of optimal transport for stationary random measures, discussing different cost functions and their relation, the existence of optimisers and geodesic interpolations.
In Section 3 we discuss the modification of given pairs of stationary point processes by stationary point processes with equal number of points in a given box. Moreover, we give the construction of the infinite particle semigroup and its approximations. In Section 4 we derive the EVI by lifting it from the evolution of finitely many points in a box. Consequences of the EVI are presented in Section 5, e.g.~convexity of the entropy and the HWI inequality.

A list of frequently used symbols can be found at the end of the article.

\section{Theory of optimal transport for stationary random measures}

\subsection{Stationarity} Let $\mathcal M=\gls{M}$ be the space of Radon measures 
on $\IR^d$ equipped with the topology of vague convergence generated by the maps $\xi\mapsto\xi(f)=\int fd\xi$ for continuous, compactly supported $f\in\glsdisp{C_c}{\mathcal C_c(\IR^d)}$. The group $\IR^d$ acts on $\mathcal M(\IR^d)$ by the natural shift operation $\theta$, i.e.\ $(\gls{theta}\xi)(A)=\xi(A+z)$ for any point $z\in\IR^d$, measure $\xi\in\mathcal M(\IR^d)$ and any Borel set $ A\in\gls{B}$.

We will be interested in probability distributions $\P\in\gls{P(M)}$, whose corresponding random variables are \emph{random measures} denoted by $\gls{xi}:\Omega\to\mathcal M (\IR^d)$. Here and in the following we emphasize the randomness with the notation $\xi^\bullet$, however we may drop $^\bullet$ whenever there is no ambiguity. A distribution $\P$ is called \emph{stationary} if it stays invariant under the push-forward $ \glsdisp{push}{(\theta_z)_\#\P}= \P$ for all $z\in\IR^d$ and we shall denote by $\gls{Ps}$ the family of stationary distributions on $\mathcal M (\IR^d)$.
The intensity measure of a stationary random measure can only be the Lebesgue measure $\Leb$ on $\IR^d$ up to a multiplicative constant.
Without loss of generality we shall assume that this constant is equal to one, i.e. $\IE_\P(\xi^\bullet(A))=\Leb(A)$ for all stationary distributions $\P\in\mathcal P_s (\mathcal M(\IR^d))$.
If the random measure takes values in the space of locally finite counting measures, then it is called \emph{point process}. 
For any $A\in\mathcal B (\IR^d)$, let $\Gamma_A$ be the configuration space on $A$, i.e. $\Gamma_A$ consists of all locally finite counting measures on $A$,  equipped with the topology of vague convergence. We write $\Gamma=\Gamma_{\IR^d}$.
As usual, we shall identify the configuration space $\gls{Gamma}$ with the set of all locally finite subsets of $\IR^d$. Let $\spp1$ be the set of all stationary point processes on $\IR^d$ with intensity one. That is, \[
\spp1=\{\P\in \gls{Ps}(\mathcal{M}(\IR^d))\mid \P(\Gamma)=1\}.
\] 

An important example is given by the \emph{Poisson point process} (homogeneous, intensity one) $\xi^\bullet\sim\gls{Poi}$, satisfying that for every $A\in\mathcal B (\IR^d)$ the point statistic $\xi^\bullet(A)$ is a Poisson distributed random variable with parameter $\Leb(A)$ and for disjoint bounded $A_1,\dots,A_n\in\mathcal B(\IR^d)$ the family $(\xi^\bullet(A_i))_{i=1,\dots,n}$ is independent. We refer to \cite{Last} for a detailed overview. Due to the complete independence, the Poisson point process can be seen as the most random stationary point process, whereas the "most deterministic" one can be thought of the stationarised grid, see Example \ref{ex:grid} below. An example in between complete randomness and periodicity is the Ginibre point process (or, more generally the interpolating family of Coulomb gases).

For any two distributions $\P_0, \P_1\in\mathcal P_s(\mathcal M (\IR^d))$, define the set $ \gls{Cpl_s}$ of all stationary couplings $ \Q$, i.e. $ (\theta_z, \theta_z)_{\#}\Q = \Q$ for all $z\in\IR^d$ and $\Q$ is a coupling of $\P_0$ and $\P_1$, short $\Q\in\Cpl(\P_0,\P_1).$\\

For a given  cost function, i.e.\ a measurable function $ c:\mathcal M (\IR^d)\times\mathcal M(\IR^d)\to\IR$, we consider the minimisation problem
 \begin{align}\label{eq:Problem}
     \gls{C}:=\inf_{ \Q\in \Cpl_s(\P_0,\P_1)}\IE_{ \Q}(c)=\inf_{ \Q\in \Cpl_s(\P_0,\P_1)}\int c(\xi,\eta) \Q (d\xi,d\eta).
 \end{align} 

\begin{lem}\label{lem:charStat}
A coupling $ \Q$ is stationary iff for all $n\in\IN$, bounded functions $G\in\glsdisp{C_b}{\mathcal C_b(\IR ^n)}$, $F_i\in \mathcal C_c(\IR^{d}\times \IR^d)$, $i=1,\dots, n$ and all $z\in\IR^d$ it holds 
\begin{align}\label{eq:charStat}
&\int G\left[\Big(\int F_i(x-z,y-z)\xi(dx)\eta(dy)\Big)_{i=1,\dots,n}\right] \Q(d\xi,d\eta)\nonumber\\
&=\int G\left[\Big(\int F_i(x,y)\xi(dx)\eta(dy)\Big)_{i=1,\dots,n}\right] \Q(d\xi,d\eta).
\end{align}
\end{lem}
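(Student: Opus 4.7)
The plan is to establish the two directions separately; the forward direction is an immediate change of variables, while the backward direction reduces to a measure-determining property of the test class on stationary couplings.

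For the forward direction, I would use the substitution identity
\begin{align*}
\int F(x-z,y-z)\,\xi(dx)\,\eta(dy) = \int F(x,y)\,(\theta_z\xi)(dx)\,(\theta_z\eta)(dy), \qquad F\in\mathcal C_c(\IR^d\times\IR^d),
\end{align*}
which follows by applying $(\theta_z\xi)(f) = \int f(x-z)\,\xi(dx)$ in each variable. Setting $H(\xi,\eta) := G\bigl((\xi\otimes\eta)(F_1),\ldots,(\xi\otimes\eta)(F_n)\bigr)$, the left-hand side of \eqref{eq:charStat} equals $\int H\circ(\theta_z,\theta_z)\,d\Q = \int H\,d\bigl((\theta_z,\theta_z)_\#\Q\bigr)$, which coincides with the right-hand side $\int H\,d\Q$ by the assumed stationarity $(\theta_z,\theta_z)_\#\Q = \Q$.

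For the backward direction the same identity rewrites the hypothesis as $\int H\,d\Q = \int H\,d\bigl((\theta_z,\theta_z)_\#\Q\bigr)$ for every $z\in\IR^d$ and every $H$ in the class $\mathcal H$ of all functionals of the above form (as $n\in\IN$, $F_i\in\mathcal C_c(\IR^d\times\IR^d)$ and $G\in\mathcal C_b(\IR^n)$ vary). Since $\P_0,\P_1$ are stationary, $(\theta_z,\theta_z)_\#\Q$ has the same marginals as $\Q$, so both belong to $\Cpl(\P_0,\P_1)$; therefore $\Q = (\theta_z,\theta_z)_\#\Q$ follows once $\mathcal H$ is known to be measure-determining on $\Cpl(\P_0,\P_1)$.

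The main obstacle is precisely this last measure-determining step. Taking $G(t)=e^{-t}$ shows that $\mathcal H$ identifies the Laplace functional, and hence the entire law, of the random Radon measure $\xi\otimes\eta$ on $\IR^{2d}$. To recover the joint law of $(\xi,\eta)$ from that of $\xi\otimes\eta$, I would use that the factorization map $(\xi,\eta)\mapsto\xi\otimes\eta$ is injective modulo the one-parameter rescaling $(\xi,\eta)\mapsto(c\xi,c^{-1}\eta)$, disintegrate $\Q$ along the $\sigma$-algebra generated by $\xi\otimes\eta$, and invoke that the prescribed intensity-one marginals $\P_0,\P_1$ (which are not scale invariant) impose linear constraints pinning down the conditional distribution on each fibre uniquely. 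This yields $\Q = (\theta_z,\theta_z)_\#\Q$ for every $z\in\IR^d$, as desired.
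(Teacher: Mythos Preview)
Your forward direction is correct and matches the paper's. The backward direction, however, has a real gap in its last step. You are right that the test class determines exactly the law of $\xi\otimes\eta$ and that $(\xi,\eta)\mapsto\xi\otimes\eta$ is injective only up to the scaling $(\xi,\eta)\mapsto(c\xi,c^{-1}\eta)$. But the claim that the fixed marginals ``pin down the conditional distribution on each fibre uniquely'' is false: the marginal constraints are aggregate constraints integrated over all fibres, not fibrewise ones. Concretely, let $\P_0=\P_1$ be the law of $U\cdot\Leb$ with $U$ uniform on $[0,2]$ (stationary, intensity one), pick any Lebesgue-preserving bijection $f:[0,2]\to[0,2]$ that is not an involution (say $f(x)=x+\tfrac23\bmod 2$), and compare $\Q_1=\mathsf{law}\bigl(U\Leb,\,f(U)\Leb\bigr)$ with $\Q_2=\mathsf{law}\bigl(f(U)\Leb,\,U\Leb\bigr)$. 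Both lie in $\Cpl(\P_0,\P_1)$ and give the same law of $\xi\otimes\eta=Uf(U)\cdot\Leb^{\otimes2}$, yet $\Q_1\neq\Q_2$ since their supports are the graphs of $f$ and $f^{-1}$. Thus ``same marginals and same law of $\xi\otimes\eta$'' does not force ``same joint law'', and your argument---which uses only these two ingredients and never exploits that the second measure is specifically $(\theta_z,\theta_z)_\#\Q$---cannot conclude.

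The paper does not go through the product map at all. Instead it argues that the functionals $(\xi,\eta)\mapsto\int F\,d\xi\,d\eta$, $F\in\mathcal C_c(\IR^{2d})$, already generate the relevant Borel structure on $\mathcal M^2$, so that sets of the form $\bigcap_i\{\int F_i\,d\xi\,d\eta\in U_i\}$ form an intersection-stable generator; approximating the indicators $\prod_i\eins_{U_i}$ by $G\in\mathcal C_b(\IR^n)$ and using dominated convergence then gives $\Q(A)=((\theta_z,\theta_z)_\#\Q)(A)$ directly for every generator $A$, hence for every Borel set. This sidesteps any disintegration along scaling orbits.
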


For a compact space $X$, a similar class of test functions is convergence determining for the weak topology on $\mathcal P(\mathcal P(X))$ by Stone-Weierstrass approximation, see e.g. \cite[Lemma 7.3]{Beigl}, where it is also shown that it is sufficient to consider $n=1$. However, since $\mathcal M(\IR^d)^2$ is not compact, we will prove it differently here.

\begin{proof}
If $ \Q$ is stationary, then $\int\phi d \Q=\int \phi d (  (\theta_z, \theta_z)_{\#}\Q )$ for all $\phi\in\mathcal C_b(\mathcal M^2 )$ and hence for the test function $\phi(\xi,\eta)=G\big[\big(\int F_i d(\theta_{-z}\xi) d(\theta_{-z}\eta)\big)_{i\le n}\big]$ we have
\begin{align*} 
&\int G\left[\Big(\int F_i(x-z,y-z)\xi(dx)\eta(dy)\Big)_{i=1,\dots,n}\right] \Q(d\xi,d\eta)\\
&=\int G\left[\Big(\int F_i(x,y)(\theta_{-z}\xi)(dx)(\theta_{-z}\eta)(dy)\Big)_{i=1,\dots,n}\right]( (\theta_z^2)_{\#}\Q )(d\xi,d\eta)\\
&=\int G\left[\Big(\int F_i(x,y)\xi(dx)\eta(dy)\Big)_{i=1,\dots,n}\right] \Q(d\xi,d\eta).
\end{align*}
On the other hand suppose \eqref{eq:charStat} holds, then in order to show stationarity we basically need to argue that functions of the above form already generate the weak topology on $\mathcal P (\mathcal M^2)$. As the topology on $\mathcal M^2$ is the weak (initial) topology generated by $(\xi,\eta)\mapsto \int F d\xi d\eta$ for $F\in\mathcal C_c(\IR^d\times\IR ^d)$, each basis element $A$ of the topology on $\mathcal M^2$ can be represented as a finite intersection of preimages of the form
\[
A=\bigcap_{i=1}^n\Big\{(\xi,\eta)\in\mathcal M^2:\int F_id\xi d\eta\in U_i\Big\}
\]
for some $F_i\in\mathcal C_c(\IR ^d\times\IR ^d)$ and some open sets $U_i\subseteq \IR$. Since $\IR^d$ is a separable metric space, so is $\mathcal M(\IR^d)$ and in turn $\mathcal P(\mathcal M^2)$ is a separable metric space, hence the basis of the topology generates the Borel sigma algebra $\mathcal B(\mathcal M^2)$. Thus $ \Q$ is uniquely determined on the intersection-stable basis of the topology and we need to show $ ((\theta_z, \theta_z)_{\#}\Q) (A)= \Q(A)$ for all basis elements $A$. To this end let $(G_k)_{k\in\IN}$ be a sequence of $\mathcal C_b(\IR ^n)$ functions pointwise converging to $\prod_{i\le n} \mathbbm 1_{U_i}$. By dominated convergence and our assumption it follows
\begin{align*}
     \Q(A)=&\int \prod_{i=1}^n\mathbbm 1 _{U_i}\Big(\int F_i(x,y)\xi(dx)\eta(dy)\Big)d \Q(\xi,\eta)\\
    =&\lim_{k\to\infty}\int G_k\left[\Big(\int F_i(x,y)\xi(dx)\eta(dy)\Big)_{i=1,\dots,n}\right] \Q(d\xi,d\eta)\\
   =& \lim_{k\to\infty}\int G_k\left[\Big(\int F_i(x-z,y-z)\xi(dx)\eta(dy)\Big)_{i=1,\dots,n}\right] \Q(d\xi,d\eta)\nonumber\\
     =&\lim_{k\to\infty}\int G_k\left[\Big(\int F_i(x,y)\xi(dx)\eta(dy)\Big)_{i=1,\dots,n}\right]( (\theta^2_{-z})_{\#}\Q)(d\xi,d\eta)=( (\theta^2_{-z})_{\#}\Q)(A)
\end{align*}
for all $z\in\IR ^d$.
\end{proof}

Throughout the paper, we will repeatedly make use of the above family of test functions.

\begin{lem}\label{lem:compact}
$ \Cpl_s(\P_0,\P_1)$ is non empty, compact and convex w.r.t.\ linear interpolation. 
\end{lem}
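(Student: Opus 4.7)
The statement contains three assertions, which I would treat separately and in increasing order of difficulty.

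\emph{Non-emptiness and convexity.} For non-emptiness, the product measure $\P_0 \otimes \P_1$ is clearly a coupling of $\P_0$ and $\P_1$, and since each factor is stationary, $(\theta_z, \theta_z)_\# (\P_0 \otimes \P_1) = (\theta_z)_\#\P_0 \otimes (\theta_z)_\#\P_1 = \P_0 \otimes \P_1$, so $\P_0 \otimes \P_1 \in \Cpl_s(\P_0,\P_1)$. For convexity, given $\Q_0,\Q_1 \in \Cpl_s(\P_0,\P_1)$ and $\lambda \in [0,1]$, the convex combination $\lambda\Q_0 + (1-\lambda)\Q_1$ has the correct marginals (marginal projections are linear and affine) and is stationary since pushforward and the stationarity condition are linear in $\Q$.

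\emph{Compactness.} The target topology is weak convergence of probability measures on $\mathcal{M}(\IR^d)^2$, where $\mathcal{M}(\IR^d)$ carries the vague topology and is a Polish space. I would split this into tightness plus closedness.

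For tightness, I would invoke the standard fact that the set of couplings $\Cpl(\P_0,\P_1)$ is tight (and weakly compact) whenever the marginals are tight: by Prokhorov, singletons $\{\P_0\}, \{\P_1\}$ are tight, so for each $\varepsilon > 0$ there exist compact $K_0, K_1 \subset \mathcal{M}(\IR^d)$ with $\P_i(K_i^c) < \varepsilon/2$, and then $K_0 \times K_1$ is a compact subset of $\mathcal{M}(\IR^d)^2$ with $\Q((K_0 \times K_1)^c) \leq \P_0(K_0^c) + \P_1(K_1^c) < \varepsilon$ uniformly in $\Q \in \Cpl(\P_0,\P_1)$.

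For closedness of $\Cpl_s(\P_0,\P_1)$ within $\Cpl(\P_0,\P_1)$, I would invoke Lemma~\ref{lem:charStat}: stationarity is equivalent to the family of equalities \eqref{eq:charStat}, indexed by $n$, $z$, $G \in \mathcal C_b(\IR^n)$, and $F_i \in \mathcal C_c(\IR^d \times \IR^d)$. For each such choice, the map $(\xi,\eta) \mapsto \int F_i(x,y)\xi(dx)\eta(dy)$ is vaguely continuous, hence so is $(\xi,\eta) \mapsto G[(\int F_i(x-z,y-z)\xi(dx)\eta(dy))_{i}]$, and this function is bounded. Therefore both sides of \eqref{eq:charStat} are continuous functionals on $\Cpl(\P_0,\P_1)$ with respect to weak convergence, and the stationarity condition is preserved in the limit. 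Thus $\Cpl_s(\P_0,\P_1)$ is closed in the tight compact set $\Cpl(\P_0,\P_1)$, hence compact.

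\emph{Expected main obstacle.} The only slightly delicate point is matching the right notion of continuity: the test functions in Lemma~\ref{lem:charStat} require $F_i$ compactly supported so that $\xi \mapsto \int F_i\,d\xi d\eta$ is continuous for the vague topology on $\mathcal{M}(\IR^d)$, and one needs to check that this suffices to characterise stationarity in the limit. This is guaranteed by Lemma~\ref{lem:charStat} itself, so once that lemma is invoked the closedness step reduces to routine verification that bounded continuous functionals pass to the weak limit.
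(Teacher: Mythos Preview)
Your proposal is correct and follows exactly the approach the paper indicates: the paper's proof is only a one-line sketch invoking tightness, Prokhorov's theorem, and Lemma~\ref{lem:charStat}, and you have spelled out precisely these ingredients. Your treatment of non-emptiness via the product coupling, convexity by linearity, tightness via the marginals, and closedness via the characterisation \eqref{eq:charStat} is the standard argument the authors have in mind.
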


\begin{proof}
The proof requires only minor adjustment of classical arguments, like tightness, Prokhorov's Theorem \cite[Theorem 14.3]{Kallenberg} and Lemma \ref{lem:charStat}.
\end{proof}

For completeness, let us also mention the following dual formulation of the optimisation problem \eqref{eq:Problem}. Since we will not make explicit use of neither Lemma \ref{lem:compact} nor the duality, we omit the details of their proofs.

\begin{prop}\label{prop:duality}
If $\mathsf C(\mathsf P_0,\mathsf P_1)<\infty$, then $\mathsf C(\mathsf P_0,\mathsf P_1)= \mathsf D (\mathsf P_0,\mathsf P_1)$ where 
\begin{align*}
\mathsf D (\mathsf P_0,&\mathsf P_1):=
\sup\Bigg\{\int \phi d\mathsf P_0+\int\psi d\mathsf P_1 : \\& \phi\in L^1(\mathsf P_0), \psi \in L^1(\mathsf P_1), \exists \tilde G\in\mathcal F\text{ s.t. } \phi(\xi)+\psi(\eta)+ \tilde G(\xi,\eta)\le c(\xi,\eta)\ \forall\ \xi,\eta \Bigg\},
\end{align*}
where $\mathcal F\subseteq \mathcal C_b(\mathcal M^2)$ 
the family of functions given as
\begin{align*}
    G\left[\Big(\int F_i(x-z,y-z)\xi(dx)\eta(dy)\Big)_{i\le n}\right]-G\left[\Big(\int F_i(x,y)\xi(dx)\eta(dy)\Big)_{i \le n}\right]
\end{align*}
for some $n\in\IN, z\in\IR^d, G\in\mathcal C_b(\IR ^n)$ and $ F_i\in \mathcal C_c(\IR^{d}\times \IR^d), i=1,\dots, n $.
\end{prop}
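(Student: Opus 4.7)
The plan is a two-level duality argument: encode the stationarity constraint as a Lagrangian penalty via the class $\mathcal F$ from Lemma \ref{lem:charStat}, perform a minmax swap, and then apply classical Kantorovich duality to the resulting inner marginal problem. By Lemma \ref{lem:charStat}, a coupling $\Q \in \Cpl(\P_0,\P_1)$ belongs to $\Cpl_s(\P_0,\P_1)$ if and only if $\int \tilde G\, d\Q = 0$ for every $\tilde G \in \mathcal F$. Since $\mathcal F$ is stable under sign change (replace $G$ by $-G$ in its definition) and under scalar multiplication (replace $G$ by $\lambda G$), the quantity $\sup_{\tilde G \in \mathcal F}\bigl(-\int \tilde G\, d\Q\bigr)$ equals $0$ for stationary $\Q$ and $+\infty$ otherwise. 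This yields the reformulation
\begin{equation*}
\C(\P_0,\P_1) \;=\; \inf_{\Q\in \Cpl(\P_0,\P_1)}\; \sup_{\tilde G \in \mathcal F}\; \int (c - \tilde G)\, d\Q.
\end{equation*}

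The next step is to exchange the inf and sup. The set $\Cpl(\P_0,\P_1)$ is convex and weakly compact by standard Prokhorov tightness from fixed marginals, and the functional $(\Q,\tilde G)\mapsto \int (c-\tilde G)\, d\Q$ is linear in each argument, with $\Q \mapsto \int c\, d\Q$ lower semicontinuous provided $c$ is l.s.c.\ and bounded below (as is the case for the transport cost of interest). A Sion-type minmax theorem, in the spirit of the argument used in \cite{BeHLPe13}, then produces
\begin{equation*}
\C(\P_0,\P_1) \;=\; \sup_{\tilde G \in \mathcal F}\; \inf_{\Q\in \Cpl(\P_0,\P_1)}\; \int (c - \tilde G)\, d\Q.
\end{equation*}
For each fixed $\tilde G$, the inner infimum is a classical Monge--Kantorovich problem with marginals $\P_0,\P_1$ and l.s.c.\ cost $c-\tilde G$. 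Kellerer's duality then yields
\begin{equation*}
\inf_{\Q \in \Cpl(\P_0,\P_1)} \int (c - \tilde G)\, d\Q \;=\; \sup\Bigl\{\int \phi\, d\P_0 + \int \psi\, d\P_1 \,:\, \phi(\xi) + \psi(\eta) \le c(\xi,\eta) - \tilde G(\xi,\eta)\ \forall\,\xi,\eta\Bigr\},
\end{equation*}
with $\phi\in L^1(\P_0), \psi\in L^1(\P_1)$. Substituting back reproduces exactly the dual functional $\mathsf D(\P_0,\P_1)$ in the statement.

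The main obstacle is the rigorous justification of the minmax swap, because $\mathcal F$ is infinite-dimensional and not compact, and because $c$ may be unbounded and possibly equal to $+\infty$. Following \cite{BeHLPe13}, a clean route is to first truncate the cost by $c\wedge N$, establish the duality for the resulting bounded problem by a direct Sion/Fan minmax on the compact set $\Cpl(\P_0,\P_1)$, and then pass to the limit $N\to\infty$. Monotone convergence handles the primal side under the standing assumption $\C(\P_0,\P_1)<\infty$, while on the dual side one approximates admissible triples $(\phi,\psi,\tilde G)$ through a diagonal argument exploiting the convergence-determining property of $\mathcal F$ already invoked in the proof of Lemma \ref{lem:charStat}. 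A secondary technical point is verifying that $\mathcal F$ can be replaced by a separable sub-cone without changing the value of the supremum, which again rests on the same convergence-determining argument.
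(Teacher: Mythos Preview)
Your approach is the one the paper has in mind: its proof is a single sentence pointing to the minmax argument of \cite{BeHLPe13}, and you have correctly expanded that structure---penalise non-stationarity via $\mathcal F$ using Lemma~\ref{lem:charStat}, swap $\inf$ and $\sup$ over the compact set $\Cpl(\P_0,\P_1)$, then reduce the inner problem to classical Kantorovich duality.

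One point deserves care: you assert in passing that $c$ is lower semicontinuous ``as is the case for the transport cost of interest'', but this is not established anywhere in the paper and is far from obvious given the definition $c(\xi,\eta)=\inf_{\q}\limsup_n(\cdots)$---an infimum of limsups has no reason to be l.s.c. Both the Sion-type minmax step and the inner Kellerer duality rely on this regularity. Your later discussion of truncation by $c\wedge N$ does not resolve the issue, since $c\wedge N$ inherits the same defect. A complete argument would need either to verify l.s.c.\ of $c$ directly, or to replace $c$ from the outset by an equivalent l.s.c.\ cost (e.g.\ via the representation in Proposition~\ref{prop:Problem2} or Corollary~\ref{cor:equiv_cost}) for which the minmax machinery applies cleanly. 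The paper itself does not address this, noting explicitly that the duality is not used elsewhere and omitting the details.
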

\begin{proof}
    This can be shown via a minmax argument exactly as for the duality result in \cite{BeHLPe13}.
\end{proof}

 \subsection{Transport costs $c$ on $\mathcal M$} 
Let us introduce a family of cost functions  $c$ which will be important for this article. We say $\q\in\mathcal M (\IR^d\times\IR^d)$ is a coupling of two measures $\xi, \eta$, if $\q(A\times\IR^d)=\xi(A)$ and  $\q(\IR^d\times A)=\eta(A)$ for all $A\in\mathcal B (\IR^d)$. We denote the set of all couplings of $\xi,\eta\in\mathcal M (\IR^d)$ by $ \gls{cpl}$.
Similar to the cost functions on $\mathcal M (\IR^d)$ studied in \cite{HS13,H16}, we define the transport cost per volume
 \begin{align}\label{eq:costMeasures}
\gls{c}:= \inf_{\q\in \cpl(\xi,\eta)} \limsup_{n\to \infty} \frac{1}{n^d}\int_{\Lambda_n \times\IR^d} \vartheta(x-y)\q(dx,dy),
 \end{align}
where $\gls{Lambda}=\Lambda_n=[-n/2,n/2]^d$ is the box of volume $n^d$ and $\vartheta:\IR ^ d\to[0,\infty]$ is some continuous radial unbounded function, that is radially increasing and satisfies $\vartheta(0)=0$. In Section \ref{sec:geodesics} and below we shall consider the special case $\vartheta(x)=|x|^p$ for some $p\ge 1$.

While the cost function $c$ is very natural from an optimal transport point of view, it is difficult to work with due to the normalisation factor.
In the following we will show that there is an equivalent formulation of the optimisation problem \eqref{eq:Problem} using stationarity. To this end, we will need to introduce some notation and terminology:

Let $(\Omega,\mathcal{F},\IP)$ be a probability space  equipped with a measurable flow $\gls{thetaO}$.
That is, the mapping $(x,\omega)\mapsto \theta^\Omega_x\omega$ is measurable, $\theta^\Omega_y\circ \theta^\Omega_x=\theta^\Omega_{x+y}$ for all $x,y\in \IR^d$ and $\theta^\Omega_0$ is the identity. We assume that  $\IP$  is stationary  w.r.t.\ the flow $\theta^\Omega$, i.e. $\IP(A)=\IP(\theta^\Omega_x(A))$ for all $x\in \IR^d$.
On the probability space $(\Omega,\mathcal{F},\IP)$ we consider a pair of equivariant random measures, i.e.\ random variables $\xi^\bullet,\eta^\bullet:\Omega\to\mathcal M$ such that $\xi^{\theta^\Omega_x\omega}(A)=\xi^{\omega}(A-x)$ for all $x\in\IR^d, \omega\in\Omega.$

We are interested in random couplings $\q$ with $\q^\omega\in \cpl(\xi^\omega,\eta^\omega)$ for $\IP$-almost all $\omega$. We denote the set of all these couplings by $\gls{cple}$. We call $\q$ equivariant if $\q^{\theta^\Omega_x\omega}(A\times B)=\q^{ \omega}((A-x)\times (B-x))$ for all $x\in\IR ^d, \omega\in\Omega$. Denote the set of such (random) couplings by $ \cpl_{e}(\xi^\bullet,\eta^\bullet)$. Moreover, we define the expected transport cost per volume

\begin{align}\label{eq:costRandomMeasures}
\gls{frakc}:= \inf_{\q^\bullet\in \cpl_{e}(\xi^\bullet,\eta^\bullet)}\IE\Big[\int_{ \Lambda_1\times \IR^d} \vartheta(x-y)\q^\bullet(dx,dy)\Big].
 \end{align}
Note that under our assumptions the joint distribution $(\xi^\bullet,\eta^\bullet)_\#\IP= \Q$ belongs to $ \Cpl_s(\P_0, \P_1)$, where $\P_0$ and $\P_1$ are the distributions of $\xi^\bullet$ and $\eta^\bullet$
respectively. We will write
$$(\xi^\bullet,\eta^\bullet)\sim \Q$$
for any pair of jointly equivariant random measure on some probability space as above whose joint distribution is given by $\Q$. Moreover, by equivariance, the domain of integration in \eqref{eq:costRandomMeasures} could be changed to any bounded Borel set $A$ of positive Lebesgue measure upon normalising the integral with its Lebesgue measure $\Leb(A)$.
 
The following proposition shows that \eqref{eq:costRandomMeasures} induces an equivalent optimisation problem to \eqref{eq:Problem} with cost function given by \eqref{eq:costMeasures}.
\begin{prop}\label{prop:Problem2}
Let $ \Q$ be a stationary coupling of $\P_0$ and $\P_1$ such that $\IE_{ \Q}[c]<\infty$, then
    \begin{align}\label{eq:costequal}
\IE_{ \Q}[c]=\inf_{(\xi^\bullet,\eta^\bullet)\sim \Q}\cost(\xi^\bullet,\eta^\bullet).
\end{align}
Minimizing over $ \Q\in \Cpl_s$  implies that (cf.\ \eqref{eq:Problem})
\begin{align}\label{eq:Problem2}
     \C (\P_0,\P_1)=\inf_{(\xi^\bullet,\eta^\bullet)}\cost(\xi^\bullet,\eta^\bullet),
 \end{align} 
where the infimum runs over all jointly equivariant random measures $(\xi^\bullet,\eta^\bullet)$ with distribution $\P_0,\P_1$ (and all probability spaces supporting the random measures $(\xi^\bullet,\eta^\bullet)$). 
\end{prop}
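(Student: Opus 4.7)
The plan is to establish \eqref{eq:costequal} first, since \eqref{eq:Problem2} then follows by taking the infimum over $\Q\in\Cpl_s(\P_0,\P_1)$; a $\Q$ with $\IE_\Q[c]=+\infty$ causes no issue because the easy direction below still produces $\inf_{(\xi^\bullet,\eta^\bullet)\sim\Q}\cost=+\infty$. To supply the jointly equivariant representatives at all, I work on the canonical space $\Omega=\mathcal M\times\mathcal M$ with the measurable flow $\theta^\Omega_z=(\theta_z,\theta_z)$, under which $\Q$ is invariant by assumption; the coordinate maps are then a pair of jointly equivariant random measures $(\xi^\bullet,\eta^\bullet)\sim\Q$.

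For the inequality $\IE_\Q[c]\le\cost(\xi^\bullet,\eta^\bullet)$, pick any $\q^\bullet\in\cpl_e(\xi^\bullet,\eta^\bullet)$. Since $\q^\omega\in\cpl(\xi^\omega,\eta^\omega)$ almost surely, it is admissible in the definition of $c(\xi^\omega,\eta^\omega)$, so
\[
c(\xi^\omega,\eta^\omega)\;\le\;\limsup_{n\to\infty}\frac{1}{n^d}\int_{\Lambda_n\times\IR^d}\vartheta(x-y)\,\q^\omega(dx,dy).
\]
Equivariance of $\q^\bullet$ and stationarity of $\IP$ turn the right-hand side into a spatial average over $\IR^d$ of the stationary integrable functional $\omega\mapsto\int_{\Lambda_1\times\IR^d}\vartheta(x-y)\,\q^\omega(dx,dy)$. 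By the Birkhoff ergodic theorem for the $\IR^d$-action, this $\limsup$ is a genuine limit, equal a.s.~to the conditional expectation given the invariant $\sigma$-algebra. Taking expectations (using Fatou and the fact that the unconditional mean is simply $\IE[\int_{\Lambda_1}\vartheta\,d\q^\bullet]$, the decomposition of $\Lambda_n$ into unit cubes making this immediate) produces
\[
\IE[c(\xi^\bullet,\eta^\bullet)]\;\le\;\IE\Big[\int_{\Lambda_1\times\IR^d}\vartheta(x-y)\,\q^\bullet(dx,dy)\Big],
\]
and minimizing over $\q^\bullet$ yields $\IE_\Q[c]\le\cost(\xi^\bullet,\eta^\bullet)$.

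For the reverse inequality, the task is to produce \emph{one} jointly equivariant $(\xi^\bullet,\eta^\bullet)\sim\Q$ together with an equivariant coupling $\q^\bullet\in\cpl_e(\xi^\bullet,\eta^\bullet)$ whose realizations are optimal for $c(\xi^\omega,\eta^\omega)$ almost surely. Granted such a $\q^\bullet$, the same ergodic-theorem chain, now read in reverse, gives
\[
\cost(\xi^\bullet,\eta^\bullet)\;\le\;\IE\Big[\int_{\Lambda_1}\vartheta\,d\q^\bullet\Big]\;=\;\IE\Big[\lim_{n\to\infty}\frac{1}{n^d}\int_{\Lambda_n}\vartheta\,d\q^\bullet\Big]\;=\;\IE[c(\xi^\bullet,\eta^\bullet)]\;=\;\IE_\Q[c],
\]
where for the last non-trivial equality I use that the limit along $\Lambda_n$ attains the infimum defining $c$ because $\q^\omega$ was chosen optimal.

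The construction of $\q^\bullet$ is the main obstacle: a simple fibrewise measurable selection of an optimizer in \eqref{eq:costMeasures} will in general not be equivariant, since the $\limsup$-infimum in the definition of $c$ does not respect the group action in a manifest way. The plan is to follow the approximation scheme of \cite{HS13,H16}: on each finite box $\Lambda_n$ select (measurably in $(\xi,\eta)$) an $\epsilon$-minimizer $\q^\bullet_n$ for the truncated transport problem $\frac{1}{n^d}\int_{\Lambda_n\times\IR^d}\vartheta\,d\q$, then symmetrize over a large ball of shifts to obtain an almost-equivariant candidate, and extract a weak accumulation point $\q^\bullet_\infty$ as $n\to\infty$. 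Lower semicontinuity of the cost, together with equivariance enforced by the symmetrization and stationarity of $\Q$, deliver the desired equivariant optimizer. Assembling the two inequalities gives \eqref{eq:costequal}, and the infimum over $\Q$ then produces \eqref{eq:Problem2}.
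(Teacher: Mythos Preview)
Your easy direction ($\IE_\Q[c]\le\cost$) is correct and matches the paper's Lemma~\ref{lem:ineq_equiv_costfct}: both use the ergodic theorem applied to an equivariant $\q^\bullet$.

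The reverse direction, however, has a genuine gap. You assert that the approximation scheme of \cite{HS13,H16} produces an equivariant $\q^\bullet_\infty$ whose realizations are \emph{a.s.~optimal for $c(\xi^\omega,\eta^\omega)$}. Your construction does not deliver this. What the scheme actually yields (via lower semicontinuity and symmetrization) is an equivariant $\q^\bullet_\infty$ with
\[
\IE\Big[\int_{\Lambda_1\times\IR^d}\vartheta\,d\q^\bullet_\infty\Big]\;\le\;\liminf_{n\to\infty}\frac{1}{n^d}\,\IE_\Q\Big[\inf_{\q\in\cpl(\xi,\eta)}\int_{\Lambda_n\times\IR^d}\vartheta\,d\q\Big]\;=:\;\liminf_{n\to\infty}\C_n\,.
\]
Nothing in your sketch relates $\C_n$ to $\IE_\Q[c]$. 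The box minimizers $\q_n^\bullet$ optimize a \emph{different} functional (the one-box cost) than the $\limsup$-infimum defining $c$; a priori $\C_n$ could exceed $\IE_\Q[c]$, and then your chain of equalities breaks at the step ``$\IE[\lim_n\dots]=\IE[c]$''.

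The paper closes exactly this gap with Lemma~\ref{lem:Cn}, whose key idea you are missing: for \emph{any} fixed $\q\in\cpl(\xi,\eta)$ and any $m$, partitioning $\Lambda_{mn}$ into $m^d$ translates of $\Lambda_n$ gives
\[
\frac{1}{m^d}\sum_{z\in\Lambda_{mn}\cap(n\IZ)^d}X_z^n(\xi,\eta)\;\le\;\frac{1}{(mn)^d}\int_{\Lambda_{mn}\times\IR^d}\vartheta\,d\q\,,\qquad X_z^n:=\inf_{\q'}\frac{1}{n^d}\int_{(z+\Lambda_n)\times\IR^d}\vartheta\,d\q'\,,
\]
because on each sub-box one may optimize separately. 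Sending $m\to\infty$, the discrete ergodic theorem turns the left side into $\IE_\Q[X_0^n\mid\mathcal I^n]$, while the right side is bounded above by $c(\xi,\eta)$ after optimizing over $\q$. Taking expectations gives $\C_n\le\IE_\Q[c]$, hence $\C_\infty\le\IE_\Q[c]$; combined with \cite[Remark~6.6]{H16} identifying $\C_\infty=\inf_{(\xi^\bullet,\eta^\bullet)\sim\Q}\cost(\xi^\bullet,\eta^\bullet)$, this finishes the reverse inequality. Without this subadditivity-plus-ergodic step, your argument is incomplete.
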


For fixed joint distribution, the right hand side of \eqref{eq:Problem2} has been studied  in \cite{H16} and for $\xi^\bullet\sim\Poi,\eta^\bullet\equiv \Leb$ in much more detail in \cite{HS13}. Thus, the essential new feature of the right hand side of \eqref{eq:Problem2} is the additional minimization over all joint distributions, which are stationary couplings. On the other hand, the left hand side gives rise to the classical theory of optimal transport on the abstract space $\mathcal M (\IR^d)$. In the sequel we will make use of both viewpoints.

Let us also comment on the idea of proof, revealing the heuristic behind equation \eqref{eq:Problem2}. For ergodic point processes, we expect the transport cost per volume in \eqref{eq:costMeasures} to be close to the average $\IE_\Q[\int_{\Lambda_1\times\IR^d}\vartheta(x-y) d \q]$. 
Minimizing this average over $\q$ (as in \eqref{eq:costMeasures}) leads to \eqref{eq:costRandomMeasures}. Thus, we expect T is precisely the statement Proposition \ref{prop:Problem2}. The proof will follow this idea and is split into two lemmata providing the upper and lower bound.

\begin{lem}\label{lem:ineq_equiv_costfct}
Let $ \Q$ be a stationary coupling of $\P_0$ and $\P_1$ such that $
\inf_{(\xi^\bullet,\eta^\bullet)\sim \Q}\cost(\xi^\bullet,\eta^\bullet)<\infty$, then \[
\IE_{ \Q}[c]\le\inf_{(\xi^\bullet,\eta^\bullet)\sim \Q}\cost(\xi^\bullet,\eta^\bullet).
\]
\end{lem}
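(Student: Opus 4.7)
The plan is to use a given (nearly optimal) equivariant coupling to produce pathwise couplings that realize the bound through a spatial average, and then identify this spatial average by the multiparameter ergodic theorem.

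Fix $\varepsilon > 0$. By the finiteness hypothesis, there exist a pair of jointly equivariant random measures $(\xi^\bullet,\eta^\bullet) \sim \Q$ defined on a probability space $(\Omega,\F,\IP)$ carrying a measure-preserving flow $\theta^\Omega$, together with an equivariant coupling $\q^\bullet \in \cpl_e(\xi^\bullet,\eta^\bullet)$ such that, setting
\begin{align*}
f(\omega) := \int_{\Lambda_1\times\IR^d}\vartheta(x-y)\,\q^\omega(dx,dy),
\end{align*}
we have $\IE[f] \le \inf_{(\xi^\bullet,\eta^\bullet) \sim \Q}\cost(\xi^\bullet,\eta^\bullet) + \varepsilon$; in particular $f \in L^1(\IP)$. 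For every $\omega$, the realization $\q^\omega$ is an element of $\cpl(\xi^\omega,\eta^\omega)$ in the deterministic sense, so the definition \eqref{eq:costMeasures} of $c$ gives the pointwise bound
\begin{align*}
c(\xi^\omega,\eta^\omega) \le \limsup_{n \to \infty}\frac{1}{n^d}\int_{\Lambda_n\times\IR^d}\vartheta(x-y)\,\q^\omega(dx,dy).
\end{align*}

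The key calculation is to recognize the right hand side as an ergodic average. Since $\mathbbm{1}_{\Lambda_1+k}(x) = \mathbbm{1}_{\Lambda_1 + x}(k)$ (using $-\Lambda_1 = \Lambda_1$) and $\Lambda_1 + x \subset \Lambda_{n+1}$ for every $x \in \Lambda_n$, Fubini combined with the equivariance identity
\begin{align*}
\int_{(\Lambda_1+k)\times\IR^d}\vartheta(x-y)\,\q^\omega(dx,dy) = f(\theta^\Omega_{-k}\omega)
\end{align*}
yields
\begin{align*}
\int_{\Lambda_n\times\IR^d}\vartheta(x-y)\,\q^\omega(dx,dy) \le \int_{\Lambda_{n+1}} f(\theta^\Omega_{-k}\omega)\, dk.
\end{align*}
Applying Wiener's pointwise ergodic theorem to the measure-preserving $\IR^d$-action $(\theta^\Omega_k)_{k \in \IR^d}$ with $f \in L^1(\IP)$ and the F\o lner sequence $(\Lambda_n)$, we obtain $\frac{1}{(n+1)^d}\int_{\Lambda_{n+1}}f(\theta^\Omega_{-k}\omega)\,dk \to \IE[f\mid \mathcal I](\omega)$ almost surely, where $\mathcal I$ denotes the $\theta^\Omega$-invariant $\sigma$-algebra. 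Combining this with $(n+1)^d/n^d \to 1$, we deduce $c(\xi^\omega,\eta^\omega) \le \IE[f\mid\mathcal I](\omega)$ for $\IP$-a.e.\ $\omega$.

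Taking expectations and using the tower property gives
\begin{align*}
\IE_\Q[c] = \IE\big[c(\xi^\bullet,\eta^\bullet)\big] \le \IE\big[\IE[f\mid \mathcal I]\big] = \IE[f] \le \inf_{(\xi^\bullet,\eta^\bullet)\sim\Q}\cost(\xi^\bullet,\eta^\bullet) + \varepsilon,
\end{align*}
and letting $\varepsilon \downarrow 0$ concludes the proof. The main obstacle is precisely the exchange between the limsup (hidden inside the expectation on the left hand side through the definition of $c$) and the infimum over equivariant couplings on the right; a naive reverse-Fatou argument would demand a dominating envelope that is not obviously available in this non-compact setting. The ergodic theorem circumvents this by converting the pathwise limsup into a genuine almost sure limit equal to $\IE[f\mid\mathcal I]$, which then integrates cleanly via the tower property.
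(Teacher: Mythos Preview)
Your proof is correct and follows essentially the same strategy as the paper: both arguments fix a near-optimal equivariant coupling $\q^\bullet$, bound the pathwise cost $c(\xi^\omega,\eta^\omega)$ by the spatial limsup of $\frac{1}{n^d}\int_{\Lambda_n\times\IR^d}\vartheta\,d\q^\omega$, and identify the latter via the multiparameter ergodic theorem before taking expectations. The only notable difference is that the paper first treats the ergodic case (where the limit equals the full expectation $\IE[f]$) and then passes to the general case via ergodic decomposition, whereas you apply Wiener's theorem directly and obtain convergence to $\IE[f\mid\mathcal I]$, concluding with the tower property; this is a mild streamlining that bypasses the decomposition step.
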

\begin{proof}
Fix a probability space $(\Omega,\mathcal F,\IP)$ supporting jointly stationary random measures $(\xi^\bullet,\eta^\bullet)$ with distribution $\sfQ$. Let us first suppose that $\IP$ is ergodic.
Then for all $\q^\bullet\in \cpl_{e}(\xi^\bullet,\eta^\bullet)$, $(\xi^\bullet,\eta^\bullet)\sim \Q$, by the ergodic theorem \cite[Theorem 9.9]{Kallenberg}
\begin{align*}
    \lim_{n\to\infty} \frac{1}{n^d} \int_{\Lambda_n\times\IR ^d} \vartheta(x-y) d\q^\bullet(x,y)=\IE_{ \IP} \int_{\Lambda_1\times\IR^d}\vartheta(x-y) d\q^\bullet(x,y) \quad  \IP-a.s..
\end{align*}
 Taking expectations, we obtain
\begin{align}\label{eq:ergodic_expectations}
   \IE_{ \IP} \limsup_{n\to\infty} \frac{1}{n^d} \int_{\Lambda_n\times\IR ^d} \vartheta(x-y) d\q^\bullet(x,y)=\IE_{ \IP} \int_{\Lambda_1\times\IR^d}\vartheta(x-y) d\q^\bullet(x,y).
\end{align}

In the case that $ \IP$ is not ergodic, but only stationary, we use an ergodic decomposition \cite[Theorem 9.12]{Kallenberg}: There exists a probability measure $m$
so that $ \IP=\int  \IP_\alpha dm(\alpha)$, where $ \IP_\alpha$ are ergodic. Integrating \eqref{eq:ergodic_expectations} with respect to $m$ shows that \eqref{eq:ergodic_expectations} still holds for non-ergodic $ \IP$.

For any $\q^\bullet\in \cpl_{e}(\xi^\bullet,\eta^\bullet)$ with $(\xi^\bullet,\eta^\bullet)\sim \Q$ we have moreover 
\begin{align*}
\IE_{ \Q}[c]= \IE_{ \Q} \inf_{\q\in  \cpl(\xi,\eta)}\limsup_{n\to\infty} \frac{1}{n^d} \int_{\Lambda_n\times\IR ^d} \vartheta(x-y) d\q(x,y)\le \IE_{ \IP} \limsup_{n\to\infty} \frac{1}{n^d} \int_{\Lambda_n\times\IR ^d} \vartheta(x-y) d\q^\bullet(x,y).
\end{align*}
Combining this with \eqref{eq:ergodic_expectations} and 
first minimizing over $q^\bullet\in \cpl_{e}(\xi^\bullet,\eta^\bullet)$ and then over supporting probability spaces implies the claim.

\end{proof}

  \begin{lem}\label{lem:Cn}
Let $ \Q$ be a stationary coupling of $\P_0$ and $\P_1$ such that $\IE_{ \Q}[c]<\infty$. Then, for any $n\in\IN$ we have
\[ \C_n:=\inf_{\substack{\q^\bullet\in  \cpl(\xi^\bullet,\eta^\bullet)\\ (\xi^\bullet,\eta^\bullet)\sim  \Q} }\frac{1}{n^d}\IE\Big[\int_{\Lambda_n\times \IR^d}\vartheta(x-y) d\q^\bullet \Big]= \frac{1}{n^d}\IE_\Q\Big[\inf_{\q\in  \cpl(\xi,\eta)}\int_{\Lambda_n\times \IR^d}\vartheta(x-y) d\q\Big]\leq \IE_{ \Q}[c]<\infty. \]
  \end{lem}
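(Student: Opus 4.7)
The lemma splits into two assertions: the equality expressing $\C_n$ as the $\Q$-expectation of the pathwise optimum, and the upper bound $\C_n \leq \IE_\Q[c]$. I treat them in turn.

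For the equality, the direction ``$\geq$'' is immediate: every random coupling $\q^\bullet$ satisfies $\int_{\Lambda_n\times\IR^d}\vartheta(x-y)d\q^\bullet \geq \inf_{\q\in\cpl(\xi,\eta)}\int_{\Lambda_n\times\IR^d}\vartheta d\q$ pointwise, and integration preserves the inequality. For ``$\leq$'', the pathwise Kantorovich problem on $\Lambda_n\times\IR^d$ is attained for every $(\xi,\eta)$ by lower semicontinuity of $\q\mapsto\int_{\Lambda_n}\vartheta(x-y)d\q$ together with tightness of a minimising sequence (only the restriction to $\Lambda_n\times\IR^d$ enters, and it has prescribed first marginal $\xi|_{\Lambda_n}$, which is a finite measure). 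A standard measurable selection theorem (Kuratowski-Ryll-Nardzewski applied to the measurable set-valued map of minimisers) then yields a jointly measurable $\q^\ast(\xi,\eta)$ realising the pointwise infimum; using $\q^\ast$ as a feasible competitor in the definition of $\C_n$ produces the matching upper bound.

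The bound $\C_n \leq \IE_\Q[c]$ proceeds in two steps. \emph{Monotonicity via tiling:} Fix $K\in\IN$ and partition $\Lambda_{Kn}$ (up to a null set) into $K^d$ disjoint translates $\Lambda_n+z_j$. For any $\q\in\cpl(\xi,\eta)$,
\[\int_{\Lambda_{Kn}\times\IR^d}\vartheta(x-y)d\q \;=\; \sum_{j=1}^{K^d}\int_{(\Lambda_n+z_j)\times\IR^d}\vartheta d\q \;\geq\; \sum_{j=1}^{K^d}\inf_{\q_j\in\cpl(\xi,\eta)}\int_{(\Lambda_n+z_j)\times\IR^d}\vartheta d\q_j.\]
Minimising over $\q$, dividing by $(Kn)^d$, taking $\Q$-expectation, and using stationarity of $\Q$ (which makes each $j$-summand equal to $n^d\C_n$, since $\vartheta$ only depends on $x-y$) yields the monotonicity $\C_n \leq \C_{Kn}$. \emph{Control of $\limsup_m \C_m$:} For $\epsilon>0$, invoke the techniques of \cite{HS13,H16} to produce an equivariant coupling $\q^\bullet \in \cpl_e(\xi^\bullet,\eta^\bullet)$ with $\limsup_m \tfrac{1}{m^d}\int_{\Lambda_m}\vartheta d\q^\bullet \leq c(\xi,\eta)+\epsilon$ a.s. Since $\q^\bullet$ is equivariant, the spatial ergodic theorem \cite[Thm.~9.9]{Kallenberg} provides a.s.~and $L^1$-convergence of $\tfrac{1}{m^d}\int_{\Lambda_m}\vartheta d\q^\bullet$, so this dominating family is uniformly integrable. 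Setting $c_m := \tfrac{1}{m^d}\inf_{\q}\int_{\Lambda_m}\vartheta d\q \leq \tfrac{1}{m^d}\int_{\Lambda_m}\vartheta d\q^\bullet$, reverse Fatou gives $\limsup_m\IE_\Q[c_m] \leq \IE_\Q[\limsup_m c_m] \leq \IE_\Q[c]+\epsilon$; sending $\epsilon\to 0$ and combining with the monotonicity, $\C_n \leq \liminf_K \C_{Kn} \leq \limsup_m \C_m \leq \IE_\Q[c]$. In the non-ergodic case one reduces to the ergodic setting via the ergodic decomposition \cite[Thm.~9.12]{Kallenberg}, exactly as in the proof of Lemma \ref{lem:ineq_equiv_costfct}.

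The main technical hurdle is the second step: securing a pathwise upper bound for $c_m$ that is uniformly integrable. Without the equivariant near-minimiser produced in \cite{HS13,H16}, the unnormalised infima $\inf_\q \int_{\Lambda_m}\vartheta d\q$ could fluctuate uncontrollably in $m$ and the interchange of $\limsup$ and expectation would break. The measurable selection step in the first part is more routine but still needs care on the non-compact space $\mathcal{M}(\IR^d\times\IR^d)$.
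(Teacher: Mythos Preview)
Your treatment of the equality in the definition of $\C_n$ via measurable selection is fine; the paper does not dwell on this point either.

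For the inequality $\C_n\le\IE_\Q[c]$, however, your second step has a gap that is close to circular. You invoke ``the techniques of \cite{HS13,H16}'' to produce an \emph{equivariant} coupling $\q^\bullet\in\cpl_e(\xi^\bullet,\eta^\bullet)$ with $\limsup_m m^{-d}\int_{\Lambda_m}\vartheta\,d\q^\bullet\le c(\xi,\eta)+\epsilon$ almost surely. But taking expectations in this pathwise bound already yields $\cost(\xi^\bullet,\eta^\bullet)\le\IE_\Q[c]+\epsilon$, which combined with Lemma~\ref{lem:ineq_equiv_costfct} gives the full Proposition~\ref{prop:Problem2}. In other words, the existence of such a $\q^\bullet$ is at least as strong as what Lemma~\ref{lem:Cn} is meant to help establish; it is not a result you can simply cite from \cite{HS13,H16} under the sole hypothesis $\IE_\Q[c]<\infty$. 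What those references give you is an optimal equivariant coupling once $\cost<\infty$ is known, and the passage from $\IE_\Q[c]<\infty$ to $\cost<\infty$ is precisely the content of Proposition~\ref{prop:Problem2}.

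The paper avoids this detour entirely. It keeps your tiling inequality at the \emph{pathwise} level rather than in expectation: for any fixed $\q\in\cpl(\xi,\eta)$,
\[
\frac{1}{m^d}\sum_{z\in\Lambda_{nm}\cap(n\IZ)^d} X^n_z(\xi,\eta)\;\le\;\frac{1}{(mn)^d}\int_{\Lambda_{mn}\times\IR^d}\vartheta(x-y)\,d\q\,,
\]
where $X^n_z(\xi,\eta):=n^{-d}\inf_{\q'}\int_{(z+\Lambda_n)\times\IR^d}\vartheta\,d\q'$. The left-hand side is an ergodic average of the $(n\IZ)^d$-stationary process $z\mapsto X^n_z=X^n_0\circ\theta_z$, so by \cite[Theorem~9.6]{Kallenberg} it converges a.s.\ to $\IE_\Q[X^n_0\mid\mathcal I^n]$. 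The $\limsup$ of the right-hand side is bounded by $\limsup_m m^{-d}\int_{\Lambda_m}\vartheta\,d\q$; minimising over $\q$ gives $\IE_\Q[X^n_0\mid\mathcal I^n]\le c(\xi,\eta)$ a.s., and a single expectation finishes. No equivariant coupling, no reverse Fatou, no uniform integrability is needed. Your Step~1 monotonicity is essentially the expectation of this same tiling inequality, so you were one move away from the direct argument: keep the inequality pathwise, send $K\to\infty$ on the ergodic-average side, and only then take expectations.
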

\begin{proof}
Fix $n\in\IN$ and put
\[X^n_z(\xi,\eta):=\inf_{\q\in  \cpl(\xi,\eta)}\frac{1}{n^d}\int_{(z+\Lambda_n)times\IR^d}\vartheta(x-y)\q(dx,dy)=X^n_0(\theta_z\xi,\theta_z\eta).\]
It then follows that for any $\q\in  \cpl(\xi,\eta)$ and $m\in\IN$
\begin{align}\label{eq:1}
\frac{1}{m^d} \sum_{z\in\Lambda_{nm}\cap (n\IZ)^d} X_z^n(\xi,\eta) \leq \frac{1}{(mn)^d}\int_{\Lambda_{mn}\times \IR^d}\vartheta(x-y)d\q.
\end{align}
Since $ \Q$ is in particular $(n\IZ)^d$-stationary, denoting by $\mathcal I^n$ the $\sigma$-algebra of $(n\IZ)^d$-invariant events, the left hand side of \eqref{eq:1} converges by \cite[Theorem 9.6]{Kallenberg} to $\IE_{ \Q}[X^n_0|\mathcal I^n]$ as $m\to\infty$.
The right hand side is bounded from above by 
\[\limsup_{m\to\infty} \frac{1}{(mn)^d}\int_{\Lambda_{mn}\times \IR^d}\vartheta(x-y)d\q\leq \limsup_{m\to\infty}  \frac{1}{m^d}\int_{\Lambda_{m}\times \IR^d}\vartheta(x-y)d\q\]
for any $\q\in  \cpl(\xi,\eta)$ so that we obtain
\[\IE_{ \Q}[X^n_0|\mathcal I^n]\leq \inf_{\q\in  \cpl(\xi,\eta)}\limsup_{m\to\infty} \frac{1}{m^d}\int_{\Lambda_{m}\times \IR^d}\vartheta(x-y)d\q = c(\xi,\eta).\]
Taking the expectation w.r.t.\ $ \Q$ yields the claim.
\end{proof}
\begin{proof}[Proof of Proposition \ref{prop:Problem2}]
Lemma \ref{lem:Cn} implies that $\C_\infty:=\limsup_{n\to\infty} \C_n\leq \IE_{ \Q}[c]$. By \cite[Remark 6.6]{H16}, it follows that
$$\C_\infty=\inf_{\substack{\q^\bullet\in  \cpl_e(\xi^\bullet,\eta^\bullet)\\ (\xi^\bullet,\eta^\bullet)\sim  \Q} }\IE_{ \Q}\Big[\int_{\Lambda_1\times \IR^d} \vartheta(x-y) d\q^\bullet(x,y)\Big] =\inf_{(\xi^\bullet,\eta^\bullet)\sim \Q} \cost(\xi^\bullet,\eta^\bullet).$$
In particular this value is finite. The other inequality follows from Lemma \ref{lem:ineq_equiv_costfct}.

 \end{proof}
Finally, \cite[Corollary 6.5]{H16} yields another equivalent formulation of the cost.
\begin{cor}\label{cor:equiv_cost}
Let $\P_0,\P_1\in \mathcal P_s(\mathcal M(\IR^d))$. Then 
  \begin{align*}
  \C(\P_0,\P_1)&=\inf_{\substack{\Q\in \mathsf{Cpl}_s(\P_0,\P_1)\\ (\xi^\bullet,\eta^\bullet)\sim  \Q} }  
  \inf_{\q^{\bullet}\in \cpl(\xi^{\bullet},\eta^{\bullet})}
  \liminf_{n\to \infty}n^{-d}\IE\left[
  \int_{\Lambda_n\times \IR^d}\vartheta(x-y)\q(dx,dy)\right]\\
  &=\inf_{\substack{\Q\in \mathsf{Cpl}_s(\P_0,\P_1)\\ (\xi^\bullet,\eta^\bullet)\sim  \Q} }  
  \liminf_{n\to \infty}
  \inf_{\q^{\bullet}\in \cpl(\xi^{\bullet},\eta^{\bullet})}
  n^{-d}\IE\left[
  \int_{\Lambda_n\times \IR^d}\vartheta(x-y)\q(dx,dy)\right].
\end{align*}  
\end{cor}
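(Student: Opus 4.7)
The overall strategy is to reduce the identity, via Proposition~\ref{prop:Problem2}, to the content of \cite[Corollary 6.5]{H16} applied to each fixed joint distribution $\Q$, and then to take an outer infimum over $\Q\in\Cpl_s(\P_0,\P_1)$. By Proposition~\ref{prop:Problem2},
\[\C(\P_0,\P_1)=\inf_{\Q\in\Cpl_s(\P_0,\P_1)}\inf_{(\xi^\bullet,\eta^\bullet)\sim\Q}\cost(\xi^\bullet,\eta^\bullet),\]
and by definition $\cost(\xi^\bullet,\eta^\bullet)=\inf_{\q^\bullet\in\cpl_e(\xi^\bullet,\eta^\bullet)}\IE\int_{\Lambda_1\times\IR^d}\vartheta(x-y)\,\q^\bullet(dx,dy)$. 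It therefore suffices to prove, for each fixed $\Q$ with finite cost, that the inner double infimum agrees with the two candidate right-hand sides with $\Q$ fixed.

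For the easy direction, I would observe that any equivariant $\q^\bullet\in\cpl_e$ is also a member of $\cpl$, and that by equivariance the quantity $n^{-d}\IE\int_{\Lambda_n\times\IR^d}\vartheta(x-y)\,\q^\bullet(dx,dy)$ is independent of $n$, hence equals its $\liminf$ as well as the $\Lambda_1$-integral appearing in $\cost$. Enlarging the class of couplings from $\cpl_e$ to $\cpl$ and passing to the $\liminf$ therefore yields
\[\inf_{(\xi^\bullet,\eta^\bullet)\sim\Q}\inf_{\q^\bullet\in\cpl_e}\IE\int_{\Lambda_1\times\IR^d}\vartheta\,d\q^\bullet\ \ge\ \inf_{(\xi^\bullet,\eta^\bullet)\sim\Q}\inf_{\q^\bullet\in\cpl}\liminf_{n\to\infty}n^{-d}\IE\int_{\Lambda_n\times\IR^d}\vartheta\,d\q^\bullet.\]
The elementary estimate $\inf_a\liminf_n f_n(a)\ge \liminf_n\inf_a f_n(a)$ then supplies the corresponding inequality between the two $\liminf$-expressions.

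The reverse direction, namely that the $\liminf$-expression with $\inf_{\q^\bullet}$ placed outside already dominates $\cost(\xi^\bullet,\eta^\bullet)$, is the substantive step and is exactly \cite[Corollary 6.5]{H16}. The main obstacle lies there: given a non-equivariant coupling $\q^\bullet$ realising the $\liminf$ along some subsequence $(\Lambda_{n_k})_k$, one must stationarise it into an equivariant coupling of no larger per-volume cost. This is achieved by averaging $\q^\bullet$ over spatial shifts within $\Lambda_{n_k}$ and extracting a subsequential limit, using compactness of couplings whose marginal intensities are controlled together with the ergodic theorem to guarantee that the averaging does not inflate the per-volume cost. Applying this identity to each fixed $\Q$ and taking $\inf_{\Q\in\Cpl_s(\P_0,\P_1)}$ on all sides completes the proof.
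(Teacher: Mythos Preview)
Your proposal is correct and follows essentially the same route as the paper: reduce to the formulation $\C(\P_0,\P_1)=\inf_{\Q,(\xi^\bullet,\eta^\bullet)\sim\Q}\cost(\xi^\bullet,\eta^\bullet)$ from Proposition~\ref{prop:Problem2} and then invoke \cite[Corollary 6.5]{H16} at fixed $\Q$ before taking the outer infimum. The paper's proof is a single sentence to this effect; your additional unpacking of the easy direction and the stationarisation sketch behind \cite[Corollary 6.5]{H16} is accurate but not needed beyond what the citation already supplies.
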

\begin{proof}
This follows from applying \cite[Corollary 6.5]{H16} to the following formulation of the cost \[
\C(\P_0,\P_1)=\inf_{\substack{\Q\in \mathsf{Cpl}_s(\P_0,\P_1)\\ (\xi^\bullet,\eta^\bullet)\sim  \Q} } \cost(\xi^{\bullet},\eta^{\bullet}).
\]
\end{proof}

  \subsection{Existence of optimal pairs}

As explained above, we have two layers of optimization problems. One on the level of distributions of random measures, that is \eqref{eq:Problem}, and one on the level of random measures itself, that is \eqref{eq:costRandomMeasures}. In this section we will verify the existence of optimal couplings on both layers.
\begin{defi}\label{def:optimalpair}
Let $\xi^\bullet\sim\P_0$, $\eta^\bullet\sim\P_1$ be jointly equivariant random Radon measures.
\begin{enumerate}[a)]
    \item We call $\Q\in\Cpl(\P_0,\P_1)$ an (optimal) distribution-coupling of $\P_0$ and $\P_1$, if it minimizes \eqref{eq:Problem}, i.e. $$\C(\P_0,\P_1)=\IE_\Q(c).$$
    \item We call $\q^\bullet\in\cpl_e(\xi^\bullet,\eta^\bullet)$ an (optimal) spatial/process-coupling of $\xi^\bullet$ and $\eta^\bullet$, if it minimizes \eqref{eq:costRandomMeasures}, i.e. $$\cost(\xi^\bullet,\eta^\bullet)=\IE\Big[\int_{ \Lambda_1\times \IR^d} \vartheta(x-y)\q^\bullet(dx,dy)\Big].$$
    \item We call \gls{Qq} an (optimal) pair, if $\Q$ is an (optimal) distribution-coupling and $\q^\bullet$ is an (optimal) process coupling of $(\xi^\bullet,\eta^\bullet)\sim\Q$.
    \item We call $\q^\bullet\in\cpl_e(\xi^\bullet,\eta^\bullet)$ a matching if a.s. $\q^\bullet\in \Gamma_{\IR^{2d}}$.
\end{enumerate}
\end{defi}

\begin{rem}\label{rem:q_implies_Q}
   Every process coupling $\q$ of $\P_0$ and $\P_1$ induces a distribution coupling $\Q$ of  $\P_0$ and $\P_1$, defined by \[
\Q=\mathsf{law}((\pr_1)_{\#}\q,(\pr_2)_{\#}\q),
\]
 where $\pr_i:\IR^d\times \IR^d\to \IR^d$ is the projection on the $i$-th coordinate. 
 Furthermore,  applying the disintegration theorem, see \cite[Theorem 5.3.1]{AGS08},  to the probability space $(\mathcal M(\IR^d\times \IR^d),\mathsf{law}(\q))$ and the map $(\pr_1,\pr_2):\mathcal M(\IR^d\times \IR^d)\to \mathcal M(\IR^d)^2$ yields a family of probability measures $(\mu_{(\xi,\eta)})_{(\xi,\eta)\in \mathcal M(\IR^d)^2}$ on $\mathcal M(\IR^d)^2$ such that\begin{enumerate}
     \item  For all measurable $A\subset \mathcal M(\IR^d\times \IR^d)$ the map $(\xi,\eta)\mapsto \mu_{(\xi,\eta)}(A)$ is measurable
     \item For $\Q$-almost all $(\xi,\eta)$ the measure $\mu_{(\xi,\eta)}$ is concentrated on the set $\{q\in \mathcal M(\IR^d\times \IR^d)\mid (\pr_1,\pr_2)(q)=(\xi,\eta)\}$
     \item For every measurable $f:\mathcal M(\IR^d\times \IR^d)\to [0,\infty]$ \[
     \IE[f(\q)]=\int \Q(d(\xi,\eta)) \int \mu_{(\xi,\eta)}(dq)f(q).
     \]
     \end{enumerate}
     Define the random coupling 
     $\tilde{\q}^{\bullet}:(\mathcal M(\IR^d)^2,\Q)\to \mathcal M(\IR^d\times \IR^d)$ by \[
     \tilde{\q}^{(\xi,\eta)}(A)=\int q(A)\mu_{(\xi,\eta)}(dq).
     \]\begin{align*}
    \IE_{\Q}[f(\tilde{\q})]&=\int \Q(d(\xi,\eta))\int \mu_{(\xi,\eta)}(dq)f(q)=\IE[f(\q)].
     \end{align*}
    This implies that $\tilde{\q}$ is a process coupling of $\P_0$ and $\P_1$ on the canonical setup induced by $\Q$. Furthermore, $\tilde{\q}$ yields the same cost as $\q$, i.e.
    \[
    \IE\left[\int_{\Lambda_1\times \IR^d}\vartheta(x-y)\q(dx,dy)\right]
    =\IE_{\Q}\left[\int_{\Lambda_1\times \IR^d}\vartheta(x-y)\tilde{\q}(dx,dy)\right].
    \] 
\end{rem}

\begin{prop}\label{prop:lsc_costfct+existence_min}
The cost function $\C:\mathcal P_s(\mathcal M(\IR^d))^2\to [0,\infty)$ is lower semicontinuous.
    For $\P,\mathsf R\in \mathcal P_s(\mathcal M(\IR^d))$ with finite cost $\C(\P,\R)<\infty$, there exist jointly equivariant random measures $\xi^{\bullet}\sim \P$ and $\eta^{\bullet}\sim \mathsf R$ and a process-coupling $\q^\bullet\in \mathsf{cpl}_e(\xi^{\bullet},\eta^{\bullet})$ such that \[
    \C(\P,\mathsf R)=\IE\Big[\int_{ \Lambda_1\times \IR^d} \vartheta(x-y)\q^\bullet(dx,dy)\Big]\;.
    \]
\end{prop}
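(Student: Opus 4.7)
The statement splits into lower semicontinuity of $\C$, and existence of an optimal equivariant process coupling attaining $\C(\P,\R)$; I address them in turn. For the lower semicontinuity I would combine the compactness of $\Cpl_s$ from Lemma~\ref{lem:compact} with the finite-volume reformulation of Corollary~\ref{cor:equiv_cost}. Given $\P_0^n\to\P_0$ and $\P_1^n\to\P_1$, choose near-optimal stationary couplings $\Q^n\in\Cpl_s(\P_0^n,\P_1^n)$ with $\IE_{\Q^n}[c]\le\C(\P_0^n,\P_1^n)+1/n$. Tightness of the marginals together with preservation of stationarity under weak limits (the latter via the test-function characterisation of Lemma~\ref{lem:charStat}) yield a subsequential limit $\Q\in\Cpl_s(\P_0,\P_1)$. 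For each $m\in\IN$ set
\[
\C_m(\Q') \,:=\, m^{-d}\,\IE_{\Q'}\Big[\inf_{\q\in\cpl(\xi,\eta)}\int_{\Lambda_m\times\IR^d}\vartheta(x-y)\,\q(dx,dy)\Big].
\]
The inner optimum is a classical transport problem on the compact source $\Lambda_m$ with non-negative lsc ground cost, hence lsc in $(\xi,\eta)$ under vague convergence; Portmanteau's theorem then renders $\Q'\mapsto\C_m(\Q')$ lsc under weak convergence on $\Cpl_s$. Combining this lsc with Lemma~\ref{lem:Cn} (which gives $\C_m(\Q^n)\le\IE_{\Q^n}[c]$) and Corollary~\ref{cor:equiv_cost} (which gives $\C(\P_0,\P_1)\le\liminf_m\C_m(\Q)$) yields
\[
\C(\P_0,\P_1)\,\le\,\liminf_m\C_m(\Q)\,\le\,\liminf_m\liminf_n\C_m(\Q^n)\,\le\,\liminf_n\C(\P_0^n,\P_1^n),
\]
which is the claimed lsc.

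Running the same extraction with constant marginals $\P_0^n\equiv\P$, $\P_1^n\equiv\R$ produces an optimal distribution coupling $\Q\in\Cpl_s(\P,\R)$ with $\IE_\Q[c]=\C(\P,\R)$. To upgrade this to an equivariant process coupling I invoke Proposition~\ref{prop:Problem2}, which gives $\IE_\Q[c]=\inf_{(\xi^\bullet,\eta^\bullet)\sim\Q}\cost(\xi^\bullet,\eta^\bullet)$, so the remaining task is to attain this infimum on the canonical setup $(\mathcal M(\IR^d)^2,\Q)$ equipped with the diagonal shift flow. The plan is to extract a minimising sequence $\q^{\bullet,k}\in\cpl_e(\xi^\bullet,\eta^\bullet)$, use coercivity of $\vartheta$ together with the uniform bound on $\IE[\int_{\Lambda_1\times\IR^d}\vartheta\,d\q^{\bullet,k}]$ to obtain tightness of the restrictions $\q^{\bullet,k}|_{\Lambda_1\times\IR^d}$, lift this to tightness on all of $\IR^d\times\IR^d$ by equivariance, pass to a vague subsequential limit $\q^\bullet$, and verify that the marginal and equivariance constraints survive the limit by testing against the functions of Lemma~\ref{lem:charStat}. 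Fatou combined with lsc of $\q\mapsto\int_{\Lambda_1\times\IR^d}\vartheta(x-y)\,d\q$ then shows that $\q^\bullet$ attains the infimum.

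The most delicate step is the construction of the equivariant limit $\q^\bullet$: process couplings take values in infinite Radon measures, and stability of equivariance and of the infinite marginals under vague weak limits is non-trivial. This is exactly the setting worked out by Huesmann--Sturm in \cite{HS13, H16} for fixed distributional coupling, and my plan is to invoke their construction once the optimal $\Q$ has been fixed by the lsc argument, so that the genuine novelty of the proposition lies in coupling the two layers of optimisation together.
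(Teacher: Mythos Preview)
Your approach differs genuinely from the paper's. The paper proves lower semicontinuity and existence in a single pass: it takes near-optimal \emph{process} couplings $\q_n^\bullet$ (rather than distribution couplings $\Q^n$), places them all on a common compact probability space, passes to vague limits at the level of Campbell measures $\q_n^\bullet\,\IP$ on $\IR^d\times\IR^d\times\Omega$, and invokes \cite[Proposition~8.5]{H16} to produce a limiting equivariant coupling whose cost does not exceed the $\liminf$. Your two-stage plan---first extract a limiting $\Q$ and use lower semicontinuity of the finite-volume functionals $\C_m$, then upgrade to a process coupling via \cite{H16} once $\Q$ is fixed---is conceptually appealing in that it separates the two optimisation layers, and your chain of inequalities is set up correctly via Lemma~\ref{lem:Cn} and Corollary~\ref{cor:equiv_cost}.

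The step you flag as ``classical'', however, is not: the map
\[
(\xi,\eta)\;\longmapsto\;\inf_{\q\in\cpl(\xi,\eta)}\int_{\Lambda_m\times\IR^d}\vartheta(x-y)\,\q(dx,dy)
\]
is a transport problem between \emph{infinite} Radon measures in the \emph{vague} topology, with the coupling constraint imposed on all of $\IR^d$, not on the compact source $\Lambda_m$. Vague convergence of $(\xi_k,\eta_k)$ does not by itself guarantee that a vague limit of near-optimal $\q_k\in\cpl(\xi_k,\eta_k)$ lies in $\cpl(\xi,\eta)$: marginals can lose mass at infinity, and the restriction of a full coupling to $\Lambda_m\times\IR^d$ is not determined by $\xi|_{\Lambda_m}$ alone. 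One can repair this---use coercivity of $\vartheta$ to get tightness of the second marginal on $\Lambda_m\times\IR^d$, then complete the deficient limit coupling off $\Lambda_m$ at zero extra cost---but that is exactly the tightness-and-extension argument that \cite[Proposition~8.5]{H16} packages at the Campbell-measure level. So your split relocates the key technical difficulty into the innocuous-looking lsc claim for $\C_m$ rather than avoiding it; the paper's one-shot route is more economical because it invokes that machinery only once. Your Part~2 (existence for fixed optimal $\Q$) is fine as written.
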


\begin{proof}
Let $\P_n\to \P$ and $\mathsf R_n\to \mathsf R$ weakly as $n\to \infty$. 
    We want to use \cite[Proposition 8.5]{H16}. To this end, we need to rewrite our setup a bit.
    For each $n\in \IN$, by Proposition \ref{prop:Problem2} there exist 
    jointly equivariant random measures $\xi_n^{\bullet}\sim \P_n$ and $\eta_n^{\bullet}\sim \mathsf R_n$ and  process-couplings $\q_n^\bullet\in \mathsf{cpl}_e(\xi_n^{\bullet},\eta_n^{\bullet})$ such that \begin{align}\label{eq:existesnce_cost}
     \IE\Big[\int_{ \Lambda_1\times \IR^d} \vartheta(x-y)\q^\bullet_n(dx,dy)\Big]\leq 
     \C(\P_n,\mathsf R_n)+\frac{1}{n}.
    \end{align}  By \cite[Proposition 3.18]{H16}
    we can assume that $\q_n^\bullet\in \mathsf{cpl}_e(\xi_n^{\bullet},\eta_n^{\bullet})$ is an optimal process-coupling.
    We can assume  the probability spaces, on which the measures $\q_n^\bullet$ are defined, to be compact. By considering the product space, which is compact by Tychonoff's theorem, with the product action, we can assume that they are defined on the same compact probability space $\Omega$. By considering subsequences, we can assume that  \[
    \lim_{n\to \infty}\C(\P_{n},\mathsf R_{n})=\liminf_{n\to \infty}\C(\P_n,\mathsf R_n).
    \]On $\IR^d\times \Omega$ consider the sequence of Campbell measures $(\xi^{\omega}_n(dx)\mathbb{P}(d\omega))_n$ and $(\eta^{\omega}_n(dx)\mathbb{P}(d\omega))_n$ which we denote by $\xi_n^{\bullet}\mathbb{P}$ and $\eta_n^{\bullet}\mathbb{P}$. We claim that both sequences are vaguely relatively compact in $\mathcal M(\IR^d\times \Omega)$. We prove this only for the first sequence, since the proof is exactly the same for the other sequence. By \cite[Theorem A2.3]{Kallenberg} we have to show that for $f\in \mathcal C_c(\IR^d\times \Omega)$ we have
    \[
    \sup_{n\in \IN}\int f d\xi^\bullet_n\mathbb{P}<\infty.
    \] 
Let $f$ be such a function and let $A\subset \IR^d$ be compact such that $\supp{f}\subset A\times \Omega$ (recall that $\Omega$ was chosen to be compact but this is not necessary).  Then 
    \begin{align*}
    \int f(x,\omega)\xi_n^{\omega}(dx)\mathbb{P}(d\omega)
    &\leq \lVert f\rVert_{\infty} \int \eins_{A}(x)\xi_n^{\omega}(dx)\mathbb{P}(d\omega).
    \end{align*}
    Since the intensities of the random measures $\xi_n^{\bullet}$ are uniformly bounded (they are all equal to one), we obtain that  the sequences $(\xi_n^{\bullet}\mathbb{P})_n$ and $(\eta_n^{\bullet}\mathbb{P})_n$ are relatively compact. By considering  subsequences we can assume that 
    \[
\xi_n^{\bullet}\mathbb{P} \to \xi^{\bullet}\mathbb{P}
    \text{ and }\eta_n^{\bullet}\mathbb{P} \to \eta^{\bullet}\mathbb{P}
    \]
    vaguely in $\mathcal M(\IR^d\times \Omega)$ as $n\to \infty$, where $\xi^\bullet$ and $\eta^\bullet$ are some invariant random measures. 
    From the weak convergence $\P_n\to \P$ as $n\to \infty$ it follows that $\xi^{\bullet}\sim \P$ and similiarly it follows that $\eta^{\bullet}\sim \mathsf R$.
    Now \cite[Proposition 8.5]{H16} yields the existence of a coupling $\q\in \mathsf{cpl}_e(\xi^{\bullet},\eta^{\bullet})$ such that \[
    \IE\Big[\int_{ \Lambda_1\times \IR^d} \vartheta(x-y)\q^\bullet(dx,dy)\Big]
    \leq 
    \liminf_{n\to \infty }\IE\Big[\int_{ \Lambda_1\times \IR^d} \vartheta(x-y)\q_n^\bullet(dx,dy)\Big]\leq \liminf_{n\to \infty} \C(\P_n,\mathsf R_n).
    \]
    This proves the lower semicontinuity and the existence of a minimizing process coupling.
\end{proof}

In the case of point processes, the optimal process coupling will also be a point process, i.e.~a matching.

\begin{prop}\label{prop:matching}
Let $\P,\mathsf R\in \mathcal P_s(\Gamma)$ be two distributions of stationary point processes with finite cost $\C(\P,\R)<\infty$. Then, there exist jointly equivariant point processes $\xi^{\bullet}\sim \P$ and $\eta^{\bullet}\sim \mathsf R$ and we can choose the optimal process coupling  $\q^\bullet\in \mathsf{cpl}_e(\xi^{\bullet},\eta^{\bullet})$ of Proposition  \ref{prop:lsc_costfct+existence_min} as a matching.
       \[    \C(\P,\mathsf R)=\IE\Big[\int_{ \Lambda_1\times \IR^d} \vartheta(x-y)\q^\bullet(dx,dy)\Big].    \]
   \end{prop}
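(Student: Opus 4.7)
The plan is to upgrade the optimal equivariant process coupling $\q^\bullet\in\cpl_e(\xi^\bullet,\eta^\bullet)$ given by Proposition \ref{prop:lsc_costfct+existence_min} to a matching. Since $\xi^\omega$ and $\eta^\omega$ are counting measures for $\IP$-a.e.\ $\omega$, the marginal constraints force $\q^\omega$ to be supported on the countable product $\supp(\xi^\omega)\times\supp(\eta^\omega)$ and to identify with a doubly-stochastic weight system on this bipartite set. A Birkhoff--von Neumann type argument then guarantees fibrewise the existence of a $\{0,1\}$-valued doubly-stochastic weight system, i.e.\ a matching, whose cost is no larger than that of $\q^\omega$, since the transport functional is linear on the convex set of doubly-stochastic arrays and its minimum is attained at an extreme point.

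The genuine task is to perform this selection measurably in $\omega$ and in a shift-equivariant way. Following the techniques of \cite{HS13, H16}, I would enrich the underlying probability space by an auxiliary independent stationary randomisation — for instance an iid $\mathcal U([0,1])$-mark attached to every atom of $\xi^\bullet$ and $\eta^\bullet$ — which breaks ties in an equivariant and measurable fashion. For each $n\in\IN$, one then selects the cost-minimising assignment between the points of $\xi^\bullet$ and $\eta^\bullet$ lying in $\Lambda_n$, and extracts a vague subsequential limit $\tilde\q^\bullet$ of the resulting equivariant finite-window matchings. Compactness (Lemma \ref{lem:compact}, applied to the associated Campbell measures as in the proof of Proposition \ref{prop:lsc_costfct+existence_min}) yields the existence of such a limit, and $\tilde\q^\bullet\in\cpl_e(\xi^\bullet,\eta^\bullet)$ by construction.

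Lower semicontinuity of the transport functional, together with the finite-volume bound of Lemma \ref{lem:Cn}, gives $\IE[\int_{\Lambda_1\times\IR^d}\vartheta(x-y)\tilde\q^\bullet(dx,dy)]\le\cost(\xi^\bullet,\eta^\bullet)=\C(\P,\R)$, and the reverse inequality follows from $\tilde\q^\bullet\in\cpl_e(\xi^\bullet,\eta^\bullet)$, so $\tilde\q^\bullet$ is optimal. The main obstacle I anticipate is verifying that the vague limit $\tilde\q^\bullet$ is itself a counting measure rather than a coupling with atoms of possibly non-integer mass: a priori, atoms of the approximating matchings could merge in the limit. However, the unit-intensity assumption on both marginals uniformly controls local point counts, and combined with the equivariant tie-breaking, this prevents atoms of mass above one from forming. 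This is precisely the type of selection argument spelled out in detail in \cite{H16}, which I would invoke to close the proof.
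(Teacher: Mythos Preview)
Your plan has the right high-level shape --- approximate by finite-volume matchings and pass to a limit --- but two genuine gaps remain.

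First, the finite-window matching you propose, between the points of $\xi^\bullet$ and $\eta^\bullet$ lying in $\Lambda_n$, need not exist as a bijection: $\xi^\omega(\Lambda_n)$ and $\eta^\omega(\Lambda_n)$ are almost surely different. More importantly, any box-restricted assignment depends on the fixed window $\Lambda_n$ and is \emph{not} equivariant; your claim that $\tilde\q^\bullet\in\cpl_e(\xi^\bullet,\eta^\bullet)$ ``by construction'' does not follow. The paper repairs the first issue by taking an optimal \emph{partial} matching $\hat\q_n$ from $\xi|_{\Lambda_n}$ into all of $\eta$, then projecting the exterior matched points of $\eta$ back onto $\partial\Lambda_n$ along the transport segments to obtain $\eta_n$ with $\xi_n(\Lambda_n)=\eta_n(\Lambda_n)$. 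It repairs the second by \emph{stationarising}: tiling $\IR^d$ with iid copies of the box matching and applying an independent uniform shift in $\Lambda_n$, which yields a genuinely equivariant $\bar\q_n$ with cost bounded by $\cost(\xi,\eta)$.

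Second, and this is where the real work lies, the vague limit $\bar\q$ of the stationarised matchings is automatically a counting measure (the space of point processes is closed in $\mathcal M$), so your worry about non-integer atom mass is not the obstacle. The actual obstacle is identifying the \emph{marginals} of $\bar\q$: stationarisation scrambles the boundary, and one must argue that the second marginal of $\bar\q$ still has law $\R$. The paper does this by a sandwiching argument: shifting instead by a slightly smaller uniform $\tilde U_n\sim\mathcal U(\Lambda_{n-\sqrt n})$ one has $(\tilde\eta_n)|_{\Lambda_r}\subset(\theta_{\tilde U_n}\eta)|_{\Lambda_r}$ for fixed $r$, passes to a joint subsequential limit in $\mathcal P_s(\Gamma\times\Gamma)$, and uses the intensity-one constraint to force equality of distributions on the diagonal. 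Your appeal to \cite{H16} for a ``selection argument'' does not cover this step; the marginal identification is specific to the construction here. Finally, the Birkhoff--von~Neumann heuristic you invoke for the infinite bipartite array is not directly usable either: extreme points of infinite doubly stochastic arrays need not be permutation matrices, which is precisely why one is forced through the finite-volume route.
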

\begin{proof}

Similar to the the proof of Proposition \ref{prop:lsc_costfct+existence_min} we assume that all random variables are defined on a common compact probability space $\Omega$.

Let $\hat \q_n$ be an optimal partial matching between $\xi_{|\Lambda_n}$ and $\eta$ which exists by Birkhoff's theorem and measurable selection, e.g. \cite[Corollary 5.22]{Villani}. Observe that the normalised cost of this matching satisfies 
 \[\cost_n=n^{-d} \IE[\int_{\Lambda_n\times \IR^d} \vartheta(x-y)\hat \q_n(dx,dy)] \leq \inf_{\q\in \mathsf{cpl}_e(\xi,\eta)} \IE[\int_{\Lambda_1\times \IR^d}\vartheta(x-y)\q(dx,dy)]=\cost(\xi,\eta).\]
 Let $\xi_n=\xi_{|\Lambda_n}.$ For $(x,y)\in\Lambda_n\times\IR^d$ let
  \begin{align*}
      t_{(x,y)}&=\max\{t\in[0,1]:(1-t)x+ty\in\Lambda_n\}\\
      z&=z(x,y)=(1-t_{(x,y)})x+t_{(x,y)}y.
  \end{align*}
 Put $\eta_n=z_\#\hat \q_n$. Note, that $(\xi_n,\eta_n)$ are two point processes with the same number statistic on $\Lambda_n$, i.e. $\xi_n(\Lambda_n)$ and $\eta_n(\Lambda_n)$ have the same distribution,  and normalised transport cost bounded by $c_n$. Moreover, we have
 \begin{equation}\label{eq:etasub}
   (\eta_n)_{|(-\frac{n}{2},\frac{n}{2})^d} \subset \eta_{|\Lambda_n} ,
 \end{equation}
  where for two  configurations $\xi_1, \xi_2\in \Gamma$ we write $\xi_1\subset \xi_2$ iff 
  \[
\forall x\in \IR^d: \xi_1(\{x\})=1 \implies \xi_2(\{x\})=1.
\]
Let $ \q_n$ be an optimal matching of $\xi_n$ and $\eta_n$, $(\q_{n,z})_{z\in \IZ^d}$ be iid copies of $ \q_n$ and
  $U_n$ be independent and uniformly distributed on $\Lambda_n$. Define its stationarization by \[
\bar{\q}_{n}=\theta_{U_n}\left(\sum_{z\in \IZ^d}\theta_{nz}\q_{n,nz} \right).
    \]
Note that by a suitable extension of the underlying probability  space and the measurable flow, we may assume that $\bar \q_n$ is equivariant.
An easy calculation shows (see for example the proof of Proposition \ref{prop:cost_modified_processes}) that $\bar \q_n$  has transport cost uniformly bounded by $\cost(\xi,\eta)$, i.e. \begin{align}\label{eq:unifbound}
\IE\Big[\int_{ \Lambda_1\times \IR^d} \vartheta(x-y)\bar \q_n(dx,dy)\Big]\leq \cost_n\leq \cost(\xi,\eta).
\end{align}

Note that, by construction, the  marginals of $\bar \q_n$ have intensity equal to one.
It then follows, just as in Proposition \ref{prop:lsc_costfct+existence_min}, that the sequence $(\bar \q_n)_{n\in \IN}$ is tight w.r.t. the vague topology on $\mathcal M(\IR^d\times \IR^d\times \Omega)$. Denote by $\bar \q$ a limit point of a suitable  subsequence, which by abuse of notation we still denote by $(\bar \q_n)_{n\in \IN}$. Then 
  \[
    \IE\Big[\int_{ \Lambda_1\times \IR^d} \vartheta(x-y)\bar\q(dx,dy)\Big]
    \leq 
    \liminf_{n\to \infty }\IE\Big[\int_{ \Lambda_1\times \IR^d} \vartheta(x-y)\bar \q_n(dx,dy)\Big]\leq \cost(\xi,\eta).
    \]
Furthermore, the space of point processes is closed in the space of random measures and since for all $n\in \IN$ the random measure $\bar \q^n$ is a.s.\ a matching, also $\bar \q$ is a matching a.s.. It remains to show that the marginals of the equivariant random matching $\bar \q$, which we denote by $\bar \xi$ and $\bar \eta$, have the same distribution as $\xi$ and $\eta$, i.e. they  are distributed according to $\mathsf P$ and $\mathsf R$ respectively. We prove this for $\bar \eta$, since the other case is easier and can be proved analogously.

Let $\tilde U_n$ be an independent and uniformly distributed random variable on $\Lambda_{n-\sqrt{n}}$. Similar to the definition of $\bar \q^n$ set \begin{align*}
\tilde \q^n=\theta_{\tilde U_n}\left(\sum_{z\in \IZ^d}\theta_{nz}\q_{n,nz} \right).
\end{align*}
 It is immediate to check that also $\tilde \q^n$ converges to $\bar \q$ w.r.t. the vague topology on $\mathcal M(\IR^d\times \IR^d\times \Omega)$ as $n\to \infty$. 
 Denote by $\tilde \eta_n$ the second marginal of $\tilde \q^n$. 
 Note that combining the convergence $\tilde\q^n\xrightarrow{n\to \infty}\bar \q$ with the uniform bound on the costs \eqref{eq:unifbound}, it can be shown exactly as in the proof of \cite[Proposition 3.18]{H16} that  $\tilde \eta_n\xrightarrow{n\to \infty}\bar \eta$ w.r.t. the vague topology on $\mathcal M(\IR^d\times \Omega)$. 
In particular, this implies for a function $f\in \mathcal C_c^{\infty}(\IR^d)$ that \[
\IE\left[\int fd\tilde \eta_n\right]\xrightarrow{n\to\infty}\IE\left[\int fd\bar \eta\right],
\]
 which implies that the intensity of $\bar \eta$ is greater or equal than one.
 Furthermore, for fixed $r>0$ and $n$ large enough,  \eqref{eq:etasub} implies that  \begin{align}\label{eq:subset}
     (\tilde \eta_n)_{| \Lambda_r}=(\theta_{\tilde U_n}\eta_n)_{| \Lambda_r}\subset (\theta_{\tilde U_n}\eta)_{| \Lambda_r} .
 \end{align}
Note that for all $n\in \IN$ the equivariant random measures $\theta_{\tilde U_n}\eta$ have the same distribution as $\eta$ (by independence of $\tilde U_n$), i.e. their distribution is equal to $\mathsf R$. 
Set $\Q_n=\mathsf{law}(\tilde\eta_n,\theta_{\tilde U_n}\eta)\in \mathcal P_s(\Gamma\times \Gamma)$. The sequence $(\Q_n)_{n\geq 1}$ is relatively compact since the sequences of the marginals are relatively compact. Let $\Q$ denote a limit point of a suitable subsequence.
Then $\Q$ is a  coupling of the distribution of $\bar \eta$ and the distribution of $\eta$.
Note that \eqref{eq:subset} implies that $\Q$ is concentrated on the set $\{(\xi_1,\xi_2)\in \Gamma^2: \xi_1\subset \xi_2\}$.
Since the intensity of $\bar \eta$ is greater or equal than one the intensity of  $\eta$, it follows that $\Q$ is concentrated on the diagonal  $\{(\xi,\xi)\in \Gamma^2: \xi \in \Gamma\}$. Hence, the distribution of $\bar \eta$ is equal to the distribution of $\eta$, which is $\R\in \mathcal P_s(\Gamma)$.
\end{proof}

From now on, we will often work with a fixed optimal process coupling $\q$ between $\P_0$ and $\P_1$.
By Remark \ref{rem:q_implies_Q} we may assume the probability space, on which $\q$ is defined, to be the canonical one, i.e. $\q$ is defined on $\mathcal M(\IR^d)^2$ equipped with the optimal distribution coupling $\Q$ induced by $\q$.

\subsection{Geodesics}\label{sec:geodesics}
In this section we will show that $\C$ defines a geodesic metric. We start with the following gluing lemma adapted to our context.
\begin{lem}[Gluing]
Let $\xi\in\mathcal M$ and $\q_0,\q_1\in\mathcal M(\IR^d\times\IR^d)$ such that $\q_0(\IR^d\times A)=\xi(A)=\q_1(A\times \IR^d)$, then there exist $\q\in\mathcal M(\IR^d\times\IR^d\times\IR^d)$ such that $\q(\cdot\times \IR^d)=\q_0$ and $\q(\IR^d\times \cdot)=\q_1$. 
\end{lem}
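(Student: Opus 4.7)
The plan is to use disintegration against the common middle marginal $\xi$ and to reproduce the classical coupling-gluing argument in the setting of locally finite measures. Since $\xi\in\mathcal M(\IR^d)$ is Radon (hence $\sigma$-finite by $\sigma$-compactness of $\IR^d$), and the same is true of $\q_0,\q_1\in\mathcal M(\IR^d\times\IR^d)$, the disintegration theorem applies on each compactly-supported slice and then extends globally.

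\textbf{Step 1: Disintegration.}
First I would disintegrate $\q_0$ against its second marginal: since $\q_0(\IR^d\times A)=\xi(A)$ and $\xi$ is $\sigma$-finite on the Polish space $\IR^d$, there exists a probability kernel $y\mapsto \kappa_0(y,\cdot)$ on $\IR^d$ such that
\begin{equation*}
\q_0(dx,dy)=\kappa_0(y,dx)\,\xi(dy).
\end{equation*}
Analogously, disintegrating $\q_1$ against its first marginal $\xi$ yields a probability kernel $y\mapsto \kappa_1(y,\cdot)$ with $\q_1(dy,dz)=\kappa_1(y,dz)\,\xi(dy)$. If one prefers to avoid the $\sigma$-finite version of the disintegration theorem, one can pick an exhaustion $\IR^d=\bigcup_n B_n$ by open balls, observe that the restricted measures $\q_0|_{\IR^d\times B_n}$ and $\xi|_{B_n}$ are finite (the latter since $\xi$ is locally finite, and then $\q_0(\IR^d\times B_n)=\xi(B_n)<\infty$), apply the standard disintegration on each $B_n$, and patch the resulting kernels together using $\sigma$-additivity.

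\textbf{Step 2: Gluing.}
With the kernels in hand, I would define $\q\in\mathcal M(\IR^d\times\IR^d\times\IR^d)$ via
\begin{equation*}
\q(dx,dy,dz):=\kappa_0(y,dx)\,\kappa_1(y,dz)\,\xi(dy),
\end{equation*}
i.e.\ the measure that on a bounded Borel rectangle $A\times B\times C$ takes the value $\int_B\kappa_0(y,A)\kappa_1(y,C)\,\xi(dy)$. Measurability of the integrand in $y$ is ensured by the measurability of the kernels. Local finiteness of $\q$ is immediate: for compact $K_1,K_2,K_3\subset\IR^d$,
\begin{equation*}
\q(K_1\times K_2\times K_3)\le \q(K_1\times K_2\times\IR^d)=\q_0(K_1\times K_2)<\infty,
\end{equation*}
since $\q_0$ is Radon.

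\textbf{Step 3: Marginals.}
Finally I would verify that $\q(\,\cdot\,\times\IR^d)=\q_0$ and $\q(\IR^d\times\,\cdot\,)=\q_1$. For the first,
\begin{equation*}
\q(A\times B\times\IR^d)=\int_B\kappa_0(y,A)\kappa_1(y,\IR^d)\,\xi(dy)=\int_B\kappa_0(y,A)\,\xi(dy)=\q_0(A\times B),
\end{equation*}
using that each $\kappa_1(y,\cdot)$ is a probability measure, and the second identity follows symmetrically.

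\textbf{Main obstacle.}
The only genuine subtlety is justifying the disintegration for possibly non-probability Radon measures; this is the reason for either appealing to the $\sigma$-finite version of the theorem on the Polish space $\IR^d$, or to the localization-and-patching workaround sketched above. Once the kernels exist, the definition and verification of $\q$ are routine.
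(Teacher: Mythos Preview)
Your proof is correct and follows essentially the same approach as the paper: disintegrate $\q_0$ and $\q_1$ against the common middle marginal $\xi$ to obtain probability kernels, then glue by taking the product of kernels over $\xi$. The paper handles the $\sigma$-finiteness issue by restricting the middle variable to boxes $\Lambda_n$, disintegrating the resulting finite measures, and passing to a vague limit, which is exactly your ``localization-and-patching'' alternative; your write-up is in fact more careful about verifying local finiteness and the marginal identities than the paper's sketch.
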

\begin{proof}
We approximate $\q_i$ by $\q_{0,n}=\q_0(\cdot\cap (\IR^d\times \Lambda_n))$ and $\q_{1,n}=\q_1(\cdot\cap (\Lambda_n\times \IR ^d))$ for $\Lambda_n\nearrow\IR^d$. By disintegrating the finite measures for each $n\in\IN$, there exists a family of probability measures $(\q^y_{0,n})_{y\in\IR^d}$ such that $\q_{0,n}(dx,dy)=\q^y_{0,n}(dx)\eins_{\Lambda_n}(y)\xi(dy)$ (similar for $\q_1$).
Then, define $\q$ to be the vague limit of $\q^y_{0,n}(dx)\q^y_{1,n}(dz)\eins_{\Lambda_n}(y)\xi(dy)$ as $n\to\infty$.
\end{proof}

We consider the case of cost functionals $c$ given as in \eqref{eq:costMeasures} with $\vartheta(x)=|x|^p$ for some $p\ge 1$.
Then, following the classical notation for the Wasserstein metric, let us denote
\begin{align}\label{eq:Wp}
\gls{Wp}(\P_0,\P_1)=\glsdisp{C}{\C_p(\P_0,\P_1)}^{1/p}
\end{align}
for $\P_0, \P_1\in \mathcal P_s(\mathcal M(\IR^d))$.

\begin{lem}
 If $p\ge 1$, then $(\mathcal P_s(\mathcal M(\IR^d)),\W_p)$ is an extended  metric space.
\end{lem}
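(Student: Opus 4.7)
We have to check the three axioms of an extended metric: positive-definiteness, symmetry, and the triangle inequality. Nonnegativity is trivial from $\vartheta \geq 0$ and taking $p$-th roots, and both $\W_p(\P_0,\P_0)=0$ (take the diagonal coupling $\q^\bullet$ obtained by running any equivariant identification $\xi=\eta$) and the $[0,\infty]$-valuedness are immediate. I would verify the remaining three properties as follows.

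\emph{Definiteness.} Assume $\W_p(\P_0,\P_1)=0$. Combining Proposition~\ref{prop:Problem2} with the existence statement in Proposition~\ref{prop:lsc_costfct+existence_min}, pick jointly equivariant random measures $\xi^\bullet\sim\P_0$ and $\eta^\bullet\sim\P_1$ on a common probability space together with an optimal equivariant coupling $\q^\bullet\in\cpl_e(\xi^\bullet,\eta^\bullet)$ realising $\IE\int_{\Lambda_1\times\IR^d}|x-y|^p\,\q^\bullet(dx,dy)=0$. Thus $\q^\bullet$ is almost surely supported on the diagonal of $\Lambda_1\times\IR^d$; equivariance together with tiling $\IR^d$ by translates of $\Lambda_1$ upgrades this to the full diagonal of $\IR^d\times\IR^d$, whence $\xi^\bullet=\eta^\bullet$ a.s.\ and in particular $\P_0=\P_1$.

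\emph{Symmetry.} This is the only point the authors warn is not obvious, since the ground cost $c(\xi,\eta)$ in \eqref{eq:costMeasures} is not manifestly symmetric (the normalising box sits only on the first coordinate). Here the passage through Proposition~\ref{prop:Problem2} pays off: given an optimal equivariant pair $(\xi^\bullet,\eta^\bullet,\q^\bullet)$ for $\C(\P_0,\P_1)$, the swapped coupling $\tilde\q^\bullet(A\times B):=\q^\bullet(B\times A)$ lies in $\cpl_e(\eta^\bullet,\xi^\bullet)$ and is still equivariant. Because $|x-y|^p$ depends only on $x-y$, the mass transport principle applied to the equivariant random measure $\q^\bullet$ and integrand $|x-y|^p$ gives
\[\IE\int_{\Lambda_1\times\IR^d}|x-y|^p\,\q^\bullet(dx,dy)=\IE\int_{\IR^d\times\Lambda_1}|x-y|^p\,\q^\bullet(dx,dy)=\IE\int_{\Lambda_1\times\IR^d}|x-y|^p\,\tilde\q^\bullet(dx,dy),\]
so $\C(\P_0,\P_1)\geq\cost(\eta^\bullet,\xi^\bullet)\geq\C(\P_1,\P_0)$; the reverse inequality is symmetric.

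\emph{Triangle inequality.} Given $\P_0,\P_1,\P_2\in\mathcal P_s(\mathcal M(\IR^d))$ with both $\W_p(\P_0,\P_1)$ and $\W_p(\P_1,\P_2)$ finite (else nothing to show), apply Proposition~\ref{prop:lsc_costfct+existence_min} to obtain optimal equivariant couplings $\q_{01}^\bullet\in\cpl_e(\xi_0^\bullet,\xi_1^\bullet)$ and $\q_{12}^\bullet\in\cpl_e(\tilde\xi_1^\bullet,\xi_2^\bullet)$. A standard disintegration argument, performed equivariantly using the canonical setup of Remark~\ref{rem:q_implies_Q}, lets one realise both couplings on a single probability space with $\xi_1^\bullet=\tilde\xi_1^\bullet$. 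Then the gluing lemma just proved, applied pointwise in $\omega$, yields a random measure $\q^\bullet$ on $\IR^d\times\IR^d\times\IR^d$ whose $(0,1)$- and $(1,2)$-marginals are $\q_{01}^\bullet$ and $\q_{12}^\bullet$; a measurable selection in the gluing keeps $\q^\bullet$ equivariant. Its $(0,2)$-marginal $\pi_{02\#}\q^\bullet$ lies in $\cpl_e(\xi_0^\bullet,\xi_2^\bullet)$, and Minkowski's inequality on the measure space $(\Lambda_1\times\IR^d\times\IR^d,\IP\otimes\q^\bullet)$ applied to $(x_0,x_1,x_2)\mapsto|x_0-x_1|+|x_1-x_2|\geq|x_0-x_2|$ yields
\[\W_p(\P_0,\P_2)\leq\Bigl(\IE\!\int|x_0-x_1|^p\q^\bullet\Bigr)^{\!1/p}\!+\Bigl(\IE\!\int|x_1-x_2|^p\q^\bullet\Bigr)^{\!1/p}=\W_p(\P_0,\P_1)+\W_p(\P_1,\P_2).\]

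\emph{Expected difficulty.} The symmetry step is the conceptual heart of the statement and is clean once the equivariant reformulation is in place. The main technical nuisance is the triangle inequality, specifically ensuring measurability/equivariance of the gluing $\q^\bullet$ and arranging the two couplings on a joint probability space with coinciding middle marginal; these are by now standard manipulations for stationary couplings but require some bookkeeping.
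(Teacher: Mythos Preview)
Your approach matches the paper's essentially point for point: symmetry via the mass-transport principle applied to an optimal equivariant process coupling, definiteness by forcing $\q^\bullet$ onto the diagonal, and the triangle inequality by gluing first the distribution couplings $\Q_0,\Q_1$ and then the process couplings $\q_0,\q_1$, followed by Minkowski.

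There is one genuine omission in your triangle-inequality step. After Minkowski on the domain $\Lambda_1\times\IR^d\times\IR^d$, the second summand is
\[
\Bigl(\IE\int_{\Lambda_1\times\IR^d\times\IR^d}|x_1-x_2|^p\,\q^\bullet(dx_0,dx_1,dx_2)\Bigr)^{1/p},
\]
and you claim this equals $\W_p(\P_1,\P_2)$. But the box $\Lambda_1$ sits on the $x_0$-coordinate, not the $x_1$-coordinate, so this is \emph{not} directly $\IE\int_{\Lambda_1\times\IR^d}|x_1-x_2|^p\,\q_{12}^\bullet$. You need precisely the mass-transport principle again (exactly as you used it for symmetry) to swap the box to the middle coordinate:
\[
\IE\int_{\Lambda_1\times\IR^d\times\IR^d}|x_1-x_2|^p\,d\q^\bullet
=\IE\int_{\IR^d\times\Lambda_1\times\IR^d}|x_1-x_2|^p\,d\q^\bullet
=\IE\int_{\Lambda_1\times\IR^d}|x_1-x_2|^p\,d\q_{12}^\bullet.
\]
The paper makes this step explicit; once you insert it, your argument is complete and coincides with theirs.
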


We will see later, in Lemma \ref{thm:convdist_convweak}, that convergence in $\W_2$ implies weak convergence.

\begin{proof}
Clearly, $\W_p$ is non-negative. To show the symmetry
let $\P_0, \P_1\in \mathcal P_s(\mathcal M(\IR^d))$ with $\C_p(\P_0,\P_1)<\infty$ and let $(\Q_0,\q_0)$ be an optimal pair.
Applying the mass-transport principle
to the function $f:\IZ^d\times \IZ^d\to [0,\infty]$ defined by $f(u,v)=\IE_{\Q} [\int_{u+\Lambda_1\times v+\Lambda_1}|x-y|^p\q_0(dx,dy)]$, see \cite[\S 8]{Lyons} or \cite[Lemma 3.8]{H16}, yields  \[
\C_p(\P_0,\P_1)=\sum_{v\in \IZ^d}f(0,v)=\sum_{u\in \IZ^d}f(u,0)=\IE_{\Q} 
\int_{\IR^d\times \Lambda_1}|x-y|^p\q_0(dx,dy)\geq \C_p(\P_1,\P_0).
\]
Hence $\W_p(\P_0,\P_1)= \W_p(\P_1,\P_0)$ with the same optimal coupling $\q_0$.
Assuming $\W_p(\P_0,\P_1)=0$, then by equivariance 
\[
0=\IE \int_{\Lambda_n\times \IR^d}|x-y|^p \q_0(dx,dy)=\IE \int_{ \IR^d\times \Lambda_n}|x-y| ^p\q_0(dx,dy)
\]  
for all $n\in \IN$. Hence for all $n\in \IN$, it holds $\Q_0$-a.s.~that $x=y$ $\q_0$-a.e., or with other words $(\P_0)_{\Lambda_n}=(\P_1)_{\Lambda_n}$. This implies $\P_0=\P_1$.

To show the triangle inequality, let furthermore $\P_2\in \mathcal P_s(\mathcal M(\IR^d))$ with $\C_p(\P_1,\P_2)<\infty$ and let $(\Q_1,\q_1)$ be an optimal pair of $\P_1$ and $\P_2$. Applying the gluing procedure first to the probability measures $\Q_0$ and $\Q_1$ and then to the realisations of $\q_0$ and $\q_1$ we obtain an equivariant coupling $(\Q,\q)$ of $\P_0$, $\P_1$ and  $\P_2$. Two applications of the Minkowski inequality imply
\begin{align*}
\W_p(\P_0,\P_2)&= \Bigg(\IE_{\Q}\int_{\Lambda_n\times \IR^d \times \IR^d}\abs{x-z}^p d\q\Bigg)^{1/p}\\
&\le \Bigg(\IE_{\Q}\int_{\Lambda_n\times \IR^d \times \IR^d}\abs{x-y}^p d\q\Bigg)^{1/p}+\Bigg(\IE_{\Q}\int_{\Lambda_n\times \IR^d \times \IR^d}\abs{y-z}^p d\q\Bigg)^{1/p},
\end{align*}
where we abbreviated $\q(dx,dy,dz)$ by $d\q$.
By the mass-transport principle, the integration can be rephrased as
\begin{align*}
\IE_{\Q}\int_{\Lambda_n\times \IR^d \times \IR^d}\abs{y-z}^p d\q=\IE_{\Q}\int_{\IR^d\times \Lambda_n \times \IR^d}\abs{y-z}^p d\q=\IE_{\Q}\int_{\Lambda_n\times \IR^d }\abs{y-z}^p d\q_1.
\end{align*}
Together, we obtain the triangle inequality
\begin{align*}
    \W_p(\P_0,\P_2)\le \Bigg(\IE_{\Q}\int_{\Lambda_n\times \IR^d }\abs{x-y}^p d\q_0\Bigg)^{1/p}+\Bigg(\IE_{\Q}\int_{\Lambda_n\times \IR^d }\abs{y-z}^p d\q_1\Bigg)^{1/p}=    \W_p(\P_0,\P_1)+    \W_p(\P_1,\P_2).
\end{align*}
\end{proof}

Now, we are ready to define geodesics on the space of distributions of random measures. 
Let  $\P_0, \P_1\in \mathcal P_s(\mathcal M(\IR^d))$ with $\C_p(\P_0,\P_1)<\infty$ and let $(\Q,\q)$ be an optimal pair. For $t\in[0,1]$ define the map $\mathsf{geo}_t:\IR^d\times \IR^d\to \IR^d$ by $\mathsf{geo}_t(x,y)=x+t(y-x)$. Then, the push-forward $(\mathsf{geo}_t)_\#\q\in\mathcal M(\IR^d)$ is the process interpolating the points of the support of $\q$ according to the Euclidean geodesics. We let  $\P_t$ be its distribution, i.e. 
$\gls{Pt}=\mathsf{law}\big((\mathsf{geo}_t)_\#\q\big)$.

\begin{lem}
The family $(\P_t)_{t\in[0,1]}$ is a constant speed geodesic from $\P_0$ to $\P_1$ with respect to $\W_p$.
\end{lem}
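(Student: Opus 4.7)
The plan is to verify three properties of the family $(\P_t)_{t\in[0,1]}$: that each $\P_t \in \mathcal P_s(\mathcal M(\IR^d))$, that the endpoints agree with the given $\P_0, \P_1$, and the constant-speed identity $\W_p(\P_s,\P_t) = (t-s)\W_p(\P_0,\P_1)$ for all $0 \le s \le t \le 1$.

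Stationarity and the endpoints come essentially for free. Since $\mathsf{geo}_t(x+z, y+z) = \mathsf{geo}_t(x,y) + z$, the pushforward $(\mathsf{geo}_t)_\#\q^\bullet$ inherits equivariance from $\q^\bullet$, and hence $\P_t$ is stationary. The endpoint values match because $\mathsf{geo}_0(x,y) = x$ and $\mathsf{geo}_1(x,y) = y$, so the two marginals of $\q^\bullet$ appear, which by construction have distributions $\P_0$ and $\P_1$.

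For the constant-speed identity I propose, for $0 \le s \le t \le 1$, the candidate process coupling of $\xi_s^\bullet := (\mathsf{geo}_s)_\#\q^\bullet \sim \P_s$ and $\xi_t^\bullet := (\mathsf{geo}_t)_\#\q^\bullet \sim \P_t$ given by $\q_{s,t}^\bullet := (\mathsf{geo}_s,\mathsf{geo}_t)_\#\q^\bullet$. The same diagonal-equivariance argument yields $\q_{s,t}^\bullet \in \cpl_e(\xi_s^\bullet,\xi_t^\bullet)$. Combining the pushforward identity with $|\mathsf{geo}_t(x,y) - \mathsf{geo}_s(x,y)| = (t-s)|y-x|$, Proposition~\ref{prop:Problem2} yields
\begin{align*}
\C_p(\P_s,\P_t) \;\le\; \IE \int_{\Lambda_1 \times \IR^d} |u-v|^p\, \q_{s,t}^\bullet(du,dv) \;=\; (t-s)^p\, \IE \int |y-x|^p\, \eins_{\Lambda_1}\!\big(\mathsf{geo}_s(x,y)\big)\, \q^\bullet(dx,dy).
\end{align*}

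The key identification is that the last expectation equals $\C_p(\P_0,\P_1) = \IE \int_{\Lambda_1 \times \IR^d} |x-y|^p\, \q^\bullet$. Equivariance of $\q^\bullet$ forces its intensity measure $\bar\q := \IE[\q^\bullet]$ to be invariant under the diagonal $\IR^d$-action, and disintegrating after the change of coordinates $(x,y) \mapsto (x, y-x)$ produces the factorization $\bar\q(dx,dy) = dx\cdot \nu(d(y-x))$ for a $\sigma$-finite measure $\nu$ on $\IR^d$. For every displacement $w \in \IR^d$, translation-invariance of Lebesgue measure gives $\int_{\IR^d} \eins_{\Lambda_1}(x + sw)\, dx = 1$, so both expectations above reduce to $\int_{\IR^d} |w|^p\, \nu(dw)$. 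This yields $\W_p(\P_s,\P_t) \le (t-s)\W_p(\P_0,\P_1)$, and combining with the triangle inequality along $0 \le s \le t \le 1$,
\begin{align*}
\W_p(\P_0,\P_1) \;\le\; \W_p(\P_0,\P_s) + \W_p(\P_s,\P_t) + \W_p(\P_t,\P_1) \;\le\; \big(s + (t-s) + (1-t)\big)\, \W_p(\P_0,\P_1),
\end{align*}
forces equality throughout, proving the constant-speed property. The only delicate step is the intensity-measure factorization; one can alternatively compute both expectations on windows $\Lambda_N$ via the mass-transport principle and absorb the $O(N^{d-1})$ boundary defect as $N \to \infty$.
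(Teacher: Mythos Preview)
Your proof is correct and follows the same overall strategy as the paper: construct the candidate coupling $\q^{s,t}=(\mathsf{geo}_s,\mathsf{geo}_t)_\#\q$, establish the upper bound $\C_p(\P_s,\P_t)\le (t-s)^p\C_p(\P_0,\P_1)$, and then upgrade to equality via the triangle inequality. The only difference lies in how the window restriction $\Lambda_1\times\IR^d$ is handled. The paper applies the mass-transport principle twice: first swapping the domain to $\IR^d\times\Lambda_1$ so that the window sits on the $\mathsf{geo}_t$-coordinate (shared by $\q^{s,t}$ and $\q^{0,t}$), then scaling and swapping back. You instead pass to the intensity measure $\bar\q=\IE[\q^\bullet]$, exploit its diagonal-shift invariance to obtain the Palm-type factorization $\bar\q(dx,dy)=dx\,\nu(d(y-x))$, and use translation invariance of Lebesgue to make the indicator $\eins_{\Lambda_1}(\mathsf{geo}_s(x,y))$ irrelevant. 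Both routes are standard; the paper's double mass-transport trick is a bit slicker, while your argument makes the Palm structure explicit (and you already note the mass-transport alternative yourself).
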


\begin{proof}
It is sufficient to show that $(\P_t)_{t\in [0,1]}$ satisfies
$$ \C_p(\P_s,\P_t) \leq (t-s)^p \C_p(\P_0, \P_1)$$
for any $0\leq s<t\leq 1$. We will show that $(\P_t)_{t\in [0,1]}$ defines a geodesic. Let $0\leq s<t\leq 1$. We define the random equivariant couplings by 
$\q^{s,t}:=(\mathsf{geo}_s,\mathsf{geo}_t)(\q)$. 
For $s=0$ it then follows that
\[
\C_p(\P_0,\P_s)\leq \IE_{\Q}\left[\int_{\Lambda_1\times \IR^d }\abs{x-y}^p\q^{0,t}(dx,dy)\right]=t^p\C_p(\P_0,\P_1).
\]
Moreover, by the transport principle we obtain
\begin{align*}
\C_p(\P_s,\P_t)&\leq \IE_{\Q}\left[\int_{\Lambda_1\times \IR^d }\abs{x-y}^p\q^{s,t}(dx,dy)\right]\\
& =\IE_{\Q}\left[\int_{\IR^d \times \Lambda_1 }\abs{x-y}^p\q^{s,t}(dx,dy)\right]\\
&= \frac{(t-s)^p}{t^p}\IE_{\Q}\left[\int_{\IR^d \times \Lambda_1 }\abs{x-y}^p\q^{0,t}(dx,dy)\right]\\
&=\frac{(t-s)^p}{t^p}\IE_{\Q}\left[\int_{\Lambda_1\times \IR^d }\abs{x-y}^p\q^{0,t}(dx,dy)\right]\\
&=(t-s)^p\C_p(\P_0,\P_1).
\end{align*}
\end{proof}

The set of stationary point processes is geodesically closed in the following sense.
\begin{prop}\label{prop:closed_sPP}
    For  $\P_i\in \mathcal P_s(\Gamma)$ there exists a constant speed geodesic $(\P_t)_{0\leq t\leq 1}$  from $\P_0$ to $\P_1$ with respect to $\W_p$ such that $\P_t\in \mathcal P_s(\Gamma)$ for $0\leq t\leq 1$.
\end{prop}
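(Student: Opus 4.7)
The plan is to reuse the geodesic construction from the preceding lemma, but starting from an optimal coupling which is itself a point process (a matching), so that the geodesic interpolation lands automatically in the configuration space.

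First I would appeal to Proposition \ref{prop:matching}: assuming $\W_p(\P_0,\P_1)<\infty$ (the infinite-cost case being vacuous in an extended metric space), it produces jointly equivariant point processes $\xi^\bullet\sim\P_0$, $\eta^\bullet\sim\P_1$ together with an optimal equivariant process coupling $\q^\bullet\in\cpl_e(\xi^\bullet,\eta^\bullet)$ that is almost surely a locally finite counting measure on $\IR^{2d}$. I then set $\P_t:=\mathsf{law}\bigl((\mathsf{geo}_t)_\#\q^\bullet\bigr)$ for $t\in[0,1]$; the preceding lemma already certifies that this defines a constant speed $\W_p$-geodesic from $\P_0$ to $\P_1$.

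The remaining task is to verify that $\P_t\in\mathcal P_s(\Gamma)$. Since $\q^\bullet=\sum_i\delta_{(X_i,Y_i)}$ a.s., its pushforward equals $\sum_i\delta_{X_i+t(Y_i-X_i)}$, which is manifestly integer-valued; stationarity of its law is immediate from the intertwining identity $\mathsf{geo}_t(x+z,y+z)=\mathsf{geo}_t(x,y)+z$, which transfers the diagonal equivariance of $\q^\bullet$ to shift equivariance of the pushforward.

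The only nontrivial step, and the main obstacle, is almost sure local finiteness of $(\mathsf{geo}_t)_\#\q^\bullet$: this is delicate because $\mathsf{geo}_t^{-1}(A)$ is an unbounded slanted strip in $\IR^{2d}$, so local finiteness does not descend from $\q^\bullet$ automatically. My plan is to change coordinates to $(x,v):=(x,y-x)$, under which the diagonal $\IR^d$-action becomes a shift in the first coordinate only; in these coordinates $\q^\bullet$ becomes an equivariant marked point process with unit base intensity (its first marginal is $\xi^\bullet\sim\P_0$) and displacement marks $v$ whose distribution under the Palm measure is some probability distribution $\nu$ on $\IR^d$. The refined Campbell formula then gives, for every bounded Borel $A\subset\IR^d$,
\[
\IE\bigl[((\mathsf{geo}_t)_\#\q^\bullet)(A)\bigr]=\int_{\IR^d}dx\int_{\IR^d}\nu(dv)\,\eins_A(x+tv)=|A|\int\nu(dv)=|A|,
\]
so the pushforward has unit intensity, hence is almost surely locally finite. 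This yields $\P_t\in\mathcal P_s(\Gamma)$ as required.
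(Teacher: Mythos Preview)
Your proof is correct and follows the paper's own approach: invoke the optimal matching from Proposition \ref{prop:matching} and feed it into the geodesic construction of the preceding lemma. The paper's proof is a two-line assertion of exactly this; you go further by supplying the local-finiteness argument for $(\mathsf{geo}_t)_\#\q^\bullet$ via the intensity computation, a point the paper leaves implicit.
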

\begin{proof}
    Let $\q$ be the matching  given by Proposition \ref{prop:matching}. In particular, $\q$ is an optimal process coupling. Then $\P_t=((\mathsf{geo}_t)_\#\q)\in \mathcal P_s(\Gamma)$ defines a  constant speed geodesic.
\end{proof}

\begin{ex}\label{ex:grid}
Let us find the optimal pair of the stationarized grid and its i.i.d.~perturbation.
Consider two random variables $Y,Z$, both uniformly distributed on $\Lambda_1$ and an independent family of random variables $\{X_z\}_{z\in\IZ^d}$, also uniformly distributed on $\Lambda_1$.
Let $\P_0$ and $\P_1$ be the distributions of the point processes
\begin{align*}
    \xi^\bullet=\sum_{z\in\IZ^d} \delta_{z+Y}\ \text{and}\ \eta^\bullet=\sum_{z\in\IZ^d} \delta_{z+Z+X_z}.
\end{align*}
It is difficult not to see that the optimal distribution-coupling $\Q$ is given by the joint distribution of $(\xi^\bullet,\eta^\bullet)$ for the straightforward choice of $Z=Y$ and the optimal process coupling is given by
$$ \q ^\bullet=\sum_{z\in\IZ^d} \delta_{(z+Y,z+Y+X_z)}.$$
The geodesic is then given by the distribution of
\[(\mathsf{geo}_t)_\#\q^\bullet= \sum_{z\in\IZ^d} \delta_{z+Y+tX_z},\quad t\in[0,1]\]
hence the distribution of the displacement interpolation $\P_t=\mathsf{law}\big((\mathsf{geo}_t)_\#\q\big)
\in\mathcal P_s(\mathcal M(\IR^d))$ generates stationarized grids with independent perturbations $tX_z$, which are uniformly distributed on $\Lambda_a(z)$.
\end{ex}

In general, couplings between two random measures (or point processes) or their distributions need not be unique. However, if the distributions lie on a geodesic which can be extended in at least one direction, we have uniqueness.
\begin{lem}\label{lem:uniqueness_cpl}
Let  $0<t< 1$ and $\P_0,\P_t,\P_1$ be point processes s.t.\ $\P_t$ is an $t$-midpoint of $\P_0$ and $\P_1$, i.e. 
\begin{equation}\label{eq:tmidpoint}
\W_p(\P_0,\P_t)=t \cdot \W_p(\P_0,\P_1),\quad  \W_p(\P_t,\P_1)=(1-t) \cdot \W_p(\P_0,\P_1).
\end{equation}
Then there exists an unique optimal coupling of $\P_0$ and $\P_t$ and of $\P_t$ and $\P_1$.
\end{lem}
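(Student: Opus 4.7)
The plan is to adapt the classical uniqueness argument for optimal couplings along $\W_p$-geodesics (which requires $p>1$) to the stationary setting: the midpoint condition together with equality in Minkowski's inequality forces the intermediate point $x_t$ to be affinely determined by the endpoints, and the invertibility of that affine map propagates uniqueness from $\mathsf{s}$ to the $(0,t)$-coupling.

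Fix any optimal equivariant process coupling $\mathsf{s}\in\cpl_e(\xi_t^\bullet,\xi_1^\bullet)$ between $\P_t$ and $\P_1$. Given two optimal process couplings $\q^1_{0t},\q^2_{0t}\in\cpl_e(\xi_0^\bullet,\xi_t^\bullet)$, the gluing lemma --- applied at the level of distributional couplings and then equivariantly at the process level, after passing to a common enlarged stationary probability space with its measurable flow --- yields two equivariant $3$-point process couplings $\q^i$ of $(\xi_0^\bullet,\xi_t^\bullet,\xi_1^\bullet)$ whose $(0,t)$-marginal is $\q^i_{0t}$ and whose $(t,1)$-marginal is $\mathsf{s}$.

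Apply Minkowski's inequality in $L^p$ with base measure $d\mu^i:=\IE\!\int_{\Lambda_1\times(\IR^d)^2}(\cdot)\,d\q^i$ to the $\IR^d$-valued functions $f(x_0,x_t,x_1):=x_t-x_0$ and $g(x_0,x_t,x_1):=x_1-x_t$. By the mass-transport principle, $\|f\|_{L^p(\mu^i)}=\W_p(\P_0,\P_t)$ and (after shifting the unit base box from the first to the second coordinate) $\|g\|_{L^p(\mu^i)}=\W_p(\P_t,\P_1)$, while $\|f+g\|_{L^p(\mu^i)}^p$ is the cost of the admissible coupling $\q^i_{01}\in\cpl_e(\xi_0^\bullet,\xi_1^\bullet)$ and is therefore at least $\W_p(\P_0,\P_1)^p$. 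The midpoint identity \eqref{eq:tmidpoint} forces equality throughout. For $p>1$, the equality case of Minkowski combined with the pointwise triangle inequality on $\IR^d$ then yields $f=c\,g$ $\q^i$-a.s.\ with $c=\|f\|/\|g\|=t/(1-t)$, equivalently $x_t=(1-t)x_0+tx_1$ on $\supp\q^i$. Hence $\q^i$ is the pushforward of $\q^i_{0t}$ under the affine map $\Phi(x_0,x_t):=\bigl(x_0,x_t,(x_t-(1-t)x_0)/t\bigr)$.

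Finally, disintegrate with respect to $\xi_t^\bullet$: write $\q^i_{0t}=k^i_{x_t}\otimes \xi_t^\bullet$ and $\mathsf{s}=m_{x_t}\otimes \xi_t^\bullet$. Comparing the $(t,1)$-marginals in $\q^i=\Phi_{\#}\q^i_{0t}$ gives $(T_{x_t})_{\#}k^i_{x_t}=m_{x_t}$ for $\xi_t^\bullet$-a.e.\ $x_t$, where $T_{x_t}(x_0):=(x_t-(1-t)x_0)/t$ is an invertible affine map (using $0<t<1$). Therefore $k^i_{x_t}$ is uniquely determined by $m_{x_t}$ and independent of $i$, so $\q^1_{0t}=\q^2_{0t}$. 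Uniqueness of the induced distribution coupling follows at once, and the symmetric argument (exchanging the roles of the endpoints and gluing with a fixed optimal coupling between $\P_0$ and $\P_t$) handles the optimal coupling between $\P_t$ and $\P_1$. The main technical obstacle is the first step: performing the stationary three-fold gluing so that the resulting process coupling remains jointly equivariant under the flow, which forces one to enlarge the underlying probability space in a controlled way. A secondary caveat is the restriction to $p>1$: for $p=1$, equality in $L^1$-Minkowski does not force $f$ and $g$ to be positively proportional, and the argument breaks down.
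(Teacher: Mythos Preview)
Your argument is correct and follows essentially the same route as the paper: glue an optimal $(\P_0,\P_t)$-coupling with an optimal $(\P_t,\P_1)$-coupling, force equality in Minkowski via the midpoint condition, deduce the affine relation $x_t=(1-t)x_0+tx_1$ on the support of the glued coupling, and conclude via invertibility of the induced map $(x_0,x_t)\mapsto(x_t,(x_t-(1-t)x_0)/t)$.

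The differences are organizational rather than substantive. You run a single Minkowski inequality in the product measure $\mu^i=\IE\int_{\Lambda_1\times(\IR^d)^2}(\cdot)\,d\q^i$, whereas the paper splits it into two layers (first inside the spatial integral, then in expectation), extracting a random proportionality constant $\alpha$ and then showing it is deterministic. Your single-step version is cleaner; in fact the final disintegration into kernels $k^i_{x_t}$ is not even needed, since $\Psi=(x_0,x_t)\mapsto(x_t,T_{x_t}(x_0))$ is a global affine bijection of $(\IR^d)^2$ and $\q^i_{0t}=\Psi^{-1}_\#\mathsf{s}$ follows directly. You also fix one reference coupling $\mathsf{s}$ and compare two candidates against it, while the paper shows that any optimal $(\P_t,\P_1)$-coupling is the pushforward of any optimal $(\P_0,\P_t)$-coupling; these are logically equivalent. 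Finally, you are right to flag the restriction $p>1$: the equality case in Minkowski that both proofs rely on fails to give proportionality for $p=1$, and the paper leaves this implicit.
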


\begin{proof}
Let $(\Q_0,\q_0)$ be an optimal pair of $\P_0$ and $\P_t$ 
and $(\Q_1,\q_1)$ an optimal pair of $\P_t$ and $\P_1$. We will show that these couplings are unique by combining \eqref{eq:tmidpoint} with several triangle inequalities which allow us to lift the corresponding statements for points in $\IR^d$ to random measures.

To this end, define $ \Q$ and $\q$ by gluing first $ \Q_0$ and $ \Q_1$ and then $\q_0$ and $\q_1$.
The Minkowski triangle inequality reads
 \begin{align}
\Bigg(\int_{\Lambda_n\times \IR^d \times \IR^d}\abs{x-z}^pd\q\Bigg)^{1/p}
\leq \Bigg(\int_{\Lambda_n\times \IR^d \times \IR^d}\abs{x-y}^pd\q\Bigg)^{1/p}+
\Bigg(\int_{\Lambda_n\times \IR^d \times \IR^d}\abs{y-z}^pd\q\Bigg)^{1/p} \label{eq:triangle1}
\end{align}
for any $n>0$.
Furthermore 
\begin{align*}
&n^{d/p}\C_p(\P_0,\P_1)^{1/p}\\
&\leq \IE_{\Q}\left[\int_{\Lambda_n\times \IR^d \times \IR^d}\abs{x-z}^pd\q\right]^{1/p}\\
&\leq \IE_{\Q}\left[ \left(\left(\int_{\Lambda_n\times \IR^d \times \IR^d}\abs{x-y}^pd\q\right)^{1/p}+
\left(\int_{\Lambda_n\times \IR^d \times \IR^d}\abs{y-z}^pd\q\right)^{1/p}\right)^{p}\right]^{1/p} \numberthis \label{eq:triangle2}\\
&\leq \IE_{\Q}\left[\int_{\Lambda_n\times \IR^d \times \IR^d}\abs{x-y}^pd\q\right]^{1/p}
+\IE_{\Q}\left[\int_{\Lambda_n\times \IR^d \times \IR^d}\abs{y-z}^pd\q\right]^{1/p}\numberthis \label{eq:triangle3}\\
&= \IE_{\Q}\left[\int_{\Lambda_n\times \IR^d \times \IR^d}\abs{x-y}^pd\q\right]^{1/p}
+\IE_{\Q}\left[\int_{ \IR^d \times \Lambda_n\times\IR^d}\abs{y-z}^pd\q\right]^{1/p}\\
&=n^{d/p}\C_p(\P_0,\P_t)^{1/p}+n^{d/p}\C_p(\P_t,\P_1)^{1/p}\\
&=n^{d/p}\C_p(\P_0,\P_1)^{1/p},
\end{align*}
by the assumption \eqref{eq:tmidpoint}. Hence all above inequalities are equalities. 
By \cite[Chapter8, Problem 4]{zyg}, equality in the triangle inequalities implies that the two functions involved are collinear. For instance, from \eqref{eq:triangle3} we obtain the existence of $\kappa>0$ such that $ \Q$-a.s.
\begin{equation}\label{eq:equal1}
 \left(\int_{\Lambda_n\times \IR^d \times \IR^d}\abs{x-y}^pd\q\right)^{1/p}=
\kappa\left(\int_{\Lambda_n\times \IR^d \times \IR^d}\abs{y-z}^pd\q\right)^{1/p}.
\end{equation}
Combining \eqref{eq:triangle1} and \eqref{eq:triangle2} yields that $ \Q$-a.s.
\begin{align}\label{eq:equal2}
&\left(\int_{\Lambda_n\times \IR^d \times \IR^d}(\abs{x-y}+\abs{y-z})^pd\q\right)^{1/p}\\
&= \left(\int_{\Lambda_n\times \IR^d \times \IR^d}\abs{x-y}^pd\q\right)^{1/p}+
\left(\int_{\Lambda_n\times \IR^d \times \IR^d}\abs{y-z}^pd\q\right)^{1/p}.
\end{align}
Hence, by \cite[Chapter8, Problem 4]{zyg} there exists (a priori random) constant $\alpha>0$ such that 
\[
\alpha \abs{x-y}= \abs{y-z} \quad \forall (x,y,z)\in \mathsf{supp}(q)\cap \Lambda_n\times \IR^d \times \IR^d.
\] 
Since we have $\Q$ a.s.\ equality in the line of inequalities in \eqref{eq:triangle1} we obtain
\[
\left(\int_{\Lambda_n\times \IR^d \times \IR^d}\abs{x-z}^pd\q\right)^{1/p}=\left(\int_{\Lambda_n\times \IR^d \times \IR^d}(\abs{x-y}+\abs{y-z})^pd\q\right)^{1/p} \quad \Q a.s.
\] 
so that 
\[
y=(1-r)x+rz \quad \forall (x,y,z)\in \mathsf{supp}(q)\cap \Lambda_n\times \IR^d \times \IR^d,
\]
where $r\in [0,1]$ is random and potentially depends on the triple $(x,y,z)$. Our first goal is to show that $r$ is a deterministic constant. First note that 
\begin{align*}
\alpha \abs{x-y}&=\alpha\abs{x-(1-r)x-rz}=\alpha r\abs{x-z}\\
&= \alpha \frac{r}{1-r} \abs{y-z}=\alpha\frac{r}{1-r}\abs{x-y}.
\end{align*}
Hence $r$ is uniquely determined by $\alpha$. It remains to show that $\alpha$ is deterministic. To this end, observe that by \eqref{eq:equal1}
\begin{align*}
\left(\int_{\Lambda_n\times \IR^d \times \IR^d}\abs{x-y}^pd\q\right)^{1/p}
&= \frac{1}{\alpha} \left(\int_{\Lambda_n\times \IR^d \times \IR^d}\abs{y-z}^pd\q\right)^{1/p}\\
&=\frac{1}{\kappa\alpha} \left(\int_{\Lambda_n\times \IR^d \times \IR^d}\abs{x-y}^pd\q\right)^{1/p}.
\end{align*}
This shows that $ \Q$-a.s. $\frac{r}{1-r}=\frac{1}{\alpha}=\kappa$ is deterministic. Finally, we will show that in fact $r=t$ as expected.
By the mass-transport principle and our assumption \eqref{eq:tmidpoint} we have
\begin{align*}
&n^{d/p}(1-t)\C_p(\P_0,\P_1)^{1/p}=n^{d/p}\C_p(\P_t,\P_1)^{1/p}\\
&=\IE_{\Q}\left[\int_{\IR^d \times \Lambda_n\times  \IR^d}\abs{y-z}^2d\q\right]^{1/p}\\
&=\IE_{\Q}\left[\int_{ \Lambda_n\times \IR^d \times  \IR^d}\abs{y-z}^pd\q\right]^{1/p}\\
&=(1-r)\IE_{\Q}\left[\int_{\Lambda_n\times\IR^d \times   \IR^d}\abs{x-z}^pd\q\right]^{1/p}\\
&=n^{d/p}(1-r)\C_p(\P_0,\P_1)^{1/p}
\end{align*}
Hence $r=t$. Note that this argument is independent of $n>0$. Hence we have that $ \Q$-a.s. 
\[
y=(1-t)x+tz \quad \forall (x,y,z)\in \mathsf{supp}(q)\cap\IR^d \times \IR^d \times \IR^d.
\]
In particular $ \Q_1, \q_1$ is obtained by the pushforward of $ \Q_0, \q_0$ under the map $(x,y)\mapsto (y,\frac{y-(1-t)x}{t})$ and hence is uniquely determined. Switching the roles of $ \Q_0$ and $ \Q_1$ yields the uniqueness of $ \Q_0$.
\end{proof}

Finally, we show that convergence in $\W_2$ implies weak convergence.
\begin{lem}\label{thm:convdist_convweak}
Let $f\geq 0$ be a continuous function with compact support and $(\P_n)_n$ and $(\tilde{\P}_n)_n$ sequences of stationary point processes with bounded intensity such that \begin{align*}
\W_2(\mathsf{P}_n,\tilde{\P}_n)\xrightarrow{n\to \infty} 0. 
\end{align*}
Then \begin{align*}
\abs{\IE_{\P_n}\left[e^{-\int fd\xi}\right]-\IE_{\tilde{\P}_n}\left[e^{-\int fd\xi}\right]}\xrightarrow{n\to \infty}0.
\end{align*}
If in addition $\P_n\xrightarrow{\text{weakly}}\P$, for some stationary $\P$, then also $\tilde{\P}_n\xrightarrow{\text{weakly}}\P$.
\end{lem}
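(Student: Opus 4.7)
The plan is to use an optimal equivariant coupling between $\P_n$ and $\tilde\P_n$ to transfer $\W_2$-smallness into closeness of Laplace functionals, treating Lipschitz $f$ first and extending to all of $\mathcal C_c^+$ by mollification. By Proposition \ref{prop:matching}, for each $n$ fix jointly equivariant point processes $\xi_n^\bullet\sim\P_n$ and $\eta_n^\bullet\sim\tilde\P_n$ together with an optimal equivariant matching $\q_n^\bullet\in\cpl_e(\xi_n^\bullet,\eta_n^\bullet)$ realising
\[
\IE\int_{\Lambda_1\times\IR^d}|x-y|^2\,\q_n^\bullet(dx,dy)=\W_2^2(\P_n,\tilde\P_n)\xrightarrow{n\to\infty}0.
\]
The elementary bound $|e^{-a}-e^{-b}|\le|a-b|$ for $a,b\ge 0$ gives
\[
\bigl|\IE_{\P_n}[e^{-\int fd\xi}]-\IE_{\tilde\P_n}[e^{-\int fd\xi}]\bigr|\le\IE\Bigl|\int(f(x)-f(y))\,\q_n^\bullet(dx,dy)\Bigr|.
\]

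Suppose first that $f$ is $L$-Lipschitz with $\supp(f)\subset\Lambda_R$. Then $|f(x)-f(y)|\le L|x-y|(\eins_{\Lambda_R}(x)+\eins_{\Lambda_R}(y))$. Since the map $A\mapsto\int_{A\times\IR^d}|x-y|\,\q_n^\bullet(dx,dy)$ is an equivariant random measure on $\IR^d$, its expectation is proportional to Lebesgue, and Cauchy--Schwarz together with the uniform intensity bound $C$ yields
\[
\IE\int_{\Lambda_R\times\IR^d}|x-y|\,\q_n^\bullet(dx,dy)=R^d\,\IE\int_{\Lambda_1\times\IR^d}|x-y|\,\q_n^\bullet(dx,dy)\le R^d\W_2(\P_n,\tilde\P_n)\sqrt{C}.
\]
The mass-transport principle, applied exactly as in the proof of symmetry of $\W_p$, yields the analogous bound for the $\eins_{\Lambda_R}(y)$ contribution (with $\eta_n^\bullet$'s intensity). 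Hence the Laplace functionals agree in the limit for any Lipschitz $f$.

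For general $f\in\mathcal C_c^+(\IR^d)$ with $\supp(f)\subset\Lambda_R$, mollify $f_\varepsilon:=f\ast\rho_\varepsilon$ with $\rho_\varepsilon$ a standard mollifier supported in the ball of radius $\varepsilon\le 1$; then $f_\varepsilon$ is smooth (hence Lipschitz), $\supp(f_\varepsilon)\subset\Lambda_{R+1}$, and $\|f-f_\varepsilon\|_\infty\to 0$. For any stationary $\P$ with intensity $\le C$,
\[
\bigl|\IE_\P[e^{-\int fd\xi}]-\IE_\P[e^{-\int f_\varepsilon d\xi}]\bigr|\le\|f-f_\varepsilon\|_\infty\,(R+1)^dC,
\]
uniformly in $\P$. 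Combining this with the Lipschitz case via the triangle inequality and taking first $n\to\infty$ then $\varepsilon\to 0$ proves the first assertion. For the second, if $\P_n\to\P$ weakly then $\IE_{\P_n}[e^{-\int fd\xi}]\to\IE_\P[e^{-\int fd\xi}]$ for every $f\in\mathcal C_c^+$, and by the first assertion the same limit holds for $\tilde\P_n$; since convergence of Laplace functionals characterises vague convergence of point processes (Kallenberg's theorem, with tightness of $(\tilde\P_n)$ furnished by the uniform intensity bound), this gives $\tilde\P_n\to\P$ weakly. The main obstacle is the Lipschitz step: reducing the global $\W_2$-control to a per-box Cauchy--Schwarz bound requires both the equivariance of $\q_n^\bullet$ and the mass-transport principle to handle the asymmetric role of the indicator.
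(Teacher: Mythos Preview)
Your proof is correct and follows essentially the same route as the paper: both use the bound $|e^{-a}-e^{-b}|\le|a-b|$, reduce to Lipschitz $f$ via uniform approximation, and control the Lipschitz term by equivariance plus a twofold Cauchy--Schwarz (on the inner integral against $\q_n(\Lambda_1\times\IR^d)=\xi(\Lambda_1)$, then on the expectation) to extract the factor $\W_2(\P_n,\tilde\P_n)\sqrt{C}$. The only cosmetic differences are that you invoke Proposition~\ref{prop:matching} rather than the general optimal pair, and you make the mass-transport step for the $\eins_{\Lambda_R}(y)$ contribution explicit where the paper leaves it implicit.
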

\begin{proof}
Let $ (\Q_n,\q_n)$ be an optimal pair of $\P_n$ and $\tilde{\P}_n$. Then \begin{align*}
&\IE_{\P_n}\left[e^{-\int fd\xi}\right]-\IE_{\tilde{\P}_n}\left[e^{-\int fd\xi}\right]\\
&=\IE_{ \Q_n}\left[e^{-\int f(x)d\q_n(dx,dy)}-e^{-\int f(y)d\q_n(dx,dy)}\right]\\
&\leq \IE_{ \Q_n}\left[ \int \abs{f(y)-f(x)}d\q_n(dx,dy)\right],
\end{align*}
where in the last line we applied the mean value theorem to the function $e^{-x}$, $x>0$, and used the fact that $f\geq 0$. Now let $g$ be a $L$-Lipschitz function with compact support such that $\norm{f-g}_{\infty}\leq \epsilon$. Furthermore assume that $\supp{f}\cup \supp{g}\subset \Lambda_N$. Then \begin{align*}
&\IE_{ \Q_n}\left[ \int \abs{f(y)-f(x)}d\q_n(dx,dy)\right]\\
&\leq L \IE_{ \Q_n}\left[ \int_{\Lambda_N \times \IR^d} \abs{x-y}d\q_n(dx,dy)\right]+ \epsilon \IE_{\P_n}\left[ \xi(\Lambda_N)\right]+
\epsilon \IE_{\tilde{\P}_n}\left[ \xi(\Lambda_N)\right].
\end{align*}
 Then by stationarity we can estimate the first term in the above line by a constant times \begin{align*}
&\IE_{ \Q_n}\left[ \int_{\Lambda_1\times \IR^d} \abs{x-y}d\q_n(dx,dy)\right]\\
&\leq \IE_{ \Q_n}\left[ \left(\int_{\Lambda_1\times \IR^d} \abs{x-y}^2 d\q_n(dx,dy)\right)^{1/2}\xi(\Lambda_1)^{1/2}\right]\\
&\leq \IE_{ \Q_n}\left[ \int_{\Lambda_1\times \IR^d} \abs{x-y}^2 d\q_n(dx,dy)\right]^{1/2}  \IE_{\P_n}\left[ \xi(\Lambda_1)\right]^{1/2}\\
&=\W_2^2(\mathsf{P}_n,\tilde{\P}_n)^{1/2} \IE_{\P_n}\left[ \xi(\Lambda_1)\right]^{1/2},
\end{align*} 
the last line converges to $0$ as $n\to \infty$. This proves the claim since $\varepsilon$ can be chosen arbitrarily small.

The second part of the statement is a consequence of the equivalence of weak convergence and convergence of the Laplace functionals, see \cite[Theorem 4.11]{Kallenberg2}.
\end{proof}

\section{Constructions and Approximations}
From now on, we will work with point processes. In this section, we first recall the definition and properties of the specific relative entropy.
Then, we introduce various constructions of point processes that play an important role in our theory and in the end of this section, we present an approximation result concerning the specific relative entropy via one particular construction of modified stationary point processes.

Recall that the set of all stationary point processes on $\IR^d$ with intensity one is
\[
\spp1=\{\P\in \gls{Ps}(\mathcal{M}(\IR^d))\mid \P(\Gamma)=1\},
\] 
where $\Gamma$ is the configuration space on $\IR^d$ equipped with the topology of vague convergence.
 For any set $A\subseteq\IR ^d$ define the projection $\mathrm{pr}_A:\mathcal M(\IR^d)\to \mathcal M (A)$ on the space of Radon measures supported on $A$ via $\gls{pr_A}(\xi)=\xi(A\cap \cdot)$. For $\P\in \spp1$ denote the restriction $\gls{P_A}:= (\mathrm{pr}_A)_\# \P$. Recall that $\Poi$ is the distribution of the Poisson point process.
\subsection{Specific entropy}
 Assume $\P_A \ll\Poi_A$ for all bounded $A\in\mathcal B(\IR^d)$ and hence the Radon-Nikodym-derivative $\frac{d\P_A}{d\Poi_A}$ exists.\footnote{Even though this is implied by $\P \ll\Poi$, this assumption is  too restrictive as the only stationary distribution with $\P \ll\Poi$ is $\P =\Poi$ itself.}
The \emph{specific relative entropy} $ \mathcal E( \P)$ with respect to the Poisson point process, see \cite{EHL21,Serfaty,RAS,Georgii}, is defined as
\begin{align}\label{eq:specific_rel_ent}
    \gls{specE}( \P):=\lim_{n\to\infty} \frac 1{n^d}\mathsf{Ent}(\P_{\Lambda_n}|\Poi_{\Lambda_n}):=\lim_{n\to\infty} \frac 1{n^d} \int \log\Big(\frac{d\P_{\Lambda_n}}{d\Poi_{\Lambda_n}}(\xi)\Big)d\mathsf{P}_{\Lambda_n}(\xi).
\end{align}
Here, $\gls{Ent}$ is the classical relative entropy (or Kullback-Leibler-divergence). It has the following basic properties, see \cite{Serfaty,RAS,Georgii}.
\begin{lem}\label{lem:Georgii} Let $\P\in \spp1$. Then
\begin{enumerate}[a)]
    \item The limit in \eqref{eq:specific_rel_ent} exists in $[0,\infty]$.
    \item The map $\P\mapsto\mathcal E(\P)$ is affine and lower semi-continuous.
    \item The specific relative entropy vanishes iff $\P=\Poi$.
    \item The limit in the definition of the specific entropy can be replaced by a supremum, i.e. \begin{align}\label{eq:def_entrop_sup}
        \mathcal E( \P)=\sup_{n\in \IN} \frac 1{n^d}\mathsf{Ent}(\P_{\Lambda_n}|\Poi_{\Lambda_n})
    \end{align}
\end{enumerate}
\end{lem}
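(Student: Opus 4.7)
The plan is to obtain all four properties from a single structural observation, namely the superadditivity of the numerator $n \mapsto \mathsf{Ent}(\P_{\Lambda_n}|\Poi_{\Lambda_n})$, combined with classical properties of the relative entropy. The key algebraic input is that for disjoint bounded Borel sets $A,B\subset\IR^d$, $\Poi_{A\cup B}=\Poi_A\otimes\Poi_B$ (independence of Poisson on disjoint sets), while $\P_{A}$ equals $\P_{B}$ up to shift by stationarity whenever $A$ and $B$ are translates.

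For (a) and (d) I would argue as follows. Fix $m\in\IN$ and $k\in\IN$, set $n=km$, and partition $\Lambda_n$ (up to a Lebesgue-null set) into $k^d$ translates $z_i+\Lambda_m$. By the product structure of $\Poi_{\Lambda_n}$ and the general chain-rule inequality
\[
\mathsf{Ent}\!\left(\mu \,\Big|\, \bigotimes_i \nu_i\right) \;\geq\; \sum_i \mathsf{Ent}(\mu_i|\nu_i),
\]
applied to $\mu=\P_{\Lambda_n}$ with marginals $\mu_i=\P_{z_i+\Lambda_m}$, combined with stationarity $\P_{z_i+\Lambda_m}=(\theta_{z_i})_\#\P_{\Lambda_m}$ and the analogous invariance of $\Poi$, one obtains
\[
\mathsf{Ent}(\P_{\Lambda_n}|\Poi_{\Lambda_n}) \;\geq\; k^d\,\mathsf{Ent}(\P_{\Lambda_m}|\Poi_{\Lambda_m}),
\]
hence $n^{-d}\mathsf{Ent}(\P_{\Lambda_n}|\Poi_{\Lambda_n})\ge m^{-d}\mathsf{Ent}(\P_{\Lambda_m}|\Poi_{\Lambda_m})$ for all $m\mid n$. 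A standard Fekete-type extension to general $n$ (by comparing $\Lambda_n$ with $\Lambda_{\lfloor n/m\rfloor m}$ and controlling the annulus via $\Poi$-estimates) then gives $\liminf_n n^{-d}\mathsf{Ent}(\P_{\Lambda_n}|\Poi_{\Lambda_n})\ge \sup_m m^{-d}\mathsf{Ent}(\P_{\Lambda_m}|\Poi_{\Lambda_m})$, while the reverse inequality with $\limsup$ is tautological. This simultaneously proves (a) and (d).

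For (b) the lower semicontinuity is immediate from (d): $\P\mapsto\mathsf{Ent}(\P_{\Lambda_n}|\Poi_{\Lambda_n})$ is l.s.c.\ in the weak topology because $\mathrm{pr}_{\Lambda_n}$ is continuous and $\mathsf{Ent}(\cdot|\Poi_{\Lambda_n})$ is l.s.c., so $\mathcal E$ is a supremum of l.s.c.\ functionals. For affinity, given $\P=\lambda\P_0+(1-\lambda)\P_1$, convexity of relative entropy yields
\[
\mathsf{Ent}(\P_{\Lambda_n}|\Poi_{\Lambda_n}) \;\leq\; \lambda\,\mathsf{Ent}((\P_0)_{\Lambda_n}|\Poi_{\Lambda_n})+(1-\lambda)\mathsf{Ent}((\P_1)_{\Lambda_n}|\Poi_{\Lambda_n}),
\]
while the elementary reverse bound for mixtures against a common reference gives
\[
\mathsf{Ent}(\P_{\Lambda_n}|\Poi_{\Lambda_n}) \;\geq\; \lambda\,\mathsf{Ent}((\P_0)_{\Lambda_n}|\Poi_{\Lambda_n})+(1-\lambda)\mathsf{Ent}((\P_1)_{\Lambda_n}|\Poi_{\Lambda_n})-\log 2.
\]
Dividing by $n^d$ and sending $n\to\infty$, the $\log 2$ term vanishes and affinity follows.

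For (c), one direction is trivial since $\Poi_{\Lambda_n}$ is already the correct restriction so each finite-volume entropy vanishes. For the converse, if $\mathcal E(\P)=0$ then by (d) every term $\mathsf{Ent}(\P_{\Lambda_n}|\Poi_{\Lambda_n})$ equals zero, so $\P_{\Lambda_n}=\Poi_{\Lambda_n}$ for every $n$; since finite-dimensional distributions on bounded Borel sets determine a probability on $\Gamma$, this forces $\P=\Poi$. The only step I would expect to require care is the Fekete extension to non-multiple $n$ in part (a); the cleanest route is to embed $\Lambda_n$ into a cube of side length $\lceil n/m\rceil m$ and use superadditivity on that cube together with monotonicity bounds on the thin annulus, which is controlled since $\Poi$ assigns exponentially small mass to configurations with many extra points.
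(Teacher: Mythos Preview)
The paper does not prove this lemma; it simply cites \cite{Serfaty,RAS,Georgii} for these basic properties. Your self-contained argument is correct and is essentially the standard one found in those references (superadditivity via the tensorisation inequality for relative entropy plus stationarity, then a Fekete argument; affinity from the two-sided bound with the $H(\lambda)\le\log 2$ defect; part (c) from (d)).

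One minor simplification: in the Fekete step you do not need any ``$\Poi$-estimates on the annulus'' or an embedding into a larger cube. Monotonicity of relative entropy under the restriction map $\mathrm{pr}_{\Lambda_{km}}$ (the data-processing inequality, which is also the special case of Proposition~\ref{prop:disint_entr} in the paper) gives directly, with $k=\lfloor n/m\rfloor$,
\[
\mathsf{Ent}(\P_{\Lambda_n}|\Poi_{\Lambda_n})\;\ge\;\mathsf{Ent}(\P_{\Lambda_{km}}|\Poi_{\Lambda_{km}})\;\ge\;k^d\,\mathsf{Ent}(\P_{\Lambda_m}|\Poi_{\Lambda_m}),
\]
and dividing by $n^d$ and using $(km/n)^d\to 1$ yields $\liminf_n n^{-d}\mathsf{Ent}(\P_{\Lambda_n}|\Poi_{\Lambda_n})\ge m^{-d}\mathsf{Ent}(\P_{\Lambda_m}|\Poi_{\Lambda_m})$ for every $m$, which is all you need.
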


\begin{rem}
Contrary to the Wasserstein distance $\W_p$, the specific relative entropy of two stationary distributions with different intensities can be finite. For instance, consider $\Poi_1,\Poi_2$ with intensities $\int\xi(\Lambda_1)d\Poi_1=1,\int\xi(\Lambda_1)d\Poi_2=2$, respectively. Then, $\W_p(\Poi_2,\Poi_1)=\infty$ since on each box $\Lambda_n$ there are $\approx n^d$ points that have to be transported over distance $\approx n$. On the other hand, $\frac{d\Poi_{2,\Lambda_n}}{d\Poi_{1,\Lambda_n}}(\xi)=2^{\xi(\Lambda_n)}e^{-n^d}$ and hence $\mathcal E (\Poi_2)=2\log 2 -1<\infty$.
\end{rem}

\subsection{Tiling and Stationarizing}\label{subsec:tiling}
Next, we present a two step construction to obtain a stationary point process on $\IR^d$ from a point process on $\Lambda_n$. These constructions were for example used in \cite{Lebl__2016} and \cite{EHL21}.

The first construction concerns gluing together independent copies of the same process to obtain a \textit{tiling} of $\IR^d$.
Let $\P$ be the distribution of a point process on $\Lambda_n$. 
Consider the map 
\begin{align}\label{eq:map_til}
    (\Gamma_{\Lambda_n})^{\IZ^d}\to \Gamma_{\IR^d},\quad (\xi_{z})_{z\in \IZ^d} \mapsto \sum_{z \in \IZ^d}\theta_{nz}\xi_z.
\end{align}
Define $\gls{Ptil}$ as the pushforward of  the product measure $\otimes_{z\in \IZ^d}\P$ under the map \eqref{eq:map_til}.
The process $\P^{til}$ is in general not stationary. To obtain a stationary process we have to additionally integrate the shifted versions of $\P^{til}$ over the box $\Lambda_n$. This is done in the second step, \textit{stationarization}:

For a distribution $\P$ of a point process on $\Lambda_n$ define \begin{align}\label{def:stationatized_process}
   \gls{Pstat}=n^{-d}\int_{\Lambda_n}(\theta_z)_{\#}\P^{til}dz.
\end{align}
In other words, if $\xi\sim\P^{til}$ and $U\sim\mathcal U(\Lambda_n)$ is an independent uniformly distributed random variable, then $\P^{stat}$ is the distribution of $\theta_U\xi$.
 We note that the process $\P^{stat}$ is stationary.  Furthermore, we can bound the specific entropy of $\P^{stat}$ by the entropy of $\P$ as follows.
 \begin{lem}
 For any distribution $\P\in\mathcal P(\Gamma_{\Lambda_n})$ of point processes it holds
 \begin{align}\label{eq:bound_entropy_by_cube}
      \mathcal{E}(\P^{stat})\le n^{-d}\mathsf{Ent}(\P|\Poi_{\Lambda_n}).
  \end{align}
 \end{lem}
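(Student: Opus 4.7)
The plan is to combine three standard ingredients: the supremum (or limit) definition of the specific entropy from Lemma \ref{lem:Georgii}(d), the convexity of the relative entropy in its first argument, and the product/tensor structure of $\P^{til}$ over disjoint tiles. The main identity to exploit is that for any Borel set $A \subset \IR^d$ which is the disjoint union of $N$ tiles of the form $nz+\Lambda_n$, $z\in\IZ^d$, independence of the tiles and the tensorization of entropy yield
\begin{equation*}
\mathsf{Ent}\big(\P^{til}_{A}\,\big|\,\Poi_{A}\big) \;=\; N\cdot \mathsf{Ent}(\P\,|\,\Poi_{\Lambda_n}).
\end{equation*}

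First, fix $m\in n\IN$ and note that since restriction commutes with integration, $\P^{stat}_{\Lambda_m} = n^{-d}\int_{\Lambda_n} \big((\theta_z)_\#\P^{til}\big)_{\Lambda_m}\,dz$. Convexity of $\mu\mapsto \mathsf{Ent}(\mu|\Poi_{\Lambda_m})$ gives
\begin{equation*}
\mathsf{Ent}\big(\P^{stat}_{\Lambda_m}\,\big|\,\Poi_{\Lambda_m}\big)\;\le\; n^{-d}\int_{\Lambda_n} \mathsf{Ent}\Big(\big((\theta_z)_\#\P^{til}\big)_{\Lambda_m}\,\Big|\,\Poi_{\Lambda_m}\Big)\,dz.
\end{equation*}
For each $z\in\Lambda_n$, the restriction $\big((\theta_z)_\#\P^{til}\big)_{\Lambda_m}$ is the push-forward of $\P^{til}_{\Lambda_m+z}$ under $\theta_z$, and since $(\theta_z)_\#\Poi_{\Lambda_m+z}=\Poi_{\Lambda_m}$ by stationarity of the Poisson process, invariance of relative entropy under a common push-forward gives
\begin{equation*}
\mathsf{Ent}\Big(\big((\theta_z)_\#\P^{til}\big)_{\Lambda_m}\,\Big|\,\Poi_{\Lambda_m}\Big)\;=\;\mathsf{Ent}\big(\P^{til}_{\Lambda_m+z}\,\big|\,\Poi_{\Lambda_m+z}\big).
\end{equation*}

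Next, for $z\in\Lambda_n$ the set $\Lambda_m+z$ is contained in a union $A_{m}$ of at most $(m/n+2)^d$ tiles. Since restriction is a deterministic (projection) map between configuration spaces, the data-processing inequality for relative entropy yields $\mathsf{Ent}(\P^{til}_{\Lambda_m+z}|\Poi_{\Lambda_m+z}) \le \mathsf{Ent}(\P^{til}_{A_{m}}|\Poi_{A_{m}}) = (m/n+2)^d\,\mathsf{Ent}(\P|\Poi_{\Lambda_n})$ by the identity recalled above. Combining the last three displays,
\begin{equation*}
\mathsf{Ent}\big(\P^{stat}_{\Lambda_m}\,\big|\,\Poi_{\Lambda_m}\big)\;\le\;(m/n+2)^d\,\mathsf{Ent}(\P\,|\,\Poi_{\Lambda_n}).
\end{equation*}

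Dividing by $m^d$ and sending $m\to\infty$ along multiples of $n$ makes the prefactor tend to $n^{-d}$, and the resulting left-hand side converges to $\mathcal E(\P^{stat})$ by Lemma \ref{lem:Georgii}(a). This gives \eqref{eq:bound_entropy_by_cube}. The only mildly delicate step is the bookkeeping for the shifted boxes $\Lambda_m+z$, handled by the trivial enlargement to a tile-aligned $A_m$; the boundary layer costs only a multiplicative factor $(1+O(n/m))^d\to 1$ and does not affect the limit.
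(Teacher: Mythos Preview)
Your proof is correct and follows essentially the same approach as the paper: convexity of $\mu\mapsto\mathsf{Ent}(\mu|\Poi_{\Lambda_m})$ applied to the mixture defining $\P^{stat}$, combined with the tensorization of relative entropy over the independent tiles of $\P^{til}$. The only cosmetic difference is that the paper packages the tile computation as the identity $\mathcal E(\P^{til})=n^{-d}\mathsf{Ent}(\P|\Poi_{\Lambda_n})$ (citing \cite{RAS}) and then passes through $\mathcal E((\theta_z)_\#\P^{til})=\mathcal E(\P^{til})$, whereas you unfold this directly via the data-processing inequality and an explicit tile count with a boundary layer that vanishes in the limit; both routes amount to the same argument.
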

\begin{proof}
    By convexity of the relative entropy  and the fact that $\mathcal E\left( \P^{til}\right)=n^{-d}\mathsf{Ent}(\P|\Poi_{\Lambda_n})$, see \cite[Chapter 5 and 6]{RAS}, we have
\begin{align*}
    \mathcal{E}(\P^{stat})&=\sup_{m\in \IN}m^{-d}\mathsf{Ent}(\P^{stat}_{\Lambda_m}|\Poi_{\Lambda_m})\\
    &=\sup_{m\in \IN}m^{-d}\mathsf{Ent}(n^{-d}\int_{\Lambda_n}(\theta_z)_{\#}\P^{til}_{\Lambda_m}dz|\Poi_{\Lambda_m})  \\
    &\leq \sup_{m\in \IN}m^{-d}n^{-d}\int_{\Lambda_n}\mathsf{Ent}\left((\theta_z)_{\#}\P^{til}_{\Lambda_m}\mid \mathsf{Poi}_{\Lambda_m}\right)dz  \\
    &\leq n^{-d}\int_{\Lambda_n} \sup_{m\in \IN}m^{-d}\mathsf{Ent}\left((\theta_z)_{\#}\P^{til}_{\Lambda_m}\mid \mathsf{Poi}_{\Lambda_m}\right)dz  \\
    &=  n^{-d}\int_{\Lambda_n} \mathcal{E}\left((\theta_z)_{\#}\P^{til}\right)dz  \\
      &=  n^{-d}\int_{\Lambda_n} \mathcal{E}\left(\P^{til}\right)dz  =\mathcal{E}\left(\P^{til}\right)  =n^{-d}\mathsf{Ent}(\P|\Poi_{\Lambda_n}).
\end{align*}
\end{proof}

\subsection{Semigroups}
We now define semigroups on the space of point processes, first on the space of stationary point processes on $\IR^d$, afterwards on boxes $\Lambda_n$.

 On the configuration space $\Gamma$ we fix a label map 
 \begin{align}\label{eq:def_label}
 \gls{l}:\Gamma\to (\IR^d)^{\IN}, \xi\mapsto (\ell(\xi)^i)_{i\in \IN}. 
 \end{align}
 That is, the map $\ell$  satisfies the following: For all $i\in \IN$ it holds that  $\ell(\xi)^i\in \xi$ and  for all  $x\in \supp(\xi)$ there exists a unique $i\in \IN$ s.t. $\ell (\xi)^i=x$.
Let $\gls{Wiener}$ be the standard Wiener measure on $\mathcal C\left([0,\infty\right),\IR^d)$ and define for $t\geq 0$ a map $F_t:\Gamma\times \mathcal C([0,\infty),\IR^d)^{\IN}\to\Gamma$ by 
\begin{align}\label{eq:F_t_einsvonvielen}
F_t(\xi,(\omega^i)_{i\in \IN})=\{\ell (\xi)^i+\omega^i_t\}_{i\in\IN}.
\end{align}
For a stationary point process $\P$ on  $\IR^d$ then define 
\begin{align}\label{def:semi_wholespace}
\gls{SemiP}=(F_t)_{\#}\left[\P \otimes (\otimes_{i=1}^{\infty}\mathbb{W})\right].
\end{align}

\begin{lem}\label{lem:semigroup}
$(\gls{Semi})_{t\geq 0}$ defines a $\mathcal C^0$-semigroup on $(\spp1,\W_2)$, in the sense that for all $\P\in \spp1$\begin{enumerate}[i)]
    \item $\Semi_0\P=\P$
    \item $\Semi_s\left(\Semi_t \P\right)=\Semi_{s+t}\P, \quad \forall s,t\geq 0$ 
    \item $\Semi_t\P\xrightarrow{t\to 0}\P$ in $\W_2$.
\end{enumerate} 
\end{lem}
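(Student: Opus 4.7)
The plan is to verify the three properties in order, exploiting the independence and exchangeability of the driving family of Brownian motions to absorb the fact that the labelling map $\ell$ is in general not translation-equivariant.

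Property (i) is immediate: since $\omega^i_0 = 0$ $\mathbb{W}$-almost surely, $F_0(\xi, (\omega^i)_i) = \{\ell(\xi)^i\}_{i \in \IN} = \xi$, so $\Semi_0 \P = \P$. For the semigroup identity (ii), I would fix $\xi \sim \P$ and two independent iid families $(\omega^i)_i, (\tilde\omega^j)_j$ of $\mathbb{W}$-distributed paths, also independent of $\xi$. Writing $\Xi := F_t(\xi, (\omega^i)_i)$, let $\sigma$ be the random bijection of $\IN$ determined by $\ell(\Xi)^j = \ell(\xi)^{\sigma(j)} + \omega^{\sigma(j)}_t$, so that
\begin{equation*}
F_s(\Xi, (\tilde\omega^j)_j) = \bigl\{ \ell(\xi)^{\sigma(j)} + \omega^{\sigma(j)}_t + \tilde\omega^j_s \bigr\}_{j \in \IN}.
\end{equation*}
Conditionally on $(\xi, (\omega^i)_i)$ the family $(\tilde\omega^{\sigma^{-1}(i)})_i$ is again iid $\mathbb{W}$, so reindexing by $i = \sigma(j)$ and using independence of Brownian increments shows that the above configuration has the same law as $\{\ell(\xi)^i + \omega^i_{t+s}\}_i$, i.e.\ $\Semi_{t+s}\P$.

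For the continuity (iii), I plan to invoke Proposition \ref{prop:Problem2} with an explicit equivariant coupling. On a probability space carrying $\xi^\bullet \sim \P$ together with an independent iid Brownian family $(\omega^i)_i$, set $\eta^\bullet := F_t(\xi^\bullet, (\omega^i)_i)$ and
\begin{equation*}
\q^\bullet := \sum_{i \in \IN} \delta_{(\ell(\xi^\bullet)^i,\, \ell(\xi^\bullet)^i + \omega^i_t)}.
\end{equation*}
This is a matching with marginals $\xi^\bullet$ and $\eta^\bullet$; shifting $\xi^\bullet$ by $x$ merely relabels the indices, and by exchangeability of the iid $(\omega^i)_i$ the joint law is unchanged, so $\q^\bullet$ is equivariant. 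Proposition \ref{prop:Problem2} together with Campbell's formula for a unit intensity stationary point process then yields
\begin{equation*}
\W_2^2(\P, \Semi_t\P) \le \IE\Bigl[\int_{\Lambda_1 \times \IR^d} |x-y|^2\, d\q^\bullet(x,y)\Bigr] = \IE[|\omega_t|^2]\cdot \IE[\xi^\bullet(\Lambda_1)] = d\,t,
\end{equation*}
which tends to $0$ as $t \to 0$.

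The main obstacle is the equivariance verification underlying both (ii) and (iii): since $\ell$ is typically not itself translation-equivariant, one has to argue that the resulting joint laws are nonetheless shift-invariant. This ultimately rests on the exchangeability of the iid Brownian family, which absorbs any label permutation induced by a shift of the underlying configuration. Once this is pinned down, the semigroup identity and the quantitative continuity bound are essentially immediate.
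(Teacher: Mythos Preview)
Your approach is essentially the same as the paper's: construct the obvious matching that pairs each point with its Brownian displacement and compute. The paper is even terser on (i) and (ii) than you are; your relabelling argument for (ii) is correct and makes explicit what the paper leaves implicit.

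For (iii) there is one point worth flagging. You invoke Proposition~\ref{prop:Problem2}, which requires $\q^\bullet\in\cpl_e(\xi^\bullet,\eta^\bullet)$, i.e.\ pointwise equivariance $\q^{\theta_x\omega}=\theta_x\q^\omega$ with respect to a measurable flow on the underlying probability space. Your exchangeability argument only shows that the \emph{law} of $\q^\bullet$ is shift-invariant; this is not the same thing, since the labelling map $\ell$ is not translation-equivariant and there is no obvious flow on $\Gamma\times\mathcal C([0,\infty),\IR^d)^{\IN}$ making $\q^\bullet$ equivariant in the pointwise sense. The paper sidesteps this by citing Corollary~\ref{cor:equiv_cost} instead, whose inner infimum is over $\q^\bullet\in\cpl(\xi^\bullet,\eta^\bullet)$ (no equivariance required) and which only needs the joint law $\Q$ of $(\xi^\bullet,\eta^\bullet)$ to be stationary---precisely what your exchangeability argument delivers. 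With that substitution your bound
\[
\W_2^2(\P,\Semi_t\P)\le \liminf_{n\to\infty} n^{-d}\IE\Big[\int_{\Lambda_n\times\IR^d}|x-y|^2\,d\q^\bullet\Big]=\IE[|\omega_t|^2]\cdot\IE[\xi^\bullet(\Lambda_1)]=d\,t
\]
goes through cleanly. (The paper writes $t$ rather than $d\,t$, but this does not affect the conclusion.)
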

\begin{proof} 
The first item follows from $\omega(0)=0$ $\mathbb W$-a.s..
The second item follows directly from the independence of the increments of Brownian motion. For the last item let $\P\in\spp1$,  fix a label map $\ell:\Gamma\to (\IR^d)^{\IN}, \xi\mapsto (\ell (\xi)^i)_{i\in \IN}$ and consider the random measure
\[
\q=\{\left(\ell (\xi)^1,\ell (\xi)^1+\omega^1_t\right),\left(\ell (\xi)^2,\ell (\xi)^2+\omega^2_t\right),\dots\}\in \Gamma_{\IR^d\times \IR^d},
\]
where $(\xi,(\omega^i)_{i\in \IN})$ have distribution $\P \otimes (\otimes_{i=1}^{\infty}\mathbb{W})$.
Using the formulation for the cost of Corollary \ref{cor:equiv_cost}, we can estimate
\begin{align*}
   \W_2^2(\P,\Semi_t\P)&\leq 
   \liminf_{n\to \infty}n^{-d}\IE\left[\int_{\Lambda_n\times \IR ^d}\abs{x-y}^2\q(dx,dy)\right]\\
   &=\liminf_{n\to \infty}n^{-d}\IE_{\P}\left[\int\int_{\Lambda_n}\abs{x-(x+\omega_t)}^2\xi(dx){\mathbb{W}}(d\omega)
   \right]\\
   &=t\liminf_{n\to \infty}n^{-d}\IE_{\P}\left[\int_{\Lambda_n}\xi(dx)\right]=t.
\end{align*}
This shows that $\W_2(\P,\Semi_t\P)\to 0$ as $t\to 0$.
\end{proof}

Now let us turn to a fixed box $\Lambda_n$, $n\in \IN$, where we need a rigorous definition of reflected paths. For a continuous path $\omega:[0,\infty)\to \IR^d$ with $\omega(0)\notin \bigcup_{u\in n\IZ^d}(u+\partial \Lambda_n)$ there exists a unique $z\in n\IZ^d$ s.t. $\omega(0)\in z+\Lambda_n$. Note that for processes of our consideration (e.g.~restrictions of stationary point processes), particles will almost surely not lie on the above lattice.

We want to define the new path $\refl_{\Lambda_n}(\omega)$, which is the path $\omega$ reflected at the boundary of $z+\Lambda_n$. The construction we use, can be found in \cite[Exercise 8.9]{Shreve}. Let $\varphi:\IR\to \IR$ be the function which satisfies $\varphi(n/2+2k)=n/2$, $\varphi(-n/2+2k)=-n/2$ for all $k\in n\IZ$ and is linear between these points and define $\Phi:\IR^d\to \Lambda_n$ via $\Phi(x)=(\varphi(x^1),\dots,\varphi(x^d))$. For a continuous path $\omega:[0,\infty)\to \IR^d$ with $\omega(0)\in z+\Lambda_n$, $z\in n\IZ^d$, we define the path $\refl_{\Lambda_n}(\omega):[0,\infty)\to z+\Lambda_n$ via
\begin{align}\label{def:reflectedpath}
\refl_{\Lambda_n}(\omega)_t := z+ \Phi(\omega_t-z)\;.
\end{align}
Note that for a standard Brownian motion $(B_t)$ and $x\in z+\Lambda_n$, $z\in n\IZ^d$, the process $(\refl_{\Lambda_n}(x+B_t))_{t\geq 0}$ is a Brownian motion reflected at the boundary of $z+\Lambda_n$ and started at $x$. In analogy to the above, define $F_t:\Gamma_{\Lambda_n} \times \mathcal C\big([0,\infty),\IR^d\big)^{\IN}\to \Gamma_{\Lambda_n}$ by 
\begin{align}\label{eq:def_GF_box}
F_t(\xi,(\omega^i)_{i\in \IN})=\{\refl_{\Lambda_n}(\ell (\xi)^i+\omega^i)_t\}_{i\in\IN}.
\end{align}
For the distribution of a point process $\P$ on $\Lambda_n$ define the semi-group 
\begin{align}\label{def_semi_box}
\Semi^{\Lambda_n}_t\P={F_t}_{\#}\left[\P \otimes  (\otimes_{i=1}^{\infty}\mathbb{W})\right].
\end{align}
Indeed, Lemma \ref{lem:semigroup} continues to hold for $\mathsf S_t^{\Lambda_n}$ for the classical unnormalized $L^2$-Wasserstein distance, we omit the proof.

\subsection{Integrability}
 In the following we will introduce  modifications of our point processes, which will enable us to lift results from the  Euclidean space $\IR^d$ to the space of configurations. First, let us record two technical lemmas.
\begin{lem}\label{lem:cost_estimate}
For an optimal pair $\Q,\q$ we have 
    \begin{align}\label{eq:cost1}
\IE_\Q[\int_{\Lambda_n\times \Lambda_n^c}\abs{x-y}^2 d\mathsf{q}(x,y)]=o(n^d)
    \end{align}
   as $n\to\infty$ and for any $0<\epsilon<1/2$
    \begin{align}\label{eq:cost2}
    \IE_{ \Q} \left[\mathsf \q(\Lambda_n\times \Lambda_n^c)\right] = o(n^{d-\epsilon}).
    \end{align}
\end{lem}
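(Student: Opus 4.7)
The plan is to exploit a dichotomy between \emph{short} and \emph{long} jumps across $\partial\Lambda_n$: short ones ($|x-y|\leq R$) can only originate from a boundary shell of width $2R$, while long ones are rare because the total squared-distance cost is finite. Introducing a cutoff $R$ (possibly depending on $n$) will reduce both quantities to a shell-volume estimate plus a tail estimate.

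The preliminary step that I will use repeatedly is the Campbell-type identity
\[
\IE_{\Q}\Big[\int_{\Lambda_n\times\IR^d}f(x-y)\,d\q(x,y)\Big]=n^d\,\IE_{\Q}\Big[\int_{\Lambda_1\times\IR^d}f(x-y)\,d\q(x,y)\Big]
\]
for measurable $f\geq 0$. This follows by decomposing $\Lambda_n$ into $n^d$ unit translates of $\Lambda_1$ and invoking equivariance of $\q^\bullet$. For $f(z)=|z|^2$ the right-hand side equals $n^d\,\C_2(\P_0,\P_1)$, which is finite because $(\Q,\q)$ is optimal.

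For \eqref{eq:cost1}, I will split the integration region at $|x-y|=R$. The constraint $(x,y)\in\Lambda_n\times\Lambda_n^c$ combined with $|x-y|\leq R$ forces $x\in\Lambda_n\setminus\Lambda_{n-2R}$, so the Campbell identity bounds the short-jump contribution by $(n^d-(n-2R)^d)\,\C_2=O(R n^{d-1})$. The long-jump contribution is at most $n^d g(R)$, where $g(R):=\IE_{\Q}\bigl[\int_{\Lambda_1\times\IR^d}\eins_{\{|x-y|>R\}}|x-y|^2\,d\q\bigr]$, and dominated convergence will give $g(R)\to 0$ as $R\to\infty$ since the unconstrained integral equals $\C_2<\infty$. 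Sending first $n\to\infty$ for fixed $R$, and then $R\to\infty$, will yield the desired $o(n^d)$.

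For \eqref{eq:cost2}, the same cutoff applied to the $\q$-mass gives a short-jump bound of $\IE_{\Q}[\xi(\Lambda_n\setminus\Lambda_{n-2R})]=O(R n^{d-1})$, using that the first marginal has unit intensity, and a long-jump bound of $R^{-2} n^d \C_2$ from Markov's inequality. Since $R$ must now be allowed to grow with $n$, I will balance the two terms: the choice $R=n^{1/3}$ gives the total bound $O(n^{d-2/3})$, which is $o(n^{d-\epsilon})$ for every $\epsilon<2/3$, in particular for all $\epsilon<1/2$. The only real obstacle is spotting the cutoff decomposition---stationarity alone gives neither bound---after which both parts are routine.
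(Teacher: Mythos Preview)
Your proof is correct and follows essentially the same short-jump/long-jump dichotomy as the paper's own argument. The only cosmetic differences are in the choice of cutoff: for \eqref{eq:cost1} the paper takes $R=\sqrt{n}$ from the outset (rather than a fixed $R$ followed by $R\to\infty$), and for \eqref{eq:cost2} the paper parameterizes the shell width directly as $n^\epsilon$ rather than balancing at $R=n^{1/3}$---your balancing in fact yields the marginally stronger $O(n^{d-2/3})$.
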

    \begin{proof}
We begin with
\begin{align*}
&\IE_{ \Q} \int_{\Lambda_n\times \Lambda_n^c}\abs{x-y}^2 d\q(x,y)\\
&\leq \IE_{ \Q} \int_{\Lambda_n\setminus \Lambda_{n-\sqrt n}\times \IR^d}\abs{x-y}^2 d\mathsf{q}(x,y)+\IE_{ \Q} \int_{\Lambda_{n-\sqrt n}\times \Lambda_n^c}\eins_{\abs{x-y}\geq \sqrt n}\abs{x-y}^2 d\mathsf{q}(x,y)\\
&\leq (n^d-(n-\sqrt n)^d)\IE_{ \Q} \int_{\Lambda_1\times \IR^d}\abs{x-y}^2 d\mathsf{q}(x,y)+n^d \IE_{ \Q} \int_{\Lambda_1\times \IR^d}\eins_{\abs{x-y}\geq \sqrt n}\abs{x-y}^2 d\mathsf{q}(x,y).
\end{align*}
Dividing by $n^d$ and letting $n\to \infty$ shows \eqref{eq:cost1}.
To show \eqref{eq:cost2}, we split $\Lambda_n$ again into
\begin{align}
\IE_{ \Q} \left[\mathsf \q(\Lambda_n\times \Lambda_n^c)\right]&=\IE_{ \Q} \left[\mathsf \q((\Lambda_{n}\setminus \Lambda_{n-n^{\epsilon}})\times \Lambda_n^c)\right]+\IE_{ \Q} \left[\mathsf \q(\Lambda_{n-n^{\epsilon}}\times \Lambda_n^c)\right]\nonumber\\
&\leq \IE_{ \Q} \left[\mathsf \q((\Lambda_{n}\setminus \Lambda_{n-n^{\epsilon}})\times \IR^d)\right]+n^{-2\epsilon}\IE_{ \Q}  \Big[ \int_{\Lambda_{n-n^{\epsilon}}\times \Lambda_n^c} \abs{x-y}^2 d\mathsf \q(x,y)\Big]\nonumber\\
&\le (n^d-(n-n^{\epsilon})^d)+n^{-2\epsilon}\IE_{ \Q}  \Big[ \int_{\Lambda_{n}\times \IR ^ d} \abs{x-y}^2 d\mathsf \q(x,y)\Big]\label{eq:Markov_ineq},
\end{align}
where we used the Markov inequality in the last step.
After multiplication with $n^{-d+\epsilon}$ and letting $n\to\infty$, this vanishes, since
\[
\limsup_{n\to \infty}n^{-d}\IE_{ \Q}\Big[ \int_{\Lambda_n\times \IR^d} \abs{x-y}^2 d \q(x,y)\Big]=\C_2(\P,\mathsf R)<\infty.
\]
\end{proof}

The following disintegration formula for the relative entropy for general probability measures will also be useful. Its proof is a direct consequence of disintegration of measures.

\begin{prop}\label{prop:disint_entr}
Let $X$,$Y$ be Polish spaces. Let $\beta$, $\gamma$ be two probability measures on $X$ and let $T:X\to Y$ be a measurable map. Let $\bar{\beta}=T_{\#}\beta$ and $\bar{\gamma}=T_{\#}\gamma$ and let $\beta(\cdot\mid T=y)$ and $\gamma(\cdot\mid T=y)$ denote the regular conditional probabilities. Then
\[
\ent(\beta\mid \gamma)=\ent(\bar{\beta}\mid \bar{\gamma})+\int_Y\ent(\beta(\cdot\mid T=y)\mid \gamma(\cdot\mid T=y))d\bar{\beta}(y).
\]
In particular, we have $\ent(\beta\mid \gamma)\geq\ent(\bar{\beta}\mid \bar{\gamma})$.
\end{prop}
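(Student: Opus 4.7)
The plan is to reduce the identity to the standard chain rule for Radon–Nikodym derivatives under disintegration. First I would dispense with trivial cases: if $\beta\not\ll\gamma$ then $\ent(\beta\mid\gamma)=+\infty$, and since $T_\#\beta\ll T_\#\gamma$ is automatic under $\beta\ll\gamma$ while the converse fails, I would argue that in this case either $\bar\beta\not\ll\bar\gamma$ (giving $+\infty$ on the right) or there is a positive $\bar\beta$-measure set of $y$ for which $\beta(\cdot\mid T=y)\not\ll\gamma(\cdot\mid T=y)$, again forcing the right-hand side to be $+\infty$. So from here on assume $\beta\ll\gamma$, with density $\rho=d\beta/d\gamma$, and correspondingly set $\bar\rho=d\bar\beta/d\bar\gamma$.

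Next I would invoke the existence of regular conditional probabilities $\gamma(\cdot\mid T=y)$ and $\beta(\cdot\mid T=y)$, which is guaranteed since $X,Y$ are Polish. The key algebraic step is the pointwise factorisation
\[
\rho(x)=\bar\rho(T(x))\cdot r_{T(x)}(x)\quad\text{for $\gamma$-a.e.\ }x,
\]
where $r_y(x):=d\beta(\cdot\mid T=y)/d\gamma(\cdot\mid T=y)\,(x)$. To verify this, I would test both sides against an arbitrary non-negative measurable $f(x)=g(T(x))h(x)$: using the disintegration
\[
\int g(T(x))h(x)\,\beta(dx)=\int_Y g(y)\int_X h(x)\,\beta(dx\mid T=y)\,\bar\beta(dy)
\]
and the analogous identity for $\gamma$, together with $\bar\beta=\bar\rho\,\bar\gamma$, this pins down $\rho$ uniquely up to $\gamma$-null sets and forces the product form above.

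Having established the factorisation, the identity follows by taking logarithms and integrating against $\beta$:
\[
\ent(\beta\mid\gamma)=\int\log\bar\rho(T(x))\,\beta(dx)+\int\log r_{T(x)}(x)\,\beta(dx).
\]
The first integral equals $\int_Y\log\bar\rho(y)\,\bar\beta(dy)=\ent(\bar\beta\mid\bar\gamma)$ by the change-of-variables formula $T_\#\beta=\bar\beta$. For the second, one more application of disintegration (this time of $\beta$ along $T$) gives
\[
\int\log r_{T(x)}(x)\,\beta(dx)=\int_Y\Big(\int_X\log r_y(x)\,\beta(dx\mid T=y)\Big)\bar\beta(dy)=\int_Y\ent\bigl(\beta(\cdot\mid T=y)\mid\gamma(\cdot\mid T=y)\bigr)\bar\beta(dy),
\]
which is the claimed formula. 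The final inequality $\ent(\beta\mid\gamma)\geq\ent(\bar\beta\mid\bar\gamma)$ is then immediate from non-negativity of relative entropy.

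The only genuine subtlety I anticipate is integrability: one has to justify that each of the two integrals obtained after splitting $\log\rho=\log\bar\rho\circ T+\log r_{T(\cdot)}$ is well defined (not of the form $\infty-\infty$). I would handle this by first working with $\log_+$ and $\log_-$ separately, or by truncating $\rho\wedge N$ and using monotone/dominated convergence together with the fact that both summands on the right are non-negative relative entropies, so that finiteness of the left-hand side propagates to finiteness of each summand. Once this bookkeeping is done, the argument is essentially just the chain rule.
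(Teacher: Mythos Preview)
Your proposal is correct and is precisely the argument the paper has in mind: the paper gives no proof beyond the remark that it ``is a direct consequence of disintegration of measures,'' and your factorisation $\rho=\bar\rho\circ T\cdot r_{T(\cdot)}$ followed by taking logarithms and disintegrating $\beta$ along $T$ is the standard way to unpack that remark. Your handling of the degenerate case $\beta\not\ll\gamma$ and the integrability bookkeeping are appropriate refinements that the paper simply omits.
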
 

The last lemma shows that uniformly bounded specific entropies imply uniform integrability of the intensities.

\begin{lem}\label{lem:unif_int}
Let $(\P_i)_{i\in I}\subseteq \mathcal P_s(\Gamma)$ be a family of distributions of stationary point processes on $\IR^d$ with \[
\sup_{i\in I}\mathcal{E}(\P_i)<\infty \text{ and } \sup_{i\in I}\IE_{\P_i}\left[\xi(\Lambda_1)\right]<\infty
.\] 
If $(\xi_i)_{i\in\IN}$ is a family of random variables, defined on some common probability space, with $\xi_i \sim \P_i$ for all $i$, then 
\[
\sup_{i\in I}\IE\left[\xi_i(\Lambda_1)\log(\xi_i(\Lambda_1))\right]<\infty.
\] 
In particular, the family  $(\xi_i(\Lambda_1))_{i\in I}$ is uniformly integrable.
\end{lem}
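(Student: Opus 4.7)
The plan is to combine the variational (Donsker--Varadhan) characterisation of relative entropy with a Poisson exponential moment estimate, and then invoke de la Vallée--Poussin for the uniform integrability conclusion.

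First I would localise to a single box. By Lemma \ref{lem:Georgii} (d) applied with $n=1$,
\[
\mathsf{Ent}(\P_{i,\Lambda_1}\mid\Poi_{\Lambda_1})\;\leq\; \mathcal E(\P_i)\;\leq\; \sup_{j\in I}\mathcal E(\P_j)\;=:\;E\;<\;\infty,
\]
so the hypothesis gives a uniform bound on the one-box relative entropies (and in particular $\P_{i,\Lambda_1}\ll\Poi_{\Lambda_1}$).

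Next I would invoke the standard variational inequality
\[
\int f\, d\mu \;\leq\; \mathsf{Ent}(\mu\mid\nu)+\log\int e^f\, d\nu
\]
with $\mu=\P_{i,\Lambda_1}$, $\nu=\Poi_{\Lambda_1}$, and test function $f(\xi)=\alpha\, N(\xi)\log N(\xi)$, where $N(\xi):=\xi(\Lambda_1)$ (with the convention $0\log 0:=0$) and $\alpha\in(0,1)$ is a parameter to be chosen. Under $\Poi_{\Lambda_1}$ the count $N$ is Poisson with mean one, so
\[
\int e^{\alpha N\log N}\, d\Poi_{\Lambda_1}\;=\;e^{-1}\sum_{k=0}^\infty\frac{k^{\alpha k}}{k!}.
\]
By Stirling, $k^{\alpha k}/k!\lesssim e^k/k^{(1-\alpha)k}$, which is summable for any $\alpha<1$. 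Denote this finite value by $Z_\alpha$. Combining the variational inequality with the entropy bound and dividing by $\alpha$ gives, uniformly in $i\in I$,
\[
\IE\bigl[\xi_i(\Lambda_1)\log\xi_i(\Lambda_1)\bigr]\;=\;\IE_{\P_i}[N\log N]\;\leq\;\alpha^{-1}\bigl(E+\log Z_\alpha\bigr).
\]
This is the first part of the claim.

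For the uniform integrability, I would apply the de la Vallée--Poussin criterion with $\Phi(t)=t\log t$ (extended by $0$ at the origin): on $\IN_0$, $\Phi$ is nonnegative and satisfies $\Phi(t)/t=\log t\to\infty$, and the estimate just obtained provides the required $\sup_i\IE[\Phi(\xi_i(\Lambda_1))]<\infty$. The hypothesis on intensities is only needed if one wishes, instead, to compare $N\log N$ with a convex function like $(1+t)\log(1+t)$, but is not essential to the core argument since $N\log N\ge 0$ on the integers. The only genuine calculation is the Stirling bound on the Poisson exponential moment, and I anticipate no serious obstacle beyond it.
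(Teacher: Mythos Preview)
Your proof is correct and takes a genuinely different route from the paper. The paper argues more ``by hand'': it first bounds the Shannon entropy $-\sum_k p_i(k)\log p_i(k)$ of the number statistic using the intensity bound, then uses the disintegration formula (Proposition~\ref{prop:disint_entr}) to control the relative entropy of the count distribution $p_i$ against $\mathsf{poi}(\cdot)$, and finally extracts the $\sum_k p_i(k)\log(k!)$ term and invokes Stirling to recover $\IE[N\log N]$. Your Donsker--Varadhan argument collapses all of this into a single step by testing against $f=\alpha N\log N$ and checking the corresponding Poisson exponential moment; this is cleaner, and as you note it does not actually use the intensity hypothesis $\sup_i\IE_{\P_i}[\xi(\Lambda_1)]<\infty$, whereas the paper's Shannon-entropy bound \eqref{eq:shannon} does. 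The paper's approach has the minor advantage of being more self-contained (no appeal to the variational formula), but yours is both shorter and slightly more general.
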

\begin{proof}
 By assumption, the intensities of the processes 
  ${\P}_i$ are uniformly bounded, say by  $K$. Put $p_i(k)={\P}_i( \xi(\Lambda_1)=k)$. Then the Shannon-entropy of the number statistic of $({\P}_i)_{\Lambda_1}$ is uniformly bounded in $i$ because 
\begin{align}\label{eq:shannon}
 -\sum_{k\in\IN} p_i(k)\log\big(p_i(k)\big)\lesssim \sum_{k\in\IN}kp_i(k)+e^{-k/2}\lesssim K+\sum_{k\in\IN}e^{-k/2}<\infty.
 \end{align}
 The first inequality follows from considering two cases. If   $-p_i(k)\log\big(p_i(k)\big)\leq kp_i(k)$ holds, the inequality is clear. On the other hand, if this  does not hold, we have $-p_i(k)\log\big(p_i(k)\big)\geq kp_i(k)$, which is equivalent to $p_i(k)\leq e^{-k}$. Then we can bound \[
p_i(k)\log\big(1/p_i(k)\big)\lesssim \sqrt{p_i(k)}\lesssim e^{-k/2},
 \]
 where we used the fact that the function $-\sqrt{x}\log(x)$ is bounded on $[0,1]$.
 Since the specific entropy of a stationary point process $\P$ is given by 
 \[
\sup_{n\geq 1}n^{-d}\ent( \P_{\Lambda_n}\mid \Poi_{\Lambda_n}),
\]
 we conclude that the relative entropies of the restricted processes $({\P}_i)_{\Lambda_1}$ w.r.t.\ $\Poi_{\Lambda_1}$ are also uniformly bounded in $i$. 
 By the disintegration formula for the entropy also the relative entropy of the number statistics, i.e.\ of the random variabls $\xi(\Lambda_i)$ are uniformly bounded in $i$, say by the same constant $K>0$ as above.
 The relative entropies of the number statistics are given by
 \begin{align}\label{eq:entropy_count_dist}
  \sum_{k\in\IN} p_i(k)\log\Big( p_i(k)\Big)-\sum_{k\in\IN} p_i(k)\log\Big( \mathsf{poi}(k)\Big)=\sum_{k\in\IN} p_i(k)\log\Big( \frac{p_i(k)}{\mathsf{poi}(k)}\Big).
 \end{align}
 Combining \eqref{eq:shannon} and \eqref{eq:entropy_count_dist} yields that   the terms \[
-\sum_{k\in\IN} p_i(k)\log\Big( \mathsf{poi}(k)\Big)= 1+\sum_{k\in\IN} p_i(k)\log(k!)
 \]
 also are uniformly bounded in $i$. By Stirling's approximation, i.e. $k!>\sqrt{2\pi k}(k/e)^ke^{1/(12k+1)}$, we obtain the boundedness of \[
\sum_{k\in\IN} p_i(k)\log(k)k=\IE\left[{\xi}_i(\Lambda_1)\log\left({\xi}_i(\Lambda_1)\right)\right]. 
 \]
This implies uniform integrability of $({\xi}_i(\Lambda_1))_{i\in I}$.
\end{proof}

\subsection{Modification}
 
Finally, we present the modification procedure which generates point processes with matching point statistics in a box $\Lambda_n$, while keeping the distribution inside $\Lambda_{n-1}$ untouched and the cost $\C_2$ as well as the specific relative entropy $\mathcal E$ controlled.
\begin{thm}\label{thm:Modification}
Let $\P,\mathsf R\in\spp1$ have finite cost $\C_2(\P,\mathsf R)<\infty$ with optimal pair $(\Q,\q)$ and choose $\q$ to be a matching as in Proposition \ref{prop:matching}. Then, there exists a sequence of pairs $(\tilde{ \Q}_{\Lambda_n},\tilde{\q}_{\Lambda_n})_{n\in\IN}$, where $\tilde{ \Q}_{\Lambda_n}\in \mathcal P_s (\Gamma_{\Lambda_n}\times \Gamma_{\Lambda_n})$ is a coupling of its marginals $\gls{tildeP},\tilde{\mathsf R}_{\Lambda_n}$ and such that the following holds
\begin{enumerate}
\item \label{thm:mod_proc_same_number} The coupling $\tilde{ \Q}_{\Lambda_n}$ is concentrated on the set $
\{(\xi,\eta)\in \Gamma_{\Lambda_n}^2:\xi(\Lambda_n)=\eta(\Lambda_n)\}$ and moreover it holds $\tilde \P_{\Lambda_{n-1}}= \P_{\Lambda_{n-1}}$ as well as $\tilde \R_{\Lambda_{n-1}}=\R_{\Lambda_{n-1}}$. In particular, it follows that $\tilde\P_{\Lambda_n}$,$\tilde \R_{\Lambda_n}$ converge weakly to $\P,\R$, respectively, as $n\to\infty$.
\item \label{thm:mod_proc_entropy}
The relative entropies are well approximated in the sense that 
\begin{align}
    \mathsf{Ent}(\tilde{\P}_{\Lambda_n}\mid \Poi_{\Lambda_n})= \mathsf{Ent}(\P_{\Lambda_{n-1}}\mid \Poi_{\Lambda_{n-1}})+o(n^d),\\
     \mathsf{Ent}(\tilde{\R}_{\Lambda_n}\mid \Poi_{\Lambda_n})= \mathsf{Ent}(\R_{\Lambda_{n-1}}\mid \Poi_{\Lambda_{n-1}})+o(n^d).\nonumber
\end{align}
In particular it follows $\mathcal E(\P)=\lim_{n\to\infty}{n^{-d}}\mathsf{Ent}(\tilde{\P}_{\Lambda_n} \mid \Poi_{\Lambda_n})$ if it is finite.
\item \label{thm:mod_proc_cost}  The following cost estimate holds \begin{align}
\limsup_{n\to \infty}n^{-d}\IE_{\tilde{ \Q}_{\Lambda_n}}\left[\int_{\Lambda_n\times \Lambda_n}\abs{x-y}^2d\tilde{\q}_{\Lambda_n}(x,y)\right]
\leq \C_2(\P,\mathsf R).
\end{align}
\item \label{thm:mod_proc_intens}
The intensities of the stationarized processes converge: $
\lim_{n\to \infty}n^{-d}\IE_{(\tilde{\P}_{\Lambda_n})^{stat}}\left[\xi(\Lambda_n)\right]=1$.
\end{enumerate}
\end{thm}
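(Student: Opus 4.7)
The construction follows the sketch in the introduction. Given the optimal matching $\q^\bullet$ between $\P$ and $\R$ from Proposition \ref{prop:matching}, partition the boundary layer $\Lambda_n\setminus\Lambda_{n-1}$ into a disjoint family of boxes $(K_j)_j$ of side length $1/2$. For each $(x,y)\in\supp(\q^\bullet)$ with exactly one endpoint in $\Lambda_{n-1}$, let $K_{j(x,y)}$ be the unique box through which the segment $[x,y]$ exits $\Lambda_{n-1}$, and draw an independent uniform $U_{(x,y)}\sim\mathcal U(K_{j(x,y)})$. Define
\begin{align*}
\tilde\xi &:= \xi|_{\Lambda_{n-1}} + \sum_{(x,y)\in\supp(\q^\bullet):\, x\notin\Lambda_{n-1},\, y\in\Lambda_{n-1}} \delta_{U_{(x,y)}},\\
\tilde\eta &:= \eta|_{\Lambda_{n-1}} + \sum_{(x,y)\in\supp(\q^\bullet):\, x\in\Lambda_{n-1},\, y\notin\Lambda_{n-1}} \delta_{U_{(x,y)}},
\end{align*}
and let $\tilde\q_{\Lambda_n}$ consist of the pairs of $\q^\bullet$ lying in $\Lambda_{n-1}\times\Lambda_{n-1}$, together with the new pairs $(U_{(x,y)},y)$ and $(x,U_{(x,y)})$ generated from inward- and outward-crossing pairs respectively. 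Set $\tilde\P_{\Lambda_n},\tilde\R_{\Lambda_n},\tilde\Q_{\Lambda_n}$ to be the corresponding laws.

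Items \ref{thm:mod_proc_same_number}, \ref{thm:mod_proc_cost}, and \ref{thm:mod_proc_intens} are largely built into the construction. Matched point counts hold because a simple bookkeeping identifies $\tilde\xi(\Lambda_n)$ and $\tilde\eta(\Lambda_n)$ as the number of interior matches plus the total number of boundary-crossing pairs in either direction, which are equal; the identity $\tilde\P_{\Lambda_n}|_{\Lambda_{n-1}} = \P_{\Lambda_{n-1}}$ is immediate, and weak convergence follows from the exhaustion $\Lambda_{n-1}\nearrow\IR^d$. For the cost bound, the exit point of $[x,y]$ lies on the segment and $K_j$ has diameter $\sqrt d/2$, so $|x-U_{(x,y)}|^2\le (|x-y|+\tfrac{\sqrt d}{2})^2 \le 2|x-y|^2 + \tfrac{d}{2}$; summing over boundary crossings and invoking both parts of Lemma \ref{lem:cost_estimate} to control $\IE\int_{\Lambda_{n-1}\times\Lambda_{n-1}^c}|x-y|^2 d\q^\bullet$ and $\IE[\q^\bullet(\Lambda_{n-1}\times\Lambda_{n-1}^c)]$ yields a boundary contribution of $o(n^d)$. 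The intensity claim reduces to $\IE[\tilde\xi(\Lambda_n)] = (n-1)^d + \IE[B]$ with $B = o(n^{d-\epsilon})$ the number of inward-crossing matches.

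The main obstacle is item \ref{thm:mod_proc_entropy}. I would apply Proposition \ref{prop:disint_entr} to the restriction map $\mathrm{pr}_{\Lambda_{n-1}}\colon\Gamma_{\Lambda_n}\to\Gamma_{\Lambda_{n-1}}$. Using the factorisation $\Poi_{\Lambda_n} = \Poi_{\Lambda_{n-1}}\otimes \Poi_{\Lambda_n\setminus\Lambda_{n-1}}$ together with $\tilde\P_{\Lambda_n}|_{\Lambda_{n-1}} = \P_{\Lambda_{n-1}}$, this yields
\[
\ent\bigl(\tilde\P_{\Lambda_n}\,\bigm|\,\Poi_{\Lambda_n}\bigr) = \ent\bigl(\P_{\Lambda_{n-1}}\,\bigm|\,\Poi_{\Lambda_{n-1}}\bigr) + \IE_{\sigma\sim\P_{\Lambda_{n-1}}}\bigl[\ent\bigl(\mu_\sigma\,\bigm|\,\Poi_{\Lambda_n\setminus\Lambda_{n-1}}\bigr)\bigr],
\]
where $\mu_\sigma$ is the conditional law of the added uniforms given $\xi|_{\Lambda_{n-1}} = \sigma$; the matching lower bound is automatic. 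Since $\mu_\sigma$ is a mixture over the exterior data $(\eta|_{\Lambda_{n-1}^c},\q^\bullet)$ of deterministic products of uniforms on the selected $K_j$'s, joint convexity of the relative entropy reduces matters to the Janossy-density computation for a product of $B$ uniforms on boxes of volume $(1/2)^d$ against $\Poi_{\Lambda_n\setminus\Lambda_{n-1}}$, which evaluates to $|\Lambda_n\setminus\Lambda_{n-1}| + Bd\log 2 + \log(B!)$. The first two terms, after taking expectation, are $O(n^{d-1})$ and $O(\IE[B])$, hence both $o(n^d)$. The delicate step is the Stirling-type bound $\IE[\log(B!)]\le \IE[B\log(B\vee 1)]$, which I would handle in the non-trivial regime $\mathcal E(\P),\mathcal E(\R)<\infty$ by dominating $B \le \eta\bigl(\Lambda_{n-1}\setminus\Lambda_{n-1-r}\bigr) + \q^\bullet\bigl(\{|x-y|>r\}\cap\Lambda_{n-1}^c\times\Lambda_{n-1}\bigr)$ for a suitable $r = r(n)\to\infty$, controlling the first summand via Lemma \ref{lem:unif_int} and stationarity and the second via the finite cost $\C_2(\P,\R)$, so that $\IE[B\log B] = o(n^d)$.
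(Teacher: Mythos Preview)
Your construction and the verification of items \ref{thm:mod_proc_same_number}, \ref{thm:mod_proc_cost}, \ref{thm:mod_proc_intens} coincide with the paper's, essentially verbatim. For item \ref{thm:mod_proc_entropy}, your use of Proposition \ref{prop:disint_entr} together with convexity of the relative entropy is also what the paper does (the paper computes the conditional density explicitly rather than invoking convexity, but both routes reduce the problem to showing $\IE\big[B\log B\big]=o(n^d)$ with $B=\q(\Lambda_{n-1}^c\times\Lambda_{n-1})$).

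The one genuine gap is your final step for $\IE[B\log B]$. Your splitting $B\le B_1+B_2$ with $B_1=\eta(\Lambda_{n-1}\setminus\Lambda_{n-1-r})$ and $B_2=\q(\{|x-y|>r\}\cap\Lambda_{n-1}^c\times\Lambda_{n-1})$ does not close: the finite cost only yields $\IE[B_2]=O(n^d/r^2)$, which says nothing about $\IE[B_2\log B_2]$, and there is no second-moment control on $B_2$ available. The paper avoids this by using the much cruder pointwise bound $B\le\eta(\Lambda_{n-1})$. One then has $n^{-d}B\to 0$ in probability (this is Lemma \ref{lem:cost_estimate}, exactly the Markov estimate you already used for item \ref{thm:mod_proc_cost}), hence also $n^{-d}\Phi(B)\to 0$ in probability for $\Phi(x)=x\log x$. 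To upgrade to $L^1$, the paper dominates
\[
n^{-d}\Phi(B)\le n^{-d}\Phi\big(\eta(\Lambda_n)\big)\le n^{-d}\sum_{z\in\IZ^d\cap\Lambda_n}\Phi\big(\eta(\Lambda_1(z))\big)+n^{-d}\eta(\Lambda_n)\log(n^d),
\]
where the convexity of $\Phi$ has been used to pass to unit boxes. The sum on the right converges in $L^1(\Q)$ by the spatial ergodic theorem, since $\IE[\Phi(\eta(\Lambda_1))]<\infty$ by Lemma \ref{lem:unif_int} (this is where $\mathcal E(\R)<\infty$ enters). So the whole right-hand side is uniformly integrable, which forces $n^{-d}\IE[\Phi(B)]\to 0$. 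If you replace your $B_1+B_2$ decomposition by this argument, the rest of your write-up goes through.
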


\begin{proof}[Proof of part \eqref{thm:mod_proc_same_number}]
Fix $n\in \IN$ and for remainder of the proof we abbreviate $\P:=\P_{\Lambda_n}$. Define boxes $K_i:=z_i+\Lambda_{1/2}$ with disjoint interior such that $K:=\bigcup_{i=1}^N K_i=\overline{\Lambda_n\setminus \Lambda_{n-1}}$ (here, $\overline\cdot$ denotes the closure). Note that $N\le 4^dn^{d-1}$. We shall modify the configuration $\xi$ under $\P$ by removing all points in $K$ and then add $k_i$ many points to boxes $K_i$ such that $\xi$ and its optimally coupled $\eta$ both have exactly $l\in\IN$ points in $\Lambda_n$. 

To this end, let us keep track of those pairs of $\q$, which are matched from the outside $\Lambda_{n-1}^c$ to the inside $\Lambda_{n-1}$ (or vice versa) via a straight line through $K_i$, that is
\begin{align*}
V_i&:=\{(x,y)\in\supp \q\cap\Lambda_{n-1}^c\times \Lambda_{n-1} :\exists \tau\in [0,1 ]\text{ s.t. }\tau y+(1-\tau)x\in K_i\cap \Lambda_{n-1}\},\\
V_i'&:=\{(x,y)\in\supp \q\cap\Lambda_{n-1}\times \Lambda_{n-1}^c:\exists \tau\in [0,1 ]\text{ s.t. }\tau y+(1-\tau)x\in K_i\cap \Lambda_{n-1}\}.
\end{align*}
The number of points to be added to $\xi$ or $\eta$, respectively is
\begin{align*}
    k_i:=\q \big(V_i\big),\quad k_i':=\q \big(V_i'\big)
\end{align*}
and the total number of points after modification will be given by
\begin{align*}
    l:=\q (\Lambda_{n-1}\times \Lambda_{n-1})+\q (\Lambda_{n-1}\times\Lambda_{n-1}^c)+\q (\Lambda_{n-1}^c\times\Lambda_{n-1}),
\end{align*}
both of which are functions of $(\xi,\eta)$.
For fixed $k=(k_1,\dots,k_N),(k'_1,\dots,k'_N)\in\IN_0 ^{N}$ and $l\in\IN$, we partition the support of $\Q$ into the disjoint events 
\begin{align}\label{eq:A_lk}
    A_{l,k,k'}:=\{(\xi,\eta)\in \Gamma_{\IR^d}\times\Gamma_{\IR^d}: l \text{ and } k \text{ are defined as above}\}
\end{align}
and for the restriction we write $\Q_{l,k,k'}:=\Q(\cdot\cap A_{l,k,k'})$. Define the function that adds points by
\begin{align*}    F_{l,k,k'}&:A_{l,k,k'}\times \prod_{i=1}^N \Gamma^{k_i}_{K_i}\times \prod_{i=1}^N \Gamma^{k_i'}_{K_i}\to \Gamma_{\Lambda_n}^2,\\
    F_{l,k,k'}(\xi,\eta,\beta,\beta')&:=\Big(\xi|_{\Lambda_{n-1}}+\sum_{i=1}^N\beta_i,\eta|_{\Lambda_{n-1}}+\sum_{i=1}^N\beta_i'\Big).
\end{align*}
Recall that we write $\Gamma^{k_i'}_{K_i}$ for configurations in $K_i$ having exactly $k_i$ points. 
Then, adding $k_i$ independent points $Z_{i,j}$ with uniform distribution $\mathcal U _{K_i}$ corresponds to binomial point processes $\beta_i^\bullet=\sum_{j=1}^{k_i}\delta_{Z_{i,j}}\sim \mathsf {Bin}_{K_i,k_i}=\Poi_{K_i}(\cdot\mid \xi(K_i)=k_i)$ and the modified joint distribution of point processes is given by the decomposition
\begin{align}\label{eq:Q_Modification}
\tilde \Q:=\sum_{l,k,k'}\tilde \Q_{l,k,k'}:=\sum_{l,k,k'}(F_{l,k,k'})_\#\Big(\Q_{l,k,k'}\otimes\bigotimes_{i=1}^N\mathsf {Bin}_{K_i,k_i}\otimes\bigotimes_{i=1}^N\mathsf {Bin}_{K_i,k'_i}\Big).
\end{align}
In particular, the marginals $\tilde\P=(\proj_1)_\#\tilde\Q$ and $\tilde\R=(\proj_2)_\#\tilde\Q$ determine the modified distribution of the point processes. Note that the modified distribution inherits the disjoint decomposition onto $B_{l,k}:=\{\xi\in\Gamma_{\Lambda_n}:\xi(\Lambda_n)=l, \xi(K_i)=k_i\forall i\le N\}=\proj_1(\cup_{k'}\mathrm{image}(F_{l,k,k'}))$ via $$\tilde{\P}=\sum_{l,k,k'}{\proj_1}_\#\tilde{\Q}_{l,k,k'}=\sum_{l,k}\tilde{\P}(\cdot\cap B_{l,k}).$$
The modified matching $\tilde\q$ is defined as follows: It keeps the matching between points in $\Lambda_{n-1}^2$ of $\q$ and assigns new pairs according to
\begin{align*}
    \tilde \q|_{\Lambda_{n-1}^c\times\Lambda_{n-1}}=\sum_{i=1}^N\sum_{(x,y)\in V_i}\sum_{j=1}^{k_i}\delta_{(Z_{i,j},y)},\quad \tilde \q|_{\Lambda_{n-1}\times\Lambda_{n-1}^c}=\sum_{i=1}^N\sum_{(x,y)\in V'_i}\sum_{j=1}^{k'_i}\delta_{(x,Z_{i,j})}.
\end{align*}
By construction, we adjusted the number of points so that
$   l=\tilde \q (\Lambda_n^2)=\xi(\Lambda_n)=\eta(\Lambda_n)$, $\tilde \Q$-a.s.. Moreover, since the added points $Z_{i,j}\in K_i$ lie outside of $\Lambda_{n-1}$, we obtain $\tilde \P|_{\Lambda_{n-1}}= \P|_{\Lambda_{n-1}}$ as well as $\tilde \R|_{\Lambda_{n-1}}=\R|_{\Lambda_{n-1}}$ as claimed.
\end{proof}

\begin{proof}[Proof of part \eqref{thm:mod_proc_entropy}]
Apply Proposition \ref{prop:disint_entr} to $T(\xi)=\xi|_{\Lambda_{n-1}}$ to obtain
\begin{align*}
 \mathsf{Ent}(\tilde{\P}_{\Lambda_n}\mid \Poi_{\Lambda_n})= & \mathsf{Ent}\big(\tilde{\P}_{\Lambda_{n-1}}\big\vert \Poi_{\Lambda_{n-1}}\big) \\
 + \int &\mathsf{Ent}\big(\tilde{\P}_{\Lambda_n}(\cdot\mid \{\cdot|_{\Lambda_{n-1}}=\tilde\xi\})\big\vert \Poi_{\Lambda_n}(\cdot\mid \{\cdot|_{\Lambda_{n-1}}=\tilde\xi\})\big) d\tilde\P_{\Lambda_{n-1}}(\tilde\xi)
\end{align*}
First, note that the first term equals $ \mathsf{Ent}\big({\P}_{\Lambda_{n-1}}\big\vert \Poi_{\Lambda_{n-1}}\big) $ since $\tilde \P|_{\Lambda_{n-1}}= \P|_{\Lambda_{n-1}}$ by the previous part \eqref{thm:mod_proc_same_number}. We turn to the second term.
For an event $C\subset \Gamma_{K}$ and a configuration $\tilde \xi\in \Gamma_{\Lambda_{n-1}}$ we set $C+\tilde \xi=\{\gamma \cup \xi\mid \gamma\in C\}$.
Regarding the second term, the complete independence property of the Poisson point process yields for $\tilde \P_{\Lambda_{n-1}}$ almost all $\tilde\xi$ and any event $C=C|_K+\tilde \xi\subseteq\Gamma_{\Lambda_n}$ that
$$\Poi_{\Lambda_n}(\xi\in C\mid \{\xi|_{\Lambda_{n-1}}=\tilde\xi\}) =\Poi(\xi_K\in C|_K)=\prod_{i=1}^N \Poi(\xi_{K_i}\in C|_{K_i}).$$
We claim that for fixed $\tilde \xi\in\Gamma_{\Lambda_{n-1}}$ it holds 
\begin{align}\label{eq:modified_density}
    \frac{d \tilde{\P}_{\Lambda_n}(\cdot\mid \{\cdot|_{\Lambda_{n-1}}=\tilde\xi\})}{d\Poi(\cdot|_K)}(\xi)=\tilde \P_{\Lambda_n}(B_{l,k})e^N\prod_i k_i!
\end{align} for all $\xi\in B_{l,k}$ and $\xi|_{\Lambda_{n-1}}=\tilde \xi$, where $B_{l,k}=\{\xi\in\Gamma_{\Lambda_n}:\xi(\Lambda_n)=l, \xi(K_i)=k_i\forall i\le N\}$ as above and $l=\sum_i k_i+\tilde\xi(\Lambda_{n-1})$.
Indeed, take any event $C\subseteq B_{l,k}\subseteq\Gamma_{\Lambda_n}$, then by the construction of $\tilde \P$, the point process $\proj_1(F_{l,k,k'}(\xi,\eta,Z,Z'))$ with fixed $\xi|_{\Lambda_{n-1}}=\tilde\xi$ is given by the sum of $\tilde \xi$ and binomial point processes in $K_i$. More precisely the regular conditional probability is given by
\begin{align*}
&\tilde{\P}_{\Lambda_n}(\xi\in C\mid \{\xi|_{\Lambda_{n-1}}=\tilde\xi\})\\
=&\sum_{k'}\Q_{l,k,k'}\otimes\bigotimes_{i=1}^N\mathsf {Bin}_{K_i,k_i}\otimes\bigotimes_{i=1}^N\mathsf {Bin}_{K_i,k_i'}\Big(\proj_1(F_{l,k}(\xi,\eta,\beta,\beta'))\in C\Big\vert\proj_1(F_{l,k}(\xi,\eta,\beta,\beta'))|_{\Lambda_{n-1}}=\tilde\xi\Big)\\
=&\sum_{k'}\Q_{l,k,k'}\otimes\bigotimes_{i=1}^N\mathsf {Bin}_{K_i,k_i}\otimes\bigotimes_{i=1}^N\mathsf {Bin}_{K_i,k_i'}\Big(\tilde \xi+\sum_{i=1}^N\beta_i\in C\Big)\\
=&\Q(\cup_{k'}A_{l,k,k'})\prod_{i=1}^N \mathsf {Bin}_{K_i,k_i}\big(\beta_i\in C|_{K_i}\big)\\
=&\tilde \P_{\Lambda_n}(B_{l,k})\Poi(\xi_K\in C|_K)e^N\prod_{i=1}^Nk_i!,
\end{align*}
where in the second step we simply plugged in the condition.
Now, we split the entropy into $B_{l,k}$, that is
\begin{align*}
    &\mathsf{Ent}\big(\tilde{\P}_{\Lambda_n}(\cdot\mid \{\cdot|_{\Lambda_{n-1}}=\tilde\xi\})\big\vert \Poi_{\Lambda_n}(\cdot\mid \{\cdot|_{\Lambda_{n-1}}=\tilde\xi\})\big)\\
    &=\sum_{l,k}\int_{B_{l,k}}\log\left(\frac{d\tilde{\P}_{\Lambda_n}(\cdot\mid \{\cdot|_{\Lambda_{n-1}}=\tilde\xi\})}{d\Poi_{\Lambda_n}(\cdot\mid \{\cdot|_{\Lambda_{n-1}}=\tilde\xi\})}(\xi)\right)d \tilde{\P}_{\Lambda_n}(\xi\mid \{\xi|_{\Lambda_{n-1}}=\tilde\xi\}) \\
        &=\sum_{l,k}\int_{B_{l,k}}\left(N+\sum_{i=1}^N\log(k_i!)+\log(\tilde \P_{\Lambda_n}(B_{l,k})) \right)d \tilde{\P}_{\Lambda_n}(\xi\mid \{\xi|_{\Lambda_{n-1}}=\tilde\xi\}).
\end{align*}

Therefore, we conclude
\begin{align*}
 &\mathsf{Ent}(\tilde{\P}_{\Lambda_n}\mid \Poi_{\Lambda_n})\\
 &=  \mathsf{Ent}\big({\P}_{\Lambda_{n-1}}\big\vert \Poi_{\Lambda_{n-1}}\big) 
 + \sum_{l,k}\int\int_{B_{l,k}}\left(N+\sum_{i=1}^N\log(k_i!)+\log(\tilde \P_{\Lambda_n}(B_{l,k})) \right)d \tilde{\P}_{\Lambda_n}(\xi\mid \{\xi|_{\Lambda_{n-1}}=\tilde\xi\})d\tilde\P_{\Lambda_{n-1}}(\tilde\xi)\\
 &= \mathsf{Ent}\big({\P}_{\Lambda_{n-1}}\big\vert \Poi_{\Lambda_{n-1}}\big) 
 + \sum_{l,k}\tilde{\P}_{\Lambda_n}( B_{l,k})\left(N+\sum_{i=1}^N\log(k_i!)+\log(\tilde \P_{\Lambda_n}(B_{l,k})) \right)
 \\
 &= \mathsf{Ent}\big({\P}_{\Lambda_{n-1}}\big\vert \Poi_{\Lambda_{n-1}}\big) 
 + \sum_{l,k,k'}\Q(A_{l,k,k'})\sum_{i=1}^N\log(k_i!)+\sum_{l,k}\tilde\P_{\Lambda_n}(B_{l,k})\log(\tilde \P_{\Lambda_n}(B_{l,k}))+o(n^d).
 \end{align*}
In order to prove the upper bound of \eqref{thm:mod_proc_entropy} (and we shall see below that this is sufficient by semi-continuity), we drop the third term that is negative. It remains to control the logarithmic moment of the number of points $\q(\Lambda_{n-1}^c\times \Lambda_{n-1})$ which are transported from the outside to the inside. Using $k_i!\le k_i^{k_i}$ and $k_i\le \sum_j k_j$, it follows
 \begin{align}
     \sum_{l,k,k'}\Q(A_{l,k,k'})\sum_{i=1}^N\log(k_i!)&\le \sum_{l,k,k'}\Q(A_{l,k,k'})\sum_{i=1}^Nk_i\log\Big(\sum_{j=1}^N k_j\Big)\nonumber\\
     &=\IE_{\Q}\Big(\q(\Lambda_{n-1}^c\times \Lambda_{n-1})\log\big(\q(\Lambda_{n-1}^c\times \Lambda_{n-1})\big)\Big).\label{eq:EQqlogq}
 \end{align}
It follows from the Markov inequality, as we have done in \eqref{eq:Markov_ineq}, that $n^{-d}\q(\Lambda_{n-1}^c\times \Lambda_{n-1})\to 0$ in probability.
Then, for $\Phi(x)=x\log(x)$, also $\Phi(n^{-d}\q(\Lambda_{n-1}^c\times \Lambda_{n-1}))\to 0$ in probability and Lemma \ref{lem:cost_estimate} implies that $n^{-d}\Phi(\q(\Lambda_{n-1}^c\times \Lambda_{n-1}))\to 0$ in probability. On the other hand, using $\q(\Lambda_{n-1}^c\times \Lambda_{n-1})\le \eta(\Lambda_n)$, 
$$
0\le n^{-d}\Phi(\q(\Lambda_{n-1}^c\times \Lambda_{n-1}))\le n^{-d}\eta(\Lambda_n)\log(\eta(\Lambda_n))= \Phi(n^{-d}\eta(\Lambda_n))+n^{-d}\eta(\Lambda_n)\log (n^d)
$$
and the latter term again converges to $0$ in $L^1(\Q)$ by Lemma \ref{lem:cost_estimate}.
Moreover, it follows from convexity of $\Phi$ and the spatial ergodic theorem that
\[
\Phi(n^{-d}\eta(\Lambda_n))\leq n^{-d}\sum_{z\in\IZ^d\cap \Lambda_n}\eta(\Lambda_1(z))\log(\eta(\Lambda_1(z)))\xrightarrow{n\to \infty} Y_0
\]
 in $L^1(\Q)$, where $Y_0$ is an integrable random variable (being this deep into a technical proof, the reader might ask for a small pun to cheer up and we thought $Y_0$). This follows from the integrability $\IE_{\Q}[\eta(\Lambda_1)\log(\eta(\Lambda_1))]$, which is a consequence of Lemma \ref{lem:unif_int}. Thus, $n^{-d}\Phi(\q(\Lambda_{n-1}^c\times \Lambda_{n-1}))$ converges to $0$ in $L^1(\Q)$, which is precisely the claimed asymptotic of \eqref{eq:EQqlogq}. 

 It remains to show equality, that is $\mathcal E(\P)=\lim_{n\to\infty}{n^{-d}}\mathsf{Ent}(\tilde{\P}_{\Lambda_n} \mid \Poi_{\Lambda_n})$. Part (1) implies that $\tilde\P_{\Lambda_n}$ converges weakly to $\P$. By Lemma \ref{lem:Georgii}, $\mathcal E$ is lower semi-continuous, hence we combine our previous considerations to
\begin{align*}
    \mathcal E(\P)&\le \liminf_{n\to\infty}{n^{-d}}\mathsf{Ent}(\tilde{\P}_{\Lambda_n} \mid \Poi_{\Lambda_n})\\
    &=  \liminf_{n\to\infty}{n^{-d}}\mathsf{Ent}\big({\P}_{\Lambda_{n-1}}\big\vert \Poi_{\Lambda_{n-1}}\big) 
 + \liminf_{n\to\infty}{n^{-d}}\sum_{l,k}\tilde\P_{\Lambda_n}(B_{l,k})\log(\tilde \P_{\Lambda_n}(B_{l,k}))\le \mathcal E(\P),
\end{align*}
 which must be an equality.
\end{proof}

\begin{proof}[Proof of part \eqref{thm:mod_proc_cost}]
By construction of $\tilde{ \Q}_{\Lambda_n}$ from part (1) and $\tilde{\q}_{\Lambda_n}$ we have
\begin{align}
 &\IE_{\tilde{ \Q}_{\Lambda_n}}\int_{\Lambda_n\times \IR^d}\abs{x-y}^2d\tilde{\q}_{\Lambda_n}(x,y)\\
&\leq\IE_{ \Q} \int_{\Lambda_{n-1}^2}\abs{x-y}^2 d\mathsf{q}(x,y)+ \IE_{ \Q} \int_{\Lambda_{n-1}\times \Lambda_{n-1}^c}(\abs{x-y}+1)^2 d\mathsf{q}(x,y)+\IE_{ \Q} \int_{\Lambda_{n-1}^c\times \Lambda_{n-1}}(\abs{x-y}+1)^2 d\mathsf{q}(x,y).
\end{align}
Hence, it is sufficient to show that the last two terms are $o(n^d)$. By symmetry it is enough to treat one of them, that is
\[
 \IE_{ \Q} \int_{\Lambda_{n-1}\times \Lambda_{n-1}^c}(\abs{x-y}+1)^2 d\mathsf{q}(x,y) 
 \leq 2 \IE_{ \Q} \int_{\Lambda_{n-1}\times \Lambda_{n-1}^c}\abs{x-y}^2 d\mathsf{q}(x,y)+\IE_{ \Q}\mathsf{q}(\Lambda_{n-1}\times \Lambda_{n-1}^c).
\]
Both terms divided by $n^d$ vanish according to Lemma \ref{lem:cost_estimate}.
\end{proof}

\begin{proof}[Proof of part \eqref{thm:mod_proc_intens}]
This part follows directly from additivity and Lemma \ref{lem:cost_estimate}, more precisely
\begin{align*}
  \frac 1 {n^d}\IE_{(\tilde{\P}_{\Lambda_n})^{stat}}[\xi(\Lambda_n)] =\frac 1 {n^d}\IE_{\tilde{\P}_{\Lambda_n}}\left[\xi(\Lambda_n)\right]
=\frac 1 {n^d}\IE_{ \Q}\left[\mathsf \q(\Lambda_{n-1}\times \IR^d)+\mathsf \q(\Lambda_{n-1}^c\times \Lambda_{n-1}) \right]=1+o(1).
\end{align*}

\end{proof}

\section{Evolution Variational Inequality for stationary point processes}\label{sec:EVI}

The goal of this section is the following EVI.

\begin{thm}\label{thm:EVI}
Let $ \P,\mathsf R\in\spp1$  with $\W_2(\P,\mathsf R)<\infty$ and finite specific entropies, i.e. $\mathcal E(\P),\mathcal E(\mathsf R)<\infty$. Then the EVI holds
\begin{align}\label{eq:EVI_thm}
\W^2_2(\Semi_t \P,\mathsf R)-\W^2_2(\P,\mathsf R)\leq  2t(\mathcal{E}(\mathsf R)-\mathcal E(\Semi_t\P)),\quad \forall t\geq 0.
\end{align}
\end{thm}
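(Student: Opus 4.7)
The proof proceeds by lifting the classical Wasserstein EVI from finite boxes with reflecting boundary to stationary point processes via the modification procedure of Theorem \ref{thm:Modification}. Fix $\P, \R \in \spp1$ with $\W_2(\P,\R) < \infty$ and $\mathcal E(\P), \mathcal E(\R) < \infty$, and let $(\tilde \P_{\Lambda_n}, \tilde \R_{\Lambda_n})$ be the modified pair produced by Theorem \ref{thm:Modification}. By construction they share a common number statistic on $\Lambda_n$, so I decompose
\begin{equation*}
\tilde \P_{\Lambda_n} = \sum_{k\ge 0} p_k^{(n)}\, \tilde \P_{\Lambda_n}^k, \qquad \tilde \R_{\Lambda_n} = \sum_{k\ge 0} p_k^{(n)}\, \tilde \R_{\Lambda_n}^k,
\end{equation*}
with $\tilde \P_{\Lambda_n}^k, \tilde \R_{\Lambda_n}^k$ symmetric probability measures on $\Lambda_n^k$. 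Since $\Semi_t^{\Lambda_n}$ evolves each particle as an independent reflecting Brownian motion in $\Lambda_n$ and preserves the particle number, it acts separately on each stratum. On the convex Euclidean domain $\Lambda_n^k$ the heat flow with Neumann boundary is the $L^2$-Wasserstein gradient flow of the relative entropy with respect to Lebesgue, so the classical EVI (McCann displacement convexity of entropy, Ambrosio-Gigli-Savar\'e) yields, for each $k$,
\begin{equation*}
W_2^2(\Semi_t^{\Lambda_n}\tilde \P_{\Lambda_n}^k, \tilde \R_{\Lambda_n}^k) - W_2^2(\tilde \P_{\Lambda_n}^k, \tilde \R_{\Lambda_n}^k) \le 2t\bigl[\mathsf{Ent}(\tilde \R_{\Lambda_n}^k\mid \Leb) - \mathsf{Ent}(\Semi_t^{\Lambda_n}\tilde \P_{\Lambda_n}^k\mid \Leb)\bigr].
\end{equation*}
Averaging against $(p_k^{(n)})_k$ and using the disintegration $\mathsf{Ent}(\cdot\mid\Poi_{\Lambda_n}) = (\text{Shannon part in }k) + \sum_k p_k^{(n)}\mathsf{Ent}(\cdot^k\mid \Leb)$, the Shannon part cancels (it is preserved by $\Semi_t^{\Lambda_n}$ and identical under $\tilde\P_{\Lambda_n}$ and $\tilde\R_{\Lambda_n}$ by construction), giving the finite-volume EVI
\begin{equation*}
\C_2^{\Lambda_n}(\Semi_t^{\Lambda_n}\tilde \P_{\Lambda_n}, \tilde \R_{\Lambda_n}) - \C_2^{\Lambda_n}(\tilde \P_{\Lambda_n}, \tilde \R_{\Lambda_n}) \le 2t\bigl[\mathsf{Ent}(\tilde \R_{\Lambda_n}\mid \Poi_{\Lambda_n}) - \mathsf{Ent}(\Semi_t^{\Lambda_n}\tilde \P_{\Lambda_n}\mid \Poi_{\Lambda_n})\bigr],
\end{equation*}
where $\C_2^{\Lambda_n}$ denotes the natural $L^2$-Wasserstein cost for point-process configurations in $\Lambda_n$ with matched number statistics.

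Dividing by $n^d$ and letting $n\to\infty$, each of the four terms is controlled as follows. Theorem \ref{thm:Modification} part \eqref{thm:mod_proc_cost} gives $\limsup_n n^{-d}\C_2^{\Lambda_n}(\tilde \P_{\Lambda_n},\tilde \R_{\Lambda_n}) \le \W_2^2(\P,\R)$, and Theorem \ref{thm:Modification} part \eqref{thm:mod_proc_entropy} together with Lemma \ref{lem:Georgii} gives $\lim_n n^{-d}\mathsf{Ent}(\tilde \R_{\Lambda_n}\mid \Poi_{\Lambda_n}) = \mathcal E(\R)$. For the two remaining terms I need the liminf lower bounds $\W_2^2(\Semi_t\P,\R) \le \liminf_n n^{-d}\C_2^{\Lambda_n}(\Semi_t^{\Lambda_n}\tilde \P_{\Lambda_n}, \tilde \R_{\Lambda_n})$ and $\mathcal E(\Semi_t\P) \le \liminf_n n^{-d}\mathsf{Ent}(\Semi_t^{\Lambda_n}\tilde \P_{\Lambda_n}\mid \Poi_{\Lambda_n})$. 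For the entropy, stationarising yields $\mathcal E\bigl((\Semi_t^{\Lambda_n}\tilde \P_{\Lambda_n})^{stat}\bigr) \le n^{-d}\mathsf{Ent}(\Semi_t^{\Lambda_n}\tilde \P_{\Lambda_n}\mid \Poi_{\Lambda_n})$ by \eqref{eq:bound_entropy_by_cube}, after which the lower semicontinuity of $\mathcal E$ from Lemma \ref{lem:Georgii} closes the bound once I know $(\Semi_t^{\Lambda_n}\tilde \P_{\Lambda_n})^{stat} \to \Semi_t\P$ weakly. For the cost, I tile the box-optimal coupling of $(\Semi_t^{\Lambda_n}\tilde \P_{\Lambda_n}, \tilde \R_{\Lambda_n})$ periodically over $n\IZ^d$ and stationarise by a uniform shift in $\Lambda_n$ as in Section \ref{subsec:tiling}, producing a stationary coupling of $(\Semi_t^{\Lambda_n}\tilde \P_{\Lambda_n})^{stat}$ and $(\tilde \R_{\Lambda_n})^{stat}$ whose expected cost per unit volume equals $n^{-d}\C_2^{\Lambda_n}(\Semi_t^{\Lambda_n}\tilde \P_{\Lambda_n}, \tilde \R_{\Lambda_n})$ up to $o(1)$ boundary corrections; lower semicontinuity from Proposition \ref{prop:lsc_costfct+existence_min} then closes the limit inequality.

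The main obstacle is the convergence of the finite-volume flow to the infinite-volume flow in a form strong enough to pass to the limit in both entropy and cost, in particular the weak convergence $(\Semi_t^{\Lambda_n}\tilde \P_{\Lambda_n})^{stat} \to \Semi_t\P$. The key quantitative input is an exit-time estimate: a particle at distance $R$ from $\partial\Lambda_n$ deviates from its free Brownian evolution only on events of probability Gaussian-small in $R/\sqrt{t}$, so the reflecting and free flows can be coupled to agree outside a boundary layer of width $O(\sqrt{t\log n})$, contributing at most $o(n^d)$ to the transport cost and to the entropy. Combined with the weak convergence $\tilde \P_{\Lambda_n} \to \P$ from Theorem \ref{thm:Modification} part \eqref{thm:mod_proc_same_number} and the weak continuity of $\Semi_t$ furnished by Lemma \ref{lem:semigroup}, this yields the required convergence and completes the proof.
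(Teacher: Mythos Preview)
Your proposal is correct and follows essentially the same strategy as the paper: modify to match number statistics (Theorem~\ref{thm:Modification}), apply the Euclidean EVI on each $k$-stratum, aggregate, stationarise, and pass to the limit via lower semicontinuity of $\W_2$ and $\mathcal E$ together with the weak convergence $(\Semi_t^{\Lambda_n}\tilde\P_{\Lambda_n})^{stat}\to\Semi_t\P$. Your use of symmetric lifts to $\Lambda_n^k$ in place of the paper's orthant labelling is a harmless variant (and arguably cleaner, as it sidesteps the permutation Lemmas~\ref{lem:permut}--\ref{lem:finite_box_ent_iden}); the identification $\C_{\Lambda_n}^k=W_2^2$ for symmetric lifts holds by randomising the label on one side and transporting it via the optimal matching. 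One small imprecision: Lemma~\ref{lem:semigroup} gives continuity of $t\mapsto\Semi_t\P$, not continuity of $\Semi_t$ in the initial datum, so it does not by itself yield $(\Semi_t^{\Lambda_n}\tilde\P_{\Lambda_n})^{stat}\to\Semi_t\P$; that convergence is precisely the exit-time coupling you describe earlier in the same paragraph (the paper's Lemma~\ref{lem:weak_convergence}), and you should cite that argument rather than Lemma~\ref{lem:semigroup}.
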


Recall that the specific Wasserstein distance $\W_2$ has been defined in \eqref{eq:Wp} and the specific entropy has been defined in \eqref{eq:specific_rel_ent}. The proof is divided among the remaining subsections and organized as follows. First, we will leverage the EVI on finite dimensional Euclidean space first to random number of points. Then, it will be lifted to the stationarized and modified process, defined in the previous section, which we will show to approximate EVI of the original process as in \eqref{eq:EVI_thm}.

\subsection{Fixed number of points on a box}
We fix $k\in \IN$ and the box $\Lambda_n$. Denote the set of all point configurations on $\Lambda_n$ with $k$ points by $\Gamma^k_{\Lambda_n}$.
For two such configurations $\xi,\eta\in \Gamma^k_{\Lambda_n}$ we define the cost \[
 c_{\Lambda_n}^k(\xi,\eta)=\inf_{\q\in\cpl(\xi,\eta)}\int_{\Lambda_n\times \Lambda_n}\abs{x-y}^2d\q,
\] 
 For two point processes $\P_0,\P_1$
  on $\Lambda_n$ with deterministic  number of points $k$ we then define the cost \[
 \gls{Ckn}( \P_0,\P_1)=\inf_{ \Q\in\Cpl(\P_0,\P_1)}\IE_{ \Q}\left[c^k_{\Lambda_n}(\xi,\eta)\right].
\]
We want to identify $\Gamma_{\Lambda_n}^k$ with a portion of $(\IR^d)^k$ in order to compute entropies and transport costs. To this end, we consider the lexicographic order on $\IR^d$ and write $x\leq y$ if $x=(x_1,\dots, x_d)$, $y=(y_1,\dots,y_d)$ satisfy $x_i< y_i$ for the first index $i=1,\dots,d$ with different entries.  Consider the orthant 
\[
O=\{(x^1,\dots,x^k)\in (\IR^d)^k\mid x^i\leq x^{i+1}\}\;.
\]
Now, define $\ell$ to be the label-map, which orders the points of a configuration lexicographically 
\[
\gls{l}:\Gamma_{\Lambda_n}^k \to O \subset (\IR^d)^k\;, \{x^1,\dots, x^k\} \mapsto \ell (x^1,\dots,x^k)\;.
\]
By slight abuse of notation, we will also apply $\ell$ to vectors $(x^i)_{i\le k}\in(\IR^d)^k$.

\subsubsection{Entropy}

 Fix two distributions of point processes $\P_0,\P_1\in\mathcal P(\Gamma_{\Lambda_n}^k)$ on $\Lambda_n$ with deterministic  number of points $k$. Assume that $\C_{\Lambda_n}^k(\P_0,\P_1)<\infty$ and let $(\Q,\q=\q(\xi,\eta))$ be an optimal pair for the unnormalized cost  $\C_{\Lambda_n}^k$. Note that existence follows from classical results on $(\IR^d)^k$, see for instance \cite[Corollary 5.22]{Villani}.

 From the Birkhoff theorem for doubly stochastic matrices it follows that we can assume that it is a matching $\q(\xi,\eta)=(\id,T^{\xi,\eta})_{\#}\xi$ for some map $T^{\xi,\eta}$ s.t. $T^{\xi,\eta}_{\#}\xi=\eta$.
Denote by $\P_t$ the displacement interpolation at time $a$ induced by $ \Q$ and $\q$. Let  $\gls{pi}:\Gamma_{\Lambda_n}\to\IN_0, \xi \mapsto \xi(\Lambda_n)$. The aim of this subsection is to prove 
\begin{prop}\label{prop:conv_entropy_box}
The relative entropy $\ent(\P_t\mid \Poi_{\Lambda_n}(\cdot\mid \pi=k))$ is convex on $[0,1]$, where $\Poi_{\Lambda_n}(\cdot\mid \pi=k)$ denotes Poisson point process on $\Lambda_n$ conditioned to have $k$ points.
\end{prop}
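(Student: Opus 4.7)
The plan is to lift everything from $\Gamma^k_{\Lambda_n}$ to $(\IR^d)^k$ via a symmetrised labelling, apply McCann's classical displacement convexity of the Boltzmann entropy on Euclidean space, and descend the resulting convexity back to $\Gamma^k_{\Lambda_n}$ by means of the disintegration formula of Proposition~\ref{prop:disint_entr}. Throughout, denote by $\Phi\colon (\IR^d)^k\to \Gamma^k_{\Lambda_n}$, $(z^i)_{i\leq k}\mapsto \sum_i \delta_{z^i}$, the symmetric quotient map, and by $\mathcal U_n$ the uniform probability measure on $\Lambda_n^k$; then $\Phi_\#\mathcal U_n = \Poi_{\Lambda_n}(\cdot\mid \pi=k)$, since conditioning a unit-intensity Poisson process on $\Lambda_n$ to have exactly $k$ points yields $k$ i.i.d.\ uniform points.

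First I would set up the lift. Using the Monge form $\q(\xi,\eta)=(\id,T^{\xi,\eta})_\#\xi$ of the optimal matching, define
\[
L(\xi,\eta):=\bigl(\ell(\xi),\,T^{\xi,\eta}(\ell(\xi))\bigr)\in(\IR^d)^k\times(\IR^d)^k
\]
and put $\tilde\Q:=L_\#\Q$, with marginals $\tilde\P_0=\ell_\#\P_0$ and some $\tilde\P_1$. Let $\tilde\P_t$ be the push-forward of $\tilde\Q$ under $(x,y)\mapsto (1-t)x+ty$; since $T^{\xi,\eta}(\ell(\xi))$ is only a relabelling of $\eta$ and $\Phi$ is symmetric, $\Phi_\#\tilde\P_t=\P_t$. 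Symmetrise over the symmetric group $S_k$ acting by coordinate permutation,
\[
\bar\Q:=\tfrac{1}{k!}\sum_{\sigma\in S_k}(\sigma,\sigma)_\#\tilde\Q,\qquad \bar\P_t:=\tfrac{1}{k!}\sum_{\sigma\in S_k}\sigma_\#\tilde\P_t,
\]
so that the marginals $\bar\P_0,\bar\P_1$ of $\bar\Q$ are $S_k$-invariant and $\Phi_\#\bar\P_t=\P_t$ still holds.

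Next, to invoke McCann I verify that $\bar\Q$ is $W_2^2$-optimal on $(\IR^d)^k$. Because permutations are isometries, the cost of $\bar\Q$ equals the cost of $\tilde\Q$, which is $\C^k_{\Lambda_n}(\P_0,\P_1)$. Conversely, for any coupling $\gamma$ of $\bar\P_0,\bar\P_1$ on $(\IR^d)^k$, the push-forward $(\Phi,\Phi)_\#\gamma$ is a coupling of $\P_0,\P_1$, and the coordinatewise pairing $\sum_i\delta_{(x^i,y^i)}$ is an admissible coupling of $\Phi(x),\Phi(y)$ of cost $|x-y|^2$, so $|x-y|^2\geq c^k_{\Lambda_n}(\Phi(x),\Phi(y))$ and therefore
\[
\int |x-y|^2\,d\gamma(x,y) \geq \C^k_{\Lambda_n}(\P_0,\P_1).
\]
Hence $\bar\Q$ is $W_2^2$-optimal and $\bar\P_t$ is a Wasserstein geodesic from $\bar\P_0$ to $\bar\P_1$. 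If $\ent(\P_0\mid\Poi_{\Lambda_n}(\cdot\mid\pi=k))=\infty$ the convexity statement is trivial; otherwise $\bar\P_0\ll\Leb$, and McCann's theorem yields convexity of $t\mapsto\ent(\bar\P_t\mid\Leb)$ on $[0,1]$, hence also of $t\mapsto \ent(\bar\P_t\mid\mathcal U_n)$, which differs only by the additive constant $kd\log n$.

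Finally I would descend by applying Proposition~\ref{prop:disint_entr} with $T=\Phi$. Both $\bar\P_t$ and $\mathcal U_n$ are $S_k$-invariant, so their regular conditional distributions given $\Phi=\xi$ both equal the uniform measure on the $k!$ orderings of $\xi$; the conditional entropy term therefore vanishes, and
\[
\ent(\bar\P_t\mid\mathcal U_n)=\ent(\Phi_\#\bar\P_t\mid\Phi_\#\mathcal U_n)=\ent(\P_t\mid\Poi_{\Lambda_n}(\cdot\mid\pi=k)),
\]
transferring the convexity from $\bar\P_t$ back to $\P_t$. The main subtlety of the argument is the symmetric-group bookkeeping in the first two paragraphs: arranging a lift which is simultaneously $W_2^2$-optimal on $(\IR^d)^k$ and whose $\Phi$-projection recovers the prescribed displacement interpolation $\P_t$. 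Once this is in place, McCann's theorem and the disintegration formula for the entropy supply the rest.
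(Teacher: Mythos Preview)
Your proof is correct and takes a genuinely different route from the paper.

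The paper lifts $\P_0,\P_1$ to the lexicographic orthant $O\subset(\IR^d)^k$ via $\ell$, obtaining a displacement interpolation $(\mu_t)$ there. The difficulty is that $O$ is not geodesically closed under interpolation (particles may swap order), so $\mu_t$ escapes $O$ for $t\in(0,1)$. The paper handles this with a non--collision argument: Lemma~\ref{lem:permut} shows that two distinct $\Q$--pairs cannot produce the same $t$--interpolant up to permutation, and Lemma~\ref{lem:finite_box_ent_iden} uses this to prove that the pieces $\ell_\#(\eins_{B_i}\mu_t)$ have disjoint support, yielding $\ent(\mu_t|\Leb)=\ent(\P_t|\Poi_{\Lambda_n}(\cdot|\pi=k))-\log|O|$. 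A separate argument is then needed at $t=1$.

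You instead lift to the full cube $(\Lambda_n)^k$ and symmetrise over $S_k$. Because $S_k$ acts by isometries, optimality of $\bar\Q$ is immediate from the pointwise bound $|x-y|^2\geq c^k_{\Lambda_n}(\Phi(x),\Phi(y))$, and $(\bar\P_t)$ is a $W_2$--geodesic to which McCann's displacement convexity applies directly on all of $[0,1]$. The identification of entropies then falls out of Proposition~\ref{prop:disint_entr} with $T=\Phi$: $S_k$--invariance forces both conditional laws on each fibre $\Phi^{-1}(\xi)$ to be uniform, so the conditional entropy vanishes and $\ent(\bar\P_t|\mathcal U_n)=\ent(\P_t|\Poi_{\Lambda_n}(\cdot|\pi=k))$. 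This bypasses Lemmas~\ref{lem:permut} and~\ref{lem:finite_box_ent_iden} entirely and needs no separate treatment of the endpoint $t=1$.

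Two minor remarks. First, your sentence ``if $\ent(\P_0|\cdot)=\infty$ the statement is trivial'' is not quite right, since convexity must hold for all subintervals; but this is harmless, as displacement convexity of the Boltzmann entropy along any $W_2$--geodesic in $\mathcal P_2((\Lambda_n)^k)$ holds without absolute--continuity assumptions on the endpoints, so you can simply drop that sentence. Second, for the disintegration step you implicitly use that $\bar\P_t$--a.e.\ configuration has $k$ distinct points (so that the fibre has exactly $k!$ elements); this follows since $\bar\P_t\ll\Leb$ for $t\in(0,1)$ whenever either endpoint is absolutely continuous, and at $t=0,1$ it holds by assumption of finite entropy.
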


Recall that, similar to Theorem \ref{thm:Modification}, the distribution $\Poi_{\Lambda_n}(\cdot\mid \pi=k)=\mathsf {Bin}_{\Lambda_n,k}$ is a binomial process. However, having in mind that the Poisson process will take the role of a reference process of our the specific relative entropy, it is instructive to think of a conditioned Poisson process instead of a binomial process.
 
 Define the map $F:\Gamma_{\Lambda_n}^k\times\Gamma_{\Lambda_n}^k\to (\IR^d)^k \times (\IR^d)^k$ by \begin{align}\label{eq:label_pairs}
(\xi,\eta)\mapsto (\ell (\xi),T^{\xi,\eta}(\ell (\xi))),
\end{align}
where $T^{\xi,\eta}$ is applied component wise.  Denote the displacement interpolation w.r.t. $F_{\#} \Q$ by $(\mu_t)_{t\in [0,1]}$.

 We claim that $F_{\#} \Q$ is an optimal coupling w.r.t. the Wasserstein $L^2$ distance $W_2$, since  its support is cyclically monotone. Indeed, let $(\xi,\eta), (\tilde{\xi},\tilde{\eta})\in \supp{ \Q}$. The proof is essentially the same for a general number of pairs in  $\supp{ \Q}$. Then\begin{align}\label{calculation:optimal}
&\sum_{i=1}^k\norm{x^i-T^{\tilde{\xi},\tilde{\eta}}(\tilde{x}^i)}^p+
\norm{\tilde{x}^i-T^{\xi,\eta}(x^i)}^p\\
&\geq c_{\Lambda_n}^k(\xi,\tilde{\eta})+c_{\Lambda_n}^k(\tilde{\xi},\eta)\nonumber\\ 
&\geq c_{\Lambda_n}^k(\xi,\eta)+c_{\Lambda_n}^k(\tilde{\xi},\tilde{\eta})\nonumber\\
&=\sum_{i=1}^k\norm{x^i-T^{\xi,\eta}(x^i)}^p+
\norm{\tilde{x}^i-T^{\tilde{\xi},\tilde{\eta}}(\tilde{x}^i)}^p,\nonumber
\end{align} 
where we used that the support of $ \Q$ is cyclically monotone. 
In particular, we obtain \begin{align}\label{eq:calc_wasser}
\C^k_{\Lambda_n}(\P_0,\P_1)&=\int c^k_{\Lambda_n}(\xi,\eta)\Q(d(\xi,\eta))\\
&=\int \sum_{x\in \xi}\norm{x-T^{\xi,\eta}(x)}^p\Q(d(\xi,\eta))\nonumber\\
&=\int \norm{\ell (\xi)-T^{\xi,\eta}(\ell (\xi))}^p\Q(d(\xi,\eta))\nonumber\\
&=\int \norm{x-y}^pF_{\#} \Q(dx,dy)\nonumber\\
&=\mathsf{W}_p^p(\mu_0,\mu_1).
\end{align}

Since $F_{\#} \Q$ is optimal, the relative entropy is convex along the geodesic $(\mu_t)_t$
induced by $F_{\#} \Q$. In the following we will consider permutations $\sigma:(\IR^d)^k\to (\IR^d)^k$, $(x^1,\dots,x^k)\mapsto (x^{\sigma_1},\dots,x^{\sigma_k})$. The set of these permutations will be called $\Sigma$. 
\begin{lem}\label{lem:permut}
Let $(\xi,\eta),(\tilde{\xi},\tilde{\eta})\in \supp{ \Q}$ and assume that there exists a permutation $\sigma$ and $0<t<1$ s.t. \[
\sigma\left((1-t)\ell (\xi)+tT^{\xi,\eta}(\ell (\xi))\right)
=(1-t)\ell (\tilde{\xi})+tT^{\tilde{\xi},\tilde{\eta}}(\ell (\tilde{\xi})).
\]Then $(\xi,\eta)=(\tilde{\xi},\tilde{\eta})$.
\end{lem}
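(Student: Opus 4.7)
The plan is to combine cyclical monotonicity of the optimal plan $\Q$ at the level of $\Gamma_{\Lambda_n}^k\times\Gamma_{\Lambda_n}^k$ with a direct geometric calculation at the shared midpoint. Writing $x=\ell(\xi)$, $y=T^{\xi,\eta}(\ell(\xi))$, $\tilde x=\ell(\tilde\xi)$, $\tilde y=T^{\tilde\xi,\tilde\eta}(\ell(\tilde\xi))$, and letting $(\sigma_i)_{i=1}^k$ denote the permutation encoded by $\sigma\in\Sigma$, the hypothesis reads componentwise
\[ (1-t)x^{\sigma_i}+t\,y^{\sigma_i}\;=\;(1-t)\tilde x^i+t\,\tilde y^i\qquad(i=1,\dots,k), \]
or equivalently $x^{\sigma_i}-\tilde x^i=\tfrac{t}{1-t}(\tilde y^i-y^{\sigma_i})$.

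Next I would perform the pointwise ``swap'' calculation. A direct expansion gives
\begin{align*}
\|x^{\sigma_i}-y^{\sigma_i}\|^2+\|\tilde x^i-\tilde y^i\|^2-\|x^{\sigma_i}-\tilde y^i\|^2-\|\tilde x^i-y^{\sigma_i}\|^2
=-2\bigl\langle x^{\sigma_i}-\tilde x^i,\,y^{\sigma_i}-\tilde y^i\bigr\rangle,
\end{align*}
and inserting the relation above turns the right-hand side into $\tfrac{2t}{1-t}\|y^{\sigma_i}-\tilde y^i\|^2\ge 0$, with equality iff $y^{\sigma_i}=\tilde y^i$. Summing over $i$, using $\sum_i\|x^{\sigma_i}-y^{\sigma_i}\|^2=\|x-y\|^2=c_{\Lambda_n}^k(\xi,\eta)$ (likewise for the tilded pair), together with the fact that $\{x^{\sigma_i}\}_i$ enumerates $\xi$ and $\{\tilde y^i\}_i$ enumerates $\tilde\eta$, so that $\sum_i\|x^{\sigma_i}-\tilde y^i\|^2$ is the cost of a particular matching between $\xi$ and $\tilde\eta$ and hence bounds $c_{\Lambda_n}^k(\xi,\tilde\eta)$ from above, yields
\[ c_{\Lambda_n}^k(\xi,\eta)+c_{\Lambda_n}^k(\tilde\xi,\tilde\eta)\;\ge\;c_{\Lambda_n}^k(\xi,\tilde\eta)+c_{\Lambda_n}^k(\tilde\xi,\eta). \]
Cyclical monotonicity of $\supp\Q$, which holds because $\Q$ is optimal for $\C_{\Lambda_n}^k$, provides the opposite inequality (see \eqref{calculation:optimal}). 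Hence the chain collapses to equalities throughout, so the swap terms vanish summand-wise, forcing $y^{\sigma_i}=\tilde y^i$ for every $i$; the displayed relation then immediately gives $x^{\sigma_i}=\tilde x^i$ as well.

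To finish, I would upgrade $\sigma(x)=\tilde x$ and $\sigma(y)=\tilde y$ to $x=\tilde x$, $y=\tilde y$. Since $\Gamma$ is identified with locally finite subsets, the tuple $x=\ell(\xi)\in O$ has pairwise distinct entries, so the only permutation $\sigma$ for which the reshuffled tuple $(x^{\sigma_1},\dots,x^{\sigma_k})$ still lies in the lex-ordered orthant $O$ (which $\tilde x=\ell(\tilde\xi)$ does) is the identity. Therefore $\sigma=\mathrm{id}$, $x=\tilde x$, $y=\tilde y$, and hence $(\xi,\eta)=(\tilde\xi,\tilde\eta)$. The only real subtlety is bookkeeping the joint action of $\sigma$ on the $x$- and $y$-sides when invoking cyclic monotonicity, which the $\sigma$-invariance of the Euclidean sum-of-squares handles cleanly.
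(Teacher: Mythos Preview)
Your proof is correct and takes essentially the same route as the paper: both combine cyclical monotonicity of $\supp\Q$ with the ``crossing geodesics'' inequality (swapping endpoints of two segments in $(\IR^d)^k$ that share an interior point cannot increase total squared length). The paper invokes strict convexity of $\|\cdot\|^2$ and argues by contradiction, whereas you unpack the same inequality into the explicit nonnegative term $\tfrac{2t}{1-t}\sum_i\|y^{\sigma_i}-\tilde y^i\|^2$ and argue directly; your final extraction of $\sigma=\mathrm{id}$ is a harmless extra step, since $\sigma(x)=\tilde x$ and $\sigma(y)=\tilde y$ already yield $\xi=\tilde\xi$ and $\eta=\tilde\eta$ at the configuration level.
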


\begin{proof}
We follow the lines of  \cite[Lemma 4.23]{Santa}. Denote the interpolation curve between the vectors $\sigma\left(\ell (\xi)\right)$ and $\sigma\left(T^{\xi,\eta}(\ell (\xi))\right)$ by $(e_s)_{s\in [0,1]}$.  For the pair $(\ell (\tilde{\xi}),T^{\tilde{\xi},\tilde{\eta}}(\ell (\tilde{\xi})))$ define similarly the curve $(\tilde{e}_s)$. By assumption $e_t=\tilde{e}_t$.  By convexity of $\norm{\cdot}^2$, the new pairs 
$(\sigma\left(\ell (\xi)\right),T^{\tilde{\xi},\tilde{\eta}}(\ell (\tilde{\xi})))$ and 
$(\ell (\tilde{\xi}),\sigma\left(T^{\xi,\eta}(\ell (\xi))\right))$
yield a lower cost. This however contradicts the optimality (or, cyclic monotonicity) of $ \Q$ since this would imply \begin{align*}
&c_{\Lambda_n}^k(\xi,\eta)+c_{\Lambda_n}^k(\tilde{\xi},\tilde{\eta})\\
&=\lVert\sigma\left(\ell (\xi)\right)-\sigma\left(T^{\xi,\eta}(\ell (\xi))\right)\rVert^2+\lVert \ell (\tilde{\xi})-T^{\tilde{\xi},\tilde{\eta}}(\ell (\tilde{\xi}))\rVert^2\\
&>\lVert\sigma\left(\ell (\xi)\right)-T^{\tilde{\xi},\tilde{\eta}}(\ell (\tilde{\xi}))\rVert^2+
\lVert \ell (\tilde{\xi})-\sigma\left(T^{\xi,\eta}(\ell (\xi))\right)\rVert^2\\
&\geq c_{\Lambda_n}^k(\xi,\tilde{\eta})+c_{\Lambda_n}^k(\tilde{\xi},\eta).
\end{align*}
\end{proof}

Next define for $\sigma\in \Sigma$ the sets
\[
A_{\sigma}=\{x\in (\IR^d)^k\mid \sigma(x)\in O\}
\]
and note they are disjoint except for their boundaries. Let us remove this ambiguity by setting
\[
B_1=A_{\sigma^1},\quad B_{i+1}=A_{\sigma^{i+1}}\setminus \cup_{k=1}^iB_k,
\]
where $\sigma^1,\dots,\sigma^m$ is some enumeration of $\Sigma$. Since the sets $B_k$ are disjoint we obtain 
\[
\ent(\mu_t| \Leb)=\sum_{i=1}^m \ent(\eins_{B_i}\mu_t| \Leb)=\sum_{i=1}^m\ent(\ell_{\#}(\eins_{B_i}\mu_t)|\Leb),
\]
where we used that $\ell$ coincides with $\sigma^i$ on $B_i\subset A_{\sigma^i}$ and that $\sigma^i:(\IR^d)^k\to (\IR^d)^k$ (and its inverse) preserves the Lebesgue measure. If we can show that the supports of the measures $\ell_{\#}(\eins_{B_k}\mu_t)$ are disjoint, then it would follow that 
\[
\ent(\mu_t|\Leb)=\sum_{i=1}^m\ent(\ell_{\#}(\eins_{B_i}\mu_t)|\Leb)=\ent(\ell_{\#}\mu_t|\Leb).
\] 
\begin{lem}\label{lem:finite_box_ent_iden}
The supports of the measures $\ell_{\#}(\eins_{B_k}\mu_t)$ are disjoint for $0<t<1$. Hence 
 \[
\ent(\mu_t|\Leb)=\ent(\ell_{\#}\mu_t|\Leb)=\ent( \P_t|\Poi_{\Lambda_n}(\cdot\mid \pi=k))-\log(\abs{O}).
\] 
\end{lem}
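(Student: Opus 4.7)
The plan is to combine the rigidity in Lemma \ref{lem:permut} with the absolute continuity of the displacement interpolation $\mu_t$ on $(\IR^d)^k$, and then read off the entropy identity by a change of variables. First I would note that we may assume $\ent(\P_0\mid\Poi_{\Lambda_n}(\cdot\mid\pi=k))$ and $\ent(\P_1\mid\Poi_{\Lambda_n}(\cdot\mid\pi=k))$ are both finite, since Proposition \ref{prop:conv_entropy_box} is vacuous otherwise. This upgrades to absolute continuity of the marginals $\mu_0,\mu_1$ of $F_{\#}\Q$ with respect to Lebesgue measure on $(\IR^d)^k$, and because \eqref{calculation:optimal} shows that $F_{\#}\Q$ is an optimal coupling, McCann's classical result for quadratic optimal transport on Euclidean space ensures that $\mu_t$ is absolutely continuous with density $\rho_t$ for all $0<t<1$.

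For the disjointness of supports, write $\mu_t=\Psi_{t\#}\Q$ with $\Psi_t(\xi,\eta)=(1-t)\ell(\xi)+tT^{\xi,\eta}(\ell(\xi))$ and set $E:=\Psi_t(\supp\Q)$, so that $\mu_t(E)=1$. For any permutation $\sigma\neq\id$, Lemma \ref{lem:permut} shows that whenever a point of $E$ equals $\sigma$ applied to another point of $E$, the two underlying pairs in $\supp\Q$ must coincide, forcing the point to be fixed by $\sigma$; hence $E\cap\sigma(E)\subset F_\sigma$, the fixed-point set of $\sigma$. Since $\sigma\neq\id$, $F_\sigma$ is a proper affine subspace of $(\IR^d)^k$, hence Lebesgue-null, and therefore $\mu_t$-null by absolute continuity. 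Combined with $\mu_t(E^c)=0$ this gives $\mu_t(\sigma(E))=0$, and the symmetric argument applied to $\sigma^{-1}$ yields $\sigma_{\#}\mu_t(E)=0$. So $\mu_t$ and $\sigma_{\#}\mu_t$ are mutually singular, and since both are absolutely continuous with densities $\rho_t$ and $\rho_t\circ\sigma^{-1}$ (as $\sigma$ preserves $\Leb$), this forces $\Leb(\{\rho_t>0\}\cap\sigma(\{\rho_t>0\}))=0$. Applying this with $\sigma=(\sigma^i)^{-1}\sigma^j$ for $i\neq j$, and noting that $\ell_{\#}(\eins_{B_i}\mu_t)$ has density $\rho_t\circ(\sigma^i)^{-1}\eins_{\sigma^i(B_i)}$ (because $\ell=\sigma^i$ on $B_i$ and $\sigma^i$ is a Lebesgue isometry), the essential supports of $\ell_{\#}(\eins_{B_i}\mu_t)$ and $\ell_{\#}(\eins_{B_j}\mu_t)$ are disjoint up to a Lebesgue-null set, which is the first claim.

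The entropy identity then follows in two steps. First, using the partition $(B_i)$ of $(\IR^d)^k$ together with the fact that each $\sigma^i$ coincides with $\ell$ on $B_i$ and preserves $\Leb$,
\[
\ent(\mu_t|\Leb)=\sum_{i=1}^m\ent(\eins_{B_i}\mu_t|\Leb)=\sum_{i=1}^m\ent(\ell_{\#}(\eins_{B_i}\mu_t)|\Leb)=\ent(\ell_{\#}\mu_t|\Leb),
\]
where the last equality uses the disjointness of essential supports to add the densities without overlap. Second, under the bijection $\ell:\Gamma_{\Lambda_n}^k\to O\cap\Lambda_n^k$ sending $\P_t$ to $\ell_{\#}\mu_t$ and $\Poi_{\Lambda_n}(\cdot\mid\pi=k)$ to the uniform probability $|O|^{-1}\Leb|_O$, the standard rescaling identity $\ent(\nu|c\mu)=\ent(\nu|\mu)-\log c$ applied with $c=|O|$ yields $\ent(\ell_{\#}\mu_t|\Leb)=\ent(\P_t\mid\Poi_{\Lambda_n}(\cdot\mid\pi=k))-\log|O|$, completing the identity.

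The main obstacle is the passage from Lemma \ref{lem:permut}, which is a pointwise rigidity statement on $\supp\Q$, to the measure-theoretic mutual singularity of $\mu_t$ and $\sigma_{\#}\mu_t$. This step crucially requires absolute continuity of $\mu_t$ to turn the inclusion $E\cap\sigma(E)\subset F_\sigma$ into the nullity statement $\mu_t(\sigma(E))=0$; it is therefore essential to invoke McCann's theorem at the outset, relying on both the optimality of $F_{\#}\Q$ established in \eqref{calculation:optimal} and on the absolute continuity of $\mu_0,\mu_1$ inherited from finiteness of the endpoint entropies.
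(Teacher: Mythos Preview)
Your proof is correct, but it takes a detour that the paper avoids. The core input in both arguments is Lemma~\ref{lem:permut}, and you extract from it the inclusion $E\cap\sigma(E)\subset F_\sigma$, then invoke absolute continuity of $\mu_t$ (via McCann) to upgrade this to a measure-theoretic singularity statement. The paper instead observes that Lemma~\ref{lem:permut} already yields something stronger at the set level: if $w\in\ell(B_i\cap E)\cap\ell(B_j\cap E)$ with $i\neq j$, write $w=\sigma^i(z_t)=\sigma^j(\tilde z_t)$ for $z_t\in B_i\cap E$, $\tilde z_t\in B_j\cap E$; then $\tilde z_t=(\sigma^j)^{-1}\sigma^i(z_t)$, and Lemma~\ref{lem:permut} forces the underlying pairs in $\supp\Q$ to coincide, hence $z_t=\tilde z_t$, contradicting $B_i\cap B_j=\emptyset$. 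So the carrier sets $\ell(B_i\cap E)$ are \emph{pointwise} disjoint, and the measures $\ell_\#(\eins_{B_i}\mu_t)$ are mutually singular without any appeal to absolute continuity. Your own derivation in fact contains this: when you write ``forcing the point to be fixed by $\sigma$'', you have already obtained $z=z'$, and that equality alone suffices to run the paper's contradiction with the partition $(B_i)$. The passage through the null set $F_\sigma$ is thus superfluous.

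Consequently, the emphasis in your final paragraph is misplaced: absolute continuity of $\mu_t$ is \emph{not} needed for the disjointness claim. Where it does matter is only in making sense of the entropy identity (one needs densities to write $\Phi(\sum g_i)=\sum\Phi(g_i)$), and for that the finiteness of $\ent(\mu_0\mid\Leb)$ already suffices via the usual displacement-convexity argument. A minor related point: your assertion that $\mu_1\ll\Leb$ follows directly from finite entropy of $\P_1$ is not immediate, since $\mu_1$ is in general not $\ell_\#\P_1$ (the second component of $F$ is $T^{\xi,\eta}(\ell(\xi))$, which need not lie in the orthant). You only need $\mu_0\ll\Leb$ to get $\mu_t\ll\Leb$ for $0<t<1$, so this does not affect the lemma, but it is worth being precise about which marginal carries the absolute continuity.
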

\begin{proof}
Assume the supports are not disjoint.
Then there exist $i<j$ and $(\xi,\eta),(\tilde{\xi},\tilde{\eta})\in \supp{ \Q}$ such that \[
z_t=(1-t)\ell (\xi)+tT^{\xi,\eta}(\ell (\xi))\in B_i \text{ and }
\tilde{z}_t=(1-t)\ell (\tilde{\xi})+tT^{\tilde{\xi},\tilde{\eta}}(\ell (\tilde{\xi}))\in B_j
\] and 
\[
\ell (z_t)
=\ell (\tilde{z}_t).
\]
This means there exists a permutation $\sigma$ such that 
\[
\sigma(z_t)=\tilde{z}_t.
\]
Hence by  Lemma \ref{lem:permut} we know that $(\xi,\eta)=(\tilde{\xi},\tilde{\eta})$ and thus $z_t=\tilde{z}_t$. Since the sets $(B_n)_{n\geq 1}$ are disjoint this implies $i=j$, which is a contradiction.
Finally we conclude 
\[
\ent(\mu_t|\Leb)=\sum_{i=1}^m\ent(\ell_{\#}(\eins_{B_i}\mu_t)|\Leb)=\ent(\ell_{\#}\mu_t|\Leb)=\ent( \P_t| \Poi_{\Lambda_n}(\cdot\mid \pi=k) )-\log(\abs{O}).\]
(The last $\log$ term appears because we consider the not normalised Lebesgue measure.)
\end{proof}
Note that we also have \[
\ent(\mu_0|\Leb)=\ent(\ell_{\#}\mu_0|\Leb)=\ent( \P_0|\Poi_{\Lambda_n}(\cdot\mid \pi=k))-\log(\abs{O}),
\]
 since $\mu_0=\ell_{\#}\mu_0$.
With the above results we can now prove Proposition \ref{prop:conv_entropy_box}.

\begin{proof}[Proof of Proposition \ref{prop:conv_entropy_box}]
 We start by  proving $\ent(\mu_1|\Leb)<\infty$.
It still holds  that \begin{align*}
\ent(\mu_1|\Leb)=\sum_{k=1}^m\ent(\ell_{\#}(\eins_{B_k}\mu_1)|\Leb).
\end{align*}
The entropy $\ent(\ell_{\#}(\eins_{B_i}\mu_1)|\Leb)$ can be bounded in the following way. Let $f_i$ be the density of $\ell_{\#}(\eins_{B_i}\mu_1)$ and $f$ be the density of 
$\ell_{\#}(\mu_1)$  w.r.t. Lebesgue. Write $\Phi(x)=x\log x$. Then $f_i\leq f$ and hence\begin{align*}
\int_{\Lambda_{n}^k}  \Phi(f_i(x))dx&\leq \int_{\Lambda_{n}^k}  \Phi(f(x))\eins_{f_i(x)\geq 1}dx+\int_{\Lambda_{n}^k}  \Phi(f_i(x))\eins_{f_i(x)\leq 1}dx\\
&\lesssim \int_{\Lambda_{n}^k}  \Phi(f(x))dx+\int_{\Lambda_{n}^k} dx,
\end{align*} 
where we used that the function $\Phi$ is monotone increasing on $[1,\infty)$ and bounded on $[0,1]$ with $\Phi(0)=0$. Note that the support of $f_i$ lies in $\Lambda_n^k$.  Since \[
\int  \Phi(f(x))dx=\ent(\ell_{\#}(\mu_1)|\Leb)=\ent(\P_1|\Poi_{\Lambda_n}(\cdot\mid \pi=k))-\log(\abs{O})<\infty,
\] it follows that $\ent(\mu_1|\Leb)<\infty$.
 Since $(\mu_t)_{t\in [0,1]}$ is the displacement interpoltion of an optimal coupling, it follows that $\ent(\mu_t|\Leb)$ is a  convex function for $t\in [0,1]$. Furthermore, we showed that $\ent(\mu_t|\Leb)$ is finite for all $t\in [0,1]$.

 For $0\leq t<1$ we know by Lemma \ref{lem:finite_box_ent_iden} that $\ent(\P_t|\Poi_{\Lambda_n}(\cdot\mid \pi=k))=\ent(\mu_t|\Leb)+\log(\abs{O})$. In order to prove that $\ent(\P_1|\Poi_{\Lambda_n}(\cdot\mid \pi=k))=\ent(\mu_1|\Leb)+\log(\abs{O})$ we  apply the exact same argument to the processes $\P_0'=\P_1$ and $\P_1'=\P_0$. We then obtain a convex function $\ent(\mu'_t|\Leb)$, $0\leq t\leq 1$ with $\ent(\mu'_t|\Leb)=\ent(\P'_t|\Poi_{\Lambda_n}(\cdot\mid \pi=k) )-\log(\abs{O})=\ent(\P_{1-t}|\Poi_{\Lambda_n}(\cdot\mid \pi=k))-\log(\abs{O})$ for $0\leq t<1$. Since $\ent(\mu_t|\Leb)=\ent(\mu'_{1-t}|\Leb)$ for $0<t<1$ they coincide also on the boundary. Hence \begin{align}\label{eq:identification_entropy_t1}
\ent(\P_1|\Poi_{\Lambda_n}(\cdot\mid \pi=k))=\ent(\mu'_0|\Leb)+\log(\abs{O})=\ent(\mu_1|\Leb)+\log(\abs{O}).
\end{align}
Finally we have that $\ent(\P_t|\Poi_{\Lambda_n}(\cdot\mid \pi=k))=\ent(\mu_t|\Leb)+\log(\abs{O})$ is convex on $[0,1]$.
\end{proof}
\subsubsection{EVI}

We now look at the identification of point processes and measures on $(\IR^d)^k$ for the EVI.

Let $\P_0$ and $\mathsf R$ be  two point processes on $\Lambda_n$ with a deterministic number of points $k\in \IN$ and finite cost. 
Let $(\Q,\q)$ be an optimal pair for $\P_0$ and $\mathsf R$. As before, write $\q(\xi,\eta)=(\mathsf{Id},T^{\xi,\eta})_{\#}\xi$. Combining \eqref{eq:def_GF_box} and \eqref{eq:label_pairs}, define the map  
\begin{align}\label{eq:bigF}
F_t:\Gamma_{\Lambda_n}^k\times\Gamma_{\Lambda_n}^k\times \mathcal C\big([0,\infty),(\IR^d)^k)\big)&\to (\IR^d)^k \times (\IR^d)^k \times (\IR^d)^k\nonumber\\
(\xi,\eta,(\omega^i)_{i=1,\dots,k})&\mapsto (\ell (\xi),\refl_{\Lambda_n}(\ell (\xi)+\omega)_t,T^{\xi,\eta}(\ell (\xi)))\;,
\end{align}
where $T^{\xi,\eta}$ and $\refl_{\Lambda_n}$ are applied component wise. 
See \eqref{def:reflectedpath} for the definition of the reflected path $\refl_{\Lambda_n}(\ell (\xi)+\omega)_t$.
On the space $\Gamma_{\Lambda_n}^k\times\Gamma_{\Lambda_n}^k\times \mathcal C([0,\infty),(\IR^d)^k)$ we consider the measure $  \Q\otimes(\otimes_{i=1}^k  \mathbb{W})$, where $\mathbb{W}$ is the standard Wiener measure.  We denote the marginals of $F_{\#} ( \Q\otimes(\otimes_{i=1}^k \mathbb{W}))$ by $\mu_0$, $\mu_t$ and $\nu$.
Note that the pushforward of $\mu_t$ under the delabeling map $(x^1,\dots,x^k)\in (\IR^d)^k\mapsto \{x^1,\dots,x^k\}\in \Gamma^k_{\Lambda_n}$
is equal to $\Semi_t^{\Lambda_n}\P_0$,  for the semigroup $\Semi_t^{\Lambda_n}$ defined in \eqref{def_semi_box}.
We get the following EVI
\begin{lem}\label{eq:EVI_fixed_everything}
For every $t\geq 0$ we have 
\begin{align*}
\C_{\Lambda_n}^k( \Semi_t^{\Lambda_n}\P,\mathsf R)-\C_{\Lambda_n}^k(\P_0,\mathsf R)\leq 2t\big[\ent(\mathsf R|\Poi_{\Lambda_n}(\cdot\mid \pi=k))-\ent(\Semi_t^{\Lambda_n}\P|\Poi_{\Lambda_n}(\cdot\mid \pi=k))\big]\;
\end{align*}
\end{lem}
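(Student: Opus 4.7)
My plan is to lift the EVI from the classical setting of the reflected heat semigroup on the convex Euclidean domain $\Lambda_n^k\subset\IR^{dk}$. The reflected heat semigroup $P_t$ on $\Lambda_n^k$, obtained by applying $\refl_{\Lambda_n}$ independently in each of the $k$ blocks of $d$ coordinates (exactly as in the definition \eqref{eq:bigF} of $F_t$), is the $W_2$-gradient flow of the relative entropy $\ent(\,\cdot\mid\Leb|_{\Lambda_n^k})$ on this flat convex domain. Hence the standard EVI with constant zero (see e.g.\ \cite{Daneri_2008}) holds: for any probability measures $\rho,\sigma$ on $\Lambda_n^k$,
\begin{equation*}
W_2^2(P_t\rho,\sigma)-W_2^2(\rho,\sigma)\leq 2t\bigl[\ent(\sigma\mid\Leb)-\ent(P_t\rho\mid\Leb)\bigr].
\end{equation*}

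Applying this directly with $\rho=\mu_0$ and $\sigma=\nu$ does not immediately work, since neither is symmetric and the reference measure on the left is Lebesgue rather than $\Poi_{\Lambda_n}(\cdot\mid\pi=k)$. I would resolve both issues simultaneously by symmetrization. Letting $S_k$ act on $\Lambda_n^k$ by coordinate permutations, set
\begin{equation*}
\tilde\mu_0:=\tfrac1{k!}\sum_{\sigma\in S_k}\sigma_\#\mu_0,\quad \tilde\nu:=\tfrac1{k!}\sum_{\sigma\in S_k}\sigma_\#\nu,\quad \tilde\mu_t:=\tfrac1{k!}\sum_{\sigma\in S_k}\sigma_\#\mu_t.
\end{equation*}
Because the reflection scheme is $S_k$-equivariant, $P_t$ commutes with coordinate permutations, so $P_t\tilde\mu_0=\tilde\mu_t$, and the EVI above applied to $(\tilde\mu_0,\tilde\nu)$ gives
\begin{equation*}
W_2^2(\tilde\mu_t,\tilde\nu)-W_2^2(\tilde\mu_0,\tilde\nu)\leq 2t\bigl[\ent(\tilde\nu\mid\Leb)-\ent(\tilde\mu_t\mid\Leb)\bigr].
\end{equation*}

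Three identifications close the argument. For the entropies, I would use the delabeling map $\Pi:\Lambda_n^k\to\Gamma_{\Lambda_n}^k$, $(x_1,\dots,x_k)\mapsto\sum_i\delta_{x_i}$; it is a $k!$-to-$1$ cover almost everywhere satisfying $\Pi_\#\Leb|_{\Lambda_n^k}=n^{dk}\Poi_{\Lambda_n}(\cdot\mid\pi=k)$, and a short density computation yields, for any symmetric probability measure $\tilde\mu$ on $\Lambda_n^k$,
\begin{equation*}
\ent(\tilde\mu\mid\Leb|_{\Lambda_n^k})=\ent\bigl(\Pi_\#\tilde\mu\,\big|\,\Poi_{\Lambda_n}(\cdot\mid\pi=k)\bigr)-dk\log n.
\end{equation*}
Since $\Pi_\#\tilde\mu_t=\Semi_t^{\Lambda_n}\P_0$ and $\Pi_\#\tilde\nu=\mathsf R$, the additive constant $-dk\log n$ cancels in the EVI. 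For the transport cost at time $0$, \eqref{eq:calc_wasser} gives $W_2^2(\mu_0,\nu)=\C_{\Lambda_n}^k(\P_0,\mathsf R)$; diagonal averaging $\frac1{k!}\sum_\sigma(\sigma,\sigma)_\#F_\#\Q$ of the optimal coupling preserves Euclidean cost, producing a coupling of $(\tilde\mu_0,\tilde\nu)$, whence $W_2^2(\tilde\mu_0,\tilde\nu)\leq\C_{\Lambda_n}^k(\P_0,\mathsf R)$. For the cost at time $t$, any $W_2$-coupling of $(\tilde\mu_t,\tilde\nu)$ on $\IR^{dk}\times\IR^{dk}$ pushed forward under $\Pi\times\Pi$ produces a coupling of the point processes $\Semi_t^{\Lambda_n}\P_0,\mathsf R$ whose expected cost bounds $\C_{\Lambda_n}^k(\Semi_t^{\Lambda_n}\P_0,\mathsf R)$ from above (since $c^k_{\Lambda_n}$ is already the infimum over matchings of the unlabeled point sets), so $W_2^2(\tilde\mu_t,\tilde\nu)\geq\C_{\Lambda_n}^k(\Semi_t^{\Lambda_n}\P_0,\mathsf R)$. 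Chaining these three estimates with the symmetrized EVI yields the lemma.

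The main subtlety is the coordinated bookkeeping of the constants $\log|O|=dk\log n-\log(k!)$ coming from the orthant identification and $\log(k!)$ coming from symmetrization: they balance exactly, which is precisely what allows the EVI on $\Lambda_n^k$ to descend cleanly to the quotient $\Gamma_{\Lambda_n}^k$. A secondary, but routine, point is verifying that $\Pi\circ P_t=\Semi_t^{\Lambda_n}\circ\Pi$ at the level of measures, which follows from the labeled definition \eqref{eq:def_GF_box} together with the independence of the $k$ driving Brownian motions.
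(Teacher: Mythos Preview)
Your argument is correct and takes a genuinely different route from the paper. Both proofs lift the classical EVI for the reflected heat flow on the convex set $\Lambda_n^k$ and then descend to $\Gamma_{\Lambda_n}^k$; the difference is in how the descent is organised. The paper works with the \emph{asymmetric} labelled measures $\mu_0,\mu_t,\nu$ obtained from the lexicographic map $\ell$, so that $\mu_0$ lives on the orthant $O$. The identification $\ent(\nu|\Leb)=\ent(\mathsf R|\Poi_{\Lambda_n}(\cdot|\pi=k))-\log|O|$ then requires the earlier disjointness analysis of Lemma~\ref{lem:permut} and Lemma~\ref{lem:finite_box_ent_iden} (culminating in \eqref{eq:identification_entropy_t1}), while for $\mu_t$ only the inequality $\ent(\mu_t|\Leb)\geq\ent(\Semi_t^{\Lambda_n}\P|\Poi_{\Lambda_n}(\cdot|\pi=k))-\log|O|$ is used, via the entropy--contraction of the delabeling map (Proposition~\ref{prop:disint_entr}). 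Your symmetrisation replaces all of this by the single elementary fact that for $S_k$-invariant $\tilde\mu$ one has $\ent(\tilde\mu|\Leb)=\ent(\Pi_\#\tilde\mu|\Poi_{\Lambda_n}(\cdot|\pi=k))-dk\log n$, which gives \emph{equality} for both $\tilde\nu$ and $\tilde\mu_t$ without any disjointness argument. The cost inequalities (diagonal averaging for $t=0$, projection for $t>0$) are handled the same way in both proofs. Your approach is therefore more self-contained for this lemma; the paper's route is natural in context because the orthant machinery was already developed for Proposition~\ref{prop:conv_entropy_box} and is simply reused here.
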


\begin{proof}
 We saw in \eqref{calculation:optimal} that $(\pr_{1,3})_{\#}F_t{_{\#}} ( \Q\otimes(\otimes_{i=1}^k \mathbb{W}))$ is an optimal coupling of $\mu_0$ and $\nu$ w.r.t. the Wasserstein $L^2$ distance $W_2$ (not to be confused with the normalized distance $\mathsf W_2$ of stationary distributions in \eqref{eq:Wp}).
 By construction  $\mu_t$ is the solution to the heat equation on $(\Lambda_n)^k$ with Neumann boundary conditions started in $\mu_0$. Hence the following EVI holds by \cite[Proposition 8.10]{Santa}, i.e.
 \begin{align}\label{eq:EVI_classic}
{W}^2_2(\mu_t,\nu)-{W}^2_2(\mu_0,\nu)\leq 2t \big[\ent(\nu|\Leb)-\ent(\mu_t|\Leb)\big]\;.
\end{align}
The following properties have already been shown in \eqref{eq:calc_wasser} and \eqref{eq:identification_entropy_t1}:
\begin{itemize}
\item ${W}^2_2(\mu_0,\nu)=\C_{\Lambda_n}^k(\P_0,\mathsf R)$,
\item  $\ent(\nu|\Leb)=\ent(\mathsf R|\Poi_{\Lambda_n}(\cdot\mid \pi=k))-\log(\abs{O})$.
\end{itemize}
From Proposition \ref{prop:disint_entr} we infer that $\ent(\mu_t|\Leb)\geq\ent(\Semi_t^{\Lambda_n} \P|\Poi_{\Lambda_n}(\cdot\mid \pi=k))-\log(\abs{O})$, since $\Semi_t^{\Lambda_n} \P$ is the image of $\mu_t$ under the delabeling map. For the same reason,  every  coupling between $\mu_t$ and $\nu$ induces a coupling of $\Semi_t^{\Lambda_n} \P$ and $\mathsf R$ with the same cost it holds that $\C_{\Lambda_n}^k(\Semi_t^{\Lambda_n}\P,\mathsf R)\leq {W}^2_2(\mu_t,\nu)$.
 Plugging these equalities and inequalities into \eqref{eq:EVI_classic} yields the claim.
\end{proof}

\begin{cor}\label{cor:decr_entr}
Let $\P$ be a point process on $\Lambda_n$ with $k\in \IN$ points. Then the function\begin{align}
       [0,\infty) \ni t\mapsto \mathsf{Ent}\left(\Semi_t^{\Lambda_n}\P \mid \mathsf{\Poi_{\Lambda_n}}(\cdot \mid \pi=k) \right)
\end{align}
is decreasing.
\end{cor}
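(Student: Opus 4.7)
The plan is to derive the monotonicity from the EVI of Lemma \ref{eq:EVI_fixed_everything} via the standard trick of ``testing the EVI against the current state of the flow''. Concretely, fix $s \geq 0$ and apply Lemma \ref{eq:EVI_fixed_everything} to the pair $\P_0 := \Semi_s^{\Lambda_n}\P$ and $\mathsf R := \Semi_s^{\Lambda_n}\P$. Before doing so I would note that $\Semi_s^{\Lambda_n}\P$ still has exactly $k$ points almost surely, since the flow $F_t$ defined in \eqref{eq:def_GF_box} evolves each of the $k$ labelled points by an independent reflected Brownian motion on $\Lambda_n$, preserving the number of particles. Thus the EVI is applicable with the required side condition.

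With this choice the diagonal coupling $(\id, \id)_{\#}\Semi_s^{\Lambda_n}\P$ witnesses $\C_{\Lambda_n}^k(\Semi_s^{\Lambda_n}\P, \Semi_s^{\Lambda_n}\P) = 0$, so in particular the cost is finite. Using the semigroup property $\Semi_t^{\Lambda_n} \circ \Semi_s^{\Lambda_n} = \Semi_{s+t}^{\Lambda_n}$ (i.e.\ the box analogue of Lemma \ref{lem:semigroup}, which follows from the independence of Brownian increments), the inequality of Lemma \ref{eq:EVI_fixed_everything} becomes
\[
\C_{\Lambda_n}^k\big(\Semi_{s+t}^{\Lambda_n}\P,\, \Semi_s^{\Lambda_n}\P\big) \;\leq\; 2t\Big[\ent\big(\Semi_s^{\Lambda_n}\P \,\big|\, \Poi_{\Lambda_n}(\cdot \mid \pi=k)\big) - \ent\big(\Semi_{s+t}^{\Lambda_n}\P \,\big|\, \Poi_{\Lambda_n}(\cdot \mid \pi=k)\big)\Big].
\]
Since the left-hand side is nonnegative, dividing by $2t > 0$ and letting $s$ range over $[0, \infty)$ shows that for all $0 \leq s \leq s+t$,
\[
\ent\big(\Semi_{s+t}^{\Lambda_n}\P \,\big|\, \Poi_{\Lambda_n}(\cdot \mid \pi=k)\big) \;\leq\; \ent\big(\Semi_{s}^{\Lambda_n}\P \,\big|\, \Poi_{\Lambda_n}(\cdot \mid \pi=k)\big),
\]
which is exactly the claimed monotonicity.

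No real obstacle is expected here; the only subtleties are bookkeeping items: checking that the hypotheses of Lemma \ref{eq:EVI_fixed_everything} (deterministic $k$ points, finite cost) are inherited by the choice $\P_0 = \R = \Semi_s^{\Lambda_n}\P$, and verifying that the argument is valid even if $\ent(\Semi_s^{\Lambda_n}\P \mid \Poi_{\Lambda_n}(\cdot\mid\pi=k)) = +\infty$ (in which case the conclusion is trivial in the extended reals). Both are immediate.
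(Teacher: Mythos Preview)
Your proof is correct and follows essentially the same approach as the paper: apply Lemma \ref{eq:EVI_fixed_everything} with $\P_0 = \mathsf R = \Semi_s^{\Lambda_n}\P$, use the semigroup property, and read off the monotonicity from nonnegativity of the cost term. The paper's proof is more terse but identical in substance, and it also points to \cite[Proposition 3.1]{Daneri_2008} for the general abstract statement.
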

\begin{proof}
This follows imediately by applying Lemma \ref{eq:EVI_fixed_everything} with $\mathsf P_0$ and $\mathsf R$ both replaced by $\Semi_s^{\Lambda_n} P$ and noting that $(\Semi_s^{\Lambda_n}\P)_t= \Semi_{s+t}^{\Lambda_n} \P$ by Lemma \ref{lem:semigroup}.
See also \cite[Proposition 3.1]{Daneri_2008} for a general statement.
\end{proof}

\subsection{Random number of points in a box}
So far we treated point processes with a fixed number of points.  In the following, the number of points will be random.

Let $ (\Q,\q)$ be an optimal pair of two point processes $\P_0$ and $\mathsf R$ on $\Lambda_n$ in the following sense.
We assume \begin{enumerate}
\item For $(\xi,\eta)\in \supp( \Q)$ it holds $\xi(\Lambda_n)=\eta(\Lambda_n)$
\item Conditioned on the sets $\{(\xi,\eta):\xi(\Lambda_n)=\eta(\Lambda_n)=k\}$ the pair $ (\Q,\q)$ is optimal w.r.t. $\C_{\Lambda_n}^k$.
\end{enumerate}

In the subsequent sections, we aim to use the approach of this section for the modification $(\tilde{\mathsf Q}_{\Lambda_n},\tilde{\mathsf q}_{\Lambda_n})$, which was constructed in Theorem \ref{thm:Modification} precisely such that (1) holds. However, in general $(\tilde{\mathsf Q}_{\Lambda_n},\tilde{\mathsf q}_{\Lambda_n})$ is not optimal in the sense of assumption (2) (see also Theorem \ref{thm:Modification} (3)).

For fixed $k$ we write $\left(\Semi_t^{\Lambda_n}\P\right)^k=\left(\Semi_t^{\Lambda_n}\P\right)(\cdot\mid \pi=k)$, $\glsdisp{Pk}{\mathsf R^k}=\mathsf R(\cdot\mid \pi=k)$ and $p(k)=\P_0(\xi(\Lambda_n)=k)$.

\begin{lem}\label{lem:box_EVI}
For $t\in [0,1]$ we have \begin{align*}
\sum_{k=0}^{\infty}p(k)(\C_{\Lambda_n}^k(\left(\Semi_t^{\Lambda_n}\P\right)^k,\mathsf R^k)-\C_{\Lambda_n}^k(\P^k_0,\mathsf R^k))\leq
2t\big[\ent(\mathsf R\mid \Poi_{\Lambda_n})-\ent(\Semi_t^{\Lambda_n}\P\mid \Poi_{\Lambda_n})\big]\;.
\end{align*}
\end{lem}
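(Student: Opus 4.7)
The plan is to decompose both sides of the claim according to the (common) distribution of the number of points in $\Lambda_n$, apply the fixed-cardinality EVI of Lemma \ref{eq:EVI_fixed_everything} in each fiber, and reassemble using the disintegration formula for relative entropy from Proposition \ref{prop:disint_entr}.

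First, I would observe that the reflected-Brownian-motion semigroup $\Semi_t^{\Lambda_n}$ preserves the number of points, since each reflected path remains in $\Lambda_n$; hence $\pi_\#\Semi_t^{\Lambda_n}\P = \pi_\#\P_0$. Combined with the standing assumption that $\xi(\Lambda_n) = \eta(\Lambda_n)$ $\Q$-a.s., this yields that $\pi_\#\P_0$, $\pi_\#\mathsf R$, and $\pi_\#\Semi_t^{\Lambda_n}\P$ all agree and are given by the weights $p(k)$. The same cardinality-preservation produces the commutation $(\Semi_t^{\Lambda_n}\P)^k = \Semi_t^{\Lambda_n}\P_0^k$ for each $k$ with $p(k) > 0$, since conditioning on $\{\pi = k\}$ just selects the $k$-particle component on which $\Semi_t^{\Lambda_n}$ acts by attaching $k$ independent reflected Brownian motions.

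The optimality hypothesis (2) on $(\Q,\q)$ states that conditioning on $\{(\xi,\eta)\colon \xi(\Lambda_n) = \eta(\Lambda_n) = k\}$ produces an optimal pair for $(\P_0^k, \mathsf R^k)$ with respect to $\C_{\Lambda_n}^k$. I would therefore invoke Lemma \ref{eq:EVI_fixed_everything} for each such $k$, obtaining an EVI between $\Semi_t^{\Lambda_n}\P_0^k$ and $\mathsf R^k$ with the reference measure $\Poi_{\Lambda_n}(\cdot\mid\pi=k)$, multiply by $p(k)$, and sum over $k \geq 0$. Using the commutation from the previous paragraph, the resulting left-hand side is exactly the quantity appearing on the left of the claim.

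For the right-hand side, I would apply Proposition \ref{prop:disint_entr} with the map $T = \pi$ to each of $\mathsf R$ and $\Semi_t^{\Lambda_n}\P$, writing the entropy as a marginal part $\ent(\pi_\# \cdot \mid \pi_\# \Poi_{\Lambda_n})$ plus the $p(k)$-weighted sum of conditional entropies $\ent(\cdot^k\mid \Poi_{\Lambda_n}(\cdot\mid\pi=k))$. Because $\pi_\#\mathsf R = \pi_\#\Semi_t^{\Lambda_n}\P$, the two marginal pieces coincide and cancel in the difference, leaving precisely the $p(k)$-weighted sum of conditional-entropy differences that upper-bounds the summed EVI. No real obstacle is anticipated; the only subtle point is the commutation of $\Semi_t^{\Lambda_n}$ with conditioning on $\{\pi=k\}$, which is immediate from the cardinality-preservation of reflected paths.
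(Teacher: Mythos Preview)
Your proposal is correct and follows essentially the same approach as the paper: disintegrate the entropies with respect to the counting map $\pi$, apply the fixed-$k$ EVI of Lemma~\ref{eq:EVI_fixed_everything} in each fiber, and sum over $k$. You are in fact slightly more explicit than the paper in justifying the commutation $(\Semi_t^{\Lambda_n}\P)^k = \Semi_t^{\Lambda_n}\P_0^k$ and in noting that the marginal entropy terms cancel because $\pi_\#\mathsf R = \pi_\#\Semi_t^{\Lambda_n}\P$.
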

\begin{proof}
First we treat the entropy terms and again we apply the disintegration formula w.r.t. the counting function $\pi$. Since the distribution of the number of points does not depend on $t$ we again obtain \[
\ent(\Semi_t^{\Lambda_n}\P\mid \Poi_{\Lambda_n})=\mathrm{const}+\sum_{k=0}^\infty\ent(\Semi_t^{\Lambda_n}\P(\cdot\mid \pi=k)\mid \Poi_{\Lambda_n}(\cdot\mid \pi=k))p(k)
\]
and a similiar expression for the process $\mathsf R$. Now fix $k$ and  consider $ \Q$ conditioned on the set $\{(\xi,\eta): \xi(\Lambda_n)=\eta(\Lambda_n)=k\}$. Lemma \ref{eq:EVI_fixed_everything} gives
\[
\C_{\Lambda_n}^k((\Semi_t^{\Lambda_n}\P)^k,\mathsf R^k)-\C_{\Lambda_n}^k(\P^k_0,\mathsf R^k)\leq 2t(\ent(\mathsf R^k\mid \Poi_{\Lambda_n}(\cdot\mid \pi=k))-\ent((\Semi_t^{\Lambda_n}\P)^k\mid \Poi_{\Lambda_n}(\cdot\mid \pi=k))),
\]
 and summing over $k$ yields
 \begin{align*}
&\sum_{k=0}^{\infty}p(k)(\C_{\Lambda_n}^k((\Semi_t^{\Lambda_n}\P)^k,\mathsf R^k)-\C_{\Lambda_n}^k(\P^k_0, \mathsf R^k))\\&\leq 2t \sum_{k=0}^{\infty}p(k)\left(\ent(\mathsf R^k\mid \Poi_{\Lambda_n}(\cdot\mid \pi=k))-\ent((\Semi_t^{\Lambda_n}\P)^k\mid \Poi_{\Lambda_n}(\cdot\mid \pi=k))\right)\\
&=2t\left(\ent(\mathsf R\mid \Poi_{\Lambda_n})-\ent(\Semi_t^{\Lambda_n}\P\mid \Poi_{\Lambda_n})\right).
\end{align*}
\end{proof}

\subsection{From point processes on a box to point processes on the whole space}\label{sec:from_box_to_space}

Let $\P,\mathsf R\in\spp1$  with $\W_2(\P_0,\mathsf R)<\infty$ and let $ (\Q,\q)$ be an optimal pair. For $n\in \IN$  apply Theorem \ref{thm:Modification} to obtain a (in general not optimal) pair $(\tilde{ \Q}_{\Lambda_n},\tilde{\q}_{\Lambda_n})$ on $\Lambda_n$ with marginals  $\gls{tildeP}$ and $\tilde{\mathsf R}_{\Lambda_n}$.

We now consider  processes defined on the whole space $\IR^d$.  Denote by $\stat{\Semi^{\Lambda_n}_t \tilde{\P}_{\Lambda_n}}$ the stationarized version of $\Semi^{\Lambda_n}_t \tilde{\P}_{\Lambda_n}$, defined in \eqref{def:stationatized_process}, and let  $\stat{\tilde{\mathsf R}_{\Lambda_n}}$ be the stationarized version of $\tilde{\mathsf R}_{\Lambda_n}$.

\begin{prop}\label{prop:cost_modified_processes}
The following inequality holds for all $t\geq 0$ \begin{align*}
\limsup_{n\to \infty}\W^2_2(\stat{\Semi^{\Lambda_n}_t \tilde{\P}_{\Lambda_n}},\stat{\tilde{\mathsf R}_{\Lambda_n}})-\W^2_2(\P_0,\mathsf R) \leq 2t\big(\mathcal{E}(\mathsf R)-\liminf_{n\to \infty}\mathcal{E}(\stat{\Semi^{\Lambda_n}_t \tilde{\P}_{\Lambda_n}})\big).
\end{align*}
\end{prop}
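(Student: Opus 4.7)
The plan is to apply the finite-box EVI of Lemma \ref{lem:box_EVI} to the modified pair $(\tilde\P_{\Lambda_n},\tilde{\mathsf R}_{\Lambda_n})$ furnished by Theorem \ref{thm:Modification}, translate each of its three terms into a quantity on the stationarized infinite-volume processes, divide by $n^d$, and let $n\to\infty$. First, I would replace the coupling $(\tilde\Q_{\Lambda_n},\tilde\q_{\Lambda_n})$ by a coupling $(\bar\Q_n,\bar\q_n)$ with identical marginals and number statistics (still permitted by Theorem \ref{thm:Modification}\eqref{thm:mod_proc_same_number}) that is additionally optimal for $\C^k_{\Lambda_n}$ when conditioned on each sector $\{\xi(\Lambda_n)=\eta(\Lambda_n)=k\}$; this reoptimization can only decrease the total cost. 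Writing $p_n(k)=\tilde\P_{\Lambda_n}(\xi(\Lambda_n)=k)$, Lemma \ref{lem:box_EVI} then yields
\begin{align*}
&\sum_k p_n(k)\Big[\C^k_{\Lambda_n}\big((\Semi_t^{\Lambda_n}\tilde\P_{\Lambda_n})^k,\tilde{\mathsf R}^k_{\Lambda_n}\big)-\C^k_{\Lambda_n}\big(\tilde\P^k_{\Lambda_n},\tilde{\mathsf R}^k_{\Lambda_n}\big)\Big]\\
&\qquad\leq 2t\Big[\ent(\tilde{\mathsf R}_{\Lambda_n}|\Poi_{\Lambda_n})-\ent(\Semi_t^{\Lambda_n}\tilde\P_{\Lambda_n}|\Poi_{\Lambda_n})\Big].
\end{align*}

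Next, I would transfer each of the three terms. For the initial cost, the inequality $\sum_k p_n(k)\C^k_{\Lambda_n}(\tilde\P^k_{\Lambda_n},\tilde{\mathsf R}^k_{\Lambda_n})\leq \IE_{\tilde\Q_{\Lambda_n}}\big[\int_{\Lambda_n\times\Lambda_n}|x-y|^2\,d\tilde\q_{\Lambda_n}\big]$ combined with Theorem \ref{thm:Modification}\eqref{thm:mod_proc_cost} gives an upper bound of $n^d\W_2^2(\P,\mathsf R)+o(n^d)$. For the semigroup cost, I assemble sectorwise optimizers into a coupling of $\Semi_t^{\Lambda_n}\tilde\P_{\Lambda_n}$ and $\tilde{\mathsf R}_{\Lambda_n}$ on $\Lambda_n$ whose expected cost equals $\sum_k p_n(k)\C^k_{\Lambda_n}(\cdot,\cdot)$, and then stationarize it by the tiling-plus-uniform-shift construction from the proof of Proposition \ref{prop:matching} to obtain an equivariant coupling of $\stat{\Semi_t^{\Lambda_n}\tilde\P_{\Lambda_n}}$ and $\stat{\tilde{\mathsf R}_{\Lambda_n}}$ with per-volume cost bounded by $n^{-d}$ times the box cost, so that
\[
\W_2^2\big(\stat{\Semi_t^{\Lambda_n}\tilde\P_{\Lambda_n}},\stat{\tilde{\mathsf R}_{\Lambda_n}}\big)\leq \frac{1}{n^d}\sum_k p_n(k)\C^k_{\Lambda_n}\big((\Semi_t^{\Lambda_n}\tilde\P_{\Lambda_n})^k,\tilde{\mathsf R}^k_{\Lambda_n}\big).
\]
For the entropies, Theorem \ref{thm:Modification}\eqref{thm:mod_proc_entropy} yields $n^{-d}\ent(\tilde{\mathsf R}_{\Lambda_n}|\Poi_{\Lambda_n})\to \mathcal E(\mathsf R)$, while the stationarization bound \eqref{eq:bound_entropy_by_cube} applied to $\Semi_t^{\Lambda_n}\tilde\P_{\Lambda_n}$ gives $n^{-d}\ent(\Semi_t^{\Lambda_n}\tilde\P_{\Lambda_n}|\Poi_{\Lambda_n})\geq \mathcal E(\stat{\Semi_t^{\Lambda_n}\tilde\P_{\Lambda_n}})$.

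Substituting these three bounds into the box EVI divided by $n^d$ and taking $\limsup_{n\to\infty}$ delivers the claim, the $\liminf$ on the right arising from the minus sign in front of $\mathcal E(\stat{\Semi_t^{\Lambda_n}\tilde\P_{\Lambda_n}})$. The main technical obstacle will be the stationarization step in the semigroup-cost estimate: one must verify that the tiled-plus-uniformly-shifted matching built from the sectorwise optimizers is genuinely an equivariant coupling of the two stationarized processes (not merely of their tilings) and that the uniform shift does not spoil the per-volume cost bound. The argument mirrors the stationarization carried out in Proposition \ref{prop:matching}: because $\tilde\P_{\Lambda_n}$ and $\tilde{\mathsf R}_{\Lambda_n}$ are supported in $\Lambda_n$, the iid tiling has no mass crossing cell boundaries, and the uniform shift preserves the expected per-box cost by stationarity.
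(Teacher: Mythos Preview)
Your proposal is correct and follows essentially the same route as the paper's proof: reoptimize the modification coupling on each $k$-sector, apply Lemma \ref{lem:box_EVI}, then translate the three terms via Theorem \ref{thm:Modification}\eqref{thm:mod_proc_cost}, a tiling-plus-uniform-shift stationarization of the sectorwise optimal coupling, and Theorem \ref{thm:Modification}\eqref{thm:mod_proc_entropy} together with \eqref{eq:bound_entropy_by_cube}. The only cosmetic difference is that the paper uses the weaker statement $\limsup_n n^{-d}\ent(\tilde{\mathsf R}_{\Lambda_n}|\Poi_{\Lambda_n})\le \mathcal E(\mathsf R)$ rather than convergence, which also covers the case $\mathcal E(\mathsf R)=\infty$.
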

\begin{proof}
The coupling $\tilde{ \Q}_{\Lambda_n}$ is supported on the set $\{(\xi,\eta)\in \Gamma_{\Lambda_n}: \xi(\Lambda_n)=\eta(\Lambda_n)\}$. For every $k\in \IN$ consider $\tilde{ \Q}_{\Lambda_n}$, $\tilde{\q}_{\Lambda_n}$ conditioned on the set $A_k=\{(\xi,\eta)\in \Gamma_{\Lambda_n}: \xi(\Lambda_n)=\eta(\Lambda_n)=k\}$. This yields new ($k$ dependent) marginals. Now, pair these marginals in an optimal way (w.r.t. $\C_{\Lambda_n}^k$) and denote the optimal pairs by $(\hat{ \Q}_k,\hat{\q}_k)$. Define \[
\hat{ \Q}=\sum_{k=0}^{\infty}p(k)\hat{ \Q}_k \text{ and }
\hat{\q}=\sum_{k=0}^{\infty}\eins_{A_k}\hat{\q}_k,
\]
where $p(k)=\tilde{ \Q}_{\Lambda_n}(\xi(\Lambda_n)=\eta(\Lambda_n)=k)$. Applying Lemma \ref{lem:box_EVI} to the coupling $\hat{ \Q}$ yields
\begin{align}\label{eq:EVI_from}
\sum_{k=0}^{\infty}p(k)(\C_{\Lambda_n}^k((\Semi^{\Lambda_n}_t \tilde{\P}_{\Lambda_n})^k,\tilde{\mathsf R}^k_{\Lambda_n})-\C_{\Lambda_n}^k(\tilde{\P}^k_{\Lambda_n},\tilde{\mathsf R}^k_{\Lambda_n}))\leq 
2t\big[\ent(\tilde{\mathsf R}_{\Lambda_n}\mid \Poi_{\Lambda_n})-\ent(\Semi^{\Lambda_n}_t \tilde{\P}_{\Lambda_n}\mid \Poi_{\Lambda_n})\big],
\end{align}
where $(\Semi^{\Lambda_n}_t \tilde{\P}_{\Lambda_n})^k$ is $\tilde{\P}_{\Lambda_n}$ conditioned to have $k$ points and the same notation for $\tilde{\mathsf R}_{\Lambda_n}$. There exists an optimal process coupling $\q^{*}$ (on $\Lambda_n$) of the processes $\Semi^{\Lambda_n}_t \tilde{\P}_{\Lambda_n}$ and $\tilde{\mathsf R}_{\Lambda_n}$ with cost \[
\sum_{k=0}^{\infty}p(k)\C_{\Lambda_n}^k((\Semi^{\Lambda_n}_t \tilde{\P}_{\Lambda_n})^k,\tilde{\mathsf R}^k_{\Lambda_n}).
\]
Let $(\q^{*}_z)_{z\in \IZ^d}$ be iid copies of $\q^{*}$ and
 let $U$ be independent and uniformly distributed on $\Lambda_n$. Define  \[
\tilde{\q}_{t,n}=\theta_U\left(\sum_{z\in \IZ^d}\theta_{nz}\q^{*}_{nz} \right).
    \]
Let $\tilde{ \Q}_{t,n}$ be the distribution coupling induced by $\tilde{\q}_{t,n}$, see Remark \ref{rem:q_implies_Q}. Then  $\tilde{ \Q}_{t,n}$ is 
a coupling of the processes $\stat{\Semi^{\Lambda_n}_t \tilde{\P}_{\Lambda_n}}$ and $\stat{\tilde{\mathsf R}_{\Lambda_n}}$.
 Next we estimate the associated cost. Let $m_l=n+2ln$, then the box $\Lambda_{m_l}$ can be covered by exactly $(1+2l)^d$ of the $\Lambda_n$ boxes (the tiling of $\IR^d$ by these boxes with a box centered at $0$). If we shift the tiling by $z\in \Lambda_n$ then we need at most $(n+2(l+1)n)^d/n^d=(1+2l+2)^d$ boxes of this new tiling in order to cover $\Lambda_{m_l}$. Hence we can estimate \begin{align*}
\W^2_2(\stat{\Semi^{\Lambda_n}_t \tilde{\P}_{\Lambda_n}},\stat{\tilde{\mathsf R}_{\Lambda_n}})&\leq\liminf_{l\to \infty} m_l^{-d}\IE_{\tilde{ \Q}_{t,n}}\left[\int_{\Lambda_{m_l}\times\IR^d} \abs{x-y}^2d\tilde{\q}_{t,n}(x,y)\right]\\
&\leq \liminf_{l\to \infty} \frac{(n+2(l+1)n)^d}{(n+2ln)^dn^d}\sum_{k=0}^{\infty}p(k)\C^k_{\Lambda_n}((\Semi^{\Lambda_n}_t \tilde{\P}_{\Lambda_n})^k,\tilde{\mathsf R}^k_{\Lambda_n})\\
&=n^{-d}\sum_{k=0}^{\infty}p(k)\C_{\Lambda_n}^k((\Semi^{\Lambda_n}_t \tilde{\P}_{\Lambda_n})^k,\tilde{\mathsf R}^k_{\Lambda_n}).
\end{align*} 
Next we bound $\sum_{k=0}^{\infty}p(k)\C_{\Lambda_n}^k(\tilde{\P}^k_{\Lambda_n},\tilde{\mathsf R}^k_{\Lambda_n})$ from above using the  pair $(\tilde{\Q}_{\Lambda_n},\tilde{\q}_{\Lambda_n})$  of $\tilde{\P}_{\Lambda_n}$ and $\tilde{\mathsf R}_{\Lambda_n}$ mentioned at the very beginning, yielding 
\begin{align*}
&\sum_{k=0}^{\infty}p(k)\C_{\Lambda_n}^k(\tilde{\P}^k_{\Lambda_n},\tilde{\mathsf R}^k_{\Lambda_n})
\leq \IE_{\tilde{ \Q}_{\Lambda_n}}\left[\int_{\Lambda_n\times \Lambda_n}\abs{x-y}^2d\tilde{\q}_{\Lambda_n}(x,y)\right]\;.
\end{align*}
Taking the limes superior, the cost estimate \eqref{thm:mod_proc_cost} of Theorem \ref{thm:Modification} yields 
\[
\limsup_{n\to \infty}n^{-d}\sum_{k=0}^{\infty}p(k)\C_{\Lambda_n}^k(\tilde{\P}^k_{\Lambda_n},\tilde{\mathsf R}^k_{\Lambda_n})\leq \W_2^2(\P_0,\mathsf R).
\]

Combining our bounds on the left-hand side of the EVI \eqref{eq:EVI_from} yields the lower bound \begin{align*}
\limsup_{n\to \infty}n^{-d}\sum_{k=0}^{\infty}p(k)(\C_{\Lambda_n}^k((\Semi^{\Lambda_n}_t \tilde{\P}_{\Lambda_n})^k,\tilde{\mathsf R}^k_{\Lambda_n})-\C_{\Lambda_n}^k(\tilde{\P}^k_{\Lambda_n},\tilde{\mathsf R}^k_{\Lambda_n}))\\
\geq \limsup_{n\to \infty}\W_2^2(\stat{\Semi^{\Lambda_n}_t \tilde{\P}_{\Lambda_n}},\stat{\tilde{\mathsf R}_{\Lambda_n}})-\W_2^2(\P_0,\mathsf R)
\end{align*}
We turn to the right-hand side of the EVI. By construction of the modified process $\tilde{\mathsf R}_{\Lambda_n}$ \[
\limsup_{n\to\infty}n^{-d}\ent(\tilde{\mathsf R}_{\Lambda_n}\mid \Poi_{\Lambda_n})\leq \mathcal{E}(\mathsf R).
\]Furthermore $\mathcal{E}(\stat{\tilde{\P}_{\Lambda_n,t}}) \leq \ent(\tilde{\P}_{\Lambda_n,t}\mid \Poi_{\Lambda_n})$.
Combining this yields an upper bound on the right-hand side of the EVI\begin{align*}
\limsup_{n\to \infty}n^{-d}(\ent(\tilde{\mathsf R}_{\Lambda_n}\mid \Poi_{\Lambda_n})-\ent(\Semi^{\Lambda_n}_t \tilde{\P}_{\Lambda_n}\mid \Poi_{\Lambda_n}))
&\leq\mathcal{E}(\mathsf R)+\limsup_{n\to \infty}-\mathcal{E}(\stat{\Semi^{\Lambda_n}_t \tilde{\P}_{\Lambda_n}})\\
&=\mathcal{E}(\mathsf R)-\liminf_{n\to \infty}\mathcal{E}(\stat{\Semi^{\Lambda_n}_t \tilde{\P}_{\Lambda_n}}).
\end{align*}

\end{proof}

\subsection{From the modifications back to the original stationary point process}

Towards proving Theorem \ref{thm:EVI} and having Proposition \ref{prop:cost_modified_processes} in mind, it remains to compare the Gradient Flow Semigroup of the modified and stationarized process to the original one. First, we show weak convergence, which then implies that the EVI is well approximated  (that is, $\W_2$ and $\mathcal E$).

\begin{lem}\label{lem:weak_convergence}
We have weak convergence of the stationarized processes, both modified and original, i.e. for fixed $t\ge 0$ and $n\to\infty$
\begin{align*}
    \stat{\mathsf S_t^{\Lambda_n}\tilde{\P}_{\Lambda_n}}\to \Semi_t\P\\
    \stat{\mathsf S_t^{\Lambda_n}{\P}_{\Lambda_n}}\to \Semi_t\P.
\end{align*}
\end{lem}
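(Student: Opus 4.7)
The plan is to verify convergence of the Laplace functionals, which by \cite[Theorem 4.11]{Kallenberg2} is equivalent to weak convergence of random measures. Fix a non-negative $f\in\mathcal C_c(\IR^d)$ with $\supp f\subset\Lambda_R$. I focus on the first convergence; the second is strictly simpler, since no modification step is needed and $\P_{\Lambda_n}$ agrees with $\P$ on $\Lambda_{n-1}$ tautologically. The idea is to construct, on a common probability space, a coupling under which $\xi_n\sim\stat{\Semi^{\Lambda_n}_t\tilde\P_{\Lambda_n}}$ and $\xi\sim\Semi_t\P$ agree on the (random) window $U+\Lambda_R$ with high probability.

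Unravelling the stationarisation, realise $\xi_n=\theta_U\bigl(\sum_{z\in\IZ^d}\theta_{-nz}\xi^{(z)}\bigr)$, where $U\sim\mathcal U(\Lambda_n)$ is independent of an i.i.d.\ family $(\xi^{(z)})_{z\in\IZ^d}$ of samples from $\Semi^{\Lambda_n}_t\tilde\P_{\Lambda_n}$; each $\xi^{(z)}=F_t(\eta^{(z)},\omega^{(z)})$ with $\eta^{(z)}\sim\tilde\P_{\Lambda_n}$ and $(\omega^{(z),i})_i$ an independent family of Brownian paths reflected in $\Lambda_n$ in the sense of \eqref{def:reflectedpath}. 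Choose a buffer $K_n\to\infty$ with $K_n/n\to 0$ (say $K_n=n^{1/3}$) and set $G_n=\{U\in\Lambda_{n-K_n}\}$, whose complement has probability $O(K_n/n)\to 0$. On $G_n$ only the tile $z=0$ contributes, so $\xi_n(f)=\int f(y-U)\,d\xi^{(0)}(y)$.

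Next I couple the two processes. By Theorem \ref{thm:Modification}\eqref{thm:mod_proc_same_number}, $\tilde\P_{\Lambda_n}|_{\Lambda_{n-1}}=\P|_{\Lambda_{n-1}}$, so we may couple $\eta^{(0)}\sim\tilde\P_{\Lambda_n}$ with $\eta\sim\P$ such that $\eta^{(0)}|_{\Lambda_{n-1}}=\eta|_{\Lambda_{n-1}}$ almost surely; attach a single driving Brownian motion $\omega^{y_0}$ to each shared point $y_0$ (independent motions on the residual points), evolve $\eta$ by free BM to obtain $\xi\sim\Semi_t\P$, and evolve $\eta^{(0)}$ with reflection at $\partial\Lambda_n$ to obtain $\xi^{(0)}$. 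A point of the coupling contributes to the discrepancy between $\xi^{(0)}|_{U+\Lambda_R}$ and $\xi|_{U+\Lambda_R}$ only if either its initial position lies in the boundary layer $\Lambda_n\setminus\Lambda_{n-1}$, or its driving path exits $\Lambda_n$ before time $t$ (decoupling the free and reflected evolutions). For a given $M>0$, standard Gaussian displacement estimates yield: (a) a BM started at $y_0$ with $|y_0-U|>R+M\sqrt t$ lies outside $U+\Lambda_R$ at time $t$ with failure probability $\leq Ce^{-cM^2}$; (b) provided $d(y_0,\partial\Lambda_n)\geq M\sqrt t$, it remains inside $\Lambda_n$ with the same probability. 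Taking $K_n>R+M\sqrt t$ for $n$ large, the only points that can affect $\xi^{(0)}|_{U+\Lambda_R}$ start in $U+\Lambda_{R+M\sqrt t}\subset\Lambda_{n-1}$, where the two initial configurations coincide and the driving BMs are identical. Using the uniform $L\log L$ bound on $\eta^{(0)}(\Lambda_1)$ and $\eta(\Lambda_1)$ from Lemma \ref{lem:unif_int} (combined with the entropy bound in Theorem \ref{thm:Modification}\eqref{thm:mod_proc_entropy}), the expected number of points responsible for a discrepancy is at most $C_f e^{-cM^2}$, and hence so is $\IE\bigl[|\xi^{(0)}(f(\cdot-U))-\xi(f(\cdot-U))|\eins_{G_n}\bigr]$.

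Exploiting stationarity of $\P$, so that conditionally on $U$ the random variable $\int f(y-U)\,d\xi(y)$ has the same law as $\xi(f)$, the estimates above give
\begin{align*}
    \limsup_{n\to\infty}\bigl|\IE[e^{-\xi_n(f)}]-\IE_{\Semi_t\P}[e^{-\xi(f)}]\bigr|\;\leq\;C_f\,e^{-cM^2}
\end{align*}
for every $M>0$, and letting $M\to\infty$ closes the argument. The second convergence follows by the same scheme with the coupling of $\eta^{(0)}$ and $\eta$ trivial. The main technical obstacle is the simultaneous handling of the boundary-layer mismatch between $\tilde\P_{\Lambda_n}$ and $\P$ and of the free-versus-reflected Brownian coupling; both errors are absorbed by tuning the buffer $K_n$ to grow much faster than the Brownian scale $\sqrt t$ but much slower than $n$.
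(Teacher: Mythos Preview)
Your argument is correct and follows essentially the same route as the paper: both proofs rest on the three observations that $\tilde\P_{\Lambda_n}$ and $\P$ coincide on $\Lambda_{n-1}$, that Brownian displacements over $[0,t]$ are typically of order $\sqrt t$, and that the reflected and free evolutions agree whenever the driving path stays inside $\Lambda_n$. The organization differs: the paper tests against cylinder functions $G\big[(\int f_i\,d\xi)_{i\le m}\big]$ and restricts to explicit high-probability events $A_1,A_2,A_3$ (polynomial point count, uniformly small Brownian excursions, $U$ away from $\partial\Lambda_n$), after which the relevant integrals are seen to depend only on $\xi|_{\Lambda_{n-n^\epsilon}}$ and the tiled modified process can be replaced by $\P$ directly; you instead couple $\xi_n$ to $\theta_U\xi$ on a common probability space and bound the $L^1$-distance of Laplace functionals via a two-scale scheme (buffer $K_n$, Brownian threshold $M$). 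Two small remarks: your appeal to the $L\log L$ bound of Lemma~\ref{lem:unif_int} is not needed---summing the per-point Gaussian tails requires only the intensity of $\eta$ and of $\tilde\P_{\Lambda_n}$, and the paper's proof never invokes it; also, the claimed bound $C_f\,e^{-cM^2}$ on the expected number of discrepancy points should carry a polynomial prefactor in $M$ (coming from integrating $r^{d-1}e^{-cr^2/t}$ over shells outside $U+\Lambda_{R+M\sqrt t}$), which is of course harmless for the conclusion.
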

\begin{proof}
    As in the proof of Lemma \ref{lem:charStat} it is sufficient to check for test functions $\phi\in\mathcal C_b(\mathcal M )$ of the form $\phi(\xi)=G\big[\big(\int f_i d\xi \big)_{i\le m}\big]$ for some fixed $m\in\IN$, $G\in\mathcal C_b(\IR^m)$ and $f_i\in\mathcal C_c(\IR^d)$. Therefore, we explicitly rephrase $\stat{\mathsf S_t^{\Lambda_n}\tilde{\P}_{\Lambda_n}}$ as a distribution of a tiled point process (see Section \ref{subsec:tiling}), uniformly shifted by $U\sim\mathcal U(\Lambda_n)$ (see \eqref{def:stationatized_process}), then pushed forward under the map $F_t$ from \eqref{eq:def_GF_box}, which sticks reflected Brownian motions to the particles, that is
\begin{align} 
&\int G\left[\Big(\int f_i(x)\xi(dx)\Big)_{i\le m}\right]  d\stat{\mathsf S_t^{\Lambda_n}\tilde{\P}_{\Lambda_n}}(\xi)\nonumber\\
&=\int\int\int G\left[\Big(\int f_i(\refl(x+\omega^x)_t+u)\xi(dx)\Big)_{i\le m}\right]d(\tilde{\P}_{\Lambda_n})^{til}(\xi)d\mathcal U(u)d\mathbb W^{\otimes \IN}(\omega),\label{eq:massiveintegral}
\end{align}
where we wrote $\omega^x$ for that coordinate $\omega^j$ of $\omega$ satisfying $\ell(\xi)_j=x$. 
We assume $\ell$ to be the map, which labels points on $\IR^d$ monotonously increasing, i.e.~if $i<j$, then $|\ell(\xi)_i|\le|\ell(\xi)_j|$, where we use the lexicographical order as a tie breaker.
Let $\epsilon<1/2$. We claim that, for $n\to\infty$, only $x\in \Lambda_{n-n^{\epsilon}}$ matter for the above integral and from which we will deduce that replacing $(\tilde{\P}_{\Lambda_n})^{til}$ by $({\P}_{\Lambda_n})^{til}$ or even $\P$ won't change the integral.
To this end, we first restrict ourselves to the high-probability event where $\xi$ has polynomially many points, most Brownian motions $\omega^i$ do not travel too far, and $U$ stays away from the boundary. Define
\begin{align*}
    A_1&=\{\xi\in\Gamma_{\IR^d}:\xi(\Lambda_n)\le n^{d+2}\}\;,\\
     A_2&=\{\omega\in\mathcal C([0,\infty],\IR^d)^{\IN}:|\omega_s^i|\le n^\epsilon \ \forall s\leq t\ \forall i\le n^{d+2}\}\;,\\
    A_3&=\{u\in\Lambda_{n-n^{1-\epsilon}}\}.
\end{align*}
Indeed, as $n\to\infty$, these are high-probability events. By Markov inequality we see that
\begin{align*}
    (\tilde{\P}_{\Lambda_n})^{til}(A_1^c)\le n^{-d-2}\IE(\xi(\Lambda_n))\to 0\;.
\end{align*}
Denoting by $\omega^i_{j,s}$ the $j$-th coordinate of $\omega^i_s$, a union bound, the distribution of the running maximum of 1D Brownian motion, and a Gaussian tail bound imply
\begin{align*}
\mathbb W^{\IN}(A_2^c)\le d n^{d+2}\mathbb W(\max_{s\leq t}|\omega_{1,s}^1|>n^{\epsilon}/\sqrt{d})\lesssim n^{d+2}e^{-n^{2\epsilon}/2d^\epsilon t}\to 0\;.
\end{align*}
Moreover, $\mathcal U(A_3)=(n-n^{1-\epsilon})^dn^{-d}\to 1$. Since $G$ is bounded, restricting the integral \eqref{eq:massiveintegral} to these events leaves the large $n$ limit unchanged. Note that on these events, we have for every $x\in\xi\cap \left(\Lambda_n\setminus\Lambda_{n-n^{\epsilon}}\right)$
\begin{align*}
    |\refl(x+\omega^x)_t+u|\gtrsim (n-n^\epsilon)-n^\epsilon -(n-n^{1-\epsilon})\to\infty
\end{align*}
and the same holds without reflecting the Brownian motions. 
In particular, since the $f_i$, $1\leq i\leq m$  have compact support, there exists an $N\in \IN$ such that for all $n\geq N$ \begin{align}\label{eq:xyz}
    \int f_i(\refl(x+\omega^x)_t+u)d\xi(x)=\int f_i(\refl(x+\omega^x)_t+u)d\xi|_{\Lambda_{n-n^\epsilon}}(x),\quad \forall (\xi,\omega,u)\in A_1\times A_2\times A_3. 
\end{align}
Therefore, we obtain
\begin{align*} 
&\int G\left[\Big(\int f_i(x)d\xi(x)\Big)_{i\le m}\right]  d\stat{\mathsf S_t^{\Lambda_n}\tilde{\P}_{\Lambda_n}}(\xi)\\
&=\int_{A_2}\int_{A_3}\int_{A_1} G\left[\Big(\int f_i(\refl(x+\omega^x)_t+u)d\xi(x)\Big)_{i\le m}\right]d(\tilde{\P}_{\Lambda_n})^{til}(\xi)d\mathcal U(u)d\mathbb W^{\otimes \IN}(\omega) +o(1)\\
&\stackrel{\eqref{eq:xyz}}{=}\int_{A_2}\int_{A_3}\int_{A_1} G\left[\Big(\int f_i(\refl(x+\omega^x)_t+u)d\xi|_{\Lambda_{n-n^\epsilon}}(x)\Big)_{i\le m}\right]d(\tilde{\P}_{\Lambda_n})^{til}(\xi)d\mathcal U(u)d\mathbb W^{\otimes \IN}(\omega) +o(1)\\
&=\int_{A_2}\int_{A_3}\int_{A_1} G\left[\Big(\int f_i(\refl(x+\omega^x)_t+u)d\xi|_{\Lambda_{n-n^\epsilon}}(x)\Big)_{i\le m}\right]d\P(\xi)d\mathcal U(u)d\mathbb W^{\otimes \IN}(\omega) +o(1)\\ 
&=\int_{A_2}\int_{A_3}\int_{A_1} G\left[\Big(\int f_i(x+\omega^x_t+u)d\xi|_{\Lambda_{n-n^\epsilon}}(x)\Big)_{i\le m}\right]d\P(\xi)d\mathcal U(u)d\mathbb W^{\otimes \IN}(\omega) +o(1),
\end{align*}
where the last line follows from the fact that 
\begin{align*}
     x+\omega^x_t \in \Lambda_n,\quad \forall x\in\xi|_{\Lambda_{n-n^\epsilon}},  (\xi,\omega,u)\in A_1\times A_2\times A_3.  
\end{align*}
It remains to argue that none of the infinitely many Brownian motions can enter $\Lambda_{n-n^\epsilon}$ from far away. 
By the Borel-Cantelli-Lemma, $A_1$ holds for all $n$ sufficiently large $\P$-a.s. Moreover, the event
\begin{align*}
    \tilde A_2=\{\forall k\in\IN,\forall i\le (nk)^{d+2}:|\omega_t^i|\le n(k-1)+n^\epsilon\}
\end{align*} 
also holds with high probability, since
\begin{align*}
    \mathbb W ^{\IN}(\tilde A_2^c)\le \sum_{k\in\IN} (nk)^{d+2} e^{-\tfrac{1}{2t}(n(k-1)+n^\epsilon)^2}\to 0
\end{align*}
as $n\to\infty$.
Hence by Borel-Cantelli, for fixed $\xi$ and $n$ sufficiently large (depending on $\xi$) we have (similar to the above): If $\omega\in \tilde A_2$ and $u\in A_3$, then for $x\in \xi$ with $x\notin \Lambda_{n-n^{\varepsilon}}$ it holds that $|x+\omega^x_t+u|\to\infty$ as $n\to\infty$ and hence, it is not contained in the compact support of the functions $f_i$, $1\le i\le m$. 
Ultimately by dominated convergence, it follows
\begin{align*}
    &\int_{A_2}\int_{A_3}\int_{A_1} G\left[\Big(\int f_i(x+\omega^x_t+u)d\xi|_{\Lambda_{n-n^\epsilon}}(x)\Big)_{i\le m}\right]d\P(\xi)d\mathcal U(u)d\mathbb W^{\otimes \IN}(\omega)\\
    &= \int_{\tilde A_2}\int_{A_3}\int_{A_1} G\left[\Big(\int f_i(x+\omega^x_t+u)d\xi(x)\Big)_{i\le m}\right]d\P(\xi)d\mathcal U(u)d\mathbb W^{\otimes \IN}(\omega)+o(1)\\
    &=\int G\left[\Big(\int f_i(x)d\xi(x)\Big)_{i\le m}\right]  d\mathsf S_t\P(\xi)+o(1),
\end{align*}
which proves the claim.
\end{proof}

Finally, we collected all ingredients to prove the EVI for stationary point processes.
\begin{proof}[Proof of Theorem \ref{thm:EVI}]
By Lemma \ref{lem:weak_convergence} we have $\stat{\Semi^{\Lambda_n}_t \tilde{\P}_{\Lambda_n}}\to \Semi_t\P$ weakly as $n\to \infty$ and by symmetry $\stat{\tilde{\R}_{\Lambda_n}}\to \R$. Lower semicontinuity of $\C_2=\W_2^2$ from Proposition \ref{prop:lsc_costfct+existence_min} implies
\begin{align*}
\W_2^2(\Semi_t\P,\mathsf R)&\leq  \liminf_{n\to \infty}\W_2^2(\stat{\Semi^{\Lambda_n}_t \tilde{\P}_{\Lambda_n}},\stat{\tilde{\mathsf R}_{\Lambda_n}})\\
&\leq  \limsup_{n\to \infty}\W_2^2(\stat{\Semi^{\Lambda_n}_t \tilde{\P}_{\Lambda_n}},\stat{\tilde{\mathsf R}_{\Lambda_n}}).
\end{align*} 
We apply Proposition \ref{prop:cost_modified_processes} and use the lower semi-continuity of the specific entropy of Lemma \ref{lem:Georgii}, to obtain the EVI
\begin{align*}
\W_2^2(\Semi_t\P,\mathsf R)-\W_2^2(\P_0,\mathsf R)&\leq \limsup_{n\to \infty}\W_2^2(\stat{\Semi^{\Lambda_n}_t \tilde{\P}_{\Lambda_n}},\stat{\tilde{\mathsf R}_{\Lambda_n}})-\W_2^2(\P_0,\mathsf R)\\&\leq 2t(\mathcal{E}(\mathsf R)-\liminf_{n\to \infty}\mathcal{E}(\stat{\Semi^{\Lambda_n}_t \tilde{\P}_{\Lambda_n}}))\\
&\leq 2t\left(\mathcal{E}(\mathsf R)-\mathcal E(\Semi_t\P)\right).
\end{align*}
\end{proof}

\section{Consequences}
\subsection{Convexity of the entropy}
We collect some consequences of Theorem \ref{thm:EVI}. 
Let $D(\mathcal E)=\{\P\in \spp1: \mathcal E(\P)<\infty\}$ be the proper domain of the specific entropy. Denote by $\overline{D(\mathcal E)}$ the closure of $D(\mathcal E)$ in $\spp1$ with respect to $\W_2$. The conclusion of Theorem \ref{thm:EVI} continues to hold for the extended domain of the entropy.

\begin{cor}\label{cor:EVI_closure}
For $\P,\mathsf R \in \overline{D(\mathcal E)}$ with $\W_2(\P,\mathsf R)<\infty$ the EVI holds, i.e. \begin{align}\label{eq:EVI_closure}
\W^2_2(\Semi_t\P,\mathsf R)-\W^2_2(\P,\mathsf R)\leq  2t(\mathcal{E}(\mathsf R)-\mathcal E(\Semi_t\P)),\quad \forall t\geq 0.
\end{align}
Furthermore, we have the contraction property \begin{align}
    \W_2(\Semi_t\P, \Semi_t\mathsf R)\leq \W_2(\P,\mathsf R).
\end{align}
\end{cor}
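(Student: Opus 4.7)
The plan is to deduce both statements from Theorem \ref{thm:EVI} by approximation in $\W_2$, using lower semicontinuity of the specific entropy (Lemma \ref{lem:Georgii}) and of $\W_2^2 = \C_2$ (Proposition \ref{prop:lsc_costfct+existence_min}), together with $\W_2$-continuity of the semigroup.

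I would first prove the contraction property directly by a synchronous-coupling argument, valid for any $\P,\R\in\spp1$ with $\W_2(\P,\R)<\infty$. Let $\q^\bullet\in\cpl_e(\xi^\bullet,\eta^\bullet)$ be an optimal equivariant matching provided by Proposition \ref{prop:matching}, and enlarge the probability space by an i.i.d.\ family of Brownian path marks attached to each pair $(x,y)\in\supp\q^\bullet$. Since the marks are i.i.d., the marked process remains equivariant. Pushing forward under $(x,y)\mapsto(x+\omega^{(x,y)}_t,\,y+\omega^{(x,y)}_t)$ yields an equivariant coupling $\q^\bullet_t$ of $\Semi_t\P$ and $\Semi_t\R$. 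Because $|x+w-(y+w)|^2=|x-y|^2$, conditioning on $\q^\bullet$ and integrating out the marks gives
\[ \IE\int_{\Lambda_1\times\IR^d}|u-v|^2\,d\q^\bullet_t(u,v) = \IE\int p_t(x,\Lambda_1)\,|x-y|^2\,d\q^\bullet(x,y), \]
where $p_t$ denotes the Brownian transition kernel. The equivariant random measure $|x-y|^2\,d\q^\bullet(x,y)$ has constant first-coordinate intensity equal to $\cost(\xi^\bullet,\eta^\bullet)$, and since $\int p_t(x,\Lambda_1)\,dx=1$, the right-hand side equals $\cost(\xi^\bullet,\eta^\bullet)=\W_2^2(\P,\R)$. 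Hence $\W_2(\Semi_t\P,\Semi_t\R)\leq\W_2(\P,\R)$.

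Armed with the contraction, I would extend the EVI by approximation. It suffices to treat $\mathcal E(\R)<\infty$, as otherwise \eqref{eq:EVI_closure} is vacuous. Fix $\P_n\in D(\mathcal E)$ with $\W_2(\P_n,\P)\to 0$; by the triangle inequality $\W_2(\P_n,\R)<\infty$ for large $n$ and $\W_2(\P_n,\R)\to\W_2(\P,\R)$. Theorem \ref{thm:EVI} applied to $(\P_n,\R)$ yields
\[ \W_2^2(\Semi_t\P_n,\R)-\W_2^2(\P_n,\R)\leq 2t\bigl(\mathcal E(\R)-\mathcal E(\Semi_t\P_n)\bigr). \]
The contraction just established gives $\W_2(\Semi_t\P_n,\Semi_t\P)\leq\W_2(\P_n,\P)\to 0$, so $\W_2^2(\Semi_t\P_n,\R)\to\W_2^2(\Semi_t\P,\R)$ by the triangle inequality. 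Lemma \ref{thm:convdist_convweak} upgrades this to weak convergence $\Semi_t\P_n\to\Semi_t\P$, whence $\liminf_n\mathcal E(\Semi_t\P_n)\geq\mathcal E(\Semi_t\P)$ by lower semicontinuity of $\mathcal E$. Passing to $\limsup_n$ on both sides of the approximating EVI yields \eqref{eq:EVI_closure}.

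The principal subtlety is verifying the equivariance of the marked synchronous coupling and the rigorous application of the mass-transport principle in the first step; both are handled on a canonical enlargement of the probability space supporting an independent i.i.d.\ field of Brownian paths indexed by the matching, in the spirit of the semigroup construction in Section \ref{sec:from_box_to_space}. The remaining passages to the limit are routine.
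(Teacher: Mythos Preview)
Your proof is correct and takes a genuinely different route from the paper's. The paper first extends the EVI by a \emph{time} regularisation: it shows $\mathcal E(\Semi_t\P)<\infty$ for $t>0$ by approximating $\P$ with $\P^{(n)}\in D(\mathcal E)$ and applying Theorem~\ref{thm:EVI} with $\R=\P^{(1)}$, then applies Theorem~\ref{thm:EVI} directly to the pair $(\Semi_s\P,\R)$ (both of finite entropy for $s>0$) and lets $s\to 0$ using the $C^0$-property of $(\Semi_t)$ from Lemma~\ref{lem:semigroup}; contraction is afterwards read off from the general result \cite[Proposition~3.1]{Daneri_2008}. You instead establish contraction \emph{first}, by the synchronous-coupling argument (same Brownian increment on both partners of an optimal matched pair), and then approximate $\P$ by $\P_n\in D(\mathcal E)$ and pass to the limit in the EVI using contraction to obtain $\W_2$- and hence weak convergence of $\Semi_t\P_n$ to $\Semi_t\P$. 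Your approach is more self-contained (no appeal to \cite{Daneri_2008} for contraction) and, as a bonus, makes explicit the continuity $\P\mapsto\Semi_t\P$ in $\W_2$ that the paper's first step (bounding $\mathcal E(\Semi_t\P)$ via $\liminf_n\mathcal E(\Semi_t\P^{(n)})$ and lower semicontinuity) also tacitly relies on. The paper's approach, in turn, is shorter once the Daneri--Savar\'e machinery is taken for granted, and avoids the marked-point-process construction.
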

\begin{proof}
   Let $\P^{(n)}\xrightarrow{n\to \infty}\P$  in $\W_2$ with $\mathcal E(\P^{(n)})<\infty$ for all $n\in \IN$. Apply Theorem \ref{thm:EVI} to   $\P^{(n)}$ and $\P^{(1)}$ to obtain for $t>0$ 
  \[
  \W^2_2(\Semi_t \P^{(n)},\P^{(1)})-\W^2_2(\P^{(n)},\P^{(1)})\leq  2t(\mathcal{E}(\P^{(1)})-\mathcal E(\Semi_t \P^{(n)}).
  \]
  This is equivalent to \begin{align}\label{eq:consequences_EVI}
  \mathcal E(\Semi_t \P^{(n)}) \leq \mathcal{E}(\P^{(1)})
  +\frac{\W^2_2(\P^{(n)},\P^{(1)})- \W^2_2(\Semi_t \P^{(n)},\P^{(1)})}{2t}.
  \end{align}
  By letting $n\to \infty$ in \eqref{eq:consequences_EVI}, the lower semicontinuity of the specific entropy yields \[
  \mathcal E(\Semi_t\P)\leq 
   \mathcal{E}(\P^{(1)})
  +\frac{\W^2_2(\P,\P^{(1)})- \W^2_2(\Semi_t\P,\P^{(1)})}{2t}.
  \]
  Hence $\Semi_t\P$ has finite specific entropy. We can assume that $\mathcal{E}(\mathsf R)<\infty$, since otherwise the inequality \eqref{eq:EVI_closure} holds.
  Applying the EVI of Theorem \ref{thm:EVI} to $\Semi_s\P$ and $\mathsf R$ then yields for $t>0$ \[
  \W^2_2(\Semi_{s+t}\P,\mathsf R)-\W^2_2(\Semi_s\P,\mathsf R)\leq  2t(\mathcal{E}(\mathsf R)-\mathcal E(\Semi_{s+t}\P)).
  \]
  Letting $s\to 0$, the lower semicontinuity of the specific entropy and the continuity of $\Semi_t$ w.r.t. $\W_2$ from Lemma \ref{lem:semigroup} yield   
\[
  \W^2_2(\Semi_t\P,\mathsf R)-\W^2_2(\Semi_t\P,\mathsf R)\leq  2t(\mathcal{E}(\mathsf R)-\mathcal E(\Semi_t\P)).
  \]
  Now let $X$ be the set of all $\tilde{\mathsf R}\in \overline{D(\mathcal E)}$ with $W_2(\P,\tilde{\mathsf R})<\infty$ and let $d$ be the restriction of $\W_2$ to $X$. Since the EVI \eqref{eq:EVI_closure} holds, we can apply  \cite[Proposition 3.1]{Daneri_2008} to the metric space $(X,d)$ and the $\mathcal C^0$-semigroup $(\Semi_t)_{t\geq 0}$, which yields \[
  \W_2(\Semi_t\P,\Semi_t\mathsf R)\leq \W_2(\P,\mathsf R),\quad \forall t\geq 0.
  \]
\end{proof}

Secondly, the EVI in Theorem \ref{thm:EVI} implies convexity of the entropy along geodesics.
\begin{cor}\label{cor:convexity2}
    Let $\P_0,\P_1\in \spp1$ with $\W_2(\P_0,\P_1)<\infty$. For a geodesic $(\P_t)_{0\leq t\leq 1}$ we have
    \begin{align}\label{eq:convex}
 \mathcal{E}(\P_t)\leq (1-t)\mathcal E(\P_0)+t\mathcal E(\P_1),\quad \forall t\in [0,1].
    \end{align}In particular, the function $t \mapsto \mathcal E(\P_t)$ is convex on $[0,1]$.
\end{cor}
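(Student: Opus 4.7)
The plan is to derive the convexity inequality directly from the EVI by the symmetrisation trick of Daneri--Savar\'e. First I would reduce to the case $\mathcal E(\P_0),\mathcal E(\P_1) < \infty$ (otherwise \eqref{eq:convex} is trivial), so that $\P_0,\P_1 \in D(\mathcal E) \subseteq \overline{D(\mathcal E)}$, and note that the constant-speed geodesic property gives $\W_2(\P_s,\P_0) = s\W_2(\P_0,\P_1)$ and $\W_2(\P_s,\P_1) = (1-s)\W_2(\P_0,\P_1)$, both finite.

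Next, fixing $s \in (0,1)$ and assuming momentarily that $\P_s \in \overline{D(\mathcal E)}$, I would invoke Corollary~\ref{cor:EVI_closure} at the base point $\P_s$ with the two choices $\mathsf R = \P_0$ and $\mathsf R = \P_1$, yielding for every $t > 0$
\begin{align*}
\W_2^2(\Semi_t\P_s,\P_0) - \W_2^2(\P_s,\P_0) &\le 2t\bigl(\mathcal E(\P_0) - \mathcal E(\Semi_t\P_s)\bigr),\\
\W_2^2(\Semi_t\P_s,\P_1) - \W_2^2(\P_s,\P_1) &\le 2t\bigl(\mathcal E(\P_1) - \mathcal E(\Semi_t\P_s)\bigr).
\end{align*}
Taking the convex combination $(1-s)\cdot(\text{first}) + s\cdot(\text{second})$, the constant-speed identity gives $(1-s)\W_2^2(\P_s,\P_0) + s\W_2^2(\P_s,\P_1) = s(1-s)\W_2^2(\P_0,\P_1)$, while setting $a := \W_2(\Semi_t\P_s,\P_0)$ and $b := \W_2(\Semi_t\P_s,\P_1)$ the elementary identity $(1-s)a^2 + sb^2 - s(1-s)(a+b)^2 = ((1-s)a - sb)^2 \ge 0$ combined with the triangle inequality $a + b \ge \W_2(\P_0,\P_1)$ yields $(1-s)a^2 + sb^2 \ge s(1-s)\W_2^2(\P_0,\P_1)$. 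The left-hand side of the combined EVI is therefore non-negative, so
\[\mathcal E(\Semi_t\P_s) \le (1-s)\mathcal E(\P_0) + s\mathcal E(\P_1), \qquad \forall\, t > 0.\]
Letting $t \to 0$, $\Semi_t\P_s \to \P_s$ in $\W_2$ by Lemma~\ref{lem:semigroup}, hence weakly by Lemma~\ref{thm:convdist_convweak}, and lower semi-continuity of $\mathcal E$ (Lemma~\ref{lem:Georgii}) then yields \eqref{eq:convex}. Full convexity of $t \mapsto \mathcal E(\P_t)$ on $[0,1]$ follows because any sub-curve of a constant-speed geodesic is itself a reparameterised geodesic, so the same inequality can be applied to any subinterval.

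The hard part will be verifying $\P_s \in \overline{D(\mathcal E)}$ so that Corollary~\ref{cor:EVI_closure} is applicable. To handle this, I would construct an explicit $\W_2$-approximating sequence in $D(\mathcal E)$ in the spirit of Section~\ref{sec:from_box_to_space}: starting from the optimal pair $(\Q,\q)$ underlying the geodesic, apply the modification procedure of Theorem~\ref{thm:Modification} on each box $\Lambda_n$ to obtain $(\tilde\Q_{\Lambda_n},\tilde\q_{\Lambda_n})$, form the displacement interpolation at parameter $s$ of the resulting box-level coupling, then tile and stationarise as in \eqref{def:stationatized_process}. Theorem~\ref{thm:Modification}\eqref{thm:mod_proc_entropy} bounds the specific entropy of the resulting stationary point processes, and arguments analogous to those in Proposition~\ref{prop:cost_modified_processes} show that they converge to $\P_s$ in $\W_2$, thereby placing $\P_s$ in $\overline{D(\mathcal E)}$ and completing the proof.
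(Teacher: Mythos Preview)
Your main argument is exactly the paper's approach: the paper simply cites Daneri--Savar\'e \cite[Theorem~3.2]{Daneri_2008} applied to the metric space $X=\{\mathsf R\in\overline{D(\mathcal E)}:\W_2(\P_0,\mathsf R)<\infty\}$, and what you have written is precisely the symmetrisation proof underlying that theorem. The computation with the identity $(1-s)a^2+sb^2-s(1-s)(a+b)^2=((1-s)a-sb)^2$ and the triangle inequality is correct, and the passage $t\to0$ via lower semicontinuity is the standard closing step.

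You are right to flag the requirement $\P_s\in\overline{D(\mathcal E)}$; the paper's proof implicitly needs the geodesic to lie in $X$ to invoke Daneri--Savar\'e on $(X,d)$, and it does not verify this either. So you are being more careful than the paper, not less. Your proposed fix via the modification construction is in the right spirit, but the justification you give is a little loose: Proposition~\ref{prop:cost_modified_processes} is a cost comparison between two stationarised processes, not a $\W_2$-convergence statement towards a fixed limit. To actually obtain $\W_2\bigl((\tilde\P_{s,\Lambda_n})^{stat},\P_s\bigr)\to0$ you would need to build an explicit equivariant coupling between the box-level displacement interpolation (tiled and stationarised) and $(\mathsf{geo}_s)_\#\q$, and control its cost using that the modification only alters points matched across $\partial\Lambda_{n-1}$ (Lemma~\ref{lem:cost_estimate} gives the $o(n^d)$ bound on their number and cost). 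This is doable with the tools of Section~3, but it is a separate argument rather than a corollary of Proposition~\ref{prop:cost_modified_processes}. The entropy bound for the approximants, on the other hand, follows cleanly from Proposition~\ref{prop:conv_entropy_box} together with \eqref{eq:bound_entropy_by_cube}, as you indicate.
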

\begin{proof}
We can assume that $\P_0,\P_1 \in \overline{D(\mathcal E)}$, since otherwise the inequality holds.
   Let $X$ be the set of all  $\mathsf R \in \overline{D(\mathcal E)}$ such that $\W_2(\P_0,\mathsf R)<\infty$. Let $d$ be the restriction of $\mathsf W_2$ to $X$. 
   From Corollary \ref{cor:EVI_closure} it follows, that
    we can apply \cite[Proposition 3.1]{Daneri_2008} to the metric space $(X,d)$ and the $C^0$-semigroup  $(\mathsf S_t)_{t\geq 0}$. Hence, in the terminology of \cite{Daneri_2008},  the semigroup $\mathsf S_t$ is the $\lambda$-Flow, with $\lambda=0$, for the specific entropy $\mathcal{E}$. Finally,  \cite[Theorem 3.2]{Daneri_2008} proves \eqref{eq:convex}. 
    Since \eqref{eq:convex} holds for all geodesics, the second part of the corollary follows.
\end{proof}
Furthermore, we rediscover a natural statement: If we let the particles of an arbitrary stationary point process evolve like Brownian motions, then it will converge to a Poisson point process in the long run. Here, the convergence is stated in specific relative entropy, strengthening the classical result of \cite{St68} due to Pinsker's inequality.
\begin{cor}
    Let $\P\in \overline{D(\mathcal E)}$ with   $\W_2(\P,\mathsf{Poi})<\infty$. Then the function $(0,\infty)\ni t\mapsto \mathcal{E}(\Semi_t\P)$ is decreasing and 
    \[
    \mathcal{E}(\Semi_t\P)\xrightarrow{t\to\infty}0.
    \]
\end{cor}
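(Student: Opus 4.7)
The plan is to obtain both assertions as direct consequences of the EVI extended in Corollary \ref{cor:EVI_closure}, by choosing two carefully selected reference distributions: $\R=\Poi$ for the convergence statement, and $\R=\Semi_s\P$ for the monotonicity. The key preparatory observation is that the Poisson point process is invariant under the semigroup, $\Semi_t\Poi=\Poi$ for all $t\ge 0$, which is the classical displacement theorem for Poisson point processes (independent perturbation of a Poisson process preserves the Poisson law, and Brownian transition kernels preserve Lebesgue measure). Combined with the contraction estimate of Corollary \ref{cor:EVI_closure}, this yields
\[
\W_2(\Semi_t\P,\Poi)=\W_2(\Semi_t\P,\Semi_t\Poi)\le \W_2(\P,\Poi)<\infty
\]
for every $t\ge 0$, and together with $\mathcal E(\Poi)=0$ (hence $\Poi\in D(\mathcal E)\subset\overline{D(\mathcal E)}$), this ensures that the EVI from Corollary \ref{cor:EVI_closure} is applicable with reference $\Poi$ and starting point $\Semi_s\P$ for any $s\ge 0$.

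For the decay to zero I would specialise the EVI to $\R=\Poi$, which gives
\[
\W_2^2(\Semi_t\P,\Poi)-\W_2^2(\P,\Poi)\le 2t\bigl(\mathcal E(\Poi)-\mathcal E(\Semi_t\P)\bigr)=-2t\,\mathcal E(\Semi_t\P).
\]
Since $\mathcal E\ge 0$ by Lemma \ref{lem:Georgii}(a), dropping the nonpositive term $\W_2^2(\Semi_t\P,\Poi)$ and rearranging yields
\[
0\le \mathcal E(\Semi_t\P)\le \frac{\W_2^2(\P,\Poi)}{2t}\xrightarrow{t\to\infty}0.
\]
This simultaneously shows that $\mathcal E(\Semi_s\P)<\infty$ for every $s>0$, a fact needed in the next step.

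For the monotonicity, having $\mathcal E(\Semi_s\P)<\infty$ and $\W_2(\Semi_s\P,\Poi)<\infty$ for $s>0$ puts us in the framework of Corollary \ref{cor:EVI_closure} with starting distribution $\Semi_s\P$. Applying the EVI with reference $\R=\Semi_s\P$ itself, and using the semigroup property $\Semi_t(\Semi_s\P)=\Semi_{s+t}\P$ from Lemma \ref{lem:semigroup}, I get
\[
0\le \W_2^2(\Semi_{s+t}\P,\Semi_s\P)\le 2t\bigl(\mathcal E(\Semi_s\P)-\mathcal E(\Semi_{s+t}\P)\bigr),
\]
so $\mathcal E(\Semi_{s+t}\P)\le \mathcal E(\Semi_s\P)$ for all $s>0$ and $t\ge 0$, i.e.\ $t\mapsto \mathcal E(\Semi_t\P)$ is non-increasing on $(0,\infty)$.

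The proof is short and poses no real obstacle once Corollary \ref{cor:EVI_closure} is available; the only point that deserves emphasis is the invariance $\Semi_t\Poi=\Poi$, without which the reference $\R=\Poi$ would not satisfy the finite-$\W_2$ hypothesis needed to invoke the EVI uniformly in $t$. Everything else is a textbook EVI manipulation.
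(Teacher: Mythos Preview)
Your proof is correct and follows essentially the same approach as the paper: both derive the decay $\mathcal E(\Semi_t\P)\le \tfrac{1}{2t}\W_2^2(\P,\Poi)$ by specialising the EVI to $\R=\Poi$, and both obtain monotonicity as a general EVI consequence. The only difference is that the paper outsources these standard manipulations to \cite[Proposition~3.1]{Daneri_2008}, whereas you carry them out by hand. One minor remark: your preparatory observation $\Semi_t\Poi=\Poi$ and the resulting contraction bound on $\W_2(\Semi_t\P,\Poi)$ are not actually needed anywhere---the EVI with starting point $\P$ and reference $\Poi$ applies directly from the hypotheses, and for the monotonicity step you only need $\Semi_s\P\in D(\mathcal E)$, which you already obtained from the decay bound.
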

\begin{proof}
    Let $X$ be the set of all  $\mathsf R\in \overline{D(\mathcal E)}$ with $\W_2(\mathsf R,\mathsf{Poi})<\infty$. Define $d$ as the restriction of $\mathsf W_2$ to $X$. Applying \cite[Proposition 3.1]{Daneri_2008} yields \[
    \mathcal{E}(\Semi_t\P)\leq \mathcal{E}(\mathsf{Poi})+\frac{1}{2t}d^2(\P,\mathsf R)=\frac{1}{2t}\mathsf W^2_2(\P,\mathsf R).
    \]Hence  \[
    \mathcal{E}(\Semi_t\P)\xrightarrow{t\to\infty}0.
    \] 
    Furthermore,  \cite[Proposition 3.1]{Daneri_2008} also implies 
   that the map \[
    t\mapsto \mathcal{E}(\Semi_t\P)
    \]is decreasing on $(0,\infty)$.
\end{proof}

\begin{ex}\label{ex:grid2}
    Let $d\geq 3$ and $(X_z)_{z\in \IZ^d}$ be iid random variables, which are uniformly distributed on $B_{\varepsilon}(0)$.  Let $Y$ be independent and uniformly distributed on $\Lambda_1$. Define the grid process by \[
    \mathsf{grid}=\theta_Y\left(\sum_{z\in \IZ^d}\delta_{z+X_z}\right).
    \]
       Since $d\geq 3$, it follows from \cite[Theorem 1.3]{HS13} that 
    the distance of the process $\Poi$ to $\Leb$ is finite, where $\Leb$ denotes the random measure a.s. equal to the Lebesgue measure.  
    Since an easy computation shows that $\W_2(\mathsf{grid},\Leb)<\infty$, the triangle inequality  yields
    $\W_2(\mathsf{grid},\mathsf{Poi})<\infty$. 
    Hence \[
    \mathcal{E}(\Semi_t(\mathsf{grid}))\xrightarrow{t\to \infty}0.
    \]
\end{ex}

\subsection{Specific Fisher information and HWI inequality}\label{sec:HWI}

Here we will introduce a notion of specific Fisher information for a stationary point process. In analogy to the celebrated HWI inequality relating entropy, Wasserstein distance and Fisher information, we will establish an inequality relating specific entropy, the transport cost $\W_2$, and the specific Fisher information.

Consider a function $V:\IR^d\to(-\infty,+\infty]$ which is lower semicontinuous and $\lambda$-convex and whose proper domain $\{x\in\IR^d:V(x)<+\infty\}$ has non-empty convex interior $\Omega$. An example is the convex indicator function $V=\eins_K$ of a convex set $K\subset\IR^d$, i.e. $V=0$ on $K$ and $V=+\infty$ on $\IR^d\setminus K$.

For a probability measures $\mu\in \mathcal{P}(\IR^d)$ we define the weighted Fisher information as follows. If $\mu=\rho\Leb$ s.t. $\rho\in W^{1,1}_{\sf loc}(\Omega)$ and there is $w\in L^2(\mu)$ such that $\rho w= \nabla \rho + \rho \nabla V$, we set  
\begin{equation}\label{eq:Fisher_Rd}
I(\mu|e^{-V}\Leb) := \int_{\IR^d}|w|^2d\mu\;.
\end{equation}
Otherwise, we set $I(\mu|e^{-V}\Leb)=+\infty$. 
In the case $V=i_K$, this can be rewritten as 
\[I(\mu|\Leb_K)=4\int_{K}|\nabla\sqrt{\rho}|^2d\Leb\;,\]
provided $\mu$ is supported in $K$ with $\mu=\rho\Leb_K$ and $\sqrt\rho\in W^{1,2}(K)$, and $I(\mu|\Leb_K)=+\infty$ otherwise.
Note that $I(\cdot|e^{-V}\Leb)$ is lower semicontinuous with respect to weak convergence, see \cite[Prop.~10.4.14]{AGS08}.
The classical HWI inequality now states:
\begin{thm}\label{thm:HWI-classic}
 Let $\mu_0,\mu_1\in P_2(\IR^d)$ be such that $\ent(\mu_0|e^{-V}\Leb)$, $\ent(\mu_1|e^{-V}\Leb)$, and $I(\mu_0|e^{-V}\Leb)$ are finite. Then
 \begin{equation}\label{eq:HWI-classic}
     \ent(\mu_0|e^{-V}\Leb) - \ent(\mu_1|e^{-V}\Leb)\leq W_2(\mu_0,\mu_1)\sqrt{I(\mu_0|e^{-V}\Leb)} - \frac{\lambda}{2}W_2(\mu_0,\mu_1)^2\;.
 \end{equation}
 \end{thm}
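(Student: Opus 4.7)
The plan is to follow the classical strategy of Otto--Villani: exploit displacement convexity along the $W_2$-geodesic between $\mu_0$ and $\mu_1$, and then identify the initial slope of the entropy with a pairing involving the Fisher information. Let $(\mu_t)_{t\in[0,1]}$ be the displacement interpolation between $\mu_0$ and $\mu_1$ in $(\mathcal P_2(\IR^d),W_2)$, given by $\mu_t=((1-t)\id+tT)_\#\mu_0$ where $T$ is the Brenier optimal map. Set $F(\mu):=\ent(\mu\,|\,e^{-V}\Leb)$. The $\lambda$-convexity of $V$ together with McCann's displacement convexity of the usual entropy yields that $F$ is $\lambda$-convex along $(\mu_t)$:
\begin{equation*}
F(\mu_t)\leq (1-t)F(\mu_0)+tF(\mu_1)-\frac{\lambda}{2}t(1-t)W_2(\mu_0,\mu_1)^2.
\end{equation*}

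Rearranging and letting $t\downarrow 0$ gives
\begin{equation*}
\limsup_{t\downarrow 0}\frac{F(\mu_t)-F(\mu_0)}{t}\leq F(\mu_1)-F(\mu_0)-\frac{\lambda}{2}W_2(\mu_0,\mu_1)^2.
\end{equation*}
The next step is to show that the left-hand slope is bounded below by $-\int \langle w,T-\id\rangle\,d\mu_0$, where $w\in L^2(\mu_0)$ is the vector field in the definition of $I(\mu_0|e^{-V}\Leb)$ (i.e.\ $\rho_0 w=\nabla\rho_0+\rho_0\nabla V$ with $\mu_0=\rho_0\Leb$). The formal computation comes from the continuity equation $\partial_t\mu_t+\div(\mu_t v_t)=0$ with $v_0=T-\id$: differentiating $F$ gives $\frac{d}{dt}F(\mu_t)|_{t=0}=\int\langle\nabla\log\rho_0+\nabla V,v_0\rangle\,d\mu_0=\int\langle w,T-\id\rangle\,d\mu_0$.

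Combining with Cauchy--Schwarz,
\begin{equation*}
-\int\langle w,T-\id\rangle\,d\mu_0\geq -\|w\|_{L^2(\mu_0)}\|T-\id\|_{L^2(\mu_0)}=-\sqrt{I(\mu_0|e^{-V}\Leb)}\,W_2(\mu_0,\mu_1),
\end{equation*}
so the two inequalities combine to give exactly \eqref{eq:HWI-classic} after rearrangement.

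The main technical obstacle is the rigorous justification of the initial slope estimate for $F(\mu_t)$, since the density $\rho_0$ need not be smooth and the geodesic $(\mu_t)$ may leave the smoothness regime where one can freely differentiate. A clean way around this is to argue via the subdifferential calculus of \cite{AGS08}: the $\lambda$-convexity along geodesics shows that $w$ is an element of the Fr\'echet subdifferential of $F$ at $\mu_0$ (this uses that $w$ is essentially $\nabla(\log\rho_0+V)$, the gradient of the integrand of $F$), whence the Young-type inequality $F(\mu_1)\geq F(\mu_0)+\int\langle w,T-\id\rangle d\mu_0+\tfrac{\lambda}{2}W_2(\mu_0,\mu_1)^2$ holds directly. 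Alternatively, one can regularise $\mu_0$ and $V$ (e.g.\ by convolution with a smooth kernel and standard approximation of $V$ preserving $\lambda$-convexity), prove the inequality in the smooth setting where the above Otto-calculus computations are justified, and then pass to the limit using the lower semicontinuity of $F$ and $I(\cdot|e^{-V}\Leb)$ together with continuity of $W_2$. Handling the boundary conditions on $\partial\Omega$ (the domain of $V$) in the regularisation step requires some care, but is standard.
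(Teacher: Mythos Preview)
Your proposal is correct and aligns with the paper's own proof, which simply cites \cite{OV00} and the subdifferential calculus of \cite[Sec.~10.1.1, Thm.~10.4.9]{AGS08} together with Cauchy--Schwarz. You have in fact spelled out more of the argument than the paper does; your remark that the rigorous route goes through the AGS subdifferential identification is exactly the content of the paper's citation.
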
 

 \begin{proof}
 We refer to \cite{OV00}. In the form stated here, the inequality follows from the characterisation of the subdifferential of geodesically convex functionals in the Wasserstein space in \cite[Sec.~10.1.1, (10.1.6)]{AGS08} together with the Cauchy-Schwartz inequality and the identification of the norm of the minimal subdifferential of the relative entropy in \cite[Thm.~10.4.9]{AGS08}. 
 \end{proof}

We recall the geometry on the configuration spaces introduced by Albeverio-Kondratiev-R\"ockner in \cite{AKR98}. 
For a cylinder function $\phi:\Gamma\to \IR$ 
of the form $\phi(\xi)=G\big[\big(\int f_i d\xi \big)_{i\le m}\big]$ for some fixed $m\in\IN$, $G\in\mathcal C_b(\IR^m)$ and $f_i\in\mathcal C_c(\IR^d)$
the gradient is defined as $\nabla \phi(\xi,x)=\sum_{i=1}^n \partial_i G\big[\big(\int f_i d\xi \big)_{i\le m}\big]\nabla f_i(x)$. The quadratic form
\[E(\phi):= \int_\Gamma\int_{\IR^d}|\nabla \phi|^2(\xi,x)d\xi(x) d\Poi(\xi)\]
defined on cylinder functions is closable and gives rise to a Dirichlet form $(E,D(E))$ on $L^2(\Gamma,\Poi)$.
For a function $\phi\in D(E)$ we denote the carré du champs operator of $E$ by $|\nabla\phi|^2$.
For a point process $\P$ and a subset $A\subset \IR^d$ we define its relative Fisher information on $A$ by
\[\gls{I}\big(\P_A\big\vert\Poi_A\big):= 4 E\big(\sqrt{\rho}\big)= \int_{\Gamma_A}|\nabla \sqrt{\rho}|^2d \Poi_A\;,\]
provided $\P_A = \rho \Poi_A$ is absolutely continuous w.r.t.\ $\Poi_A$ and $\sqrt\rho\in D(E)$.

\begin{defi}\label{def:I_alt}
The \emph{specific (relative) Fisher information} of a stationary point process $\P$  is defined by
\begin{align}
\gls{specI} (\P):=
\limsup_{n\to\infty}\frac 1 {n^d}
 I(\P_{\Lambda_n}|\Poi_{\Lambda_n})\;.
\end{align}
 \end{defi}

We note that the expression $I(\P_{\Lambda_n}|\Poi_{\Lambda_n})$ can be expressed more explicitly e.g. by identifying $\P_{\Lambda_n}$ with measures on Euclidean spaces. For instance, recall that we denote by $\P_{\Lambda_{n}}^k$ the distribution of $\P_{\Lambda_n}$ conditioned on having $k$ points in $\Lambda_n$. Denote by $p_n(k)$ the probability that $\P_{\Lambda_n}$ has $k$ points in $\Lambda_n$. As in Section \ref{sec:EVI}, we denote by $O_n^k\subset(\IR^d)^k$ the orthant and by $\mu_n^k$ the image of $\P_{\Lambda_n}^k$ under the lexicographic ordering map. Alternatively, we can identify $\P_{\Lambda_n}^k$ with a symmetric probability measure $\nu_n^k$ on $(\Lambda_n)^k$. Then, we have
\begin{equation}\label{eq:Fisher-repEuclidean}I(\P_{\Lambda_n}|\Poi_{\Lambda_n})
=
\sum_{k=0}^\infty p_n(k)\cdot I(\mu_n^k|\Leb_{O_n^k})
=
\sum_{k=0}^\infty p_n(k)\cdot I(\nu_n^k|\Leb_{\Lambda_n^k})
\;.\end{equation}

We now have the following result in analogy to the classical HWI inequality.

\begin{thm}\label{thm:HWI2}
Let $\P_0,\P_1\in\spp1$ have finite cost $\W_2(\P_0,\mathsf \P_1)<\infty$, finite entropies $\mathcal E(\P_0),\mathcal E(\P_1)<\infty$ and finite Fisher information $\mathcal I(\P_0)<\infty$. Then, the following HWI-inequality holds
\begin{align}\label{eq:HWI2}
    \mathcal E(\P_0)-\mathcal E(\P_1)\le \W_2(\P_0,\P_1)\sqrt{\mathcal I (\P_0)}.
\end{align}
\end{thm}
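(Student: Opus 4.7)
The strategy is to lift the classical HWI inequality (Theorem~\ref{thm:HWI-classic}) to stationary point processes via finite-volume approximations, using the modification from Theorem~\ref{thm:Modification} to reconcile the mismatch of number statistics between $\P_0$ and $\P_1$ on each box $\Lambda_n$. Starting from an optimal pair $(\Q,\q)$ given by Proposition~\ref{prop:matching}, one applies Theorem~\ref{thm:Modification} to obtain modified marginals $\tilde\P_{0,\Lambda_n},\tilde\P_{1,\Lambda_n}$ with common number statistics $p_n(k) := \tilde\P_{i,\Lambda_n}(\pi = k)$, and after conditioning on $\{\pi=k\}$ and using the lexicographic labeling of Section~\ref{sec:EVI}, identifies $\tilde\P_{i,\Lambda_n}^k$ with measures $\mu_{i,n}^k$ on the convex orthant $O_n^k \subset (\IR^d)^k$.

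On each such convex domain (so $\lambda = 0$), Theorem~\ref{thm:HWI-classic} gives
\[ \ent(\mu_{0,n}^k|\Leb) - \ent(\mu_{1,n}^k|\Leb) \le W_2(\mu_{0,n}^k, \mu_{1,n}^k) \sqrt{I(\mu_{0,n}^k|\Leb)}. \]
Multiplying by $p_n(k)$, summing over $k$, and applying Cauchy--Schwarz on the right-hand side, together with the Fisher information identity \eqref{eq:Fisher-repEuclidean} and the entropy disintegration (Proposition~\ref{prop:disint_entr}, which combines cleanly because the two marginals share the number statistic $p_n$), yields
\[ \ent(\tilde\P_{0,\Lambda_n}|\Poi_{\Lambda_n}) - \ent(\tilde\P_{1,\Lambda_n}|\Poi_{\Lambda_n}) \le \sqrt{\sum_k p_n(k) W_2^2(\mu_{0,n}^k, \mu_{1,n}^k)}\cdot \sqrt{I(\tilde\P_{0,\Lambda_n}|\Poi_{\Lambda_n})}. \]
Dividing by $n^d$ and sending $n \to \infty$, Theorem~\ref{thm:Modification}(2) converts the left-hand side into $\mathcal E(\P_0) - \mathcal E(\P_1)$, while Theorem~\ref{thm:Modification}(3), together with the bound of each per-$k$ Wasserstein cost $W_2^2(\mu_{0,n}^k, \mu_{1,n}^k)$ by the cost of the restriction of $\tilde\q_{\Lambda_n}$ to the $k$-sector, controls the first factor on the right by $\W_2(\P_0,\P_1)$.

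The main obstacle is to control the Fisher information, i.e.\ to show $\limsup_n n^{-d} I(\tilde\P_{0,\Lambda_n}|\Poi_{\Lambda_n}) \le \mathcal I(\P_0)$. Since $\tilde\P_{0,\Lambda_n}$ agrees with $\P_{0,\Lambda_{n-1}}$ in the interior and the modification only adds uniformly distributed points in the boundary collar---each of which carries no Fisher content once the partition of boundary counts is fixed---the Fisher information is morally that of $\P_{0,\Lambda_{n-1}}$; however, the mixture over boundary partitions may create density singularities that a priori spoil the classical Fisher information. To address this, I would either analyse $I(\tilde\P_{0,\Lambda_n}|\Poi_{\Lambda_n})$ directly using the disintegration over boundary partitions (exploiting that uniform additions are Fisher-free) and argue that the singular contribution is $o(n^d)$, or preliminarily pre-smooth $\P_0$ by $\Semi_\epsilon$ (which yields smooth densities and finite Fisher information), apply the finite-volume argument to $\Semi_\epsilon\P_0$ in place of $\P_0$, and finally let $\epsilon \to 0$, invoking continuity of $\W_2$ from Lemma~\ref{lem:semigroup}, lower semicontinuity of $\mathcal E$, and the expected monotonicity $\mathcal I(\Semi_\epsilon\P_0) \le \mathcal I(\P_0)$ along the entropy's gradient flow.
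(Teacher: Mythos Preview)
Your strategy matches the paper's proof essentially step for step: modification via Theorem~\ref{thm:Modification}, conditioning on $\{\pi=k\}$, labeling onto the orthant $O_n^k$, applying the classical HWI on each sector, summing with Cauchy--Schwarz, and passing to the limit using Theorem~\ref{thm:Modification}\eqref{thm:mod_proc_entropy},\eqref{thm:mod_proc_cost}.

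The only place where you hedge is the Fisher information, and there the paper resolves the issue by your first option, but more sharply than you anticipate: it obtains the \emph{exact} identity
\[
I(\tilde\P_{0,\Lambda_n}\mid\Poi_{\Lambda_n}) = I(\P_{0,\Lambda_{n-1}}\mid\Poi_{\Lambda_{n-1}}),
\]
so no $o(n^d)$ estimate or pre-smoothing is needed. The point is that by the construction in Theorem~\ref{thm:Modification}\eqref{thm:mod_proc_same_number} and the density formula~\eqref{eq:modified_density}, the Radon--Nikodym derivative factors as
\[
\frac{d\tilde\P_{0,\Lambda_n}}{d\Poi_{\Lambda_n}}(\xi)
= \Big(\tilde\P_{0,\Lambda_n}(B_{l,k})\,e^N\prod_i k_i!\Big)\cdot \frac{d\P_{0,\Lambda_{n-1}}}{d\Poi_{\Lambda_{n-1}}}(\xi|_{\Lambda_{n-1}})
\quad\text{on }B_{l,k}.
\]
The first factor depends only on the counts $(\xi(K_i))_i$ and is therefore piecewise constant on the partition $(B_{l,k})$; its configuration-space gradient vanishes identically (the boundaries of the $B_{l,k}$ are null sets). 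Hence $|\nabla\sqrt{\rho}|^2$ reduces to the gradient of the interior factor alone, and integrating the piecewise constant factor against $\Poi_{\Lambda_n}(\cdot\mid\xi|_{\Lambda_{n-1}}=\tilde\xi)$ over the $B_{l,k}$ returns $1$ (it is a conditional density). Your worry about ``density singularities from the mixture over boundary partitions'' does not materialise, and the pre-smoothing route via $\Semi_\epsilon$ is unnecessary.
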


\begin{proof}
We consider the processes $\tilde\P_{0,n},\tilde\P_{1,_n}$ defined on $\Lambda_n$ given by Theorem \ref{thm:Modification} for the pair $\P_0,\P_1$. Let $\tilde \P_{0,n}^k,\tilde \P_{1,n}^k$ be their conditional distribution on having $k$ points, $\tilde p_n(k)$ the probability of having $k$ points (identical for $\tilde \P_{0,n}$ and $\tilde \P_{1,n}$ by construction) and $\tilde\mu_{0,n}^k,\tilde\mu_{1,n}^k\in\mathcal{P}(O_n^k)$ their images under the lexicographic ordering.
From Section \ref{sec:EVI}, in particular \eqref{eq:calc_wasser}, \eqref{eq:identification_entropy_t1}, we recall that for $i=0,1$ and a suitable constant $\mathrm{const}(n)$:
\begin{equation}\label{eq:split-ent-dist}
\begin{split}
   \ent(\tilde\P_{i,n}^k\vert \Poi(\cdot|\pi=k)) &= \ent(\tilde\mu_{i,n}^k|\Leb_{O_n^k}) +\log|O_n^k|\;, \qquad C_{\Lambda_n}^k(\tilde\P_{0,n}^k,\tilde\P_{1,n}^k)= W_2^2(\tilde\mu_{0,n}^k,\tilde\mu_{1,n}^k)\;,\\
\ent(\tilde\P_{i,n}\vert\Poi|_{\Lambda_n})&= \sum_{k=0}^\infty p_n(k) \ent(\tilde\P_{i,n}^k\vert \Poi(\cdot|\pi=k))+\mathrm{const}(n)\;.   
\end{split}
\end{equation}
We will show below that $I(\tilde\P_{0,n}|\Poi_{\Lambda_n})$ is finite and hence $I(\tilde\mu_{0,n}^k\vert\Leb_{O_n^k})$ is finite. Then, from the finite-dimensional HWI inequality Theorem \ref{thm:HWI-classic} we infer 
\begin{equation}\label{eq:HWI-nk}
    \ent(\tilde\mu_{0,n}^k|\Leb_{O^k_n}) - \ent(\tilde\mu_{1,n}^k|\Leb_{O^k_n})\leq W_2(\tilde\mu_{0,n}^k,\tilde\mu_{1,n}^k)\sqrt{I(\tilde\mu_{0,n}^k|\Leb_{O^k_n})}\;.
\end{equation}
Hence, by multiplying \eqref{eq:HWI-nk} by $p_n(k)$, summing over $k$, using Cauchy-Schwartz and \eqref{eq:split-ent-dist}, \eqref{eq:Fisher-repEuclidean} we obtain
\begin{equation*}
    \ent(\tilde\P_{0,n}\vert\Poi|_{\Lambda_n})-\ent(\tilde\P_{1,n}\vert\Poi|_{\Lambda_n}) \leq \sqrt{\sum_{k=0}^\infty p_n(k)\C_{\Lambda_n}^k(\tilde\P_{0,n}^k,\tilde\P_{1,n}^k)}\sqrt{I(\tilde\P_{0,_n}|\Poi_{\Lambda_n})}\;.
\end{equation*}
From Theorem \ref{thm:Modification} \eqref{thm:mod_proc_entropy} and \eqref{thm:mod_proc_cost} we get upon dividing by $n^{-d}$ and sending $n\to\infty$
\begin{equation}\label{eq:preHWIapprox}
    \mathcal{E}(\P_0)-\mathcal{E}(\P_1) \leq W_2(\P_0,\P_1)\left(\limsup_{n\to\infty}\frac{1}{n^d}I(\tilde\P_{0,_n}|\Poi_{\Lambda_n})\right)^{\frac12}\;.
\end{equation}
It thus remains to show that the term in brackets coincides with $\mathcal I(\P_0)$.
To this end, we  drop the subscript $0$, follow the route of Theorem \ref{thm:Modification} and use the family of  disjoint events 
$B_{l,k}=\{\xi\in\Gamma_{\Lambda_n}:\xi(\Lambda_n)=l, \xi(K_i)=k_i\forall i\le N\}$.
Let us disintegrate $\tilde \P_{\Lambda_n}$ with respect to $\xi\mapsto\xi|_{\Lambda_{n-1}}$, which means in terms of densities that
\begin{align*}
    \frac{d\tilde\P_{\Lambda_n}}{d\Poi_{\Lambda_n}}(\xi)=\frac{d\tilde\P_{\Lambda_n}\big(\cdot|\{\cdot|_{\Lambda_{n-1}}=\xi|_{\Lambda_{n-1}}\}\big) }{d\Poi_{\Lambda_n}\big(\cdot|\{\cdot|_{\Lambda_{n-1}}=\xi|_{\Lambda_{n-1}}\}\big)}(\xi)\cdot  \frac{d\tilde\P_{\Lambda_{n-1}}}{d\Poi_{\Lambda_{n-1}}}(\xi|_{\Lambda_{n-1}}).
\end{align*}
Recall that \eqref{eq:modified_density} states that for all $\xi\in B_{l,k}$ it holds
\begin{align*}
    \frac{d \tilde{\P}_{\Lambda_n}(\cdot\mid \{\cdot|_{\Lambda_{n-1}}=\xi_{\Lambda_{n-1}}\})}{d\Poi_{\Lambda_n}\big(\cdot|\{\cdot|_{\Lambda_{n-1}}=\xi_{\Lambda_{n-1}}\}\big)}(\xi)=\tilde \P_{\Lambda_n}(B_{l,k})e^N\prod_i k_i!.
\end{align*}
 Also recall that $\tilde\P_{\Lambda_{n-1}} = \P_{\Lambda_{n-1}}$ by construction.

Hence,
\begin{align*}
    I(\tilde\P_{n}|\Poi_{\Lambda_n})&=
    \int_{\Gamma_{\Lambda_n}} \left |\nabla \sqrt{\frac{d\tilde\P_{\Lambda_n}}{d\Poi_{\Lambda_n}}(\xi)}\right |^2d \Poi_{\Lambda_n}(\xi)\\
    &= \int_{\Gamma_{\Lambda_{n-1}}}\int_{\Gamma_{\Lambda_{n}}} \left |\nabla \sqrt{\frac{d\tilde\P_{\Lambda_n}}{d\Poi_{\Lambda_n}}\left(\xi \right)}\right |^2d \Poi_{\Lambda_n}(\xi \mid \xi|_{\Lambda_{n-1}}=\tilde{\xi}) d \Poi_{\Lambda_{n-1}}(\tilde{\xi} )\\
     &= \int_{\Gamma_{\Lambda_{n-1}}}\sum_{l,k}\int_{B_{l,k}} \left |\nabla \sqrt{\frac{d\tilde\P_{\Lambda_n}}{d\Poi_{\Lambda_n}}\left(\xi \right)}\right |^2d \Poi_{\Lambda_n}(\xi \mid \xi|_{\Lambda_{n-1}}=\tilde{\xi}) d \Poi_{\Lambda_{n-1}}(\tilde{\xi} )\\
     &= \int_{\Gamma_{\Lambda_{n-1}}}\sum_{l,k}\tilde \P_{\Lambda_n}(B_{l,k})e^N\prod_i k_i!\int_{B_{l,k}} \left |\nabla \sqrt{\frac{d\tilde\P_{\Lambda_{n-1}}}{d\Poi_{\Lambda_{n-1}}}(\xi|_{\Lambda_{n-1}})}\right |^2d \Poi_{\Lambda_n}(\xi \mid \xi|_{\Lambda_{n-1}}=\tilde{\xi}) d \Poi_{\Lambda_{n-1}}(\tilde{\xi} )\\
      &= \int_{\Gamma_{\Lambda_{n-1}}}\sum_{l,k}\tilde \P_{\Lambda_n}(B_{l,k})e^N\prod_i k_i!\int_{B_{l,k}} \left |\nabla \sqrt{\frac{d\P_{\Lambda_{n-1}}}{d\Poi_{\Lambda_{n-1}}}(\tilde{\xi})}\right |^2d \Poi_{\Lambda_n}(\xi \mid \xi|_{\Lambda_{n-1}}=\tilde{\xi}) d \Poi_{\Lambda_{n-1}}(\tilde{\xi} )\\
    &= \int_{\Gamma_{\Lambda_{n-1}}}\left |\nabla \sqrt{\frac{d\P_{\Lambda_{n-1}}}{d\Poi_{\Lambda_{n-1}}}(\tilde{\xi})}\right |^2 \sum_{l,k}\int_{B_{l,k}} \tilde \P_{\Lambda_n}(B_{l,k})e^N\prod_i k_i!d \Poi_{\Lambda_n}(\xi \mid \xi|_{\Lambda_{n-1}}=\tilde{\xi}) d \Poi_{\Lambda_{n-1}}(\tilde{\xi} )\\
    &= \int_{\Gamma_{\Lambda_{n-1}}}\left |\nabla \sqrt{\frac{d\P_{\Lambda_{n-1}}}{d\Poi_{\Lambda_{n-1}}}(\tilde{\xi})}\right |^2 \sum_{l,k}\int_{B_{l,k}} \frac{d \tilde{\P}_{\Lambda_n}(\cdot\mid \{\cdot|_{\Lambda_{n-1}}=\tilde{\xi}\})}{d\Poi_{\Lambda_n}\big(\cdot|\{\cdot|_{\Lambda_{n-1}}=\tilde{\xi}\}\big)}(\xi)d \Poi_{\Lambda_n}(\xi \mid \xi|_{\Lambda_{n-1}}=\tilde{\xi}) d \Poi_{\Lambda_{n-1}}(\tilde{\xi} )\\
     &= \int_{\Gamma_{\Lambda_{n-1}}}\left |\nabla \sqrt{\frac{d\P_{\Lambda_{n-1}}}{d\Poi_{\Lambda_{n-1}}}(\tilde{\xi})}\right |^2  d \Poi_{\Lambda_{n-1}}(\tilde{\xi} )\\
     &=I(\P_{\Lambda_{n-1}}|\Poi_{\Lambda_{n-1}}).
\end{align*}
 Since $\mathcal I(\P_0)$ was assumed finite, the last term is indeed finite for  all but finitely many $n$. Moreover,
this yields \[
\limsup_{n\to\infty}\frac{1}{n^d}I(\tilde\P_{n}|\Poi_{\Lambda_n})=\limsup_{n\to\infty}\frac{1}{n^d}I(\P_{\Lambda_n}|\Poi_{\Lambda_n})
\]
and  the claim is proved.

\end{proof}

\begin{ex}\label{ex:grid3}
The stationarized perturbed grid of Examples \ref{ex:grid} and \ref{ex:grid2} have finite specific relative Fisher information if the distribution of the perturbation is smooth. More precisely, define 
\[
    \mathsf{grid}=\theta_Y\left(\sum_{z\in \IZ^d}\delta_{z+X_z}\right),
    \]
    where the perturbations $(X_z)_{z\in \IZ^d}$  are i.i.d.~random variables on $\IR^d$, whose distribution have Lebesgue-density $f\in\mathcal C_c^\infty$ with $\supp f\subseteq \Lambda_1$, and the independent shift $Y$ is uniformly distributed on $\Lambda_1$. Then, the density of  the distribution of $\mathsf{grid}$ on a box $\Lambda_n$ with respect to the distribution of the Poisson process, evaluated at a configuration $\theta_Y(\xi)=\sum_{z\in\Lambda_n\cap \IZ^d}\delta_{z+Y+x_z}$ is given by
    $$\frac {d\P_{\mathsf{grid},\Lambda_n}}{d\Poi_{\Lambda_n}}(\theta_Y(\xi))=\prod_{z\in\Lambda_n\cap \IZ^d}e^1f(x_z)\eins_{\xi|_{\Lambda_1(z)}=\delta_{x_z}}.$$
A simple calculation then yields
$$\mathcal I (\P_{\mathsf{grid}})=\int_{\IR^d}\frac{|\nabla f(x)|^2}{f(x)}dx,$$
which coincides with the classical Fisher-Information of the perturbation $X_0$. We omit the details as an exercise to the  reader.
\end{ex}

\printnoidxglossary[type=symbols,style=long3col,title={List of Symbols}]\label{sec:Symbols}

\end{document}